\let\theoremstyle\undefined
\title{Obvious natural morphisms of sheaves are unique}
\author{Ryan Cohen Reich\thanks{email:~\nolinkurl{ryancr@umich.edu}. This
    research was partially supported by NSF RTG grant DMS 0943832.}}
\newcommand*\includestandalone{\includegraphics[valign=M]}
\setlist[itemize,enumerate]{labelindent = 0.5em, leftmargin=*}
\setlist[description]{labelindent = 0.5em, itemindent = 0.5em,
  leftmargin = *}
\numberwithin{equation}{section}
\renewcommand{\BibLabel}{%
  \Hy@raisedlink{\hyper@anchorstart{cite.\CurrentBib}\relax\hyper@anchorend}%
  [\thebib]%
}
\newcommand*{\genby}[1]{%
  \begingroup
    \begingroup
      \lccode`\~=`\,
    \lowercase{\endgroup\let~\cup}%
    \mathcode`\,=\string"8000
    \langle#1\rangle
  \endgroup
}
\DeclareMathOperator{\roof}{roof}
\DeclareMathOperator{\source}{S}
\DeclareMathOperator{\target}{T}
\DeclareMathOperator{\comp}{\nattrans{comp}}
\DeclareMathOperator{\triv}{\nattrans{triv}}
\DeclareMathOperator\SGNT{\nattrans{SGNT}}
\newcommand*\cSGNT{\SGNT}
\DeclareMathOperator\bc{\nattrans{bc}}
\DeclareMathOperator\cd{\nattrans{cd}}
\DeclareMathOperator\RNA{\nattrans{RNA}}
\newcommand*\define{\emph}
\newcommand*\isoarrow{\xrightarrow{\sim}}
\newcommand{\nattrans}[1]{\mathsf{#1}}
\newcommand*\unit{
  \@ifnextchar'{\on{\nattrans{counit}}\@gobble}{\on{\nattrans{unit}}}%
}
\newcommand*\Unit{
  \@ifnextchar'{\on{\nattrans{Counit}}\@gobble}{\on{\nattrans{Unit}}}%
}
\newcommand*\sh{\mathcal}
\newcommand*\cat{\mathbf}
\newcommand*\on{\operatorname}
\newcommand*\R{\mathbb{R}}
\newcommand*\id{\mathrm{id}}
\newcommand*\op{\mathrm{op}}
\newcommand*\Aff{\mathbb{A}}
\newcommand{\clonecounter}[2]{
 \@xp\xdef\csname c@#1\endcsname{\@xp\@nx\csname c@#2\endcsname}
 \@xp\xdef\csname the#1\endcsname{\@xp\@nx\csname the#2\endcsname}
 \@xp\xdef\csname theH#1\endcsname{\@xp\@nx\csname theH#2\endcsname}
 \global\@xp\let\csname p@#1\endcsname = \@empty
 \global\@xp\let\csname cl@#1\endcsname = \@empty
}
\def\@ynthm#1[#2]#3{%
  \ifx\relax#2\relax
  \def\@tempa{\@oparg{\@xthm{#1}{#3}}[]}%
  \else
  \@ifundefined{c@#2}{%
    \def\@tempa{\@nocounterr{#2}}%
  }{%
    \clonecounter{#1}{#2}%
    \toks@{#3}%
    \@xp\xdef\csname#1\endcsname{%
      \@nx\@thm{%
        \let\@nx\thm@swap
        \if S\thm@swap\@nx\@firstoftwo\else\@nx\@gobble\fi
        \@xp\@nx\csname th@\the\thm@style\endcsname}%
      {#1}{\the\toks@}}
    \let\@tempa\relax
  }%
  \fi
  \@tempa
}
\newenvironment{theorem}[2]
{\envbyname{#1}\label{#2}}
{\endenvbyname}
\newenvironment{theorem*}[1]
{\envbyname{#1*}}
{\endenvbyname}
\numberwithin{thmcounter}{section}
\newcommand*\createtheorem[2]{%
  \newtheorem{#1}[thmcounter]{#2}%
  \newtheorem*{#1*}{#2}%
  \labelformat{#1}{\rrplaintest{#2~}##1}%
}
\newif\ifrrplainref 
\DeclareRobustCommand{\rrplaintest}[1]{\ifrrplainref\else #1\fi}
\newcommand{\plainref}[1]{\rrplainreftrue\ref{#1}\rrplainreffalse}
\newcommand\plainref*[1]{\rrplainreftrue\ref*{#1}\rrplainreffalse}
\theoremstyle{definition}
\begin{document}
\maketitle
\begin{abstract}\noindent
  We prove that a large class of natural transformations (consisting
  roughly of those constructed via composition from the ``functorial''
  or ``base change'' transformations) between two functors of the form
  $\cdots f^* g_* \cdots$ actually has only one element, and thus that
  any diagram of such maps necessarily commutes.  We identify the
  precise axioms defining what we call a ``geofibered category'' that
  ensure that such a coherence theorem exists.  Our results apply to
  all the usual sheaf-theoretic contexts of algebraic geometry.  The
  analogous result that would include any other of the six functors
  remains unknown.
\end{abstract}

Commutative diagrams express one of the most typical subtle beauties
of mathematics, namely that a single object (in this case an arrow in
some category) can be realized by several independent constructions.
The more interesting the constructions, the more insight is gained by
carefully verifying commutativity, and it is tempting to take the
inverse claim to mean that comparable arrows, constructed mundanely,
should be expected to be equal and are therefore barely worth proving
to be so.  As half the purpose of a human-produced publication is to
provide insight, there is some validity in thinking that any
opportunity to reduce both length and tedium is worth taking, but this
is at odds with the other half, which is to supply rigor.  The goal of
this paper is, therefore, to serve both ends by proving that a large
class of diagrams commonly encountered in algebraic geometry, obtained
from the fibered category nature of sheaves, are both interesting and
necessarily commutative.

The goal of proving ``all diagrams commute'' began with the first
``coherence theorem'' of this kind, proved by Mac~Lane~\cite{maclane};
further research, apparently, has not yet developed this particular
application, as we are not aware of any coherence theorems that apply
to fibered categories.  Indeed, our main result~\ref{main theorem} is
itself not entirely free of conditions, and our best unconditional
result such as in \ref{secondary theorem} is somewhat restricted in
scope.  A similar statement to the latter was obtained by Jacob
Lurie~\cite{lurie} concerning \emph{arbitrary} tensor isomorphisms of
functors on coherent sheaves over a stack; ours applies to less
general morphisms but more general situations.  The problem of proving
a general coherence theorem for pullbacks and pushforwards in the
context of \emph{monoidal} functors was mentioned in
Fausk--Hu--May~\cite{fausk-may} and said to be both unsolved and
desirable; we do not claim to have solved it, though certainly, we
have done something.  Those authors do not consider multiple maps~$f$
in combination, however, as we do.

The goal of rigoriously verifying the diagrams of algebraic geometry
has been pursued by several people, notably recently Brian Conrad
in~\cite{conrad}, who proved the compatibility of the trace map of
Grothendieck duality with base change.  Our theorems do not seem to
apply to this problem, most significantly because almost every
construction in that book intimately refers to details of abelian and
derived categories and to methods of homological algebra, none of
which are addressed here.  Such a quality is possessed by many
constructions of algebraic geometry and place many of its interesting
compatibilities potentially out of reach of the results stated here.

Maps constructed in such a way, as concerns Conrad constantly in that
book, can certainly differ by a natural sign, if not worse, and so the
only hope for automatically proving commutativity of that kind of
diagram is to separate the cohomological part from the categorical
part and apply our results only to the latter.  This is to say nothing
of the explicit avoidance of ``good'' hypotheses (for example,
flatness, such as we will consider) on the schemes and morphisms
considered there, all of which place this particular compatibility on
the other side of the line separating ``interesting'' from
``mundane''.

In \ref{s:definitions}, we give (many) definitions in preparation for
stating the main theorems; comments on the hypotheses considered there
are given in \ref{s:comments}, as are the acknowledgements.  These
theorems are given, without proof, in \ref{sec:main thms}, and their
proofs are delayed until \ref{sec:proofs}.  The style of these first
two sections of this paper is quite formal; for a more relaxed
overview, see \ref{sec:guide}.  \ref{s:string diagrams} quickly
presents our use of ``string diagrams'', a computational tool we will
use to give structure to some of our more arbitrary calculations;
further details and comparison with others' use of this tool is given
in \ref{s:string diagram intro}. Afterwards,
Sections~\plainref{sec:uniqueness} and~\plainref{sec:simplify} contain
the core arguments upon which the eventual high-level proofs rely.

\section{Definitions and notation}
\label{s:definitions}

In this section we define the natural transformations we consider and
will eventually prove are unique.  Since they apply to a variety of
similar but slightly different common situations in algebraic
geometry, we have abstracted the essential properties into a formal
object of category theory; the particular applications are indicated
in \ref{sec:main thms}.

\subsection*{Abstract push and pull functors.}
Our results apply in a variety of similar situations with slightly
different features and technical hypotheses, all unified by the
formalism of pushforward and pullback.  The following definition
encapsulates the axiomatic properties we will require.

\begin{theorem}{defn}{geofibered category}
  We define a \define{geofibered category} to be a functor $F \colon
  \cat{Sh} \to \cat{Sp}$ between two categories called, respectively,
  ``shapes'' and ``spaces'', such that:
  \begin{itemize}
  \item The category~$\cat{Sp}$ has all finite fibered products.
  \item For every morphism $f \colon X \to Y$ of~$\cat{Sp}$, there are
    functors
    \begin{align*}
      f^* \colon \cat{Sh}_Y \longleftrightarrow \cat{Sh}_X : f_*
    \end{align*}
    between the fibers over~$Y$ and~$X$, which are an adjoint pair
    $(f^*, f_*)$.
  \item The assignment $X \mapsto \cat{Sh}_X$ and $f \mapsto f^*$
    constitutes a pseudofunctor $\cat{Sp} \to \cat{Cat}$; i.e.\ a
    cleaved fibered category, as described below.
  \item Dually, the assignment $X \mapsto \cat{Sh}_X$ and $f \mapsto
    f_*$ constitutes a pseudofunctor $\cat{Sp} \to \cat{Cat}^\op$.
  \item The above pseudofunctor data obey the compatibilities
    described in the remainder of this subsection.
  \end{itemize}
\end{theorem}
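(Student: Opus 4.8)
The final statement is the \emph{definition} of a geofibered category, so in the strict sense there is nothing to prove: a definition is neither true nor false. What one does want to do after reading it is (a) to check that the notion is non-vacuous --- that the standard sheaf-theoretic situations of algebraic geometry really do assemble into geofibered categories --- and (b) to extract the elementary structural consequences of the axioms that the eventual coherence theorem will use. Since (b) concerns ``the compatibilities described in the remainder of this subsection'', which is not reproduced in the excerpt, I will sketch (a).

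The plan is to produce the examples. Take $\cat{Sp}$ to be a suitable category of schemes and, for a fixed choice, let $\cat{Sh}_X$ be one of: sheaves of sets or of abelian groups on the small \'etale (or Zariski) site of $X$, quasi-coherent $\sh O_X$-modules, constructible $\ell$-adic sheaves, or, in the derived incarnation, the appropriate derived category of such. In each case one already has the adjoint pair $(f^*, f_*)$ --- the usual inverse-image/direct-image pair, or in the (quasi-)coherent setting the usual $f^*$ and $f_*$ --- and the assignments $X \mapsto \cat{Sh}_X$, $f \mapsto f^*$ and $f \mapsto f_*$ are pseudofunctorial because pullback (resp.\ pushforward) of sheaves is compatible with composition of morphisms up to a canonical, coherent isomorphism. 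All of this is classical (SGA~4, EGA, and the standard references for the $\ell$-adic and derived formalisms); the only work is to repackage it in the form this definition demands and to verify that the canonical isomorphisms one chooses satisfy the listed compatibility conditions.

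The main obstacle --- and the reason the definition is phrased so carefully --- is the coupling of the two pseudofunctors through fibered products: from a cartesian square in $\cat{Sp}$ one must produce a base-change transformation $g^* f_* \to f'_*\,{g'}^*$ and control when it is an isomorphism (proper base change, flat base change, \dots, according to whatever the remainder of the subsection requires). Producing the transformation is formal: it is the mate, under the two adjunctions, of the evident pseudofunctoriality isomorphism ${g'}^* f^* \isoarrow {f'}^* g^*$ together with the counit of $(f^*, f_*)$. The content lies in checking that it behaves correctly under horizontal and vertical pasting of squares, and that in each concrete theory it is invertible exactly under the hypotheses the axioms name; that is where one has to leave pure category theory and cite the specific base-change theorems of each setting. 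I expect this --- not the construction of $f^*$, $f_*$ or their pseudofunctoriality --- to be the part that actually requires thought.
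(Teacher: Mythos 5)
There is nothing to prove here: the statement is a definition, and you correctly say so. Your part (a) --- checking non-vacuity by exhibiting the standard sheaf-theoretic examples --- matches what the paper itself does, informally, in \ref{sec:main thms}, where it lists presheaves, quasicoherent sheaves, constructible \'etale and complex sheaves, and their derived categories, concedes that it cannot locate references verifying every axiom in every context, and declares that the pullback pseudofunctor structure should simply be \emph{determined} by adjunction from the (obvious or formal) pushforward structure. One correction to your framing, though: you locate the ``main obstacle'' in the coupling of the two pseudofunctors through base change and in controlling when $g^* f_* \to \tilde f_* \tilde g^*$ is invertible. In the paper that is deliberately \emph{not} part of the geofibered category axioms. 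The compatibilities ``described in the remainder of this subsection'' are only the triangle identities \ref{eq:adjoint identities}, associativity \ref{eq:compositions assoc}, and the requirements that \ref{eq:composition adjunction} and \ref{eq:trivialization adjunction} equal $\on{Hom}(\comp^*,-)$ and $\on{Hom}(\triv^*,-)$; the commutation morphism is then \emph{constructed} from these data in \ref{commutative diagram morphism} (by exactly the mate construction you describe), and its invertibility for selected classes of maps is packaged separately as the optional ``geolocalizing'' structure of \ref{geolocalizing}, with the base-change theorems cited only in \ref{base change theorems}. So the definition demands rather less than you suggest, which is precisely what lets every example on the list qualify even before any base-change theorem is invoked.
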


The terminology was chosen in part because ``bifibered category'' has
a different meaning, and also to indicate that this concept models the
fibered category of sheaves in various geometries.  For a description
of pseudofunctors in the context of fibered categories, see Vistoli's
exposition~\cite{vistoli}*{\S3.1.2}.

The requirement that~$\cat{Sp}$ has fibered products is only strictly
speaking necessary for some of the following definitions, but since
that of the ``roof'' is among them, given the central nature of this
concept in our main theorems, a geofibered category would be quite
useless otherwise.

The rest of this subsection is devoted to introducing notation and
terminology and, along side it, describing the compatibilities of the
data of a geofibered category.

\begin{theorem}{defn}{SGFs}
  As per \ref{geofibered category}, in any geofibered category, for
  any morphism $f \colon X \to Y$ of spaces we have adjoint functors
  $(f^*, f_*)$ of shapes which we call \define{basic standard
    geometric functors} (the terminology intentionally references
  ``geometric morphisms'' of topoi)

  We define a \define{standard geometric functor} (SGF) to be any
  composition of basic standard geometric functors.  Formally, an~SGF
  is equivalent to a diagram of spaces and morphisms forming a
  directed graph that is topologically linear, together with an
  ordering of its vertices from one end of the segment to the other
  (i.e.~we ``remember'' the terms of the composition as well as the
  resulting functor).  For an~SGF~$F$ from shapes on~$X$ to shapes
  on~$Y$, we write~$X = \source(F)$ and~$Y = \target(F)$.
\end{theorem}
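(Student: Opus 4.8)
The claim in the definition above that calls for an argument is the stated equivalence: an~SGF --- a composition of basic standard geometric functors --- amounts to the same thing as a topologically linear directed graph of spaces and morphisms together with an ordering of its vertices from one end to the other. The plan is to unwind both descriptions and write down two mutually inverse constructions; I expect no real obstacle here, only bookkeeping, with the one genuinely substantive point being the force of the parenthetical ``we remember the terms of the composition as well as the resulting functor''.

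First I would observe that a basic standard geometric functor is precisely a morphism $f \colon A \to B$ of $\cat{Sp}$ equipped with an orientation: the choice $f^*$ produces a functor $\cat{Sh}_B \to \cat{Sh}_A$ and the choice $f_*$ produces $\cat{Sh}_A \to \cat{Sh}_B$. An~SGF is then a finite composable string $G_1, \dots, G_n$ of such, where composability means the fiber $G_i$ lands in is the one $G_{i+1}$ is defined on. From such a string I read off a list of spaces $X = X_0, X_1, \dots, X_n = Y$, with $X_0 = \source(G_1)$ and $X_i$ the space over which $G_1, \dots, G_i$ have landed, and for each $i$ the morphism $f_i$ of $\cat{Sp}$ underlying $G_i$, which joins $X_{i-1}$ to $X_i$, directed $X_i \to X_{i-1}$ if $G_i$ is a pullback and $X_{i-1} \to X_i$ if it is a pushforward. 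The resulting figure is a directed graph on the vertex set $\{X_0, \dots, X_n\}$ with one edge between each consecutive pair, so its geometric realization is an interval --- hence topologically linear --- and the indexing $X_0, \dots, X_n$ is an ordering of its vertices from one end to the other.

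Conversely, given a topologically linear directed graph with vertices enumerated $X_0, \dots, X_n$, so that the edges are exactly the pairs $\{X_{i-1}, X_i\}$ with underlying morphisms $f_i$, I assign to the $i$-th edge the functor $f_i^*$ when it points $X_i \to X_{i-1}$ and $(f_i)_*$ when it points $X_{i-1} \to X_i$, and compose these in order; the spaces line up by construction, so this is a composable string, i.e.\ an~SGF with $\source = X_0$ and $\target = X_n$. The two passages are inverse to each other, each recovering the same ordered data of spaces, connecting morphisms, and orientations. I would handle the degenerate case at the start --- the empty composition is the identity functor on some $\cat{Sh}_X$ and corresponds to the one-vertex graph with no edges --- and note that nothing forces the $X_i$ to be distinct or the $f_i$ to be non-identities, so repeated spaces and identity morphisms cause no difficulty (a self-loop at the graph level, which would realize to a circle rather than an interval, is correctly excluded).

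The point to stress, rather than to verify, is that this is an equivalence of \emph{data} and not of functors up to isomorphism: two distinct topologically linear diagrams can perfectly well compute equal, or merely isomorphic, functors, and the definition deliberately declines to quotient by that. That is exactly what gives the coherence theorems their content --- they will concern natural transformations between these rigid, fully remembered compositions, not between the bare functors they evaluate to.
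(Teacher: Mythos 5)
This is a definition, not a theorem, and the paper offers no proof of the asserted equivalence because it is immediate from unwinding the terms; your two mutually inverse constructions are exactly the intended reading, including the point that vertices are indexed positions (so repeated spaces and identity morphisms pose no problem) and that the equivalence is one of remembered data rather than of the resulting functors. Your proposal is correct and takes essentially the same (tautological) route the paper intends.
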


The pseudofunctor data consists of a number of natural
transformations, which are the basis for the main objects of our
study.

\begin{theorem}{defn}{basic SGNTs}
  Between pairs of~SGFs there are canonical natural transformations of
  the following three types that we call the \define{basic standard
    geometric natural transformations}:
  \begin{subequations}
    \begin{align}
      \label{eq:units}
      \left.
      \begin{aligned}[c]
        \unit(f) &\colon \on{id} \to f_* f^* \\
        \unit'(f) &\colon f^* f_* \to \on{id}
      \end{aligned}
      \right\}&
      \text{\define{Adjunctions}} \\
      \label{eq:compositions}
      \left.
      \begin{aligned}[c]
        \comp^*(f,g) &\colon f^* g^* \isoarrow (gf)^* \\
        \comp_*(f,g) &\colon g_* f_* \isoarrow (gf)_*
      \end{aligned}
      \right\} &
      \text{\define{Compositions}} \\
      \label{eq:trivializations}
      \left.
      \begin{aligned}[c]
        \triv_*(f) &\colon \id_* \isoarrow \id \\
        \triv^*(f) &\colon \id^* \isoarrow \id
      \end{aligned}
      \right\} &
      \text{\define{Trivializations}}.
    \end{align}
  \end{subequations}
  For the latter two types, we will use~$^{-1}$ to denote their
  inverses (which will not arise as often in our arguments).
\end{theorem}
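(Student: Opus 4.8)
The plan is to recognize that \ref{basic SGNTs} calls less for a proof than for an \emph{extraction}: each transformation it names is either literally part of the data of a geofibered category or an immediate formal consequence of that data, so the task is to point to the relevant clause of \ref{geofibered category} in each case and to check that the resulting transformations are natural and oriented as claimed.

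First I would dispatch the \define{adjunctions}. The second bullet of \ref{geofibered category} gives, for each $f \colon X \to Y$ of $\cat{Sp}$, an adjoint pair $(f^*, f_*)$; unwinding the definition of an adjunction produces exactly a unit $\id \to f_* f^*$ and a counit $f^* f_* \to \id$ subject to the triangle identities, and these we name $\unit(f)$ and $\unit'(f)$. For the \define{compositions}, the third bullet declares $X \mapsto \cat{Sh}_X$, $f \mapsto f^*$ to be a pseudofunctor $\cat{Sp} \to \cat{Cat}$, hence equipped with coherent natural isomorphisms $f^* g^* \isoarrow (gf)^*$, which we call $\comp^*(f,g)$; the fourth bullet supplies the dual data for $f \mapsto f_*$, valued in $\cat{Cat}^\op$, from which, once that convention is unwound, one reads off the isomorphisms $g_* f_* \isoarrow (gf)_*$, named $\comp_*(f,g)$.

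The \define{trivializations} come from the unitor part of the same two pseudofunctor structures, which compare the image of an identity morphism of $\cat{Sp}$ with the identity functor on its fiber; over a space $X$ this gives $\id^* \isoarrow \id$ and $\id_* \isoarrow \id$. Since the compositions and trivializations are all isomorphisms, they admit inverses, which is what licenses the notation $^{-1}$ introduced at the end of the statement.

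The only real subtlety --- the closest thing to an obstacle --- is keeping the variance conventions straight: one must remember that the pushforward pseudofunctor lands in $\cat{Cat}^\op$, so that its composition isomorphism runs $g_* f_* \isoarrow (gf)_*$ and not the reverse, and that $\triv_*(f)$ and $\triv^*(f)$, though notated as functions of a morphism $f$, are really attached to a fixed space. It is also worth recording here --- since the coherence theorems to come are stated relative to precisely these transformations --- that each is \emph{canonical}, i.e.\ uniquely determined by the geofibered structure with no auxiliary choices.
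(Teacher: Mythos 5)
Your proposal is correct and matches the paper's own treatment: the statement is a definition, and the subsections immediately following it (``Adjunctions,'' ``Compositions,'' ``Trivializations'') justify it exactly as you do, by reading the unit/counit off the adjoint pair $(f^*,f_*)$ and the composition and trivialization isomorphisms off the two pseudofunctor structures of \ref{geofibered category}. The only point you leave implicit that the paper dwells on is that the pullback-side data is required to be \emph{compatible} with (indeed determined by) the pushforward-side data via the adjunction isomorphisms \ref{eq:composition adjunction} and \ref{eq:trivialization adjunction}, but that is a compatibility axiom rather than part of establishing existence.
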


The origins, nature, and compatibilities of these transformations are
described in the following subpoints.

\subsubsection*{Adjunctions.}

The maps $\unit(f)$~and~$\unit'(f)$ are equivalent to the~$(f^*, f_*)$
adjunction in the usual way and satisfy the familiar compatibility
required of an adjunction, which we state at risk of being pedantic so
as to provide a complete reference for the data of a geofibered
category.
\begin{subequations}
  \label{eq:adjoint identities}
  \begin{alignat}{99}
    \label{eq:left adjoint identity}
    \bigl(
    f^*
    \xrightarrow{f^* \unit(f)}
    &f^* f_* f^*
    \xrightarrow{\unit'(f) f^*}
    f^*
    \bigr) &
    &= \id \\
    \label{eq:right adjoint identity}
    \bigl(
    f_*
    \xrightarrow{\unit(f)f_*}
    &f_* f^* f_*
    \xrightarrow{f_* \unit'(f)}
    f_*
    \bigr)&
    &= \id
  \end{alignat}
\end{subequations}

In the context of \emph{natural transformations of functors} rather
than morphisms of individual objects, we have a ``reverse natural
adjunction'' also defined by the unit and counit; we use it on
occasion to aid definitions.  The construction and proof of
bijectivity are left as an exercise for readers who desire it.

\begin{equation}
  \label{eq:RNA map}
  \RNA_f \colon \on{Hom}(Ff_*, G) \longleftrightarrow \on{Hom}(F, Gf^*).
\end{equation}

\subsubsection*{Compositions.}
The data of a pseudofunctor implies the existence of
isomorphisms $\comp_*(f,g) \colon (fg)_* \cong f_* g_*$ for every
composable pair of morphisms~$f,g$ of spaces.  Likewise, we have
isomorphisms $\comp^*(f,g) \colon (fg)^* \cong g^* f^*$.  The
compatibilities required of these data by a pseudofunctor express
their associativity in triple compositions:
\begin{equation}
  \label{eq:compositions assoc}
  \bigl(
  (fgh)_* \cong (fg)_* h_* \cong f_* g_* h_*
  \bigr)
  =
  \bigl(
  (fgh)_* \cong f_* (gh)_* \cong f_* g_* h_*
  \bigr)
\end{equation}
and similarly for pullbacks. It must be noted that this data for
pushforwards or pullbacks alone determines such data for the other;
for example, given pseudofunctoriality of $f_*$, for the
adjoint~$f^*$, we define~$\comp^*(f,g)$ via category-theoretic
formalism:~$(fg)_*$ has the left adjoint~$(fg)^*$ and the
composition~$f_* g_*$ has as left adjoint the composition~$g^* f^*$;
since we have~$(fg)_* \cong f_* g_*$ we also get~$(fg)^* \cong g^*
f^*$ by uniqueness of left adjoints.  In more explicit terms, this
means that for any shapes $\sh{F}$~and~$\sh{G}$, we have an isomorphism
\begin{multline}
  \label{eq:composition adjunction}
  \on{Hom}((fg)^* \sh{F}, \sh{G})
  \cong
  \on{Hom}(\sh{F}, (fg)_* \sh{G})
  \cong
  \on{Hom}(\sh{F}, f_* g_* \sh{G})
  \cong
  \on{Hom}(f^* \sh{F}, g_* \sh{G}) \\
  \cong
  \on{Hom}(g^* f^* \sh{F}, \sh{G}).
\end{multline}
We require as a compatibility relation that this is
$\on{Hom}(\comp^*(f,g)_{\sh{F}}, \sh{G})$.

\subsubsection*{Trivializations.}
The trivialization isomorphism~$\triv_* \colon \id_* \cong \id$ is
another part of the pseudofunctor data (and likewise for pullbacks),
as well as its compatibility with composition:
\begin{equation}
  \label{eq:trivializations comp}
  \bigl(\comp_*(\id, f) \colon \id_* f_* \cong (\id f)_* = f_*\bigr)
  = \bigl(\triv_* f_* \colon \id_* f_* \cong \id f_* = f_* \bigr)
\end{equation}
(and, again, the same for pullbacks).  Given this data just for
pullbacks or pushforwards, for example the latter, we could define an
isomorphism $\id^* \to \id$ and its inverse by adjunction,
similar to~\ref{eq:composition adjunction}:
\begin{equation}
  \label{eq:trivialization adjunction}
  \on{Hom}(\id^* \sh{F}, \sh{G})
  \cong
  \on{Hom}(\sh{F}, \id_* \sh{G})
  \cong
  \on{Hom}(\sh{F}, \sh{G}).
\end{equation}
We require that this isomorphism be equal
to~$\on{Hom}(\triv^*_{\sh{F}}, \sh{G})$.

\subsection*{Classes of transformations.}
Typically we do not consider transformations in the pure form above,
but rather in horizontal composition with some identity maps.

\subsubsection*{Basic classes.}
The following are the classes of natural transformations consisting of
just one of the basic ones in horizontal composition.

\begin{theorem}{defn}{basic SGNT classes}
  In this definition, $F$~and~$G$ represent any~SGFs; $f$~and~$g$
  represent any maps of spaces.  We define:
  \begin{subequations}
    \begin{align}
      \label{eq:unit class}
      \unit &= \{F\unit(f)G\} &
      \unit' &= \{F\unit'(f)G\} \\
      \label{eq:composition class}
      \comp_0 &= \{F\comp_*(f,g)G\} \cup \{F \comp^*(f,g) G\} &
      \comp &= \comp_0 \cup \comp_0^{-1} \\
      \label{eq:triv class}
      \triv_0 &= \{F\triv_*G\} \cup \{F \triv^* G\} &
      \triv &= \triv_0 \cup \triv_0^{-1}.
    \end{align}
  \end{subequations}
\end{theorem}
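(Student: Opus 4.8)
The preceding item is a definition, so strictly there is nothing to prove; what is worth recording, and what I would verify first, is that the six symbols $\unit$, $\unit'$, $\comp_0$, $\comp$, $\triv_0$, $\triv$ it introduces genuinely name sets of natural transformations between genuine SGFs. The only real point is the implicit quantification. By \ref{basic SGNTs}, each basic SGNT is a natural transformation between two explicitly given SGFs related by a single elementary move, and horizontal composition (whiskering) of a natural transformation $\alpha \colon H \to H'$ with identity transformations of SGFs $F$ and $G$ yields a natural transformation $F\alpha G \colon FHG \to FH'G$ whenever the source and target spaces match up — e.g.\ $\target(G) = \source(H)$ and $\target(H) = \source(F)$ — so that $FHG$ and $FH'G$ are defined and parallel. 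Thus what is hidden inside the braces is that $F$ and $G$ range over exactly those SGFs for which these identifications hold, and $f$ (resp.\ the composable pair $f,g$) over the morphisms of $\cat{Sp}$ with the corresponding source and target; granting that reading, $\unit$, $\unit'$, $\comp_0$, and $\triv_0$ are well-defined sets with nothing further to say.

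For $\comp = \comp_0 \cup \comp_0^{-1}$ and $\triv = \triv_0 \cup \triv_0^{-1}$ one needs in addition that the members of $\comp_0$ and $\triv_0$ are invertible. This is immediate: by \ref{basic SGNTs} the maps $\comp_*(f,g)$, $\comp^*(f,g)$, $\triv_*(f)$, and $\triv^*(f)$ are isomorphisms, and whiskering preserves isomorphisms (the inverse of $F\alpha G$ is $F\alpha^{-1}G$, by the interchange law), so $\comp_0^{-1}$ and $\triv_0^{-1}$ are themselves sets of isomorphisms of SGFs, and hence $\comp$ and $\triv$ are well-defined. By contrast $\unit(f)$ and $\unit'(f)$ are not isomorphisms in general, which is exactly why $\unit$ and $\unit'$ are recorded without a ``${}^{-1}$'' partner.

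The reason for isolating these classes is the eventual coherence statement \ref{main theorem}: its subject is (roughly) the closure of $\unit \cup \unit' \cup \comp \cup \triv$ under vertical and horizontal composition, and the claim is that any two parallel arrows in that closure coincide. That is the genuinely hard part, far beyond the present definition, and the strategy I would adopt is a rewriting/confluence argument — reduce an arbitrary composite to a normal form by repeatedly applying the structural isomorphisms (contracting adjacent pushforwards or pullbacks via $\comp$, absorbing $\triv$) and cancelling adjunction units against counits wherever the triangle identities \ref{eq:adjoint identities} permit, using also the associativity and trivialization compatibilities \ref{eq:compositions assoc} and \ref{eq:trivializations comp}, and then show that the normal form does not depend on the reduction path. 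The main obstacle to anticipate is that these rewriting moves do not visibly commute, so one wants an organizing formalism — the string diagrams promised in \ref{s:string diagrams} — to keep the confluence bookkeeping tractable; the restriction to functors of the form $\cdots f^* g_* \cdots$, rather than the full six-operation package, is precisely what keeps the relation set small enough for that argument to close.
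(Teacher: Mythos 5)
You are right that this is a definition with nothing to prove; the paper supplies no proof, and your well-definedness remarks (the implicit composability constraints on $F$, $G$, $f$, $g$, and the invertibility of the $\comp_0$ and $\triv_0$ elements needed for $\comp_0^{-1}$ and $\triv_0^{-1}$ to make sense) match what the paper leaves implicit. Your anticipated normal-form strategy for the later uniqueness results also accords with the paper's actual approach via staging, alternating reductions, and the roof.
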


And now we may introduce the main objects of our study.

\begin{theorem}{defn}{SGNTs}
  A \define{standard geometric natural transformation} (SGNT) is an
  element of the class (in which we use the notation $\genby{S}$
  to denote the class generated by $S$ via composition)
  \begin{equation}
    \label{eq:SGNT class}
    \SGNT = \genby{\unit \cup \unit' \cup \comp \cup \triv}.
  \end{equation}
  For any SGNT~$\phi \colon F \to G$ between two~SGFs, we write~$F =
  \on{dom} \phi$ and~$G = \on{cod} \phi$.
\end{theorem}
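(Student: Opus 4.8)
The statement is a definition, so the content to be verified is that the notation is well-posed: every element $\phi$ of the class $\SGNT$ carries an unambiguously determined source SGF $\on{dom}\phi$ and target SGF $\on{cod}\phi$. Since $\SGNT$ is defined as the closure under composition of the basic classes $\unit$, $\unit'$, $\comp$, $\triv$ of \ref{basic SGNT classes}, the plan is to argue by induction on the number of composition factors, treating the generators as the base case and vertical composition as the inductive step.

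For the base case I would run through the generators one at a time. Each is a horizontal composite $F\,\theta\,G$ in which $\theta$ is one of the basic SGNTs of \ref{basic SGNTs} and $F,G$ are SGFs whose source and target spaces are constrained to match those of $\theta$. Granting that matching, the source and target of $F\theta G$ are obtained by splicing the source, resp.\ target, functor of $\theta$ — itself a composition of at most two basic functors — between the ordered linear diagrams recording $F$ and $G$: for instance $F\unit(f)G$ runs from $FG$ to $F f_* f^* G$, while $F\comp_*(f,g)G$ runs from $F g_* f_* G$ to $F(gf)_* G$. In each case one checks that splicing two topologically-linear, vertex-ordered diagrams around such a short decomposition again produces a topologically-linear, vertex-ordered diagram, hence an SGF in the sense of \ref{SGFs}; this is immediate from the definition of an SGF as exactly such a diagram. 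Thus every generator acquires a well-defined $\on{dom}$ and $\on{cod}$.

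For the inductive step, a composite $\psi\phi$ is formed only when $\on{cod}\phi = \on{dom}\psi$, and I would set $\on{dom}(\psi\phi)=\on{dom}\phi$ and $\on{cod}(\psi\phi)=\on{cod}\psi$, which are SGFs by the inductive hypothesis. The one point demanding care — and the main obstacle — is that by \ref{SGFs} an SGF is not merely a functor but a functor together with a remembered ordered factorization into basic functors, so composability of two SGNTs must be tested at the level of these decorated diagrams and not up to isomorphism of underlying functors. This matters precisely because the trivialization and composition isomorphisms relate distinct SGFs with isomorphic underlying functors (e.g.\ $\id_*$ versus $\id$, or $g_* f_*$ versus $(gf)_*$); the bookkeeping of \ref{SGFs} keeps these apart, so that $\on{dom}\phi$ and $\on{cod}\phi$ are determined by the presentation of $\phi$ as a composition and never become ambiguous. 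With this convention fixed the induction closes, and $\on{dom}$ and $\on{cod}$ are well-defined throughout $\SGNT$.
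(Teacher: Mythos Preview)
This statement is a \emph{definition}, and the paper supplies no proof for it: the sentence ``For any SGNT $\phi\colon F\to G$ between two SGFs, we write $F=\on{dom}\phi$ and $G=\on{cod}\phi$'' is purely notational, introducing names for the source and target that an SGNT already carries as a morphism between specified SGFs. Your write-up is therefore not a proof of anything the paper proves, but rather a well-posedness check that the paper leaves implicit.

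That said, your check has a small wrinkle worth flagging. You conclude that $\on{dom}$ and $\on{cod}$ are ``determined by the presentation of $\phi$ as a composition''; but that is weaker than showing they are determined by $\phi$ itself. Two different SGFs can have literally the same underlying functor (e.g.\ $\id$ versus $\id_*\id^*$ after both trivializations happen to give identities, or more mundanely any two decorations that collapse to the same alternating reduction), and then the identity natural transformation on that functor would acquire different $\on{dom}$ and $\on{cod}$ depending on which SGF one declares at the outset. The paper's implicit resolution---visible in how $\SGNT$ is used throughout, e.g.\ in \ref{alternating unique}, \ref{SGNT0 roofs}, and the string-diagram formalism---is that an SGNT is not a bare natural transformation but an arrow in a category whose \emph{objects} are SGFs (decorated factorizations, per \ref{SGFs}); the source and target SGF are part of the datum, not recovered from it. Your inductive bookkeeping is exactly the construction of that category, so the argument is fine once you state the conclusion as ``$\SGNT$ is a well-defined class of arrows between SGFs'' rather than ``$\on{dom}$ and $\on{cod}$ are functions on a set of natural transformations.''
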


Another way of understanding this class is the following
characterization:~$\SGNT$ is the smallest category of natural
transformations of~SGFs containing all identity maps,~$\comp$
maps,~$\triv$ maps,~and their inverses, and that is closed under
horizontal and vertical composition and adjunction of~$^*$ and~$_*$.

The following type of~SGNT is of fundamental importance throughout the
paper; it arises seemingly of its own accord in a variety of
situations and is also essential in simplifying~SGFs to the point that
our main theorem is provable.

\begin{theorem}{defn}{commutative diagram morphism}
  Consider a commutative diagram
  \begin{equation}
    \label{eq:base change diagram}
    \includestandalone{img0}
  \end{equation}
  We define the associated \define{commutation morphism}
  \begin{equation}
    \label{eq:base change morphism} 
    \cd(f,g;\tilde{f},\tilde{g}) \colon g^* f_* \to \tilde{f}_* \tilde{g}^*
  \end{equation}
  to be the map corresponding by $g$-adjunction and $\tilde{g}$-
  reverse natural adjunction to the corner-swapping map
  \begin{equation}
    \label{eq:base change definition}
    f_* \tilde{g}_*
    \xrightarrow{\comp_*(\tilde{g},f)}
    (f\tilde{g})_*
    = (g\tilde{f}_*)
    \xrightarrow{\comp_*(\tilde{f},g)^{-1}}
    g_* \tilde{f}_* 
  \end{equation}
  See~\ref{eq:CD string diagram} for a visualization of this
  definition.
\end{theorem}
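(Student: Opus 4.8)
Strictly speaking this is a definition rather than a theorem, so the content of any ``proof'' is merely to check that the prescription is unambiguous and lands in the asserted Hom-set; the plan is to follow the corner-swapping transformation through the chain of natural bijections and confirm that the two manipulations are independent. First I would verify that the transformation of \ref{eq:base change definition} is legitimate: since the square commutes in~$\cat{Sp}$ we have $f\tilde{g} = g\tilde{f}$, hence $(f\tilde{g})_* = (g\tilde{f})_*$ on the nose, and the isomorphisms $\comp_*(\tilde{g},f)$ and $\comp_*(\tilde{f},g)$ (and the inverse of the latter) exist by the pseudofunctoriality of $f \mapsto f_*$ demanded of a geofibered category. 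Thus \ref{eq:base change definition} names a genuine natural transformation $\alpha\colon f_* \tilde{g}_* \to g_* \tilde{f}_*$, i.e.\ an element of $\on{Hom}(f_* \tilde{g}_*, g_* \tilde{f}_*)$.

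Next I would apply the two adjunction bijections in turn. Applying the $\tilde{g}$ reverse natural adjunction~\ref{eq:RNA map} with the free functors set to $f_*$ and $g_* \tilde{f}_*$ carries $\alpha$ to an element of $\on{Hom}(f_*,\, g_* \tilde{f}_* \tilde{g}^*)$; then applying the unit-counit form of the $(g^*,g_*)$ adjunction --- again a bijection $\on{Hom}(\sh{F}, g_*\sh{G}) \cong \on{Hom}(g^*\sh{F},\sh{G})$, induced by $\unit(g)$ and $\unit'(g)$ in the usual way --- with $\sh{F} = f_*$ and $\sh{G} = \tilde{f}_* \tilde{g}^*$ carries that to an element of $\on{Hom}(g^* f_*,\, \tilde{f}_* \tilde{g}^*)$, which is exactly the source and target claimed for $\cd(f,g;\tilde{f},\tilde{g})$. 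The first move only rewrites the codomain slot (appending $\tilde{g}^*$ while absorbing $\tilde{g}_*$ from the domain), whereas the second only rewrites the domain slot (trading a $g_*$ on the codomain for a $g^*$ on the domain), so by the interchange law the two operations act on disjoint ends of the notation and commute; hence ``corresponding by $g$-adjunction and $\tilde{g}$ reverse natural adjunction'' is order-independent and $\cd$ is well defined, and its naturality in the shape argument is automatic, being a composite of natural bijections applied to a natural transformation.

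The one place calling for care --- and what I would flag as the main obstacle to presenting this cleanly in prose --- is keeping straight which ``reverse'' form of each adjunction acts on which slot, since swapping $\unit(\tilde{g})$ for $\unit'(\tilde{g})$, or placing $\tilde{g}^*$ on the wrong side, yields a transformation of the wrong variance and silently breaks everything downstream. I would resolve this once and for all by adopting a uniform convention --- say, ``$h$-adjunction'' always denotes $\on{Hom}(\sh{F}, h_* \sh{G}) \cong \on{Hom}(h^* \sh{F}, \sh{G})$ and ``$h$-RNA'' always denotes~\ref{eq:RNA map} --- and then reading \ref{eq:base change morphism} off \ref{eq:base change definition} mechanically. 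Since this bookkeeping is precisely what string diagrams automate, in practice I would defer the full verification to the picture promised in~\ref{eq:CD string diagram} rather than belabor the Hom-set chase here, at which stage nothing is at stake beyond unwinding notation.
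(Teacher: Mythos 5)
This is a definition rather than a theorem, and your well-definedness check is exactly the content the paper supplies: the commutativity $f\tilde{g}=g\tilde{f}$ plus pseudofunctoriality make the corner-swapping map a genuine transformation $f_*\tilde{g}_*\to g_*\tilde{f}_*$, and applying the $\tilde{g}$-reverse natural adjunction and the $g$-adjunction yields the unit--$\comp$--counit composite that the paper records pictorially in~\ref{eq:CD string diagram}. Your unwinding matches that diagram, so the proposal is correct and takes essentially the same approach.
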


The commutation morphisms are too general to be entirely useful,
though it is helpful to retain the concept for manipulations when no
further hypotheses are needed.  Nonetheless, for our theorems such
hypotheses are needed.

\begin{theorem}{defn}{base change morphism}
  When~\ref{eq:base change diagram} is cartesian, we write~$\bc(f,g) =
  \cd(f,g;\tilde{f},\tilde{g})$ and use the aggregate notation, as
  before:
  \begin{equation}
    \label{eq:base change class}
    \bc = \{F\bc(f,g)G\}.
  \end{equation}
\end{theorem}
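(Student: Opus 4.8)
The assertion packaged in this definition is that the notation $\bc(f,g)$ is unambiguous: when \ref{eq:base change diagram} is cartesian, the commutation morphism $\cd(f,g;\tilde{f},\tilde{g})$ of \ref{commutative diagram morphism} depends, up to the canonical identification of its target, only on the cospan $X\xrightarrow{f}Y\xleftarrow{g}Y'$ and not on which fibered product completes it. The plan is to deduce this from a naturality property of $\cd$ itself, proved directly from the pseudofunctor coherences built into \ref{geofibered category}.

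First I would fix two cartesian completions --- one with fourth vertex $X'$ and maps $\tilde{f}\colon X'\to Y'$, $\tilde{g}\colon X'\to X$, the other with $X''$ and $\bar{f},\bar{g}$ --- and let $\theta\colon X'\isoarrow X''$ be the unique isomorphism with $\bar{f}\theta=\tilde{f}$ and $\bar{g}\theta=\tilde{g}$. The two targets are joined by the canonical isomorphism $c\colon\tilde{f}_*\tilde{g}^*\isoarrow\bar{f}_*\bar{g}^*$ assembled from $\comp_*(\theta,\bar{f})$, $\comp^*(\theta,\bar{g})$ and the unit $\unit(\theta)$ of the $(\theta^*,\theta_*)$-adjunction; this unit is invertible because $\theta$, being an isomorphism of spaces, makes $\theta^*$ an equivalence by pseudofunctoriality, so $(\theta^*,\theta_*)$ is an adjoint equivalence. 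Since any two cartesian completions of the cospan are joined by exactly one such $\theta$ respecting the projections, it suffices to prove $c\circ\cd(f,g;\tilde{f},\tilde{g})=\cd(f,g;\bar{f},\bar{g})$.

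To do this I would unwind $\cd$ through its definition in \ref{commutative diagram morphism}: it is the image of the corner-swapping map \ref{eq:base change definition} under $g$-adjunction followed by $\tilde{g}$- (resp.\ $\bar{g}$-) reverse natural adjunction \ref{eq:RNA map}. The $g$-adjunction is literally the same for both squares, so by naturality of its defining bijection it commutes with horizontal composition by $\theta_*$; the $\tilde{g}$- and $\bar{g}$-adjunctions are matched by $\comp^*(\theta,\bar{g})$ and the counit of $\theta$ by way of the composition--adjunction axiom \ref{eq:composition adjunction}. This reduces everything to a compatibility of the two corner-swapping maps, namely that $f_*\tilde{g}_*\to g_*\tilde{f}_*$ for $X'$ equals the corresponding map for $X''$ horizontally composed with $\theta_*$, modulo $\tilde{g}_*\cong\bar{g}_*\theta_*$ and $\tilde{f}_*\cong\bar{f}_*\theta_*$; that compatibility involves only composition isomorphisms and falls out of the associativity identity \ref{eq:compositions assoc} applied to the triple composites $f\bar{g}\theta=g\bar{f}\theta$.

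The main obstacle is the middle step: correctly matching the two reverse natural adjunctions across the $\theta$-twist --- verifying that the (co)unit of $\theta$ interacts with $\comp^*(\theta,\bar{g})$ exactly as \ref{eq:composition adjunction} and the triangle identities \ref{eq:adjoint identities} demand, with every composition isomorphism oriented the right way. This is a finite but error-prone diagram chase; I would carry it out in the string-diagram calculus introduced later in the paper, where $\cd(f,g;\tilde{f},\tilde{g})$ is the fixed picture of \ref{eq:CD string diagram}, $c$ inserts a cup-and-cap for the pair $(\theta^*,\theta_*)$, and the desired equality becomes the visibly local move that slides this cup-and-cap along the $\bar{g}$-strand and straightens it away --- a move licensed entirely by the pseudofunctor axioms already assumed.
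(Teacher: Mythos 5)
This item is a definition, not a theorem: the paper attaches no proof to it, and the only content is the notational convention $\bc(f,g)=\cd(f,g;\tilde f,\tilde g)$ together with the class $\bc=\{F\bc(f,g)G\}$. The well-definedness question you set out to answer --- independence of the choice of cartesian square completing the cospan --- is one the paper never poses, because it resolves the ambiguity by fiat: \ref{geofibered category} posits that $\cat{Sp}$ has all finite fibered products, and the paper consistently speaks of \emph{the} fibered product (cf.\ \ref{roof}, and the remark in the proof of \ref{SGNT CD ordering} that $\tilde g$ ``may be taken to be the base change of $g$''), so $\tilde f$ and $\tilde g$ are determined by $(f,g)$ once that global choice is fixed. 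Note also that without such a choice your statement cannot even be an equality of SGNTs on the nose: by \ref{SGFs} an SGF remembers its terms, so $\tilde f_*\tilde g^*$ and $\bar f_*\bar g^*$ are distinct SGFs and the comparison isomorphism $c$ you build is unavoidable.

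As a piece of mathematics your coherence claim is the right one and your outline is the standard route: the unique isomorphism $\theta$ between the two completions, the comparison $c$ built from $\comp_*(\theta,\bar f)$, $\comp^*(\theta,\bar g)$ and the invertible $\unit(\theta)$, and a reduction through the adjunction descriptions of $\cd$ to the associativity constraint \ref{eq:compositions assoc}. But be aware that you have only sketched it; the step you yourself flag as the ``main obstacle'' (matching the two reverse natural adjunctions across the $\theta$-twist) is exactly where all the work lives, and ``slides and straightens away'' is an appeal to a topological-invariance principle that the paper explicitly declines to prove in general (see the discussion preceding \ref{diagrams translation}). If you want to carry it out, the identities you need are precisely \ref{fig:adjunction-composition 1}, \ref{fig:adjunction-composition 2} and \ref{fig:compositions (nontrivial)}, which are the paper's licensed local moves. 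In short: correct instinct, legitimate (stronger) statement, but it proves something the paper handles by convention rather than by argument, and the proof itself is not yet complete.
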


\subsubsection*{Invertible base changes.}
In this subsection and the next, we introduce some further conditions
on geofibered categories that support a larger class of natural
transformations to which to apply our theorems.  We include the proofs
of a few of their simplest properties, some of which are necessary for
the definitions to make sense and others are simply most appropriate
when placed here.

We begin with an abstract structure on geofibered categories inspired
by \ref{base change morphism}.

\begin{theorem}{defn}{geolocalizing}
  Let $\cat{Sh} \to \cat{Sp}$ be a geofibered category.  We will call
  a class~$P$ of morphisms in~$\cat{Sp}$ \define{push-geolocalizing}
  if:
  \begin{itemize}
  \item It contains every isomorphism and is closed under composition.
  \item For each~$f \in P$ and~$g \in \cat{Sp}$, the base
    change~$\bc(f,g)$ is an isomorphism and~$\tilde{f} \in P$.
  \end{itemize}
  Similarly, we define~$P$ to be \define{pull-geolocalizing} by
  swapping the roles of~$f$ and~$g$. We say that a geofibered category
  is given a~\define{geolocalizing} structure if it is equipped with a
  pair of push-~and pull-geolocalizing classes in~$\cat{Sp}$.
\end{theorem}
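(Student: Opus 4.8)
Since \ref{geolocalizing} is a definition, what there is to verify is that it is well-posed and that the notion is not empty; I would also take the opportunity, as announced above, to record the first compatibility the later constructions will lean on, and I would do these three things in that order. For well-posedness: given $f \colon X \to Y$ in a class $P$ and an arbitrary $g \colon Y' \to Y$, the morphism $\tilde f$ in the second bullet is the projection $X \times_Y Y' \to Y'$, which exists because $\cat{Sp}$ has all finite fibered products by \ref{geofibered category}. Two fibered products of the same cospan are related by a unique isomorphism over $Y$ compatible with both projections, so $\tilde f$ is determined up to a unique isomorphism of $\cat{Sp}$; since $P$ contains every isomorphism and is closed under composition, the condition ``$\tilde f \in P$'' does not depend on that choice. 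Likewise $\bc(f,g) = \cd(f,g;\tilde f,\tilde g)$ is assembled from the composition isomorphisms $\comp_*$ via \ref{eq:base change definition} and then transported by the $g$-adjunction and $\tilde g$-reverse natural adjunction of \ref{commutative diagram morphism}, so ``$\bc(f,g)$ is an isomorphism'' is unambiguous as well. The pull-geolocalizing clause is the same statement with the roles of $f$ and $g$ exchanged, reflecting the symmetry of the corner-swapping map \ref{eq:base change definition} under interchanging $(\tilde g, f)$ with $(\tilde f, g)$.

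For non-vacuity I would check that the class of all isomorphisms of $\cat{Sp}$ is both push- and pull-geolocalizing, so that every geofibered category admits a minimal geolocalizing structure. Closure under composition and containment of the isomorphisms are immediate; the base change of an isomorphism along any morphism is an isomorphism in any category in which the pullback exists; and when $f$ is an isomorphism so is $\tilde f$, with $\tilde g = f^{-1} g \tilde f$, after which \ref{eq:base change definition} unwinds at once to an isomorphism. Nontrivial geolocalizing classes --- flat morphisms, open immersions, and the like --- appear in the concrete examples of \ref{sec:main thms} and need nothing here.

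Finally, the compatibility the later material will actually use is that, for $f \in P$, the base changes paste coherently: for composable $Z \xrightarrow{g'} Y' \xrightarrow{g} Y$ with $f \in P$ over $Y$, the map $\bc(f, gg')$ agrees --- modulo the composition isomorphisms $\comp_*$, $\comp^*$ and the canonical isomorphism $X \times_Y Z \cong (X \times_Y Y') \times_{Y'} Z$ --- with the horizontal composite of $\bc(\tilde f, g')$ and $\bc(f, g)$ across the two cartesian squares, which is what licenses iterating the second bullet of \ref{geolocalizing} unambiguously. The plan is to transport both sides along the $g$-adjunction and the $\tilde g$-reverse natural adjunction of \ref{commutative diagram morphism}, turning the desired identity into one about the corner-swapping maps, where it reduces to the associativity relation \ref{eq:compositions assoc} for $\comp_*$ together with the compatibility of the pasting isomorphism with all projections. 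The one real obstacle is bookkeeping --- keeping the several instances of \ref{eq:RNA map}, the counit identity \ref{eq:right adjoint identity}, and $\comp_*$ correctly threaded through the transport --- which is exactly what the string-diagram calculus of \ref{s:string diagrams} is designed to make routine, so I would run the computation there rather than by hand.
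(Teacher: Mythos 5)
This is a definition, so the paper offers no proof; the only substantive assertion attached to it is the remark immediately following, that the class of isomorphisms of $\cat{Sp}$ gives a ``trivial'' geolocalizing structure, which the paper states without argument. Your well-posedness check (that $\tilde f$ is determined only up to unique isomorphism, and that membership $\tilde f \in P$ is insensitive to this because $P$ contains isomorphisms and is closed under composition) and your verification of the trivial structure are both correct and are, if anything, more careful than the paper --- though note that the claim that $\bc(f,g)$ is an isomorphism when $f$ is an isomorphism does deserve the one-line argument that $f^*$ is then an equivalence with quasi-inverse $(f^{-1})^*$, so $f_* \cong (f^{-1})^*$ and the corner-swapping map of \ref{eq:base change definition} transports to an isomorphism; ``unwinds at once'' is slightly optimistic. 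Your third item, the pasting compatibility of base changes over a composite $gg'$, is not part of this definition at all: it is essentially \ref{commutation relations}\ref{en:CD-comp commutation}, which the paper proves later by exactly the string-diagram computation you defer to, so you have correctly anticipated where that work lives but should not fold it into the definition itself.
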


We chose the word ``localizing'' in deference to the convention that
to invert morphisms of a category is to localize it; the full term
``geolocalizing'' is in part to complement ``geofibered'', and in part
to indicate that it is not the geolocalizing morphisms themselves that
are inverted.

We note that, by definition, every geofibered category has a
``trivial'' geolocalizing structure consisting of just the
isomorphisms of~$\cat{Sp}$.

This concept matched by that of a ``good''~SGF or~SGNT, based on the
following concept.  Its uniqueness claim will be proven later, to keep
this section efficient.

\begin{theorem}{defn}{alternating}
  We say that an SGF~$F$ is \define{alternating} if it is of the
  form~$(f_*) g^* h_* \cdots$ ($f$~optional), where none of~$f$, $g$,
  $h$,~$\dots$ is the identity map. The \define{alternating reduction}
  of~$F$ is the unique (by \ref{alternating unique}) alternating
  SGF~$F'$ admitting an~SGNT in~$\genby{\comp_0, \triv_0}$ (thus, an
  isomorphism in~$\cSGNT$)~$F \to F'$.
\end{theorem}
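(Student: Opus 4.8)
The plan is to view an SGF as a finite word in the alphabet of basic SGFs --- equivalently, as a zigzag $X_0,X_1,\dots,X_n$ of morphisms of $\cat{Sp}$ in which each edge is tagged as a pushforward or a pullback --- and to recognize $\genby{\comp_0,\triv_0}$, acting on such words, as a terminating and confluent rewriting system whose irreducible words are exactly the alternating ones. First I would note that every generator is a single contracting move: $F\comp_*(f,g)G$ replaces an adjacent pair of pushforwards $\cdots g_* f_*\cdots$ by $\cdots(gf)_*\cdots$, the map $F\comp^*(f,g)G$ replaces an adjacent pair of pullbacks $\cdots f^* g^*\cdots$ by $\cdots(gf)^*\cdots$, and a $\triv_*$ (resp.\ $\triv^*$) generator deletes an occurrence of $\id_*$ (resp.\ $\id^*$) --- each strictly shortening the word. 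A word admits some move precisely when it contains an $\id$ factor or two adjacent factors of equal variance, so ``no move applies'' means exactly ``alternating'' in the sense of \ref{alternating}, the empty word and the one-letter words $f_*$ and $g^*$ being the degenerate alternating cases (while $\id_*$ and $\id^*$ are \emph{not} alternating, consistently with the reduction continuing). Existence of the alternating reduction is then immediate: apply moves until stuck; the accumulated vertical composite is an SGNT $F\to F'$ in $\genby{\comp_0,\triv_0}$, and it is an isomorphism in $\cSGNT$ since every $\comp$ and $\triv$ map is.

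For uniqueness it suffices to prove the system confluent: any element of $\genby{\comp_0,\triv_0}$ out of $F$ decomposes into single contracting moves, so its target is reachable from $F$ by the rewriting, and confluence then forces an alternating such target to be the unique normal form. By Newman's lemma, termination reduces confluence to local confluence, and the only overlapping redexes to examine are: two $\comp_*$-moves (resp.\ two $\comp^*$-moves) sharing a letter inside a length-three run $h_* g_* f_*$ (resp.\ $f^* g^* h^*$), whose two contractions both land on $(hgf)_*$ (resp.\ $(hgf)^*$) because composition in $\cat{Sp}$ is associative; and a $\comp_*$-move overlapping a $\triv_*$-move at $\id_* f_*$ or $f_*\id_*$, where contraction and deletion both give $f_*$ because $\id f=f=f\id$ in $\cat{Sp}$. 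There are no mixed $*/{}_*$ overlaps, a $\comp_*$ redex being all-${}_*$, a $\comp^*$ redex all-$*$, and each $\triv$ redex a single letter. This gives local confluence, hence confluence, hence uniqueness of $F'$ --- using only associativity and unitality of $\cat{Sp}$. The pseudofunctor coherence constraints \ref{eq:compositions assoc} and \ref{eq:trivializations comp} (with their pullback analogues) enter only if one wants the reducing SGNT itself to be unique: they say precisely that the two diamonds above commute as equalities of natural transformations, whence the same argument determines the map $F\to F'$.

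The step I expect to take the most care is the bookkeeping around iterated $\triv$-deletions: erasing an $\id$ factor can bring two formerly separated same-variance runs into contact, so the reduction genuinely cascades and there is no honest one-pass formula for $F'$ --- which is exactly why confluence, rather than an explicit normal form, is the right instrument. A lesser but unavoidable chore is checking that a word in valid SGF form always has its adjacent same-variance factors genuinely composable as morphisms of $\cat{Sp}$ (so that the $\comp$-moves are defined), and matching the combinatorial notion ``irreducible'' with the stated definition of ``alternating'' across all the degenerate cases.
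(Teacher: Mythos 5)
Your argument is correct, but it reaches the conclusion by a genuinely different route than the paper does. The paper proves the uniqueness claim as \ref{alternating unique} via the ``staging'' machinery of \ref{staged morphism}: it fixes one canonical normalization strategy (alternately compose all composable pairs, then delete all trivial terms, with an explicit stage-counting bookkeeping), shows that every morphism in $\genby{\comp_0,\triv_0}$ can be massaged into such a staging (\ref{staging induct}), and that a complete staging is determined by its source (\ref{complete staging determined}) --- in effect a confluence proof by strategy normalization. You instead use the standard abstract-rewriting package: termination because every generator strictly shortens the word, local confluence by a finite critical-pair check, and Newman's lemma; this is shorter, more modular, and makes transparent exactly which axiom resolves which overlap (associativity and unitality of $\cat{Sp}$ at the level of words, \ref{eq:compositions assoc} and \ref{eq:trivializations comp} at the level of transformations). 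The one place you are breezy is the upgrade from uniqueness of the normal form to uniqueness of the normalizing SGNT: Newman's lemma as usually stated does not give the latter, and the ``coherent'' variant you need requires that \emph{every} local branching --- not only the overlapping critical pairs you list, but also pairs of moves with disjoint supports, which commute as natural transformations only by the interchange law (\ref{horizontal composition verify} in the paper) --- be joinable by \emph{equal} composite transformations; the Noetherian induction on word length then goes through. Since all branchings here are coherently joinable, this is routine, but it should be stated. Both routes prove the same proposition; yours generalizes immediately to any generating set of contracting moves with coherent critical pairs, while the paper's yields an explicit canonical factorization of the reduction that it can point to later.
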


\begin{theorem}{defn}{good}
  Let $\cat{Sh} \to \cat{Sp}$ be a geolocalizing geofibered category,
  and let~$F$ be an alternating~SGF; we say that it is \emph{good} if,
  either: every basic~SGF in $F$ of the form~$f^*$ is
  pull-geolocalizing; or, every basic~SGF in $F$ of the form~$f_*$ is
  push-geolocalizing.  For any~SGF, we say that it is good if it has a
  good alternating reduction.  We say that an~SGNT $\phi \colon F \to
  G$ is good if both $F$~and~$G$ are good.
\end{theorem}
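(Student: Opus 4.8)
The content that actually needs proof at this point is the existence and uniqueness of the alternating reduction invoked in \ref{alternating} (and hence used again in \ref{good}), and the plan is to recast it as a confluence statement for an obvious rewriting system. By \ref{SGFs}, an SGF from $X$ to $Y$ is literally a finite ``zigzag'' $X = X_0, X_1, \dots, X_n = Y$ of spaces joined by morphisms of $\cat{Sp}$, each arrow carrying a direction and labelled either as a pushforward ($g_*$) or as a pullback ($g^*$). In these terms the basic transformations in $\comp_0$ perform a \emph{composition move} --- replace two consecutive arrows with the same label by their composite, bearing that same label --- and those in $\triv_0$ perform a \emph{trivialization move} --- delete an identity arrow. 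An alternating SGF is precisely a zigzag in \emph{reduced form}: no identity arrow, and no two consecutive arrows of the same label, i.e.\ no move applies to it. So the statement to prove becomes: every zigzag rewrites, by composition and trivialization moves, to a reduced one, and the reduced result is independent of the choices made along the way.

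For existence I would induct on the number of arrows: if $F$ is not reduced then some move applies and strictly decreases that number, so iterating halts at a reduced $F'$, and the vertical composite of the corresponding $\comp_*$, $\comp^*$, $\triv_*$ and $\triv^*$ transformations is an SGNT $F \to F'$ in $\genby{\comp_0,\triv_0}$; it is an isomorphism since every factor is. Uniqueness is where the work lies, and I would obtain it from \emph{confluence} of the rewriting system. The reduction obviously terminates, so by Newman's lemma it suffices to check \emph{local} confluence: whenever two moves apply to one zigzag, the two outcomes can be rejoined by further moves. The overlap patterns are few --- two composition moves at overlapping sites force three consecutive equally-labelled arrows and either order produces their common composite (associativity in $\cat{Sp}$); a composition move overlapping a trivialization move happens exactly when one of the arrows being composed is an identity, and either choice leaves the other arrow in place (the identity laws of $\cat{Sp}$); and disjoint moves, as well as nested trivialization moves, commute immediately. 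Given local confluence, the reduction has a unique normal form; since any reduced SGF receiving a transformation in $\genby{\comp_0,\triv_0}$ from $F$ is obtained from $F$ by a sequence of such moves and is therefore a normal form of $F$, any two of them coincide, which is the required uniqueness.

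The one genuinely delicate point --- the place the argument would break if the moves were set up carelessly --- is the behaviour of identity arrows inside the local-confluence diagrams: an arrow $\id_{X_i}$ is composable with a neighbour on \emph{either} side, and merging a pair of mutually inverse isomorphisms manufactures a fresh identity arrow that is itself then a trivialization redex, so each critical pair must be chased through these secondary redexes. I expect this bookkeeping, rather than any conceptual difficulty, to be the main obstacle. Finally I would note that what this gives is uniqueness of the alternating SGF $F'$, which is all that \ref{alternating} and \ref{good} require; uniqueness of the transformation $F \to F'$ itself is a strictly finer coherence claim that I would defer to the later results, where the pseudofunctor compatibilities \ref{eq:compositions assoc} and \ref{eq:trivializations comp} finally enter.
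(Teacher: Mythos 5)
You have correctly isolated the only mathematical content hiding in this definition, namely the well-definedness of the alternating reduction invoked through \ref{alternating}, and your confluence argument is sound for that purpose: the moves strictly decrease the number of arrows, the critical pairs are exactly the ones you list, and Newman's lemma then gives a unique normal form, which is exactly what ``the unique alternating SGF~$F'$ admitting an~SGNT in $\genby{\comp_0,\triv_0}$'' requires. However, this is a genuinely different route from the paper's. The paper proves \ref{alternating unique} by equipping every morphism of $\genby{\comp_0,\triv_0}$ with a ``staging structure'' (\ref{staged morphism} and the string of lemmas through \ref{staging induct}), an explicit layer-by-layer normal form for the \emph{morphism} rather than just for its target; the payoff is that \ref{alternating unique} asserts uniqueness of the reducing map $F \to F'$ itself, not merely of~$F'$. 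That stronger statement is precisely what you defer, and it is not optional later on: it is the substance of \ref{basic theorem}(1) and \ref{big alternating unique}, and it is reused inside the roof-uniqueness argument. Your approach buys a much shorter and more standard proof of what \ref{good} and \ref{alternating} literally need (and your identification of the inverse-pair/fresh-identity issue as the only delicate critical pair is accurate); the paper's staging machinery buys the map-level coherence in the same stroke. If you wanted to push your method to cover that too, you would have to upgrade local confluence of targets to commutativity of the corresponding squares of natural transformations --- which is exactly where \ref{eq:compositions assoc}, \ref{eq:trivializations comp}, and \ref{horizontal composition verify} enter, and is morally what the staging argument is organizing.
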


\begin{theorem}{lem}{good persistence}
  If~$F$ is good, then for any~SGNT $\phi \colon F \to G$ with $\phi
  \in \comp \cup \triv$, we have~$G$ good as well.  If~$F$ is
  alternating, the same is true for $\phi \in \bc$.
\end{theorem}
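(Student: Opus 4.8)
The plan is to analyze how the three kinds of moves — $\comp$, $\triv$, and $\bc$ — act on the alternating reduction of $F$, and to check that goodness survives each one. By \ref{good}, goodness of $F$ means that its alternating reduction $F'$ has either all its $^*$-factors pull-geolocalizing or all its $_*$-factors push-geolocalizing; call these the two \emph{cases}. Fix one, say the first. First I would observe that the real content is about how $G'$ relates to $F'$: for $\phi\in\comp\cup\triv$, the composite $F\to G$ is an isomorphism in $\cSGNT$ lying in $\genby{\comp_0,\triv_0}$ (after absorbing the relevant $\comp_0^{\pm1}$ or $\triv_0^{\pm1}$ generator), so $F$ and $G$ have the same alternating reduction — $F'=G'$ — and goodness of $G$ is immediate from goodness of $F$ by definition. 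This handles the first sentence essentially formally, once one notes that $\comp$ and $\triv$ maps, and their inverses, all belong to $\genby{\comp_0,\triv_0}$ and hence factor through the common alternating reduction.

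The second sentence, concerning $\phi\in\bc$ with $F$ \emph{alternating}, is where the actual work lies. Here $F$ is already its own alternating reduction, and $\phi=H\,\bc(f,g)\,K$ replaces a segment $g^* f_*$ inside $F$ by $\tilde f_*\,\tilde g^*$, where the square is cartesian. The new functor $G=H\tilde f_*\tilde g^* K$ need not be alternating — e.g.\ $\tilde g^*$ may collide with an adjacent $^*$ in $K$, or $\tilde f_*$ with an adjacent $_*$ in $H$ — so I must pass to its alternating reduction $G'$ and check goodness there. The key step is: in the geolocalizing structure, if $f$ is push-geolocalizing then so is $\tilde f$ (by \ref{geolocalizing}), and symmetrically for the pull class; moreover isomorphisms lie in both classes and both classes are closed under composition, so when two $_*$-factors (resp.\ $^*$-factors) merge via $\comp_*$ (resp.\ $\comp^*$) the merged factor stays in the class, and when a $\triv$ deletes an identity factor nothing is lost. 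So in the first case (all $^*$-factors of $F$ pull-geolocalizing), I need $\tilde g$ pull-geolocalizing: but that is \emph{not} automatic from \ref{geolocalizing} applied to $\bc(f,g)$ — that definition gives $\tilde f\in P_{\mathrm{push}}$ from $f\in P_{\mathrm{push}}$. The resolution is that in the first case every $^*$-factor of $F$, including $g^*$, is pull-geolocalizing; and the \emph{pull}-geolocalizing condition applied to $g$ (in the role played by "$f$" in the symmetric half of \ref{geolocalizing}) gives exactly that $\tilde g\in P_{\mathrm{pull}}$. So it is the hypothesis on $g$, not on $f$, that feeds the surviving $^*$-factor; and the new $_*$-factor $\tilde f_*$ is irrelevant in this case. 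In the second case the roles of the two classes and of $f,g$ swap.

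I expect the main obstacle to be bookkeeping: verifying that passing from $G$ to its alternating reduction $G'$ — a process built from $\comp_0,\triv_0$ moves on the merging boundaries where $\tilde f_*$ meets $H$ and $\tilde g^*$ meets $K$ — only ever composes or deletes factors and never introduces a factor of the "wrong" variance, so that the case-hypothesis (uniform pull-geolocalizing $^*$-factors, or uniform push-geolocalizing $_*$-factors) is preserved verbatim. This is routine given closure under composition and the fact that isomorphisms belong to both classes, but it requires care about which variance of factor can possibly be created at each merge. Once that is in hand, the lemma follows by combining: $F'=G'$ for $\comp\cup\triv$; and for $\bc$, $G'$ inherits the case-hypothesis from $F$ together with the geolocalizing stability of $\tilde f,\tilde g$.
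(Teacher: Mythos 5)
Your proposal is correct and follows essentially the same route as the paper: for $\phi\in\comp\cup\triv$ you identify the alternating reductions $F'=G'$ via uniqueness (absorbing $\phi$ or $\phi^{-1}$ into the reduction), and for $\phi\in\bc$ you use stability of the geolocalizing classes under base change together with closure under composition and containment of isomorphisms to push the goodness conditions through to the alternating reduction of $G$. Your extra care about which hypothesis ($f$ push-geolocalizing versus $g$ pull-geolocalizing) feeds which surviving factor is a correct unpacking of the paper's terser phrase that ``the hypotheses imposed by goodness are stable under base change.''
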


\begin{proof}
  Let $\alpha \colon F \to F'$ and $\beta \colon G \to G'$ be the
  alternating reductions of $F$~and~$G$.  If $\phi \in \comp_0 \cup
  \triv_0$, then~$\beta\phi$ is also an alternating reduction of~$F$,
  and thus by \ref{alternating} we have $F' = G'$; since~$F$ (and
  thus~$F'$) is good by hypothesis,~$G'$ (and thus~$G$) is good.
  Likewise, if $\phi \in \comp_0^{-1} \cup \triv_0^{-1}$, then~$\alpha
  \phi^{-1}$ is an alternating reduction of~$G$ and so we have $G' =
  F'$ and thus, again, that~$G$ is good.

  Finally, for $\phi \in \bc$, it is clear first of all that for a map
  $\bc(f,g) \colon g^* f_* \to \tilde{f}_* \tilde{g}^*$ itself, the
  right-hand side is good if the left is, because the hypotheses
  imposed by goodness on the maps of spaces are stable under base
  change.  For a more general element $A \bc(f,g) B$ of~$\bc$, if $F =
  A g^* f_* B$ is good and alternating then $G = A \tilde{f}_*
  \tilde{g}^* B$ satisfies the conditions given in \ref{good} (but is
  not alternating); since the (push-, pull-) geolocalizing morphisms
  are closed under composition and contain the identity, these
  conditions are preserved by passage through any element
  of~$\genby{\comp_0, \triv_0}$, so the same conditions are satisfied
  by the alternating reduction~$G'$; i.e.~$G'$, hence~$G$, is good.
\end{proof}

This definition is complemented and completed by the following
concept, upon which all significant results of this paper are
ultimately based.

\begin{theorem}{defn}{roof}
  Let $\cat{Sh} \to \cat{Sp}$ be a geofibered category and let~$F$ be
  any~SGF, viewed as a diagram of morphisms in~$\cat{Sp}$.  We define
  the \emph{roof} of~$F$, denoted~$\roof(F)$, by constructing the
  final object in the category of spaces with maps to this diagram,
  which exists and is unique by the universal property of the fibered
  product.

  Then the roof is the space thus defined, also denoted~$\roof(F)$,
  together with its ``projection'' morphisms
  \begin{align}
    \label{eq:roof projections}
    a_F \colon \roof(F) \to \target(F) &&
    b_F \colon \roof(F) \to \source(F).
  \end{align}
  We also define~$\roof(F)$ as an~SGF to be the functor~$a_{F{*}}
  b_F^*$ (resp.~$a_{F{*}}$ if $b_F = \id$, resp.~$b_F^*$ if $a_F =
  \id$, resp.~$\id$),~and in~\ref{roof morphism} we will construct a
  canonical~SGNT $F \to \roof(F)$ that we will also call~$\roof(F)$.
  We will make an effort to eliminate ambiguity.
\end{theorem}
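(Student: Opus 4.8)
The plan is to unwind the definition. The ``category of spaces with maps to this diagram'' is nothing but the category of cones over $F$, where $F$ is viewed as a diagram in $\cat{Sp}$ indexed by a finite, topologically linear directed graph (a path), and ``the final object'' of that category is precisely the limit of this diagram. A limit, when it exists, is unique up to a unique isomorphism; this settles the uniqueness assertion at once, and the same canonical isomorphism shows that the projections $a_F$, $b_F$ --- and hence the functor $a_{F{*}}b_{F}^{*}$ --- are well defined up to canonical isomorphism, which is all that the constructions to come will need. So the only real content is existence, and the one point that deserves attention is that \ref{geofibered category} supplies merely \emph{finite fibered products} in $\cat{Sp}$, not all finite limits; I would therefore check directly that iterated fibered products suffice to build the limit of a path-shaped diagram (they would \emph{not} suffice for a general finite connected diagram, which would also need equalizers, but a path has no parallel arrows).

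The existence proof is an induction on the number of edges of the path. With no edges, $F$ is a single object $X$ and its limit is $X$ with the identity cone. For the inductive step, let $X_{n}$ be a leaf of the path, so that deleting it leaves the sub-path on $X_{0},\dots,X_{n-1}$, whose limit $L$ with projections $\ell_{i}\colon L\to X_{i}$ exists by the inductive hypothesis; let the unique edge at $X_{n}$ join it to $X_{n-1}$. If that edge is oriented $X_{n}\to X_{n-1}$, then a cone over $F$ with apex $S$ is the same as a pair of maps $u\colon S\to L$ and $s\colon S\to X_{n}$ satisfying $(X_{n}\to X_{n-1})\circ s=\ell_{n-1}\circ u$, so the limit is the fibered product $L\times_{X_{n-1}}X_{n}$ formed from $\ell_{n-1}$ and $X_{n}\to X_{n-1}$, which exists by the first clause of \ref{geofibered category}. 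If instead the edge is oriented $X_{n-1}\to X_{n}$, then the $X_{n}$-component $s$ of any cone is forced to equal $(X_{n-1}\to X_{n})\circ\ell_{n-1}\circ u$, so the limit is again $L$, now carrying the additional projection $(X_{n-1}\to X_{n})\circ\ell_{n-1}\colon L\to X_{n}$. Either way the limit exists, completing the induction; the maps $a_{F}$ and $b_{F}$ of the statement are then its components at the two ends of the path, namely $\target(F)$ and $\source(F)$ in the ordering recorded as part of the SGF data.

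I expect the main obstacle, such as it is, to be the bookkeeping in this induction --- especially the second case, in which attaching an ``outward-pointing'' leaf contributes no new fibered product, so that the construction never leaves the class of iterated fibered products guaranteed by the axioms. Two harmless points should be noted in passing: the graph underlying an SGF may repeat object-labels (as for $f_{*}f^{*}$) or contain identity morphisms, and neither obstructs the fibered products above, since those are purely combinatorial constructions in $\cat{Sp}$. Finally I would record that $\roof(F)$, viewed as an SGF, is $a_{F{*}}b_{F}^{*}$ (with the degenerate conventions of the statement when $a_{F}$ or $b_{F}$ is the identity), that this is a functor $\cat{Sh}_{\source(F)}\to\cat{Sh}_{\target(F)}$ just like $F$ itself, and that this is exactly the source and target needed for the canonical SGNT $F\to\roof(F)$ to be constructed in \ref{roof morphism}.
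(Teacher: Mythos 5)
Your argument is correct and matches the construction the paper intends: the statement is a definition whose only implicit proof obligation is the existence and uniqueness of the limit of a path-shaped diagram of spaces, which the paper simply asserts to follow from the universal property of the fibered product (and which its appendix depicts as an iterated fibered product over the zigzag). Your induction on leaves --- in particular the observation that a path has no parallel arrows, so that no equalizers are needed beyond the finite fibered products guaranteed by the first axiom of a geofibered category, and that an outward-pointing leaf contributes no new fibered product at all --- supplies exactly the detail the paper leaves implicit.
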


Just as for goodness, the concept of a roof comes associated with a
natural morphism.  We state this proposition here and prove it as
Propositions~\plainref{roof morphism uniqueness} and~\plainref{roof
  morphism construction}.

\begin{theorem}{prop}{roof morphism}
  For any SGF~$F$, its roof is the unique SGF of the form~$a_* b^*$
  admitting admitting a map $\roof(F) \colon F \to \roof(F)$
  in~$\SGNT_0^+$.  This map exists and is uniquely determined by the
  properties that it factors through the alternating reduction of~$F$
  and, if~$F$ is alternating, through an element of~$\bc$.
\end{theorem}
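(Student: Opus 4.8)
The plan is to prove the two assertions separately, as the later Propositions~\plainref{roof morphism uniqueness} and~\plainref{roof morphism construction} do: existence of an SGNT $\roof(F)\colon F\to\roof(F)$ in $\SGNT_0^+$ with the stated factorizations, and its uniqueness once those factorizations are required.

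For the construction I would first reduce to the case that $F$ is alternating. Let $\alpha\colon F\to F'$ be the alternating reduction; it lies in $\genby{\comp_0,\triv_0}$, hence in $\SGNT_0^+$, and the passage from the diagram of $F$ to that of $F'$ only composes consecutive morphisms and removes identities, neither of which changes the limit, so $\roof(F')=\roof(F)$ as a span and it suffices to build $\roof(F')\colon F'\to\roof(F')$ and set $\roof(F)=\roof(F')\circ\alpha$. For alternating $F$, induct on the number of basic functors. If $F$ is $\id$, $f_*$, or $f^*$, it already has the form $a_*b^*$ (with $a$ or $b$ an identity), equals $\roof(F)$, and the morphism is the identity. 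Otherwise peel off the leftmost basic functor, writing $F=g_*\circ F''$ or $F=g^*\circ F''$ with $F''$ alternating and one shorter, and let $\rho''\colon F''\to p_*q^*=\roof(F'')$ be supplied by induction, with $p\colon R''\to\target(F'')$ and $q\colon R''\to\source(F'')$. In the first case $g_*p_*$ lies over a path $R''\xrightarrow{p}\target(F'')\xrightarrow{g}\target(F)$ in $\cat{Sp}$, so no base change is needed: compose $g_*\rho''$ with $\comp_*(p,g)$, landing in $(gp)_*q^*$. In the second case $g^*p_*$ lies over a cospan $R''\xrightarrow{p}\target(F'')\xleftarrow{g}\target(F)$: form the cartesian square, with $\widetilde p$ and $\widetilde g$ the base changes of $p$ and $g$, then compose $g^*\rho''$ with $\bc(p,g)$ and a composition isomorphism $\comp^*$, landing in $\widetilde p_*(q\widetilde g)^*$. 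Either way, the universal property of the (iterated) fibered product identifies the target with $\roof(F)$ and the two resulting morphisms of spaces with its projections $a_F$ and $b_F$. The morphism thus built is assembled entirely from $\comp_0$-, $\triv_0$- and $\bc$-maps, so it lies in $\SGNT_0^+$; it factors through $\alpha$ by construction; and when $F$ is alternating with a cospan to contract it factors through an element of $\bc$ (a cospan-free alternating $F$ is already its own roof, the factoring being through the base change along a trivial square).

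For uniqueness, the target is forced first: the construction exhibits $a_*b^*$ as an iterated fibered product of the diagram of $F$, which the universal property determines independently of all choices, so $a_*b^*=\roof(F)$. The content is therefore that the \emph{morphism} does not depend on the chosen presentation of $\alpha$ inside $\genby{\comp_0,\triv_0}$, nor on the order in which the cospans (``valleys'') of $F$ are contracted. The former is Mac~Lane-style coherence for the pseudofunctor data---associativity~\ref{eq:compositions assoc} of $\comp$ and its compatibility~\ref{eq:trivializations comp} with $\triv$---which makes $\genby{\comp_0,\triv_0}$ thin. For the latter I would establish two compatibilities of the base-change maps: that base change distributes over composition in $\cat{Sp}$, so that contracting a cospan agrees with contracting one leg and then the other---this unwinds through the definition~\ref{eq:base change definition} of $\bc$ to the associativity of $\comp_*$, the adjunction identities~\ref{eq:adjoint identities}, and the reverse natural adjunction~\ref{eq:RNA map}---and that base changes at two disjoint valleys commute, a diagram chase in the same data. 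Granting these, any two runs of the recursion are linked by such moves, so the morphism is well defined; and any element of $\SGNT_0^+$ satisfying the stipulated factorizations reduces, after applying the two compatibilities, to such a run, so it is unique.

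The main obstacle is the uniqueness half, and within it the first base-change compatibility: that base change distributes over composition of morphisms of spaces. The rest is either formal bookkeeping with limits or a standard coherence fact, whereas this is a genuine computation involving several adjunctions at once, of exactly the kind deferred to Sections~\plainref{sec:uniqueness} and~\plainref{sec:simplify}; I would carry it out in the string-diagram calculus of~\plainref{s:string diagrams} to keep the adjunctions under control.
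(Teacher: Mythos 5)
Your existence half is sound and is essentially a left-to-right variant of the paper's recursion: \ref{roof morphism construction} contracts one valley at a time and then re-alternates, whereas you peel one basic functor at a time, but either induction yields an element of~$\SGNT_0^+$ that factors through the alternating reduction and whose target is the iterated fibered product. The real divergence, and the gap, is in the uniqueness half. What the paper proves in \ref{roof morphism uniqueness} --- and what \ref{roof isomorphism}, \ref{SGNT0 roofs}, and \ref{counits auxiliary} all rely on --- is that the roof morphism is the unique map $F \to a_* b^*$ in \emph{all} of~$\SGNT_0^+$, and that no other SGF of the form~$a_* b^*$ receives such a map. Your argument only shows that your own recursion is confluent and that maps already known to factor through the alternating reduction and then through base changes agree; it never engages with an arbitrary word in $\bc$, $\comp_0$, $\triv_0$. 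To do that one needs the normal form $\genby{\triv_0}\genby{\comp_0}\genby{\bc}$ of \ref{SGNT CD ordering}, and the two compatibilities you list (base change versus composition, and commutation of disjoint base changes, i.e.\ \ref{commutation relations}\ref{en:CD-comp commutation} and \ref{horizontal composition verify}) do not suffice to produce it: you also need the $\triv$--$\bc$ relations (\ref{CD trivial isomorphisms} and \ref{commutation relations}\ref{en:CD-triv commutation}), the $\triv$--$\comp$ relations, and the observation of \ref{canonical triv} that the residual trivializations vanish precisely because the target is~$a_* b^*$. Only after this normalization is the \emph{target} forced to be the roof for an arbitrary map (each $\bc$ factor replaces a cospan by its fibered-product projections and each $\comp_0$ factor composes, so the codomain's span is always the limit of the diagram of~$F$); your ``the universal property determines it independently of all choices'' covers only the output of your construction, not the first sentence of the proposition.

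Two further steps are asserted where the paper has to work. First, ``Mac Lane-style coherence makes $\genby{\comp_0,\triv_0}$ thin'' is not citable here: in this setting that statement is \ref{big alternating unique}, and its proof is the entire staging apparatus of \ref{sec:uniqueness} --- the standard coherence theorem is for a single monoidal category, not for composition and trivialization constraints of a pseudofunctor spread over varying fibers, so you would at minimum have to explain the reduction. Second, uniqueness of the $\genby{\bc}$ factor is not just ``disjoint valleys commute'': contracting one valley creates new ones, so a general word in~$\bc$ interleaves contractions of original and newly created cospans, and one needs the paper's ``level'' induction to see that the level-one factors are all present, can be commuted to the front, and are forced. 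A Newman-style confluence argument could probably be made to work in place of the level argument, but it has to be carried out; as written, the sentence ``any two runs of the recursion are linked by such moves'' is exactly the claim that needs proof.
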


\begin{theorem}{cor}{roof isomorphism}
  Let $\cat{Sh} \to \cat{Sp}$ be a geolocalizing geofibered category.
  If~$F$ is any good~SGF, then~$\roof(F)$ is also good, and $\roof(F)
  \colon F \to \roof(F)$ is a natural isomorphism.
\end{theorem}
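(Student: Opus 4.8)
The plan is to push both assertions through the factorization of the roof morphism supplied by \ref{roof morphism}. Let $F \to F'$ be the alternating reduction of $F$, so that $\roof(F) = \roof(F')$ and the morphism $\roof(F) \colon F \to \roof(F)$ factors as a composite $F \to F' \to \roof(F')$ whose first arrow lies in $\genby{\comp_0, \triv_0}$ and whose second arrow, since $F'$ is alternating, is a composite of composition and trivialization isomorphisms together with finitely many elements of $\bc$, as one sees by iterating \ref{roof morphism}. I will treat the two pieces separately: the first is inert because it is an isomorphism in $\cSGNT$, and the entire force of the geolocalizing hypothesis goes into controlling the $\bc$-steps in the second.

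First I would establish that $F'$ is good. The arrow $F \to F'$ is a composite of maps each lying in $\comp_0 \cup \triv_0 \subseteq \comp \cup \triv$, so \ref{good persistence}, applied once per factor, propagates goodness from $F$ to $F'$. Since $F'$ is now good \emph{and} alternating, one of the two alternatives of \ref{good} holds for it on the nose; say, without loss of generality, that every basic SGF of the form $f_*$ occurring in $F'$ is push-geolocalizing (the pull case is entirely symmetric). I then follow the construction of $\roof(F') \colon F' \to \roof(F')$ step by step: each element of $\bc$ appearing in it base-changes one of the pushforwards $f_*$ of the current reorganization of $F'$, and \ref{geolocalizing} guarantees that the resulting $\tilde f$ is again push-geolocalizing; the composition isomorphisms merely amalgamate pushforwards, and push-geolocalizing morphisms are closed under composition. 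Hence at every stage — in particular for the pushforward $a_*$ of $\roof(F') = a_* b^*$ — the pushforwards present are push-geolocalizing, so $\roof(F)$ meets a condition of \ref{good}; as it is of the form $a_* b^*$ it is alternating, and therefore good. (The degenerate shapes $a_*$, $b^*$, $\id$ allowed by \ref{roof} are good for trivial reasons.) This is essentially the same bookkeeping carried out in the proof of \ref{good persistence}.

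For the isomorphism claim, the arrow $F \to F'$ is an isomorphism by \ref{alternating}, being an element of $\genby{\comp_0, \triv_0}$ and hence a composite of the isomorphisms $\comp_*$, $\comp^*$, $\triv_*$, $\triv^*$ of \ref{basic SGNTs} and their inverses. It thus remains to show that $F' \to \roof(F')$ is an isomorphism, and since it is a composite of composition and trivialization isomorphisms (invertible by \ref{basic SGNTs}) and of elements $A \bc(f,g) B$ of $\bc$, it suffices to see that each such $\bc$-element is invertible. Using the same reduction as above, the pushforward $f$ that is base-changed at that step is push-geolocalizing — it is one of the original pushforwards of $F'$, or a composition of such together with their base changes, which remain push-geolocalizing by \ref{geolocalizing} — and \ref{geolocalizing} says precisely that for a push-geolocalizing $f$ and \emph{any} $g$ the transformation $\bc(f,g)$ is an isomorphism; whiskering by the SGFs $A$ and $B$ preserves invertibility. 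Therefore every $\bc$-element occurring in $\roof(F') \colon F' \to \roof(F')$ is an isomorphism, so $F' \to \roof(F')$, and hence $\roof(F) \colon F \to \roof(F)$, is a natural isomorphism.

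The one genuinely delicate point — and the step I expect to cost the most — is the claim used twice above that, as one runs the construction of the roof morphism, the ``every $f_*$ push-geolocalizing'' (resp.\ ``every $f^*$ pull-geolocalizing'') condition persists: that the morphisms which actually get base-changed are exactly those whose geolocalization goodness provides, and that no base change or composition destroys this property for the morphisms that survive into later stages. Verifying this requires unwinding precisely how \ref{roof morphism} produces the roof morphism out of iterated base changes and composition isomorphisms; once that combinatorial picture is in hand, both conclusions follow formally from the closure properties recorded in \ref{geolocalizing}. I anticipate no other obstacle, since the alternating-reduction prefix is invertible and goodness-transparent (\ref{basic SGNTs}, \ref{good persistence}) and plays no further role.
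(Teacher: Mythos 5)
Your proposal is correct and follows essentially the same route as the paper: factor the roof morphism through the alternating reduction and the iterated base-change/reduction steps of \ref{roof morphism construction}, propagate goodness along each factor via \ref{good persistence}, and conclude that the $\bc$-factors are isomorphisms because they are applied to good alternating sources (the $\comp_0$ and $\triv_0$ factors being isomorphisms automatically). The ``delicate point'' you flag at the end is exactly the content already established in \ref{good persistence} (the alternating case for $\phi \in \bc$), so no further work is needed there.
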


\begin{proof}
  In the construction of \ref{roof morphism}, the roof morphism is
  constructed in such a way that its factors in~$\bc$ are applied only
  to an alternating source, with the other factors in~$\comp_0$
  or~$\triv_0$ by \ref{alternating}. Therefore~\ref{good persistence}
  applies and each composand of~$\roof(F)$ is good.  Then each
  base-change factor is, by definition of good, an isomorphism, while
  all the other factors are automatically so.
\end{proof}

This motivates notation for classes of~SGNTs including the inverses of
invertible base changes.

\begin{theorem}{defn}{invertible base changes}
  We use the following class notation for the ``balanced'' elements
  of~$\SGNT$ in which the units and counits only occur in pairs within
  a base change morphism, along with a ``forward'' variant:
  \begin{align}
    \label{eq:SGNT0}
    \SGNT_0^+ = \genby{\bc, \comp_0, \triv_0} &&
    \SGNT_0 = \genby{\SGNT_0^+ \cup (\SGNT_0^+)^{-1}}.
  \end{align}
  Here we use the inverse notation to refer to the class of inverses
  of only the actually invertible (as abstract natural
  transformations) elements of~$\SGNT_0^+$.
\end{theorem}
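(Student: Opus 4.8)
The statement is a definition rather than a proposition, so strictly speaking there is nothing to \emph{prove}; the only obligation it incurs is that the notation it introduces be well-posed. My plan is to discharge this in two parts: first, to confirm that $\SGNT_0^+$ really does consist of elements of $\SGNT$, so that the description ``balanced elements of $\SGNT$'' is justified; and second, to check that the inverse notation $(\SGNT_0^+)^{-1}$, and hence $\SGNT_0$, is unambiguous.

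For the first part I would unwind $\SGNT_0^+ = \genby{\bc, \comp_0, \triv_0}$ generator by generator. The inclusions $\comp_0 \subseteq \SGNT$ and $\triv_0 \subseteq \SGNT$ are immediate, since $\comp_0$ and $\triv_0$ are visibly contained in the generating classes $\comp$ and $\triv$ of $\SGNT$ in~\ref{eq:SGNT class}. The one inclusion that is not automatic is $\bc \subseteq \SGNT$: I would appeal to \ref{base change morphism} and \ref{commutative diagram morphism}, where each base change morphism is built in~\ref{eq:base change definition} from the composition isomorphisms $\comp_*$ together with a unit, a counit, and the adjunction bijections. As all of these are either generators of $\SGNT$ or are obtained from them by the closure of $\SGNT$ under horizontal and vertical composition and under adjunction of $^*$ and $_*$, every $F\bc(f,g)G$ lies in $\SGNT$. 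Because $\SGNT$ is closed under composition, the class $\SGNT_0^+$ it generates is then contained in $\SGNT$.

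For the second part the point is simply that ``invertible as an abstract natural transformation'' is a well-defined property of any single transformation, so the collection of invertible elements of $\SGNT_0^+$ is unambiguously determined, and passing to their two-sided inverses yields a well-defined class $(\SGNT_0^+)^{-1}$; consequently $\SGNT_0 = \genby{\SGNT_0^+ \cup (\SGNT_0^+)^{-1}}$ is well-defined as well. I expect the only step with any content to be the verification $\bc \subseteq \SGNT$, and even this is a matter of tracing the construction of~\ref{eq:base change definition} and invoking the closure axioms for $\SGNT$ rather than a genuine difficulty; everything else is bookkeeping.
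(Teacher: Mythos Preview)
Your assessment is correct: this is a definition, and the paper offers no proof for it---it simply states the notation and moves on. Your well-posedness checks (that $\bc \subseteq \SGNT$ via the construction in~\ref{eq:base change definition}, and that the inverse notation is unambiguous) are accurate and go slightly beyond what the paper itself bothers to verify, but there is no discrepancy to report.
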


\subsection*{Invertible unit morphisms.}
We will also be allowing the inverses of certain units and counits,
whose definition is more technical.  The goodness hypothesis in this
next definition is, strictly speaking, unnecessary for its
formulation, but as it is required in our only nontrivial example of
this concept, \ref{etale acyclicity}, it seems likely that without it
the definition would be invalid.

\begin{theorem}{defn}{acyclicity structure}
  Let $\cat{Sh} \to \cat{Sp}$ be a geolocalizing geofibered category.
  We define an \define{acyclicity structure} on it to be a class~$C$
  of pairs~$(a,b)$ of morphisms of~$\cat{Sp}$ having the properties:
  \begin{itemize}
  \item Every pair~$(a, i)$ or~$(i, b)$, with~$a$ being
    push-geolocalizing and~$b$ being pull-geolocalizing, and where $i$
    is any invertible morphism in $\cat{Sp}$ (and having the
    appropriate sources and targets, as below), is in~$C$.
  \item For any~$(a,b) \in C$, we have $\source(a_*) = \target(b^*)$,
    and for every $f \in \cat{Sp}$ such that $a_* \unit(f) b^*$ is
    good and a natural isomorphism, either (\define{left
      invertibility})~$a_* \unit(f)$, or (\define{right
      invertibility})~$\unit(f) b^*$ is a natural isomorphism.
  \item $C$ is closed under base change in the following sense: for
    any pair of maps $X \to \target(a_*)$ and $Y \to \source(b^*)$,
    the base change
    \begin{equation}
      \label{eq:acyclicity base change}
      (\tilde{a}, \tilde{b})
      = X \times_{\target(a_*)} (a,b) \times_{\source(b^*)} Y
    \end{equation}
    of the pair map~$(a,b)$ into $\target(a_*) \times \source(b^*)$ is
    in~$C$.
  \end{itemize}
  This entails the derived concept of admissibility: an~SGF $F$ is
  \define{admissible} if $\roof(F) = (a_F, b_F)$ is in~$C$.  We will
  say, correspondingly, that any $(a,b) \in C$ is itself admissible.
\end{theorem}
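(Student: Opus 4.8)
Strictly speaking \ref{acyclicity structure} is a definition, so there is no assertion to prove; what genuinely wants checking at this point — and the companion remark I would record here, in parallel with the trivial geolocalizing structure noted above — is that the notion is never vacuous. The plan is to show that every geolocalizing geofibered category carries a \emph{trivial acyclicity structure}: let $C_0$ be precisely the class forced into $C$ by the first bullet, i.e.\ all pairs $(a,i)$ with $a$ push-geolocalizing and $i$ invertible, together with all pairs $(i,b)$ with $i$ invertible and $b$ pull-geolocalizing, subject to the source/target matching $\source(a_*)=\target(i^*)$ (resp.\ $\source(i_*)=\target(b^*)$); then one verifies the three conditions of the definition for $C_0$.

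The first and third conditions are bookkeeping. The first holds by construction, and the source/target equality it demands is built into the description of $C_0$. For the third, a base change of a pair $(a,i)\in C_0$ is computed componentwise: $\tilde{i}$ is a pullback of the isomorphism $i$, hence again an isomorphism, while $\tilde{a}$ is a base change of the push-geolocalizing map $a$, hence push-geolocalizing by the closure clause in \ref{geolocalizing}; thus $(\tilde{a},\tilde{i})$ is again of the first-bullet form, so $C_0$ is closed under base change (in fact literally closed). The symmetric argument covers the pairs $(i,b)$.

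The only substantive point is the invertibility dichotomy of the second bullet, and the idea is to exploit that, by pseudofunctoriality, $i^*$ and $i_*$ are equivalences of categories whenever $i$ is invertible in $\cat{Sp}$. For a pair $(a,i)$, the natural transformation $a_*\unit(f)\,i^*$ is simply $a_*\unit(f)$ whiskered on the right by the equivalence $i^*$, and whiskering a natural transformation by an equivalence neither creates nor destroys invertibility; hence $a_*\unit(f)\,i^*$ being a natural isomorphism forces $a_*\unit(f)$ to be one, which is exactly left invertibility. Dually, $i_*\unit(f)\,b^*$ is $\unit(f)\,b^*$ whiskered on the left by the equivalence $i_*$, so its invertibility forces that of $\unit(f)\,b^*$, which is right invertibility. (For $C_0$ one does not even need the ``good and a natural isomorphism'' hypothesis; that restriction earns its keep only for larger acyclicity structures, such as the one in \ref{etale acyclicity}.) With all three conditions in hand, $C_0$ is an acyclicity structure, and in particular admissibility is a meaningful notion in every geolocalizing geofibered category, since $\roof(F)=(a_F,b_F)$ is already a well-defined pair of morphisms by \ref{roof}.

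The main obstacle is only a matter of care: to state the whiskering fact at the right level of generality — namely that for an equivalence $G$ and a natural transformation $\psi$, each of $G\psi$ and $\psi G$ is a natural isomorphism if and only if $\psi$ is — and to apply it on the correct side in each of the two cases. Everything else is threading the closure axioms of \ref{geolocalizing} through the componentwise description of the base change of a pair.
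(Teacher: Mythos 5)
You are right that there is nothing to prove in the definition itself; the paper's only substantive claim here is the unproved remark immediately following it, that the pairs $(a,i)$ and $(i,b)$ of the first bullet form a ``trivial'' acyclicity structure, and your verification supplies exactly what that remark glosses over. Your argument is correct: closure under base change follows componentwise from \ref{geolocalizing} and stability of isomorphisms under pullback, and the invertibility dichotomy reduces to the fact that $i^*$ and $i_*$ are equivalences (hence whiskering by them on the appropriate side reflects invertibility), which is the same justification the paper implicitly relies on.
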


We chose ``acyclicity structure'' in reference to a morphism $f \colon
X \to Y$ being acyclic in geometry or topology when $\unit(f)$ is an
isomorphism on certain sheaves (e.g.\ possibly only those of the form
$b^* \sh{F}$), potentially after taking cohomology (i.e.\ applying
derived~$a_*$).

We note that, by definition, every geolocalizing geofibered category
has a ``trivial'' acyclicity structure consisting of just the
pairs~$(a, i)$ and~$(i, b)$ of the first point.

\begin{theorem}{defn}{invertible units}
  Let $\cat{Sh} \to \cat{Sp}$ be a geolocalizing geofibered category
  with acyclicity structure.  We define the class~$\Unit$ to be the
  class of all good~SGNTs of the form~$F \phi G$, where $\phi = A
  \unit(f) B$ is a natural isomorphism and~$\on{dom}(\phi)$ (which
  is~$AB$) is admissible.
\end{theorem}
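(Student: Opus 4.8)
Because \ref{invertible units} is a definition rather than an assertion, the content to be checked is that each clause refers to a notion already in hand, that the displayed parenthetical is correct, and that the construction is not circular; along the way I would also nail down the two small compatibility facts that the later sections will want to quote. The plan is as follows.

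First I would unwind ``$\on{dom}(\phi)$ (which is $AB$)''. By \ref{basic SGNTs} the unit $\unit(f)$ has domain the identity (the empty composition), so horizontal composition with SGFs $A$ and $B$ of the matching sources and targets gives $\on{dom}(A\unit(f)B) = AB$ and codomain $A f_* f^* B$; thus ``$\on{dom}(\phi)$ admissible'' is a condition on a bona fide SGF. By \ref{roof} that SGF has a roof, with projections $a$ and $b$, and the tautology $\source(a_*) = \roof(AB) = \target(b^*)$ is precisely the source/target compatibility that \ref{acyclicity structure} demands of members of $C$, so ``$AB$ admissible'', i.e.\ that this pair lies in $C$, is a well-formed condition. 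Since the totality of SGNTs forms a category (the characterization following \ref{SGNTs}) and ``good SGNT'' is defined by \ref{good} with no reference to $\Unit$, the class $\Unit$ is then a well-defined, non-circular subclass of $\SGNT$.

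Next I would carry out two routine sanity checks. For the trivial acyclicity structure of \ref{acyclicity structure}: admissibility of $AB$ forces $(a,b)$ to have the shape $(a,i)$ or $(i,b)$ with $a$ push-geolocalizing, $b$ pull-geolocalizing and $i$ invertible, and passing this through the roof morphism $AB \to a_* b^*$ — an isomorphism when $AB$ is good, by \ref{roof isomorphism} — shows that the invertibility demanded of $A\unit(f)B$ is automatic on exactly the geolocalizing data present, so $\Unit$ reduces to the units already declared invertible; the definition specializes correctly. In the other direction, given a nontrivial $C$, the second clause of \ref{acyclicity structure} (applied to a pair in $C$ and an $f$ for which $a_* \unit(f) b^*$ is good and an isomorphism) exhibits an honest element of $\Unit$, so the class does genuinely enlarge $\SGNT_0^+$.

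The step with real content — and where I expect the work to go — is showing that membership in $\Unit$ does not depend on the particular splitting $F\phi G$ with $\phi = A\unit(f)B$, and that the admissibility hypothesis on $\on{dom}(\phi) = AB$ is stable under sliding pieces of $F$ and $G$ across the unit. I would reduce $AB$ to its alternating reduction via \ref{alternating}, observe that $\roof$ depends only on the underlying diagram of spaces and so is untouched by the $\genby{\comp_0, \triv_0}$-moves that absorb identities and reassociate compositions, and then use that the geolocalizing classes contain every isomorphism and are composition-closed — the same mechanism already exploited in the proof of \ref{good persistence} — together with the base-change-closure clause of \ref{acyclicity structure}, to conclude that such slides change neither goodness nor the roof's membership in $C$. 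Keeping the sources, targets and directions straight throughout that last argument is the only genuinely fiddly part.
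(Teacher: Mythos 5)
This item is a definition, so the paper has no proof of it to compare against; what you offer is a well-posedness check, and the parts of it that matter are correct. The parenthetical is right: $\unit(f) \colon \id \to f_* f^*$, so horizontal composition with $A$ and $B$ gives $\on{dom}(A\unit(f)B) = AB$ and $\on{cod} = A f_* f^* B$; admissibility of $AB$ is then a condition on the roof pair $(a,b)$ of a bona fide SGF, and the source/target compatibility $\source(a_*) = \target(b^*)$ required of members of $C$ holds tautologically for a roof. Goodness (\ref{good}) and admissibility (\ref{acyclicity structure}) are defined earlier with no reference to $\Unit$, so there is no circularity.

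Where you go astray is in the final paragraph, which you flag as carrying the real content: no independence-of-splitting argument is needed, because $\Unit$ is carved out by an existential condition --- an SGNT belongs to it if \emph{some} decomposition $F(A\unit(f)B)G$ with the stated properties exists. Membership is therefore well-defined without showing that admissibility of $AB$ survives sliding factors between $F$, $A$, $B$, $G$, and the paper never needs such stability: the results that consume this definition (\ref{Unit square}, \ref{unit fractions swap}) work with a chosen decomposition. Your first ``sanity check'' is also muddled --- the invertibility of $A\unit(f)B$ is a hypothesis of the definition, not something to be deduced from the trivial acyclicity structure, and no units are ``already declared invertible'' anywhere prior to this point --- but since neither check is part of what the definition requires, this does no harm to the substance.
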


Until now we have ignored the counits, but for the most part, this is
justifiable.  We give the proof of the following lemma in
\ref{sec:proofs}.

\begin{theorem}{lem}{counits auxiliary}
  We have $\unit' \subset \genby{\SGNT_0^+, \unit}$ and,
  with~$\Unit'$ as in~\ref{invertible units} but with
  \emph{good}~$\unit'(f)$ replacing arbtrary~$\unit(f)$, we have
  $\Unit' \subset \genby{\SGNT_0^+, \Unit}$.
\end{theorem}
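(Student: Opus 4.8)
The plan is to realize each counit $\unit'(f)\colon f^*f_*\to\id$ as an explicit composite built only from one base change, a single adjunction unit, two composition isomorphisms and two trivializations, and then to obtain both inclusions by horizontal composition with arbitrary~SGFs.

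Fix $f\colon X\to Y$; let $p_1,p_2\colon X\times_Y X\rightrightarrows X$ be the projections of the self-fibered-product and $\Delta\colon X\to X\times_Y X$ the diagonal, so that $p_1\Delta=p_2\Delta=\id_X$ and the square on $X\times_Y X,X,X,Y$ is cartesian. The claim is that
\begin{multline*}
  f^*f_*
  \xrightarrow{\ \bc(f,f)\ } p_{1*}p_2^*
  \xrightarrow{\ p_{1*}\unit(\Delta)p_2^*\ } p_{1*}\Delta_*\Delta^*p_2^* \\
  \xrightarrow{\ \comp_*(\Delta,p_1)\ } \id_*\Delta^*p_2^*
  \xrightarrow{\ \comp^*(\Delta,p_2)\ } \id_*\id^* \\
  \xrightarrow{\ \triv_*\ } \id^*
  \xrightarrow{\ \triv^*\ } \id
\end{multline*}
equals $\unit'(f)$ (the unlabelled whiskerings being the evident ones). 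Granting this, a general element $F\unit'(f)G$ of $\unit'$ is the vertical composite of the six natural transformations obtained by horizontally pre- and post-composing the displayed arrows with $F$ and $G$; the first lies in $\bc$, the second in $\unit$, and the remaining four in $\comp_0\cup\triv_0$, so all six lie in $\SGNT_0^+\cup\unit$ and hence $F\unit'(f)G\in\genby{\SGNT_0^+,\unit}$. This gives $\unit'\subseteq\genby{\SGNT_0^+,\unit}$.

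The main obstacle is the displayed identity, a purely formal consequence of the geofibered-category axioms. I would prove it by unwinding $\bc(f,f)=\cd(f,f;p_1,p_2)$ through \ref{commutative diagram morphism} and \ref{base change morphism} --- i.e.\ through the corner-swap \ref{eq:base change definition} transported across the $f$-adjunction and the $p_2$-reverse natural adjunction --- and then repeatedly applying the pseudofunctor compatibilities \ref{eq:compositions assoc} and \ref{eq:trivializations comp} (and their adjoint forms \ref{eq:composition adjunction}, \ref{eq:trivialization adjunction}), the triangle identities \ref{eq:adjoint identities}, and the relations $p_i\Delta=\id$. I expect the cleanest route is a string-diagram calculation in the sense of \ref{s:string diagrams}: the right-hand side is the single ``cup'' attached to the $f$-strand, while on the left the base change produces a pair of $f$-strands, the $\unit(\Delta)$ ``cap'' slides between them, and the residual bends straighten under the composition and trivialization isomorphisms once $p_i\Delta=\id$ is invoked, leaving exactly that cup. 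The only delicate point is bookkeeping which bend each isomorphism straightens.

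For the second inclusion, run the same construction horizontally composed with $A$ and $B$. Write $\phi=A\unit'(f)B=(AT_6B)\circ\cdots\circ(AT_1B)$ for the resulting factorization, where $T_1=\bc(f,f)$, $T_2=p_{1*}\unit(\Delta)p_2^*$, and $T_3,\dots,T_6$ are the composition and trivialization steps. For each $i\ne 2$ the transformation $F(AT_iB)G$ lies in $\bc\cup\comp_0\cup\triv_0\subseteq\SGNT_0^+$, so it suffices to show that the remaining factor $\varphi:=AT_2B=(Ap_{1*})\unit(\Delta)(p_2^*B)$ lies in $\Unit$; that is, that $F\varphi G$ is good, that $\varphi$ is a natural isomorphism, and that $\on{dom}\varphi=Ap_{1*}p_2^*B$ is admissible. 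For invertibility: since $F\phi G\in\Unit'$ the counit $\unit'(f)$ is good, so $f_*$ is push-geolocalizing or $f^*$ is pull-geolocalizing; either way \ref{geolocalizing} forces $\bc(f,f)$ --- hence $A\bc(f,f)B$ --- to be an isomorphism, and as $AT_3B,\dots,AT_6B$ are isomorphisms (compositions and trivializations) and $\phi$ is one by hypothesis, $\varphi=(AT_6B)^{-1}\circ\cdots\circ(AT_3B)^{-1}\circ\phi\circ(AT_1B)^{-1}$ is an isomorphism. For admissibility: $\on{dom}\varphi=Ap_{1*}p_2^*B$ and $\on{dom}\phi=Af^*f_*B$ have the same roof, because replacing the cospan $X\xrightarrow{f}Y\xleftarrow{f}X$ inside the underlying diagram by the span $X\xleftarrow{p_1}X\times_Y X\xrightarrow{p_2}X$ does not change the final cone defining \ref{roof}; since admissibility depends only on the roof pair and $\on{dom}\phi$ is admissible, so is $\on{dom}\varphi$. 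For goodness of $F\varphi G$: its domain $FAp_{1*}p_2^*BG$ and codomain $FAp_{1*}\Delta_*\Delta^*p_2^*BG$ are connected, through the $\bc$-, $\comp_0$- and $\triv_0$-factors $F(AT_1B)G$ and $F(AT_3B)G,\dots,F(AT_6B)G$, to $\on{dom}(F\phi G)=FAf^*f_*BG$ and $\on{cod}(F\phi G)=FABG$ --- which are good because $F\phi G$ is --- so \ref{good persistence} (applied after passing to alternating reductions) propagates goodness to both. Hence $\varphi\in\Unit$, so $F\phi G\in\genby{\SGNT_0^+,\Unit}$, giving $\Unit'\subseteq\genby{\SGNT_0^+,\Unit}$.
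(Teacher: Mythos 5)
Your proposal is correct and follows essentially the same route as the paper: the identical factorization of $\unit'(f)$ as $\triv_*\triv^*\circ\comp_*(\Delta,p_1)\comp^*(\Delta,p_2)\circ p_{1*}\unit(\Delta)p_2^*\circ\bc(f,f)$, verified by a string-diagram computation, followed by the same three checks (invertibility via goodness of $f^*f_*$ forcing $\bc(f,f)$ to be an isomorphism, admissibility via invariance of the roof under the $\bc$ factor, and goodness propagated through the $\SGNT_0^+$ factors). The only part you leave as a sketch is the string-diagram verification of the displayed identity, which the paper carries out explicitly by reducing it to $\triv_*\triv^*\circ\cd(f,f;\id,\id)$ and then applying the trivialization--adjunction identities; your described reduction is the same one.
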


\section{Main theorems}
\label{sec:main thms}

In this section we suppose the existence of an ambient geolocalizing
geofibered category with an acyclicity structure, $\cat{Sh} \to
\cat{Sp}$; as we have noted, any geofibered category can play this
role with trivial structures; among our results is a description of
some nontrivial ones.  Our main results are of two types: the first
contains a ``quantitative'' and comparatively technical result on
SGNTs; the second contains a ``qualitative'' corollary.  Proofs, if
not indicated otherwise, are given in \ref{sec:proofs}.

The quantitative result is a classification of~SGNTs:

\begin{theorem}{thm}{SGNT0 inverses square}
  Let $\phi \colon F \to G$ be in~$\genby{\SGNT,\bc^{-1},\Unit^{-1}}$,
  and denote $\roof(G) = a_{G{*}} b_G^*$.  Then there exist maps of
  spaces $f$~and~$g$, such that~$a_{G{*}} \unit(g) b_G^*$ is a natural
  isomorphism, forming a commutative diagram:
  \begin{equation}
    \label{eq:SGNT0 inverses square}
    \includestandalone{img37}
  \end{equation}
  The upward arrow can be omitted for~$\phi \in \genby{\SGNT,\bc^{-1}}$.
\end{theorem}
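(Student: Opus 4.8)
The plan is to bring $\phi$ into normal form by removing all counits and then inducting on the length of the remaining composition, tracking how the roof of the current target changes at each step. By the first assertion of~\ref{counits auxiliary} we have $\unit' \subset \genby{\SGNT_0^+, \unit}$, so every counit occurring in an expression for $\phi$ may be rewritten using only $\unit$ and elements of $\SGNT_0^+$; since $\bc,\comp_0,\triv_0 \subset \SGNT_0^+$ and their inverses lie in $\SGNT_0$, this yields $\genby{\SGNT,\bc^{-1},\Unit^{-1}} = \genby{\SGNT_0,\unit,\Unit^{-1}}$, and $\genby{\SGNT,\bc^{-1}} = \genby{\SGNT_0,\unit}$. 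Thus I would write $\phi = \gamma_n\circ\cdots\circ\gamma_1$ with each $\gamma_i$ in $\SGNT_0$, of the form $A\unit(h)B \in \unit$, or of the form $(A\unit(h)B)^{-1}$ with $A\unit(h)B \in \Unit$ — the last type absent in the restricted case — and induct on $n$, the inductive claim being that $\phi$ fills in the diagram~\ref{eq:SGNT0 inverses square}, with $g$ allowed to be $\id$ (so the upward arrow vanishes) whenever no factor of the third type occurs.

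For the base cases $n\le 1$: for $\phi=\id$ take $f=g=\id$. If $\gamma_1\in\SGNT_0$, the diagrams of spaces underlying $F$ and $G$ have the same limit — a $\bc$ leaves it unchanged, the square being cartesian, and $\comp_0,\triv_0$ plainly do — so $\roof(F)=\roof(G)$ as SGFs, and the required commutation is the equality of the two maps $F\to\roof(F)$ given by $\roof(F)$ and by $\roof(G)\circ\phi$, which follows from the characterization of the roof morphism in~\ref{roof morphism} together with the coherence of $\SGNT_0$ established in Sections~\plainref{sec:uniqueness} and~\plainref{sec:simplify}; take $f=g=\id$. If $\gamma_1=A\unit(h)B$ with $h\colon W\to V$ the adjunction at the interior vertex $V$ of $F=AB$, then inserting the spike $V\xleftarrow{h}W\xrightarrow{h}V$ into the diagram of spaces shows $\roof(G)$ to be the fibre product of $\roof(F)$ with $W$ over $V$; writing $\pi\colon\roof(G)\to\roof(F)$ for the projection one gets $a_G=a_F\pi$, $b_G=b_F\pi$, hence $\roof(G)=a_{F*}\pi_*\pi^*b_F^*$, so taking $f=\pi$, $g=\id$ the diagram commutes by the naturality of the roof construction and the upward arrow is a trivialization isomorphism absorbed into the bottom map. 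Finally, if $\gamma_1=(A\unit(h)B)^{-1}$ with $A\unit(h)B\in\Unit$, then symmetrically $\roof(F)$ is the fibre product of $\roof(G)$ with $W$ over $V$, giving $\pi\colon\roof(F)\to\roof(G)$; take $f=\id$, $g=\pi$. Here $a_{G*}\unit(\pi)b_G^* = a_{G*}\unit(g)b_G^*$ is identified, by naturality of the roof morphism, with $\roof(F)\circ(A\unit(h)B)\circ\roof(G)^{-1}$; since the domain and codomain of an element of $\Unit$ are good (by~\ref{invertible units} and the definition of goodness), their roof morphisms are isomorphisms by~\ref{roof isomorphism}, and $A\unit(h)B$ is itself a natural isomorphism by~\ref{invertible units}, so $a_{G*}\unit(g)b_G^*$ is a natural isomorphism.

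For the inductive step, write $\phi=\gamma\circ\phi'$ with $\phi'\colon F\to G'$, apply the hypothesis to $\phi'$ and the relevant base case to $\gamma\colon G'\to G$, and paste the two copies of~\ref{eq:SGNT0 inverses square} along $\roof(G')$. The $\SGNT_0$-parts compose, $\SGNT_0$ being closed under composition; a fresh spike introduced by a $\gamma$ of the second type is merged with the existing $f$ into a single map — a composite of base-change projections of the roof — using the compatibility of $\unit$ with composition of adjunctions (a consequence of~\ref{eq:adjoint identities} and~\ref{eq:composition adjunction}), and a $\gamma$ of the third type is absorbed into $g$ in the same way. The one substantive point is that the merged $a_{G*}\unit(g)b_G^*$ stays a natural isomorphism: because $g$ is assembled only out of projections that are roof-morphism conjugates of elements of $\Unit$, the map $a_{G*}\unit(g)b_G^*$ factors — up to $\comp$-isomorphisms — as a composite of such conjugates, each invertible by the argument of the third base case, with~\ref{good persistence} and~\ref{roof isomorphism} ensuring that every SGF met along the way stays good. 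Carrying out precisely this bookkeeping — keeping track of $f$, $g$ and the $\SGNT_0$-morphism under pasting, and in particular checking that an invertible spike interacting with an arbitrary one never destroys invertibility and that the prescribed shape $a_{G*}\unit(g)b_G^*$ is retained — will be the main obstacle; the remaining verifications that the pasted figure has the form~\ref{eq:SGNT0 inverses square} are routine manipulations with~\ref{eq:adjoint identities},~\ref{eq:compositions assoc},~\ref{eq:trivializations comp} and the definition of $\bc$ in~\ref{base change morphism}.
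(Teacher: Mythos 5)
Your overall strategy---eliminate counits by \ref{counits auxiliary}, then induct on the factors of $\phi$, tracking the roof of the intermediate target---is the same as the paper's, which splits the statement into \ref{SGNT0 square} (for $\genby{\SGNT_0,\unit}$) and \ref{SGNT0 inverses square'} (the general case); your three base cases correspond to \ref{SGNT0 roofs}, \ref{unit roof}, and \ref{unit roof} read upside-down together with \ref{Unit square}.  The positive part of your argument is essentially sound, modulo the $\comp_*$/$\comp^*$ bookkeeping forced by the fact that $(a_F\pi)_*(b_F\pi)^*$ is only $\comp$-isomorphic, not equal, to $a_{F{*}}\pi_*\pi^*b_F^*$ (the paper absorbs this with the composition-of-units identity~\ref{eq:composition units counits} in the proof of \ref{SGNT0 square}).

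There is, however, a genuine gap in the inductive step, at exactly the point you defer as ``the main obstacle.''  After pasting the two diagrams along $\roof(G')$ the bottom is a zigzag
\begin{equation*}
  \roof(F)\xrightarrow{\beta'} C' \xleftarrow{\alpha'}\roof(G')\xrightarrow{\beta''} C''\xleftarrow{\alpha''}\roof(G),
\end{equation*}
with $\alpha'$ the invertible unit produced by the inductive hypothesis.  A fresh unit $\beta''$ contributed by $\gamma$ emanates from $\roof(G')$, i.e.\ from the \emph{inverse} side of the previous diagram, not from the $f$-side; so it cannot simply be ``merged with the existing $f$.''  One must first rewrite $\beta''\circ(\alpha')^{-1}$ in the form $\tilde\alpha^{-1}\tilde\beta$ with $\tilde\alpha$ invertible, and this is not bookkeeping: it is the calculus-of-fractions step \ref{unit fractions swap}, which genuinely requires the left-or-right-isomorphism axiom of \ref{acyclicity structure}.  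To invoke that axiom you must know that $\alpha'$ lies in $\Unit$---goodness, invertibility, \emph{and} admissibility of its roof---which is the content of \ref{Unit square} and uses the base-change closure of the acyclicity class; your argument tracks invertibility but never establishes or propagates admissibility, and never invokes the left/right dichotomy at all.  Without it the merged $a_{G{*}}\unit(g)b_G^*$ need not be invertible: \ref{cohomology not isomorphism} exhibits precisely two units between $a_*$ and $b^*$, each individually fine, whose naive common refinement is zero rather than an isomorphism.  So the proof as written does not close; the missing ingredients are \ref{Unit square} and \ref{unit fractions swap}.
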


The last sentence is \ref{SGNT0 square}, and the rest is \ref{SGNT0
  inverses square'}.  The use of~$\genby{\SGNT \cup \bc^{-1}}$ rather
than~$\genby{\SGNT_0, \unit}$ is justified by \ref{counits auxiliary}.

Now we give criteria under which ``all diagrams commute''; i.e.\ there
exists only one~SGNT between two given SGFs. First, we have the basic
Corollaries~\plainref{big alternating unique} and~\plainref{SGNT0
  roofs}:

\begin{theorem}{thm}{basic theorem}
  \mbox{}
  \begin{enumerate}
  \item For any~SGFs $F$~and~$G$, there exists at most one $\phi
    \colon F \to G$ in~$\genby{\comp, \triv}$.

  \item Let~$G = a_* b^*$; then for any SGF~$F$, there exists at most
    one $\phi \colon F \to G$ in~$\SGNT_0$.
  \end{enumerate}
\end{theorem}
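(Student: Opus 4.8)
Both statements will be deduced from the quantitative classification~\ref{SGNT0 inverses square}, applied to the classes $\genby{\comp,\triv}$ and $\SGNT_0$: both are contained in $\genby{\SGNT,\bc^{-1}}$ (for $\SGNT_0$ this uses~\ref{counits auxiliary}) and neither involves $\Unit^{-1}$, so that the ``upward arrow'' of diagram~\ref{eq:SGNT0 inverses square} is absent throughout. I would also record at the outset that a roof-shaped SGF $G=a_*b^*$ is its own roof: viewed as a diagram in $\cat{Sp}$ it is the span $\target(a)\xleftarrow{a}\source(a)=\source(b)\xrightarrow{b}\target(b)$, whose apex $\source(a)$ is plainly the final space mapping to it, so $\roof(G)=G$ with $(a_G,b_G)=(a,b)$ and, by the uniqueness part of~\ref{roof morphism}, the roof morphism $G\to\roof(G)$ is $\id_G$. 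In particular $\roof\bigl(\roof(F)\bigr)=\roof(F)$ for any~$F$, with identity roof morphism, so the only map in $\SGNT_0^+$ from a roof-shaped SGF to a roof-shaped one is the identity (and the two must then coincide).

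For part~(1): the class $\genby{\comp,\triv}$ is a groupoid of natural isomorphisms, its generators $\comp=\comp_0\cup\comp_0^{-1}$ and $\triv=\triv_0\cup\triv_0^{-1}$ being closed under inversion; hence two maps $\phi_1,\phi_2\colon F\to G$ in it differ by an automorphism of $F$ lying in $\genby{\comp,\triv}$, and it suffices to show that $\id_F$ is the only such automorphism. By~\ref{alternating} there is an isomorphism $F\to F'$ in $\genby{\comp_0,\triv_0}\subseteq\genby{\comp,\triv}$ onto the alternating reduction, and conjugating by it reduces the claim to the case that $F$ is alternating; and for an alternating SGF the triviality of this automorphism group is exactly the Mac~Lane coherence theorem for the pseudofunctor data of a geofibered category — applied separately to the assignments $f\mapsto f^*$ and $f\mapsto f_*$ and then combined — which is among the core results of Sections~\plainref{sec:uniqueness} and~\plainref{sec:simplify}, and which can also be read off from~\ref{SGNT0 inverses square}, whose diagram, applied to such an automorphism, degenerates by the roof-morphism rigidity of~\ref{roof morphism} to the identity. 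I expect this coherence input to be the real substance of part~(1).

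For part~(2): write $G=a_*b^*$, so by the first paragraph $\roof(G)=G$ and the roof morphism of $G$ is $\id_G$. Given $\phi\colon F\to G$ in $\SGNT_0$, I apply~\ref{SGNT0 inverses square} (again with the upward arrow absent): since the roof morphism of $G$ is the identity, its diagram determines $\phi$ from the roof morphism $\rho_F\colon F\to\roof(F)$ together with the roof-to-roof data relating $\roof(F)$ and $\roof(G)=G$. The single unit $\unit(g)$ occurring there is, by the requirement that $a_{G*}\unit(g)b_G^*$ be a natural isomorphism with $G$ already its own roof, the trivial one ($g=\id$ works) and drops out, leaving a map in $\SGNT_0^+$ between the roof-shaped SGFs $\roof(F)$ and $G$. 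By the first paragraph (i.e.\ the uniqueness clause of~\ref{roof morphism}) there is at most one such map, so the data, hence $\phi$, is determined; in particular $G$ must coincide with $\roof(F)$, and when it does, $\phi$ is forced to be the corresponding composite. The crux is to verify that the parameters $(f,g)$ and the roof-to-roof leg produced by~\ref{SGNT0 inverses square} genuinely degenerate once the target is itself a roof; this rests on the rigidity of roof morphisms recorded in~\ref{roof morphism}.
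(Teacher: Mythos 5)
Your overall architecture inverts the paper's: \ref{SGNT0 inverses square} is the terminal result of the development, and its proof consumes exactly the uniqueness statements you are trying to establish (the two parts are Corollaries~\plainref{big alternating unique} and~\plainref{SGNT0 roofs}, which feed through \ref{roof morphism uniqueness} and \ref{SGNT0 square} into \ref{SGNT0 inverses square'}), so deducing them from it is circular. Even setting the ordering aside, both deductions have concrete gaps. For part~(1), after the correct reduction to showing that an alternating~$F$ has no nontrivial $\genby{\comp,\triv}$-automorphism, you declare this to be ``Mac~Lane coherence for the two pseudofunctors, combined.'' That combination is the whole theorem: an automorphism of an alternating~$F$ may pass through non-alternating~SGFs, creating and cancelling $\id_*$~and~$\id^*$ terms that migrate between the $_*$~and~$^*$ strings via~\ref{eq:trivializations comp}, and controlling that interaction is precisely what the staging machinery of \ref{staging induct}--\ref{alternating unique} exists to do; it is not a formal consequence of two separate single-pseudofunctor coherence theorems. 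Your fallback --- reading the claim off from \ref{SGNT0 inverses square} by ``roof-morphism rigidity'' --- fails because the identity that diagram yields for an automorphism~$\psi$ is at best $\roof(F) \circ \psi = \roof(F)$, and the roof morphism contains base changes and is \emph{not} invertible in a general geofibered category (cf.~\ref{balmer}), so $\psi$ cannot be cancelled. Only the alternating reduction morphism, which lies in $\genby{\comp_0,\triv_0}$ and is therefore always an isomorphism, can be cancelled --- which is exactly the argument of \ref{big alternating unique}.

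For part~(2), your first paragraph is correct and is the right ingredient: $G = a_* b^*$ is its own roof with roof morphism~$\id_G$, by \ref{roof morphism uniqueness}. But the deduction from \ref{SGNT0 inverses square} does not close. That theorem asserts only the \emph{existence} of maps $f$~and~$g$ making the diagram commute; for $\phi \in \SGNT_0$ the upward arrow (the $\unit(g)$ leg) is absent, but the forward leg $\roof(F) \to (a_F f)_* (b_F f)^*$ built from~$\unit(f)$ remains, and nothing in the statement forces $f = \id$. You address only the single unit~$g$ and then claim that what remains is ``a map in~$\SGNT_0^+$ between roof-shaped SGFs,'' which is unjustified: a~priori two maps $\phi, \phi' \in \SGNT_0$ could produce different~$f$'s and hence different bottom edges, and the classification provides no canonicity of the pair~$(f,g)$ (indeed, the entire difficulty of \ref{main theorem} is to force such canonicity under extra hypotheses). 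What you need is the sharper statement \ref{SGNT0 roofs}: for $\phi \in \SGNT_0$ one has exactly $\roof(G) \circ \phi = \roof(F)$ with no unit appearing at all; combined with $\roof(G) = \id_G$ and the uniqueness of the morphism~$\roof(F)$ from \ref{roof morphism uniqueness}, this forces $\phi = \roof(F)$ and gives part~(2) at once.
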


We have also found a curious conclusion that is not quite a corollary
of the latter nor of the main theorem:

\begin{theorem}{thm}{secondary theorem}
  Let $\phi \colon F \to G$ be in~$\SGNT$, where both $F$~and~$G$ have
  at most one basic~SGF.  Then either both are trivial and~$\phi \in
  \triv_0^{-1} \triv_0$, or neither is, so $F = G$ and $\phi = \id$.
\end{theorem}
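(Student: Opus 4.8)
The plan is to reduce to the basic coherence result \ref{basic theorem}(1) and its roof analogue \ref{basic theorem}(2). The key observation is that when $F$ and $G$ each consist of at most one basic SGF, the roof computation of \ref{roof} is trivial: if $F = f_*$ or $F = f^*$, then the relevant diagram has $\roof(F) = \source(F) = \target(F)$ when $f = \id$ (so $F$ is isomorphic via $\triv$ to $\id$), and otherwise $\roof(F) = F$ itself with one projection being the identity. So the main theorem \ref{SGNT0 inverses square square} (applied to $\phi \in \SGNT \subset \genby{\SGNT,\bc^{-1},\Unit^{-1}}$) gives a commutative square comparing $\phi$ to a composite through $\roof(G)$, but here $\roof(G)$ is essentially $G$ (up to a trivialization), and similarly on the $F$ side. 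The first thing I would do, therefore, is carefully enumerate the cases for $F$ and $G$: each is one of $\id$, some $f_*$ with $f \neq \id$, or some $f^*$ with $f \neq \id$. When $F$ (or $G$) is $\id$, its alternating reduction is $\id$; when it is a nontrivial $f_*$ or $f^*$, it is already alternating and equal to its own roof (with the trivial projection on the other side). This makes $\SGNT_0^+$-morphisms out of $F$ into its roof, and into $G$'s roof, essentially identities or trivializations.

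Next I would run \ref{SGNT0 inverses square square}. Since $\phi \in \SGNT$, no $\bc^{-1}$ or $\Unit^{-1}$ appears, so the upward arrow is absent and we get a genuine commutative triangle: $\phi$ followed by $\roof(G) \colon G \to \roof(G)$ in $\SGNT_0^+$ equals a composite $F \to \roof(F) \to \roof(G)$ where the first map is $\roof(F) \in \SGNT_0^+$ and the second is $a_{G*}\unit(g)b_G^*$ for suitable $g$, which is forced to be a natural isomorphism. In the present situation $\roof(F)$ and $\roof(G)$ are (up to $\triv$) the identity, so this says $\phi$ itself is, up to composition with $\triv$-maps, of the form $a_*\unit(g)b^*$ with at most a single $a$ or $b$ present and the whole thing invertible. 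The crux is then to show that the only way $a_*\unit(g)b^*$ can be a natural isomorphism, with $a$ or $b$ absent, is the degenerate one. If $b$ is absent this is $a_*\unit(g)$; if $a$ is absent it is $\unit(g)b^*$; if both are absent it is $\unit(g)\colon \id \to g_*g^*$, which is a natural transformation $\id \to g_*g^*$ between functors with \emph{different} numbers of basic SGFs, hence cannot even have the right domain and codomain unless we also impose $g = \id$, in which case it becomes a $\triv$-related map and $F = G$. The remaining cases similarly force things to collapse: $\unit(g)$ by itself raises the count of basic SGFs by two, so after composing the whole SGNT $\phi$ lands in $G$ with at most one, forcing $g = \id$ throughout.

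Finally, having reduced to the case where no genuine unit appears — i.e.\ $\phi$ lies in $\genby{\comp_0,\triv_0}$ (an isomorphism in $\cSGNT$) after discarding degenerate units — I would split on whether $F$ and $G$ are trivial. If both are trivial, then $\phi$ is a composite of $\triv_0$-maps and their inverses, which is exactly the statement $\phi \in \triv_0^{-1}\triv_0$ (any such composite between $\id$ and $\id$ collapses to at most one $\triv_0^{-1}$ followed by one $\triv_0$, by the compatibility \ref{eq:trivializations comp}; in fact when both are literally $\id$ we may take $\phi = \id$). If neither is trivial, then $F$ and $G$ are nontrivial one-term SGFs, and a $\genby{\comp_0,\triv_0}$-isomorphism between them must preserve the ``type'' (pushforward vs.\ pullback) and the underlying map of spaces — since $\comp_0$ needs at least two terms to act and $\triv_0$ needs a trivial term, neither can be applied nontrivially — so $F = G$ and $\phi = \id$ by \ref{basic theorem}(1) (uniqueness of morphisms in $\genby{\comp,\triv}$). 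The main obstacle I anticipate is the bookkeeping in the middle step: showing cleanly that the ``extra'' $\unit(g)$ produced by \ref{SGNT0 inverses square square} is forced to be degenerate because of the constraint on the number of basic SGFs, and handling the mixed case where one of $F,G$ is trivial and the other is not (which the statement implicitly rules out, so I must derive a contradiction there from the count mismatch modulo trivializations).
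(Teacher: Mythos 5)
Your proposal is correct and follows essentially the same route as the paper: apply the roof square of \ref{SGNT0 square} to $\phi$, note that the two roof edges are identities or single trivializations when $F$ and $G$ have at most one basic SGF, and kill the remaining $\unit(g)$ edge by the count/type argument that it would introduce both a $_*$ and a $^*$ into $\roof(G)$, which cannot both be present. The detours through natural-isomorphism considerations and \ref{basic theorem} are unnecessary but harmless.
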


More importantly, we have the following general theorem:

\begin{theorem}{thm}{main theorem}
  Let~$\phi \colon F \to G$ be a natural transformation of~SGFs.
  
  Write $\source(F) = \source(G) = X$ and $\target(F) = \target(G) =
  Y$; denote $Z = X \times Y$ and let $b \colon \roof(F) \times_Z
  \roof(G) \to \roof(G)$ be the projection map, where $\roof(F) \to Z$
  and $\roof(G) \to Z$ are the pair maps $(b_F, a_F)$~and~$(b_G,
  a_G)$; suppose that the unit map $a_{G{*}} \unit(b) b_G^*$ is an
  isomorphism.

  Suppose as well that the map $\roof(G) \colon G \to \roof(G)$ is an
  isomorphism. If $\phi \in \genby{\SGNT, \bc^{-1}, \Unit^{-1}}$, then
  it is the unique map $\psi \colon F \to G$ in that class.
\end{theorem}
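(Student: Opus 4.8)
The plan is to reduce the general \ref{main theorem} to the quantitative \ref{SGNT0 inverses square}. Given $\phi\colon F\to G$ in $\genby{\SGNT,\bc^{-1},\Unit^{-1}}$, the main theorem produces the square~\eqref{eq:SGNT0 inverses square}: the maps $\roof(F)\colon F\to\roof(F)$ and $\roof(G)\colon G\to\roof(G)$ in $\SGNT_0^+$, a map $\roof(F)\to\roof(G)$ along the bottom in $\SGNT_0$, and an upward unit map $\roof(G)\to G$ built from $a_{G*}\,\unit(g)\,b_G^*$ for some $g$ — but crucially, the map $f$, $g$ produced there depend only on $G$ (and on the fact that some map $F\to G$ in the class exists), not on $\phi$ itself. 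So the first step is to observe that, given \emph{two} maps $\psi_1,\psi_2\colon F\to G$ in the class, \ref{SGNT0 inverses square} applied to each yields squares with the \emph{same} right-hand vertical data (the roof morphism of $G$ and the same corrective unit $a_{G*}\unit(g)b_G^*$, which the hypothesis of \ref{main theorem} — with $b$ the appropriate projection — guarantees is an isomorphism, hence exactly the $g$ appearing in the conclusion of \ref{SGNT0 inverses square}).

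Next I would use the hypothesis that $\roof(G)\colon G\to\roof(G)$ is an isomorphism. Combined with the fact that the corrective unit is an isomorphism, this means that in the square for each $\psi_i$, all the vertical and the top-right maps are invertible, so $\psi_i$ is determined by the bottom map $\roof(F)\to\roof(G)$ in $\SGNT_0$, precomposed with $\roof(F)\colon F\to\roof(F)$ and the appropriate inverses up the right side. Precisely, $\psi_i = (\text{right side})^{-1}\circ(\text{bottom}_i)\circ\roof(F)$, where ``bottom$_i$'' is some element of $\SGNT_0$ from $\roof(F)=a_{F*}b_F^*$ to $\roof(G)=a_{G*}b_G^*$. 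Now apply part~(2) of \ref{basic theorem}: since $\roof(G)$ has the form $a_*b^*$, there is at most one map in $\SGNT_0$ from any SGF — in particular from $\roof(F)$ — to it. Hence $\text{bottom}_1=\text{bottom}_2$, and therefore $\psi_1=\psi_2$.

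The main obstacle is verifying that the $g$ (and $f$) produced by \ref{SGNT0 inverses square} is genuinely canonical — i.e.\ that the conclusion of that theorem, when its hypothesis ``$a_{G*}\unit(g)b_G^*$ is a natural isomorphism'' is met, pins down $g$ up to the relevant equivalence, and that this $g$ is the one whose invertibility is being \emph{assumed} in \ref{main theorem} via the projection $b\colon\roof(F)\times_Z\roof(G)\to\roof(G)$. This requires unwinding the construction in the proof of \ref{SGNT0 inverses square}: there $f,g$ arise from the roof of the combined diagram for $\phi$, and one must check that the roof of a diagram realizing \emph{any} $\psi\colon F\to G$ has the same $(a,b)$ data as $\roof(F)\times_Z\roof(G)$ — which is a formal consequence of the universal property of the fibered product (the roof of the two-SGF juxtaposition $G^{-1}F$ is exactly the fibered product of the two roofs over $Z=X\times Y$). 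So the argument really is: \ref{SGNT0 inverses square} gives the square; the hypotheses of \ref{main theorem} say the two columns on the right are isomorphisms; \ref{basic theorem}(2) says the bottom is unique; hence $\phi$ is unique. I expect the only genuinely delicate point to be bookkeeping the direction and invertibility of the upward unit arrow and confirming it matches the assumed isomorphism $a_{G*}\unit(b)b_G^*$ — but \ref{counits auxiliary} and the definition of $\Unit$ handle the passage between units and counits, so no new input is needed.
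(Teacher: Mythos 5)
Your overall strategy matches the paper's in outline --- reduce via \ref{counits auxiliary} and \ref{SGNT0 inverses square'} to a square whose only $\phi$-dependent edge is the unit zigzag $\roof(F)\xrightarrow{\beta}\bullet\xleftarrow{\alpha}\roof(G)$, then use the two invertibility hypotheses --- but the step you yourself flag as ``the main obstacle'' is resolved incorrectly, and it is exactly the step where the paper does all the work. The maps $f\colon C\to\roof(F)$ and $g\colon C\to\roof(G)$ produced by the inductive construction of \ref{SGNT0 inverses square'} are \emph{not} independent of $\phi$: their common source $C$ is an iterated fibered product over all the intermediate SGFs of a chosen factorization of $\phi$, and this is in general strictly larger than $\roof(F)\times_Z\roof(G)$. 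Your claim that ``the roof of a diagram realizing any $\psi$ has the same $(a,b)$ data as $\roof(F)\times_Z\roof(G)$'' is therefore false; what is true is only that the pair $(f,g)$ factors through the projections $(a,b)$ of that fibered product via some $h\colon C\to\roof(F)\times_Z\roof(G)$, with $f=ah$ and $g=bh$. The paper's proof consists precisely of exploiting this: it decomposes $\unit(f)$ as $\unit(a)$ followed by $a_*\unit(h)a^*$ (and $\unit(g)$ likewise via $b$ and the \emph{same} $h$), observes that since $a_F a=a_G b$ and $b_F a=b_G b$ the two $h$-factors are literally the same natural transformation, and uses the hypothesis that $a_{G*}\unit(b)b_G^*$ is an isomorphism to conclude that this common factor is invertible and cancels in $\alpha^{-1}\beta$, leaving the canonical zigzag built from $a$ and $b$ alone. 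Without this cancellation the hypothesis on $b$ is never actually used and the $\phi$-dependence of $C$ is never removed.

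A second, related error: your appeal to \ref{basic theorem}(2) does not apply. There is no edge $\roof(F)\to\roof(G)$ in $\SGNT_0$ in the square; the $\phi$-dependent edge contains the unit $\unit(f)$ and the inverted unit $\unit(g)^{-1}$, which are excluded from the ``balanced'' class $\SGNT_0$ by definition, so the uniqueness statement of \ref{basic theorem}(2)/\ref{SGNT0 roofs} says nothing about it. Indeed, even a positive map of the form $a_*\unit(g)b^*\colon a_*b^*\to (ag)_*(bg)^*$ need not be unique when $(a,b)$ admits several factorizations $g$ of the same target pair --- the paper points this out in its comments section --- which is why some hypothesis such as the one on the fibered product (or the weak admissibility of \ref{conditions auxiliary}) is genuinely required. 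The conclusion is reached not by citing \ref{basic theorem}(2) but by showing that $f$ and $g$ may be replaced by the projections $a$ and $b$, which are determined by $F$ and $G$ alone.
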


We also have an auxiliary lemma giving sufficient conditions for the
hypotheses of the theorem to hold.  To state it, we use the term
\define{weakly admissible} of an SGF~$F$ to mean that the pair
morphism~$(a_F, b_F)$ of its~roof is a universal monomorphism.

\begin{theorem}{lem}{conditions auxiliary}
  We have $\roof(G)$ is an isomorphism if~$G$ is good.  The other
  condition of \ref{main theorem} holds if either:
  \begin{itemize}
  \item $F$ is weakly admissible, and either there exists some $\phi
    \in \genby{\SGNT_0, \unit}$, or its consequence:~$(b_G, a_G)$
    factors through~$(b_F, a_F)$;
  \item $G$ is weakly admissible and there exists some $\phi \in
    \genby{\SGNT_0, \Unit^{-1}}$.
  \end{itemize}
\end{theorem}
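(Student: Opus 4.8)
The plan is this. The first assertion is immediate: it is \ref{roof isomorphism} with $G$ in the role of $F$, which says precisely that for good $G$ the roof morphism $\roof(G)\colon G\to\roof(G)$ is a natural isomorphism. For the second assertion I would extract one elementary observation about $\cat{Sp}$ and apply it twice: if $m\colon A\to Z$ is a monomorphism and $n\colon B\to Z$ factors as $n=m\circ h$, then the projection $p\colon A\times_Z B\to B$ is an isomorphism, with inverse $(h,\id_B)$ --- a section always, and a retraction because monicity of $m$ forces $r=h(t)$ for every ``point'' $(r,t)$ of the fibered product. The use of this is that the moment $b\colon\roof(F)\times_Z\roof(G)\to\roof(G)$ is known to be an isomorphism, $b^*$ is an equivalence of shape categories with quasi-inverse $b_*$, so $\unit(b)$ is a natural isomorphism and hence so is $a_{G{*}}\unit(b)b_G^*$, which is exactly the hypothesis of \ref{main theorem} that must be verified.

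For the first bullet, $F$ weakly admissible says exactly that $(b_F,a_F)\colon\roof(F)\to Z$ is a universal, hence an ordinary, monomorphism; so if $(b_G,a_G)$ factors through $(b_F,a_F)$, then the observation above (with $m=(b_F,a_F)$ and $n=(b_G,a_G)$) makes $b$ an isomorphism and the case is finished. To get that factorization from the existence of some $\phi\colon F\to G$ in $\genby{\SGNT_0,\unit}$, I would first note --- this is the routine class bookkeeping, using \ref{counits auxiliary} to absorb the counits, as remarked just after \ref{SGNT0 inverses square} --- that $\phi$ lies in the class to which the ``upward arrow omitted'' case \ref{SGNT0 square} of \ref{SGNT0 inverses square} applies; that statement presents $\phi$ via the roof morphism of $F$, a morphism relating the two roofs, and the roof morphism of $G$, and reading its commutative diagram on underlying spaces yields a morphism $\roof(G)\to\roof(F)$ commuting with both projections, to $X$ and to $Y$, i.e.\ the desired factorization of $(b_G,a_G)$ through $(b_F,a_F)$.

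For the second bullet, $G$ is weakly admissible, so this time $(b_G,a_G)$ is the monomorphism, and $\phi\in\genby{\SGNT_0,\Unit^{-1}}$ falls (by the same class bookkeeping) under the full \ref{SGNT0 inverses square}, which already furnishes maps of spaces $f,g$ with $a_{G{*}}\unit(g)b_G^*$ a natural isomorphism, together with its commutative diagram carrying the upward $\Unit^{-1}$ arrow. Read on spaces, that diagram exhibits $(b_F,a_F)$ as factoring through $(b_G,a_G)$ via a morphism $k\colon\roof(F)\to\roof(G)$ which is the one inducing the distinguished $g$; the observation (now with $m=(b_G,a_G)$ and $n=(b_F,a_F)$) identifies $b$, over $\roof(G)$, with $k=g$, so $a_{G{*}}\unit(b)b_G^*=a_{G{*}}\unit(g)b_G^*$ is a natural isomorphism, as needed.

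The hard part in both bullets is this passage from the $2$-categorical commutative diagram supplied by \ref{SGNT0 inverses square} down to a $1$-categorical factorization of pairs of roof projections: one must check that the space-level morphism read off the diagram genuinely commutes with \emph{both} roof projections, and, in the second bullet, that it is exactly the map $g$ whose unit is certified invertible there, so that the invertibility transfers to $\unit(b)$. The rest --- the fibered-product observation and the class inclusions --- is routine.
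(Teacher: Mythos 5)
Your proposal is correct and takes essentially the same route as the paper: \ref{roof isomorphism} for the first claim; the factorization of $(b_G,a_G)$ through $(b_F,a_F)$ (resp.\ the reverse) read off from diagram \ref{eq:positive SGNT diagram} (resp.\ that diagram upside down); and the observation that weak admissibility makes the relevant projection of $\roof(F)\times_Z\roof(G)$ invertible, whence $\unit(b)$ is an isomorphism (identified with the certified $a_{G{*}}\unit(g)b_G^*$ in the second bullet). The only cosmetic difference is that you exhibit the inverse $(h,\id)$ of the projection directly, where the paper argues that a section of the monomorphism $b$ is automatically an isomorphism.
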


Finally, we address the question of exhibiting geolocalizing and
acyclicity structures on the geofibered categories that occur in
practice.  The intended application of this concept is to ``sheaves''
on ``schemes'' in various more or less geometric contexts:
\begin{description}
\item[Presheaves] We may define $\cat{Sp} = \cat{Cat}^\op$, the
  opposite category of all small categories, and for any category~$X$,
  set $\cat{Sh}_X$ to be the category of presheaves on~$X$ (with
  values in any fixed complete category; if it is abelian, then so is
  $\cat{Sh}_X$ for any $X$).  For any morphism $f \colon X \to Y$ of
  spaces (i.e.\ a functor $F \colon Y \to X$) and for any shapes
  (presheaves) $\sh{F} \in \cat{Sh}_X$ and $\sh{G} \in \cat{Sh}_Y$, we
  take the usual pushforward and pullbacks:
  \begin{align}
    \label{eq:presheaf ops}
    f_*(\sh{F})(y) = \sh{F}(F(y)) &&
    f^*(\sh{G})(x) = \lim_{x \to F(y)} \sh{F}(y)
  \end{align}
  This is far more general than actual sheaves on schemes, but because
  of the flexibility in the concept of geolocalizing and acyclicity
  structures, our results apply uniformly to it (if with potentially
  less power should these structures be too trivial).
\item[Quasicoherent sheaves] Each $\cat{Sh}_X$ is the category of
  quasicoherent sheaves in the Zariski topology on $X \in \cat{Sp}$
  being a scheme of finite type over a fixed locally noetherian base
  scheme; pushforwards and pullbacks are those of quasicoherent
  sheaves.
\item[Constructible \'etale sheaves] Each $\cat{Sh}_X$ is the category
  of $\ell$-torsion or $\ell$-adic ``sheaves'' in the \'etale topology
  on $X$ being a scheme of finite type over a fixed locally noetherian
  base scheme; pushforwards and pullbacks are those of such
  ``sheaves'' (ultimately, inherited from actual sheaf operations) We
  will not recall the definition here, nor the definitions of any of
  the functors on it, but work purely with the associated formalism.
\item[Constructible complex sheaves] Each $\cat{Sh}_X$ is the category
  of constructible sheaves of complex vector spaces in the classical
  topology on $X$ being a complex-analytic variety of finite type over
  a fixed base variety.  Pushforwards and pullbacks are those of
  sheaves of complex vector spaces.
\item[Derived categories] With $\cat{Sp}$ being any of the above
  categories of spaces, we may take $\cat{Sh}_X$ to be the derived
  category of the corresponding abelian category of shapes on a
  space~$X$.  Pushforwards and pullbacks are, respectively, the
  right-derived pushforward and left-derived pullback.
\end{description}
The noetherian hypotheses were suggested by Brian Conrad to ensure the
good behavior of the sheaf theory, as we have attempted to encapsulate
in \ref{geofibered category} and subsequent definitions.  Presumably
this list, as varied as it is, is incomplete; for instance, most
likely sheaves on the crystalline site, D-modules, and other such
categories belong on it as well.

We regret that we have been unsuccessful in locating references that
explicitly prove, in all of these contexts, that the functors
described (which \emph{are} defined very carefully) actually have the
properties that we have called a geofibered category.  In all cases,
the pseudofunctor structure of pushforwards is either totally obvious
(as for presheaves and sheaves, given~\ref{eq:presheaf ops}) or formal
(as for $\ell$-adic and derived sheaves), and in lieu of existing
literature on the category-theoretic minutiae, we feel free to simply
declare that the structure for pullbacks should be \emph{determined}
by adjunction and the required compatibilities; see the discussion
following \ref{geofibered category}.

In order to exhibit geolocalizing structures on these categories, we
simply recall the base change theorems of algebraic geometry, together
with standard properties of the types of morphism.

\begin{theorem}{lem}{base change theorems}
  (\textit{Proper, smooth,~and flat base change}) In the \'etale or
  complex (possibly derived) contexts, the class of proper morphisms
  of schemes is push-geolocalizing and the class of smooth morphisms
  is pull-geolocalizing. In the quasicoherent (possibly derived)
  context, the class of flat morphisms is pull-geolocalizing. \qed
\end{theorem}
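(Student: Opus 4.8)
The plan is to check, for each of the three claims, the two clauses of Definition~\ref{geolocalizing}. The first clause is immediate: an isomorphism of schemes is proper, smooth, and flat, and each of these three classes of morphisms is closed under composition. For the second clause we are handed a cartesian square as in~\ref{eq:base change diagram}, with $f$ proper (for the push case) or with $g$ smooth, resp.\ flat (for the pull cases), and we must show that the opposite edge $\tilde f$, resp.\ $\tilde g$, again lies in the class, and that $\bc(f,g)$ is a natural isomorphism. That the class is stable under base change is the standard fact that properness, smoothness, and flatness are each preserved under pullback. So everything reduces to the single assertion that $\bc(f,g)\colon g^* f_* \to \tilde f_* \tilde g^*$ is an isomorphism in each of the named situations.

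To identify this assertion with a classical theorem, recall from Definition~\ref{commutative diagram morphism} that $\bc(f,g)$ is the mate, under $g$-adjunction and $\tilde g$-reverse natural adjunction, of the corner-swapping isomorphism of~\ref{eq:base change definition} built from $f\tilde g = g\tilde f$. Unwinding these adjunctions shows that $\bc(f,g)$ coincides with the base change morphism as constructed in the standard references; this is the one piece of genuine bookkeeping, and it is purely formal. Once it is made, the claim that $\bc(f,g)$ is an isomorphism becomes, respectively, the proper base change theorem (for $f$ proper), the smooth base change theorem (for $g$ smooth), and flat base change for quasi-coherent sheaves (for $g$ flat).

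It then remains only to quote these theorems in the relevant context. In the \'etale $\ell$-torsion, hence $\ell$-adic, setting the proper and smooth base change theorems are those of SGA~4 (the torsion being prime to the residue characteristics, as the setup requires); in the complex-analytic setting the proper base change theorem and the compatibility of $g^*$ with $f_*$ for a submersion $g$ are classical facts about sheaves on locally compact spaces. In the quasi-coherent setting, flat base change for a quasi-compact, quasi-separated $f$ is standard (Hartshorne III.9.3, or EGA~III, or the Stacks project). The standing hypothesis that the base is locally noetherian and all schemes are of finite type over it ensures every scheme in sight is noetherian and hence every morphism is quasi-compact and quasi-separated, so these theorems apply without side conditions; in the derived contexts one applies the derived version of the same statements to $Rf_*$ and $Lg^*$ (with $Lg^* = g^*$ when $g$ is flat), the requisite finiteness of cohomological dimension again being guaranteed by the noetherian finite-type hypotheses.

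There is no serious obstacle here: all the depth is imported from the cited base change theorems, and this lemma contributes only the formal identification of the abstract $\bc$ of Definition~\ref{commutative diagram morphism} with the classical base change map, together with the trivial verification of the axioms of Definition~\ref{geolocalizing}. If anything requires care it is merely confirming, case by case, which finiteness hypotheses each classical theorem demands and that the standing assumptions supply them.
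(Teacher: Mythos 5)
Your proposal is correct and matches the paper's intent exactly: the paper offers no proof beyond the \qed in the statement, deferring entirely to the classical proper, smooth, and flat base change theorems (citing SGA\,4 and the Stacks Project in the following paragraph), which is precisely the reduction you carry out. You merely make explicit the routine verifications the paper leaves implicit, namely stability of the three classes under composition and base change and the identification of the abstract $\bc(f,g)$ with the classical base change map.
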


A stunningly general flat base change theorem for derived
quasicoherent sheaves on any algebraic spaces over any scheme can be
found at~\cite{stacks-project}*{Tag 08IR}.  The proper and smooth base change
theorems in \'etale cohomology were proven for torsion sheaves by
Artin~\cite{sga4}*{Exp.~xii, xiii, xvi}; the $\ell$-adic and derived
versions follow formally.  The recent preprint~\cite{enhanced} of Liu
and Zhang presents an extension of these theorems to the derived
categories on Artin stacks in the lisse-\'etale topology, as well.

As for acyclicity structures, in general we can only offer the trivial
one, but in the \'etale context or its derived analogue (and
presumably by the same token, the complex one) we can do better using
a theorem from SGA4.

\begin{theorem}{lem}{etale acyclicity}
  In the \'etale or derived \'etale contexts, the class
  \begin{equation}
    \label{eq:etale acyclicity}
    C = \{(a,b) \colon X \to Y \times Z \mid
          \text{$(a,b)$ is an immersion}\}
  \end{equation}
  is an acyclicity structure for the geolocalizing structure defined
  in \ref{base change theorems}.
\end{theorem}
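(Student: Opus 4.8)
The plan is to verify, one at a time, the three defining properties of an acyclicity structure from~\ref{acyclicity structure}, for the class $C$ of those pairs $(a,b)$ whose pair morphism into $\target(a_*)\times\source(b^*)$ is an immersion. The equality $\source(a_*)=\target(b^*)$ demanded of any $(a,b)\in C$ is automatic, since a pair enters $C$ only via a single morphism out of that common space. Of the three properties, the first and third are soft consequences of standard facts about immersions, and essentially all of the content sits in the middle one.

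For the first property, take a pair $(a,i)$ with $i$ invertible (the case $(i,b)$ is identical after interchanging the two factors). Composing its pair morphism with the automorphism of $\target(a_*)\times\source(b^*)$ induced by $i^{-1}$ turns it into the graph of $a$; and the graph of any morphism of schemes is an immersion, being a base change of the diagonal, which is always an immersion. Hence $(a,i)\in C$. For the third property, the construction in~\ref{eq:acyclicity base change} is, by inspection, nothing but the base change of the pair morphism of $(a,b)$ along the product map into $\target(a_*)\times\source(b^*)$; since immersions are stable under base change, the result is again an immersion and lies in $C$. Passing to the derived \'etale context changes nothing of this, ``immersion'' remaining a condition on the underlying morphism of schemes.

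The real content is the second property. Fix $(a,b)\in C$ and a morphism $f$ of spaces, with target the common source of $a_*$ and $b^*$, such that $a_*\unit(f)b^*$ is a good natural isomorphism; we must show that $a_*\unit(f)$ or $\unit(f)b^*$ is a natural isomorphism. If $a$ or $b$ is an identity there is nothing to prove, since $a_*\unit(f)b^*$ is then, up to a trivialization, one of the two maps in question; so assume both are non-identity, so that $a_*b^*$ is alternating, whence goodness of $a_*b^*$ forces either $b$ to be pull-geolocalizing (i.e.\ smooth) or $a$ to be push-geolocalizing (i.e.\ proper). Using the composition maps to identify $a_*f_*f^*b^*$ with $(af)_*(bf)^*$, the hypothesis says that $a_*\unit(f)b^*$ is a natural isomorphism from $a_*b^*$, the roof functor of the span $(a,b)$, to $(af)_*(bf)^*$, the roof functor of the span $(a,b)\circ f$; since $(a,b)$ is a monomorphism, $f$ is recovered from the second span together with its factorization through the first. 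I would then factor the immersion $(a,b)$ as an open immersion into its scheme-theoretic closure followed by a closed immersion, and, according to whether $b$ is smooth or $a$ is proper, invoke the relevant base change and acyclicity theorems from SGA~4~\cite{sga4} (smooth base change and the acyclicity of smooth morphisms in the former case, proper base change in the latter), together with~\ref{base change theorems}, to locate the locus of the common source of $a_*,b^*$ over which $\unit(f)$ fails to be an isomorphism. If that locus is nonempty it is already detected by $b^*$, giving right invertibility; otherwise $a_*\unit(f)$ is itself an isomorphism, giving left invertibility.

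I expect this last step to be the main obstacle. The delicate point is that when $a$ is proper but not finite, $a_*$ can annihilate a nonzero sheaf, so one cannot naively cancel it from $a_*\unit(f)b^*$; what rescues the argument is precisely that $(a,b)$ is an immersion, which forces the restriction of $b$ to each fibre of $a$ to be a monomorphism and, by properness of $a$, even a closed immersion, and this rigidity prevents the cohomological defect of $\unit(f)$ on a sheaf pulled back along $b$ from being killed by $a_*$ unless it already vanishes. Carrying out this support-and-vanishing analysis with the SGA~4 acyclicity results and the \'etale base change theorems is where the genuine work lies; the first and third properties are, by comparison, immediate.
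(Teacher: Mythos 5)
Your handling of the first and third axioms of \ref{acyclicity structure} coincides with the paper's: a pair $(a,i)$ with $i$ invertible is isomorphic to the graph of $a$, hence an immersion, and immersions are stable under base change. The problem is the second axiom, which is the entire content of the lemma and which you explicitly leave undone (``where the genuine work lies''). Moreover the shape of the argument you sketch for it is off. You propose to decide between left and right invertibility by locating a ``locus where $\unit(f)$ fails to be an isomorphism'' and asking whether $b^*$ detects it, after factoring $(a,b)$ through its scheme-theoretic closure. The correct dichotomy is the one already handed to you by goodness and requires no such support analysis: if $a$ and $af$ are proper one proves \emph{left} invertibility ($a_*\unit(f)$ is an isomorphism), and if $b$ and $bf$ are smooth one proves \emph{right} invertibility ($\unit(f)\,b^*$ is an isomorphism).

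The missing mechanism is a stalkwise cancellation. In the proper case, restrict $a_*\unit(f)\,b^*$ to a geometric point $p$ of the target of $a$; proper base change (applied to $a$ and to $af$, which is where goodness is used) identifies this stalk with $a|_{p*}\,\unit(f|_{\tilde p})\,(b\tilde p)^*$, where $\tilde p$ is the fibre of $a$ over $p$. The immersion hypothesis enters exactly here: $b\tilde p$ is the base change of $(a,b)$ along $p$, hence an immersion, so its counit $(b\tilde p)^*(b\tilde p)_* \isoarrow \id$ is invertible; composing with $(b\tilde p)_*$ and this counit strips off the $(b\tilde p)^*$ and shows $a|_{p*}\unit(f|_{\tilde p})$ is an isomorphism for every $p$, hence $a_*\unit(f)$ is one. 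The smooth case is dual (restrict along points $q$ of the target of $b$, use smooth base change, cancel $a_*$ using that $a\tilde q$ is an immersion), but needs one further input you mention without deploying: the criterion of \cite{sga4}, Exp.~xv, Th.~1.15, that for a locally acyclic morphism the unit is an isomorphism on a sheaf if and only if it is so on every geometric fibre; this is what upgrades the fibrewise statement that each $\unit(f|_{\tilde q})\,b|_q^*$ is invertible to the global statement that $\unit(f)\,b^*$ is. Your worry that $a_*$ might ``annihilate a nonzero sheaf'' is legitimate, but it is resolved by this counit cancellation, not by a support-and-vanishing analysis; as written, the proof of the middle axiom has not been carried out.
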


\section{String diagrams}
\label{s:string diagrams}

In the course of executing the general strategy of~\ref{sec:simplify}
we will need to do a few specific computations with~SGNTs.  As these
have little intrinsic meaning, the work would be unintelligible using
traditional notation, so we have chosen to express it visually using
``string diagrams''.

For the convenience of readers familiar with such depictions of
categorical algebra, in the present section we will give only the
essential definitions and results that will be cited in our later
proofs.  A more conversational introduction to ths topic of string
diagrams, together with the proofs of the mostly routine facts shown
here, are left to the appendix.

In summary, in our diagrams, vertical edges represent basic geometric
functors, and are marked with upward or downward arrows to
distinguish, respectively, $f_*$ from $f^*$.  The shapes shown in
\ref{fig:basic SGNTs} generate all our string diagrams by horizontal
(natural transformation) and vertical (functor) composition, which
correspond to horizontal (left-to-right) and vertical (bottom-to-top)
concatenation of diagrams.  We use a doubled-line convention for our
edges, which has no mathematical meaning but does improve aesthetics
and (with some imagination) topologically justifies most of our string
diagram identities as being mere topological deformations in the
plane.

\begin{figure}[p]
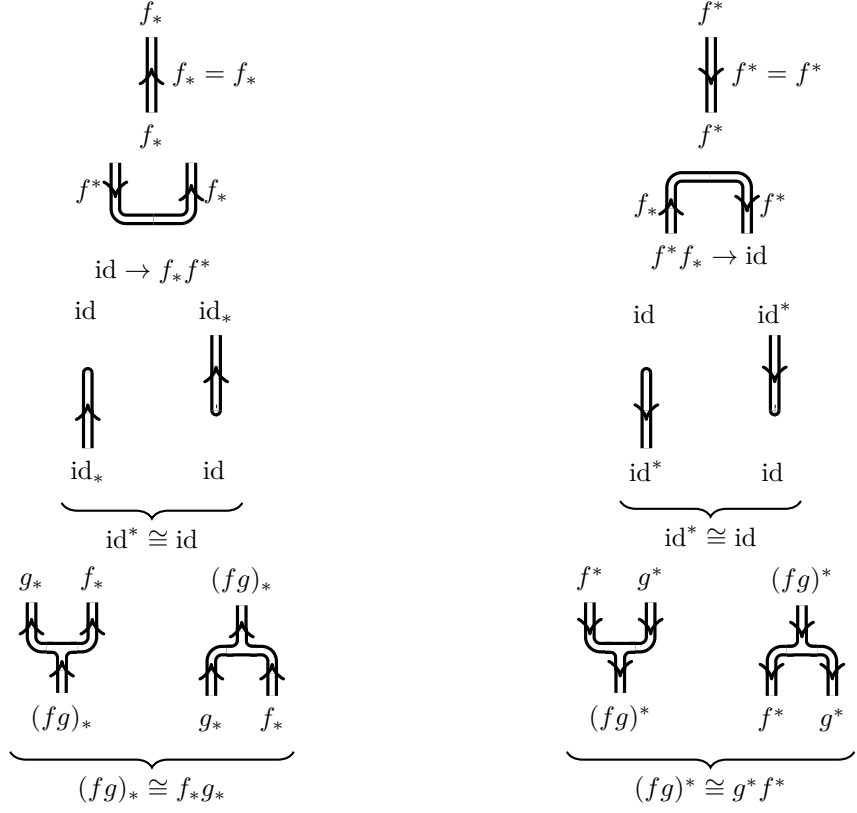

  \begin{tabu}{X[-1,c]X[-1,c]}
    \adjustbox{lap={\widthof{$f_* = f_*$}}}{\includestandalone{img1a}} &
    \adjustbox{lap={\widthof{$f^* = f^*$}}}{\includestandalone{img1b}} \\
    \includestandalone{img1c} & \includestandalone{img1d} \\
    \includestandalone{img1e} & \includestandalone{img1f} \\
    \includestandalone{img1g} & \includestandalone{img1h}
  \end{tabu}
  \caption{The basic SGNTs as string diagrams}
  \label{fig:basic SGNTs}
\end{figure}
\begin{figure}[p]
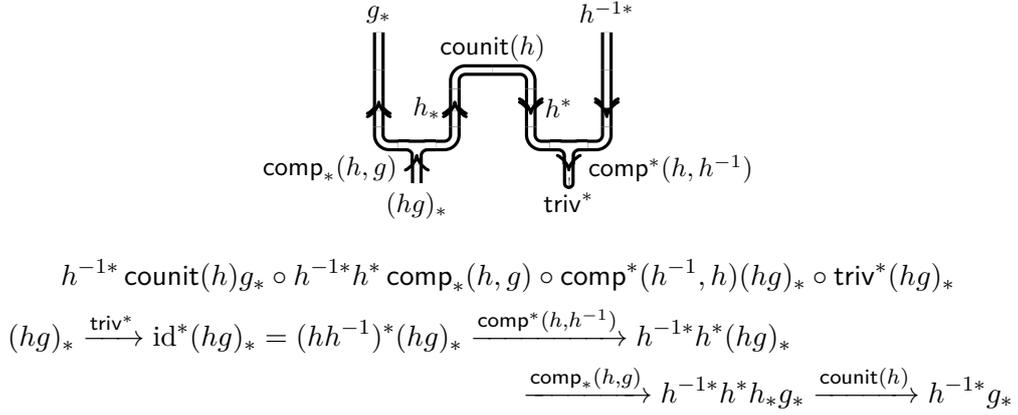

  \centering

  \includestandalone{img2}
  \begin{gather*}
    h^{-1{*}}\unit'(h)g_* \circ
    h^{-1{*}}h^*\comp_*(h,g) \circ
    \comp^*(h^{-1},h) (hg)_* \circ
    \triv^* (hg)_*
    \\
    \begin{multlined}[0.9\linewidth]
      (hg)_*
      \xrightarrow{\triv^*}
      \id^* (hg)_* = (h h^{-1})^* (hg)_*
      \xrightarrow{\comp^*(h,h^{-1})}
      h^{-1{*}}h^* (hg)_* \\
      \xrightarrow{\comp_*(h,g)}
      h^{-1{*}}h^*h_* g_*
      \xrightarrow{\unit'(h)}
      h^{-1{*}} g_*
    \end{multlined}
  \end{gather*}
  \caption{Example of a string diagram with corresponding SGNT
    expressed several ways}
  \label{fig:diagram}
\end{figure}

An example of the correspondence between string diagrams and SGNTs is
given in \ref{fig:diagram}, but we will never be so thorough in
labeling them in actual use.

We emphasize that it is important for the correspondence between
string diagrams and natural transformations that the string diagram be
\emph{labeled}; i.e.~for the edges and components to have the meanings
assigned to them by~\ref{fig:basic SGNTs}.  For if not, then the
operation of reflecting the string diagram vertically produces another
planar graph that is valid combinatorially, but does not necessarily
correspond to any~SGNT (the corresponding operation on functors $f^*
g_* h^* \cdots$ is to swap upper~$^*$ and lower~$_*$, but the
resulting basic~SGFs are no longer composable).  We thank Mitya
Boyarchenko for this last observation, however much it forced the
restructuring of this paper.

Although the diagram acquires its unique identity as an~SGNT only
after labeling all the edges, we will almost always omit these labels.
We will never assert the identity of an~SGNT corresponding to an
unlabled diagram without indicating how we would label it, which can
(in our applications) always be deduced from the ends by propagating
through the various transformations.

\subsection*{String diagram identities.}
In this subsection we record all the identities satisfied by string
diagrams with two shapes.  These can basically be considered the
relations in the category whose objects are~SGFs and whose morphisms
are the string diagrams between two given~SGFs.  The proofs, which are
either direct translation of the corresponding symbolic equations or
simple manipulation, are left to the appendix for the convenience of
readers who would prefer to get to the point.

\begin{theorem}{lem}{fig:adjunctions}
  ({Adjunction identities})

  \nopagebreak\medskip
  \centering

  \includestandalone{img3a} \hfil \includestandalone{img3b}
\end{theorem}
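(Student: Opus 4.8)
The plan is to translate the two symbolic adjoint identities of equations~\eqref{eq:left adjoint identity} and~\eqref{eq:right adjoint identity} into their string-diagram incarnations, using the dictionary set up in Figure~\ref{fig:basic SGNTs}. Recall that in that dictionary, the unit $\unit(f)\colon \id \to f_* f^*$ is depicted as a ``cup'' (a pair of oppositely-directed edges emerging from a single point at the bottom), the counit $\unit'(f)\colon f^* f_* \to \id$ is depicted as a ``cap'', and a bare edge labelled by $f$ (directed up for $f_*$, down for $f^*$) is the identity natural transformation of that functor. Horizontal composition of natural transformations is left-to-right juxtaposition of diagrams, and vertical composition is stacking bottom-to-top.

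First I would draw the left-hand side of~\eqref{eq:left adjoint identity}: starting from a single downward edge $f^*$ at the bottom, the map $f^*\unit(f)$ attaches a cup to its right, producing $f^* f_* f^*$ (read as: the original downward strand, then an upward strand, then a downward strand, with the upward and downward strands born together from the cup); then $\unit'(f)f^*$ caps off the first two of those three strands, leaving a single downward $f^*$ at the top. The resulting picture is a single strand that wiggles sideways — down, across via the cup, back across via the cap — and the content of the identity is exactly that this ``zigzag'' strand equals the straight vertical strand, i.e.\ the first of the two displayed string-diagram equations (img3a). The second equation (img3b) is the mirror-image statement obtained the same way from~\eqref{eq:right adjoint identity}: starting from an upward $f_*$, attach $\unit(f)f_*$ as a cup on the \emph{left}, obtaining $f_* f^* f_*$, then apply $f_*\unit'(f)$ to cap the last two strands, recovering the straight upward $f_*$.

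So the proof is simply: \emph{the diagram img3a is, under the labelling of Figure~\ref{fig:basic SGNTs}, a literal transcription of~\eqref{eq:left adjoint identity}, and img3b of~\eqref{eq:right adjoint identity}; since those equations hold by the adjunction axioms~\eqref{eq:adjoint identities} built into a geofibered category, the string-diagram identities hold.} There is essentially no obstacle here beyond bookkeeping — one must only check that the orientations of the strands in the figures match the convention ($f_*$ up, $f^*$ down) and that the side on which each cup or cap is attached is the one dictated by whether we horizontally precompose or postcompose with the identity of $f^*$ resp.\ $f_*$. The one place to be slightly careful is that the two identities are genuinely distinct (one is the ``zigzag on a downward strand'', the other on an upward strand), and that each is self-contained: no appeal to naturality or to the other basic SGNTs is needed, only the snake equations for the unit–counit pair. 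Since the excerpt explicitly defers the routine verification of such transcriptions to the appendix, the body of the proof need be no more than this observation.
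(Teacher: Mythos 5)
Your proposal is correct and matches the paper's own proof, which likewise observes that these two diagrams are nothing but the string-diagram transcriptions of the unit--counit identities~\ref{eq:adjoint identities} assumed as part of the adjunction data. The extra bookkeeping you supply about orientations and which side the cup or cap attaches to is a reasonable elaboration of the same one-line argument.
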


\begin{theorem}{lem}{fig:compositions (inv)}
  ({Composition identities (inverses)})

  \nopagebreak\medskip
  \centering

  \includestandalone{img4a} \hfil \includestandalone{img4b}

  \includestandalone{img4c} \hfil \includestandalone{img4d}
\end{theorem}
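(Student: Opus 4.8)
The plan is to deduce each of the four pictured identities from facts already recorded for the \emph{forward} composition isomorphisms. Whatever the precise content of these four diagrams, the only relations among the inverse composition maps that can be depicted with two shapes are: (i) the bare cancellations $\comp_*(f,g)\circ\comp_*(f,g)^{-1}=\id$ and $\comp_*(f,g)^{-1}\circ\comp_*(f,g)=\id$, together with their $\comp^*$-counterparts; (ii) the reversed associativity relation, i.e.\ that the two ways of splitting $(fgh)_*$ into $f_*g_*h_*$ agree, together with its pullback mirror; and (iii) possibly the compatibility across an adjunction, the reversed form of \ref{eq:composition adjunction}. So I would first sort the four diagrams into these types.

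Those of type (i) are immediate: the left-hand picture is a ``merge'' node stacked on the matching ``split'' node, and the claimed equality is nothing more than the stipulation (in \ref{basic SGNTs}) that $\comp_*(f,g)^{-1}$ denotes the two-sided inverse of $\comp_*(f,g)$ — there is no computation, only recognition of the picture and its labels. Those of type (ii) or (iii) follow purely formally from the corresponding forward identity by the observation that if $\alpha\circ\beta=\gamma\circ\delta$ with all four arrows invertible, then $\beta^{-1}\circ\alpha^{-1}=\delta^{-1}\circ\gamma^{-1}$: taking for $\alpha,\beta$ the two composands of one side of \ref{eq:compositions assoc} (resp.\ of \ref{eq:composition adjunction}) and for $\gamma,\delta$ those of the other yields exactly the identity among the maps $\comp_*(\cdot,\cdot)^{-1}$ (resp.\ $\comp^*(\cdot,\cdot)^{-1}$); the pullback case is the mirror image. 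What remains is the translation into string-diagram form: a $\comp_*$ node merging the strands for $g_*$ and $f_*$ into the strand for $(gf)_*$ becomes, under inversion, a $\comp_*^{-1}$ node splitting $(gf)_*$ back into $g_*$ and then $f_*$ with every edge carrying the same map of spaces, and stacking two such splits in the two legal orders and invoking the displayed equation produces the picture; dually for $\comp^*$.

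The one genuine (if modest) obstacle I anticipate is the bookkeeping in that last translation. One must confirm that the relabeled, ``upside-down'' composition node really represents the inverse natural transformation and not the bare combinatorial vertical reflection of the diagram — which, as stressed after \ref{fig:basic SGNTs} following the observation credited to Boyarchenko, need not correspond to any SGNT at all. Concretely, this means checking that reversing a $\comp_*$ merge keeps all its strands oriented upward (so that ``$_*$'' edges stay ``$_*$'' and the underlying basic SGFs remain composable) and that the indices $f,g,h$ are transported unchanged; after that consistency check, each of the four identities is a direct transcription of the symbolic equation it encodes, and no further calculation is required. This is why the detailed verifications can safely be relegated to the appendix, as the paper does.
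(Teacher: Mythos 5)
Your proposal lands on the right answer: all four pictures are of your type~(i), namely the two cancellation identities $\comp_*\circ\comp_*^{-1}=\id$ and $\comp_*^{-1}\circ\comp_*=\id$ together with their $\comp^*$ counterparts, so the machinery you prepare for types (ii) and (iii) is not needed here (reversed associativity lives in \ref{fig:compositions (assoc)}, a separate lemma). For the $\comp_*$ half your argument coincides with the paper's: invertibility of $\comp_*(f,g)$ is part of the pseudofunctor data, and the diagrams are read off from the labels with no computation. Where you diverge is the $\comp^*$ half. You treat its invertibility as equally definitional, which is defensible on a literal reading of \ref{basic SGNTs} and of \ref{geofibered category} (both assignments are declared pseudofunctors, and both arrows are drawn as isomorphisms); the paper instead adopts the convention, announced after \ref{eq:compositions assoc}, that the pullback composition data is \emph{determined} by the pushforward data through adjunction, and accordingly proves the right-hand pair of diagrams by observing that $\on{Hom}(\comp^*(f,g)_{\sh F},\sh G)$ is the composite of bijections displayed in \ref{eq:composition adjunction}, hence $\comp^*(f,g)$ is invertible by Yoneda's lemma. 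The paper's route keeps faith with its view that only one of the two pseudofunctor structures is primitive; yours buys brevity at the cost of silently assuming both structures are given independently, which is harmless here but worth being explicit about. Your closing caution --- that the inverted $\comp_*$ node keeps every strand oriented upward and is not the illegitimate vertical reflection of the diagram --- is exactly the consistency check that makes the transcription legitimate.
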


\begin{theorem}{lem}{fig:compositions (assoc)}
  ({Composition identities (associativity)})

  \nopagebreak\medskip
  \centering

  \includestandalone{img5a} \hfil \includestandalone{img5b}
  
  \includestandalone{img5c} \hfil \includestandalone{img5d}
\end{theorem}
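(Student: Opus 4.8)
The plan is to prove each identity in the statement by reading the string diagrams back into symbolic natural transformations via the dictionary of Figure~\ref{fig:basic SGNTs}, exactly as in the worked example of Figure~\ref{fig:diagram}, and then recognizing the resulting equation of SGNTs as an instance of the pseudofunctor associativity axiom~\ref{eq:compositions assoc} or its pullback counterpart. Concretely, for each diagram I would choose a generic height function, cut the diagram into horizontal strips each containing a single trivalent composition vertex (or a single bare edge), read off the natural transformation of each strip by left-to-right concatenation, and compose the strips from bottom to top. This produces a well-defined SGNT, and the assertion of the lemma is precisely that the two sides of each displayed identity produce the same one.

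The one genuinely load-bearing step is that this translation does not depend on the chosen height function: sliding a vertex past an unrelated vertex lying in a different strand does not change the composite. This is nothing more than the interchange (Godement) law for horizontal and vertical composition of natural transformations, and once it is in hand, every planar isotopy of a labeled diagram that does not drag one vertex through another preserves the associated SGNT. Consequently the only moves carrying content are those in which two vertices genuinely change their relative position, and for the present lemma each such move transcribes verbatim into~\ref{eq:compositions assoc} for $\comp_*$ — the two binary trees on the three leaves $f_*$, $g_*$, $h_*$ are related by exactly one associator move — or into the corresponding identity for $\comp^*$, which is part of the pseudofunctor data and is pinned down through the comparison~\ref{eq:composition adjunction}. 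Any variant in the statement that carries an inverse-labeled composition vertex then follows formally by pre- and post-composing the already established identity with the appropriate $\comp_*(f,g)^{-1}$ or $\comp^*(f,g)^{-1}$, which on the diagram side is the reflection of a portion of the picture with its arrows flipped; this invokes only the inverse-composition cancellations from the previous lemma together with~\ref{eq:compositions assoc} itself.

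The main obstacle is not the algebra, which is routine, but the bookkeeping of labels. As emphasized after Figure~\ref{fig:diagram}, an unlabeled string diagram need not represent any SGNT at all, so for each diagram in the statement I would propagate the labeling — both the orientation of each edge and the underlying map of spaces it names — inward from the two endpoints, verify that it is consistent on both sides of the claimed equality, and only then invoke the relevant axiom. Once the labels are seen to match, the identities are, as advertised, either direct transcriptions of~\ref{eq:compositions assoc} and its dual or one-line manipulations of them.
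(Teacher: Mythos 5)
Your proposal is correct and follows essentially the same route as the paper: the pushforward diagrams are read off as direct transcriptions of the pseudofunctor associativity axiom~\ref{eq:compositions assoc}, and the pullback versions are reduced to it via the adjunction comparison~\ref{eq:composition adjunction} (Yoneda). The extra care you take with the interchange law and label propagation is implicit in the paper's general string-diagram formalism but does no harm.
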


\begin{theorem}{lem}{fig:trivializations triv}
  ({Trivialization identities (trivializations)})

  \nopagebreak\medskip
  \centering

  \includestandalone{img6a} \hfil \includestandalone{img6b} \hfil
  \includestandalone{img6c} \hfil \includestandalone{img6d}
\end{theorem}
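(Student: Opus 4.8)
The plan is to read each of the four displayed string diagrams as an asserted equality between two SGNTs sharing a source and target SGF, one side built from trivializations ($\triv_*$, $\triv^*$, or their inverses) and the other from compositions ($\comp_*$, $\comp^*$), and to reduce each such equality to the symbolic axioms of \ref{s:definitions}. The only genuinely new inputs are the pseudofunctor compatibility~\ref{eq:trivializations comp} --- which identifies, for each $f$, the composition isomorphism involving an identity map with the corresponding horizontal composite of $\triv_*$, together with its pullback mirror --- and the adjunction relation~\ref{eq:trivialization adjunction}, which is the unique bridge tying $\triv^*$ to $\triv_*$. Everything else is bookkeeping with the already-established Lemmas~\plainref{fig:adjunctions}, \plainref{fig:compositions (inv)}, and \plainref{fig:compositions (assoc)}.

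Concretely, I would proceed in four steps. First, fix the labeling of each diagram by propagating the two endpoint SGFs inward through the transformations, as promised in \ref{s:string diagrams}; this pins down which basic SGNT occupies each node, and hence exactly which symbolic equation is being claimed. Second, dispatch the purely-pushforward (resp.\ purely-pullback) identities, in which a $\triv_*$ (resp.\ $\triv^*$) is merged with or slid past a $\comp_*$ (resp.\ $\comp^*$): one of these is literally~\ref{eq:trivializations comp}, and any ``other-side'' variant follows from it by a single application of the associativity relation~\ref{eq:compositions assoc} together with naturality of $\triv_*$. Third, treat the mixed identities, those relating $\triv^*$ to $\comp_*$ or to $\triv_*$: these are obtained by transporting the pushforward identity through the $\id$-reverse natural adjunction~\ref{eq:RNA map} (equivalently, by a short chase with the $\id$-adjunction triangle identities of Lemma~\plainref{fig:adjunctions}) and then invoking~\ref{eq:trivialization adjunction} to recognize the transported map as $\triv^*$, the stray unit/counit pairs canceling by the adjunction identities. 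Fourth, deduce the identities involving $\triv_0^{-1}$ by inverting the equalities already proved, using that both sides are isomorphisms in $\cSGNT$.

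The main obstacle I anticipate is bookkeeping rather than content, and in particular the hazard flagged in \ref{s:string diagrams}: an unlabeled string diagram admits combinatorially valid but geometrically meaningless reflections, so for each of the four diagrams I must check that the intended labeling genuinely produces composable basic SGFs at every height and that the planar deformation witnessing the identity respects that labeling --- never silently trading an upward ($f_*$) edge for a downward ($f^*$) one. Once the labelings are pinned down, each identity collapses, exactly as claimed, to a one- or two-line symbolic manipulation over~\ref{eq:trivializations comp} and~\ref{eq:trivialization adjunction}.
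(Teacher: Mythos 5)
You have misidentified what the four diagrams assert. This lemma, labeled ``(trivializations)'', is simply the statement that the trivialization morphisms come in inverse pairs: the four pictures are the two composites $\triv_*\circ\triv_*^{-1}$ and $\triv_*^{-1}\circ\triv_*$ equaling the identity (the first and third), and the same two composites for $\triv^*$ (the second and fourth). The identities you plan to prove --- merging or sliding a $\triv_*$ past a $\comp_*$, and the mixed $\triv^*$/$\comp_*$ variants --- are the content of the separate Lemma~\plainref{fig:trivializations comp} (and of Lemma~\plainref{fig:trivializations adj} for the interactions with units and counits), so most of your four-step plan, in particular the appeals to~\ref{eq:compositions assoc}, to~\ref{eq:trivializations comp}, and to the reverse natural adjunction, is aimed at a different target.

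For the lemma actually stated the argument is two lines, and the needed ingredients are in fact present, buried in your proposal. For $\triv_*$ there is nothing to prove: the pseudofunctor data furnishes $\triv_*\colon \id_*\isoarrow\id$ as an isomorphism by hypothesis, so it and its inverse compose to the identity in both orders. For $\triv^*$, which is \emph{defined} by the requirement that the composite~\ref{eq:trivialization adjunction} equal $\on{Hom}(\triv^*_{\sh{F}},\sh{G})$, invertibility follows from Yoneda's lemma: the Hom-isomorphism is natural, so the morphism representing it is invertible, with inverse represented by the inverse Hom-map. Note that your step four (deduce the $\triv_0^{-1}$ identities by inverting equalities already proved) cannot be the route here, since the content of this lemma is precisely that these inverses exist and cancel; that must come from the definition and from Yoneda, not from a prior diagram identity.
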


\begin{theorem}{lem}{fig:trivializations adj}
  ({Trivialization identities (adjunctions)})

  \nopagebreak\medskip
  \centering

  \includestandalone{img7a} \hfil \includestandalone{img7b} \hfil
  \includestandalone{img7c} \hfil \includestandalone{img7d}
\end{theorem}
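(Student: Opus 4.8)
The plan is to follow the same recipe as for the other string-diagram lemmas in this subsection: read each of the four diagrams, from its boundary and by propagating labels as explained in~\ref{s:string diagrams}, into the symbolic identity of natural transformations it encodes, and then verify that identity against the compatibility axioms listed after~\ref{basic SGNTs}. In symbols the four identities record, in both directions and for each of the two adjunction maps, how the trivializations $\triv_*\colon\id_*\cong\id$ and $\triv^*\colon\id^*\cong\id$ and the unit and counit $\unit(\id),\unit'(\id)$ of the identity map determine one another; equivalently, that the $(\id^*,\id_*)$ adjunction is, under $\triv_*$ and $\triv^*$, the tautological self-adjunction of $\on{id}$. The essential content is the pair of equalities
\[
  \unit'(\id)=\bigl(\id^*\id_*\xrightarrow{\triv^*\triv_*}\on{id}\bigr)
  \qquad\text{and}\qquad
  \unit(\id)=\bigl(\on{id}\xrightarrow{(\triv_*\triv^*)^{-1}}\id_*\id^*\bigr),
\]
the remaining diagrams being mirror images, or the forms in which only one of the two trivialization factors has been cancelled (the shapes actually fed into later computations).

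The one substantive input is~\ref{eq:trivialization adjunction}, which by definition declares $\triv^*$ to be the adjunct of $(\triv_*)^{-1}$ under the $(\id^*,\id_*)$ adjunction. Unwinding that adjunct by the usual formula and applying the interchange law together with naturality of $\triv_*$ and $\triv^*$ collapses it to the first displayed equality. The second is then formal: for a fixed adjoint pair the unit is determined by the counit, and the self-adjunction of $\on{id}$ transported along $\triv_*,\triv^*$ is an adjunction on $(\id^*,\id_*)$ whose counit is $\triv^*\triv_*=\unit'(\id)$, so its unit $(\triv_*\triv^*)^{-1}$ must coincide with $\unit(\id)$. The one-sided forms then drop out by substituting these two-sided identities into one leg of~\ref{eq:left adjoint identity} or~\ref{eq:right adjoint identity} and cancelling, and any picture that is nothing but the string-diagram restatement of~\ref{eq:trivialization adjunction} itself needs no argument beyond reading off notation.

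The work that then remains --- left, as with the companion lemmas, to the appendix --- is essentially bookkeeping: tracking the direction of each trivialization isomorphism and its inverse, keeping straight on which side the ambient identity functors sit when a picture occurs inside a context $F(-)G$, and distinguishing the covariant adjunction bijection from the reverse natural adjunction of~\ref{eq:RNA map}. No induction and no appeal to the global theory of SGNTs is involved; each of the four identities is a one-line consequence of~\ref{eq:trivialization adjunction} and the adjunction axioms, so the only place genuine care is required --- the ``hard'' part, such as it is --- is simply not transposing a factor or mis-orienting a trivialization during the translation.
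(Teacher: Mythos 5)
Your proof is correct and takes essentially the same route as the paper's: the appendix proof likewise disposes of all four diagrams by observing that they are the string-diagram transcription of the compatibility axiom~\ref{eq:trivialization adjunction} applied via Yoneda's lemma (two of them for $\triv^*$ and $\triv_*$ themselves, two for their inverses). Your additional scaffolding --- the two-sided identities $\unit'(\id)=\triv^*\triv_*$ and $\unit(\id)=(\triv_*\triv^*)^{-1}$ together with the uniqueness of a unit given its counit --- is just a more explicit unwinding of the same mechanism and introduces no gap.
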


\begin{theorem}{lem}{fig:trivializations comp}
  ({Trivialization identities (compositions)})

  \nopagebreak\medskip
  \centering

  \includestandalone{img8a}

  \includestandalone{img8b}
\end{theorem}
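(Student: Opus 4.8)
The plan is to recognize the two displayed string-diagram equations as the pictorial renderings of the trivialization--composition compatibility~\ref{eq:trivializations comp} (for pushforwards) and of its pullback analogue; the proof is then a matter of labeling and transcription, together with one short formal derivation in the cases where the diagrams also record the ``other side'' of the unit law.

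Following the convention emphasized above, I would first fix a labeling of each diagram by propagating the meanings of its two ends through the intervening transformations, and then check that the resulting labeled equation is the one claimed. For the first (pushforward) identity, the two sides become, respectively, the composite $\id_* f_* \xrightarrow{\comp_*(\id,f)} (\id f)_* = f_*$ and the horizontal composite $\id_* f_* \xrightarrow{\triv_* f_*} \id f_* = f_*$, whose equality is literally~\ref{eq:trivializations comp}; the corresponding identity with inverses, which a two-shape diagram typically also records, follows by inverting both sides. If the diagram additionally contains the mirrored statement $\comp_*(f,\id) = f_*\triv_*$, which is not literally one of the stated axioms, I would obtain it by applying the associativity relation~\ref{eq:compositions assoc} for $\comp_*$ to a composable triple with the identity as its middle term, simplifying on each side the factor of $\comp_*$ that involves the identity by means of~\ref{eq:trivializations comp}, and cancelling the surviving $\comp_*$; pictorially this is a short planar deformation. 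For the second (pullback) identity, I would invoke the discussion following~\ref{geofibered category}: $\triv^*$ and $\comp^*$ may be taken to be determined by adjunction via~\ref{eq:trivialization adjunction} and~\ref{eq:composition adjunction}, so that transporting the pushforward identity through these two adjunction isomorphisms yields the pullback one; alternatively, in those contexts where the pullback pseudofunctor data is given directly, the second diagram is simply the transcription of the ``(and, again, the same for pullbacks)'' clause of~\ref{eq:trivializations comp}, and is the vertical reflection of the first with $^*$ and $_*$ interchanged.

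The most substantive step is the brief associativity argument deriving the mirrored unit law, in case the diagrams record it; otherwise the proof is pure transcription. Either way the only thing genuinely requiring care --- and it is no real obstacle --- is the labeling, for the reason stressed above: an unlabeled diagram carries no meaning, so the verification is not ``reflect the picture'' but ``label both sides consistently from the ends and observe that they name one and the same natural transformation.'' Once the labels are fixed each equation is manifest, and nothing beyond this bookkeeping is required.
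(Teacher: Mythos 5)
Your proposal is correct and follows essentially the same route as the paper: the first (pushforward) row is read off as the direct string-diagram transcription of the assumed compatibility~\ref{eq:trivializations comp}, and the second (pullback) row is obtained by transporting that identity through the adjunction isomorphisms~\ref{eq:composition adjunction} and~\ref{eq:trivialization adjunction} via Yoneda's lemma. Your extra remark about deriving the mirrored unit law from associativity is a sensible precaution the paper glosses over, but it does not change the argument.
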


\begin{theorem}{lem}{fig:adjunction-composition 1}
  ({Adjunction-composition identities (part 1)})

  \nopagebreak\medskip
  \centering
  \includestandalone{img9a} \hfil \includestandalone{img9b}
  
  \medskip
  \includestandalone{img9c} \hfil \includestandalone{img9d}
\end{theorem}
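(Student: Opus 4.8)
The four diagrams of this lemma encode the interaction between the adjunction data $\unit(f), \unit'(f)$ and the composition isomorphisms $\comp^*(f,g), \comp_*(f,g)$; concretely they amount to the statement that the adjoint pair attached to a composite morphism $gf$ is the ``stacked'' one. In symbols, the unit identity reads: $\unit(gf)$ equals $\id \xrightarrow{\unit(g)} g_* g^* \xrightarrow{g_* \unit(f) g^*} g_* f_* f^* g^*$ followed by the composition isomorphisms $\comp_*(f,g)$ and $\comp^*(f,g)$; and the counit identity reads: $\unit'(gf)$, under the identification $(gf)^*(gf)_* \cong f^* g^* g_* f_*$, equals $f^* g^* g_* f_* \xrightarrow{f^* \unit'(g) f_*} f^* f_* \xrightarrow{\unit'(f)} \id$. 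The remaining diagrams are the equivalent forms obtained by moving a composition isomorphism (or its inverse) across, using the cancellations of \ref{fig:compositions (inv)}. A reader who prefers may take these equalities of natural transformations as the content of the lemma and the pictures as shorthand.

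The plan is to derive these equalities from the pseudofunctor compatibility already built into a geofibered category. The crucial input is \ref{eq:composition adjunction}: it postulates that the iterated Hom-set bijection $\on{Hom}((gf)^* \sh F, \sh G) \cong \on{Hom}(g^* f^* \sh F, \sh G)$ --- assembled step by step out of the $(f^*, f_*)$- and $(g^*, g_*)$-adjunctions --- is the one induced by $\comp^*(f,g)_{\sh F}$. Evaluating that bijection on $\id_{(gf)^* \sh F}$ and on $\id_{g^* f^* \sh F}$ expresses $\comp^*(f,g)$, and hence (by passing to mates) $\comp_*(f,g)$, purely in terms of $\unit(f), \unit'(f), \unit(g), \unit'(g)$; feeding this back through the standard three-way dictionary for an adjunction --- unit/counit satisfying the zig-zag relations \ref{eq:adjoint identities}, versus the natural Hom-set bijection, versus the cup/cap string diagram of \ref{fig:adjunctions} --- turns it into the two identities above and then into the pictures. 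For the counit forms one runs the same argument through the reverse natural adjunction \ref{eq:RNA map}, or simply dualizes; in the degenerate cases $f = \id$ or $g = \id$ a trivialization node appears and the parallel compatibility \ref{eq:trivialization adjunction} supplies the corresponding relation.

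The one point that needs genuine care, rather than being mechanical, is orientation bookkeeping: tracking which node is $\comp^*$ and which is $\comp_*$, and in what order the factors sit ($f^* g^*$ versus $g^* f^*$, $g_* f_*$ versus $f_* g_*$), the more so because the ``Compositions'' paragraph writes the composite morphism in the opposite order from the display defining \ref{eq:compositions}. Once every edge and node is labelled consistently with \ref{fig:basic SGNTs}, each of the four identities is a one- or two-move diagram chase, so --- as with the neighbouring lemmas of this section --- the actual manipulations belong in the appendix; nothing beyond \ref{eq:composition adjunction}, the adjoint identities \ref{eq:adjoint identities}, and the graphical calculus of \ref{fig:adjunctions} and \ref{fig:compositions (inv)} is needed.
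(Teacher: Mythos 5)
Your proposal is correct and follows essentially the same route as the paper: it derives the ``stacked unit/counit'' identity (the paper's \ref{eq:composition units counits}) from the compatibility \ref{eq:composition adjunction} by evaluating the Hom-set bijection on identities, and then transports composition isomorphisms across via the cancellations of \ref{fig:compositions (inv)} to obtain the four pictured identities. The paper's own proof is just a terser statement of exactly these two steps.
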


\begin{theorem}{lem}{fig:adjunction-composition 2}
  ({Adjunction-composition identities (part 2)})

  \nopagebreak\medskip
  \centering
  \includestandalone{img10a} \hfil \includestandalone{img10b}
  
  \medskip
  \includestandalone{img10c} \hfil \includestandalone{img10d}
\end{theorem}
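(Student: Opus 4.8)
The plan is to prove each displayed identity by the recipe announced just above the statement: first assign to each unlabelled diagram its canonical labelling, propagating the meanings of the four outer ends inward as in the discussion after \ref{fig:basic SGNTs}; then read off, via that dictionary, the two composites of basic SGNTs that the two sides of the identity represent; and finally verify that these two natural transformations coincide. Since the two sides visibly have the same domain and codomain SGF, only the equality of transformations is at issue.

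I would dispatch first the identities that describe how the unit and counit of a composite $gf$ relate to the composition isomorphisms --- the factorizations of $\unit(gf)$ and of $\unit'(gf)$ through $\unit(f)$, $\unit(g)$, $\comp_*(f,g)$ and $\comp^*(f,g)$, together with the ``sliding'' moves that pass a cap or a cup across a composition vertex. The one piece of genuine input here is the compatibility relation \ref{eq:composition adjunction}: it asserts exactly that $\comp^*(f,g)$ and $\comp_*(f,g)$ are the maps induced on adjoints by $(gf)_*\cong g_*f_*$, so that the hom-set bijection coming from the $(gf)$-adjunction is identified with the one obtained by composing the $f$- and $g$-adjunctions. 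Evaluating that identification at $\id_{(gf)^*\sh{F}}$ (resp.\ at $\id_{(gf)_*\sh{G}}$) and expanding both sides with the zig-zag identities \ref{eq:adjoint identities} produces the asserted formula for $\unit(gf)$ (resp.\ $\unit'(gf)$) after whiskering by the ambient SGFs; the sliding moves then follow from one further application of \ref{eq:adjoint identities}, equivalently from a planar isotopy of the doubled-edge picture.

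The remaining identities are then pure string-diagram bookkeeping: each is reached from the already-established relations --- the zig-zag moves of \ref{fig:adjunctions}, the inverse and associativity relations of \ref{fig:compositions (inv)} and \ref{fig:compositions (assoc)}, and the part-one identities of \ref{fig:adjunction-composition 1} --- by inserting a cancelling cap-cup pair or a $\comp_*\comp_*^{-1}$ pair where one of those relations applies, commuting whiskerings past each other by naturality (functoriality of $f_*$, $f^*$, $g_*$, $g^*$ applied to the basic SGNTs), and simplifying to the target diagram. I would carry these out one diagram at a time; none should require more than a few moves.

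The main obstacle is organizational rather than conceptual. One must pin down the labelling convention once and for all so that every unlabelled diagram names a definite SGNT, and then be meticulous about the directions of the composition isomorphisms and about which functor each $\unit$ or $\unit'$ is whiskered by; a single transposed whiskering or inverted $\comp$ silently breaks an identity. The one genuinely subtle step is extracting a pointwise transformation identity from \ref{eq:composition adjunction}, which is phrased as an equality of natural bijections of hom-sets rather than of transformations: this needs the standard Yoneda manoeuvre --- an equality of bijections $\on{Hom}(-,A)\cong\on{Hom}(-,B)$ natural in the first variable is a single isomorphism $A\to B$, recovered by evaluating at $\id_A$ --- after which the whole lemma collapses to repeated use of the adjoint identities.
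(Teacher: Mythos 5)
Your strategy is correct and is essentially the paper's own: the paper proves each diagram of part 2 by first inserting a cancelling unit--counit pair via the zig-zag identities of \ref{fig:adjunctions} and then applying one of the part-1 identities of \ref{fig:adjunction-composition 1}, which is exactly the ``insert a cancelling cap-cup pair and simplify'' step you describe. Your preliminary re-derivation of the composite-unit factorizations from \ref{eq:composition adjunction} via the Yoneda manoeuvre duplicates the paper's proof of part 1 rather than anything new, but it is harmless and the remaining bookkeeping matches the paper's argument.
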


\begin{theorem}{lem}{fig:compositions (nontrivial)}
  Suppose we have maps of spaces~$f$, $g$, $h$,~and $k$ and another
  one~$l$ such that $g = lk$~and $h = fl$ (resp.~$f = hl$~and $k =
  lg$).  Then we have the first equality (resp.~the second equality)
  below, and similarly for the~$^*$ version:
  \begin{equation}
    \label{eq:compositions (nontrivial) diagram}
    \includestandalone{img11}
  \end{equation}
\end{theorem}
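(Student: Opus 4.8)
The statement to prove is Lemma~\ref{fig:compositions (nontrivial)}: given maps $f, g, h, k, l$ of spaces with $g = lk$ and $h = fl$ (or the dual factorization $f = hl$ and $k = lg$), a certain string-diagram identity involving the composition morphisms $\comp_*$ and $\comp^*$ holds. The plan is to translate the string diagram in~\eqref{eq:compositions (nontrivial) diagram} back into a symbolic identity between~SGNTs built from $\comp_*(\cdot,\cdot)$ and its inverse, and then to derive that identity from the associativity axiom~\eqref{eq:compositions assoc} together with the trivialization-composition compatibility~\eqref{eq:trivializations comp}. In effect this lemma is a ``mixed'' associativity statement: both sides are two different ways of rebracketing the composition $f_* l_* k_* $ (up to reassociating through the products $h = fl$ and $g = lk$), so the content is that the pentagon-type coherence already guaranteed by the pseudofunctor structure forces the two expressions to agree.

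\textbf{Main steps.} First I would fix the labeling of the unlabeled diagram: propagating the maps through from the two ends, the top and bottom boundaries are $h_* g_*$ (or $g^* h^*$ in the dual) and $f_* l_* k_*$, and each trivalent vertex in the interior is an instance of $\comp_*$ for one of the three relations $h = fl$, $g = lk$, and the composite relation $hg = f(lk) = (fl)k$ read two ways. Second, I would write the left-hand side of~\eqref{eq:compositions (nontrivial) diagram} as the composite
\begin{equation*}
  h_* g_* \xrightarrow{\comp_*(l,f)^{-1}\, g_*} f_* l_* g_* \quad\text{and}\quad g_* \xleftarrow{\comp_*(k,l)} l_* k_*,
\end{equation*}
i.e.\ as $f_*\comp_*(k,l)^{-1} \circ \comp_*(l,f)^{-1} g_*$ suitably composed, and the right-hand side as the single map $\comp_*(k, h)^{-1}$ (or its relevant rebracketing) followed by $h_*\comp_*$, using $hg = (fl)k$. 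Third — the crux — I would invoke~\eqref{eq:compositions assoc} applied to the triple $(f, l, k)$: this says precisely that
\begin{equation*}
  \bigl((flk)_* \cong (fl)_* k_* \cong f_* l_* k_*\bigr) = \bigl((flk)_* \cong f_* (lk)_* \cong f_* l_* k_*\bigr),
\end{equation*}
and substituting $fl = h$ on the left path and $lk = g$ on the right path turns this equality of isomorphisms $(hg)_* \to f_* l_* k_*$ into exactly the claimed identity after inverting. The dual ($^*$) version is the same argument with $\comp^*$ and the associativity relation for pullbacks; alternatively it follows from the pushforward case by~\eqref{eq:composition adjunction}, since $\comp^*$ is determined by $\comp_*$ through adjunction.

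\textbf{Expected obstacle.} The genuinely mathematical content is light — it is just associativity — so the difficulty is entirely bookkeeping: matching the orientation and bracketing conventions of the string diagram to the symbolic $\comp_*$'s, and making sure the substitutions $h = fl$, $g = lk$ are inserted at the right spots so that the three vertices of the diagram line up with the three arrows of the associativity square. I would handle this by labeling the diagram's interior edge (the one carrying $l_*$) explicitly and checking that, after the substitution, both the ``resp.'' cases reduce to the two diagonals of the same associativity pentagon; the second case ($f = hl$, $k = lg$) is the first case read with the roles of the outer two maps swapped, so no new work is needed. I would also remark that no $\triv$ identities are actually needed here unless one of $f, g, h, k, l$ is allowed to be the identity, in which case~\eqref{eq:trivializations comp} patches the degenerate vertex — but since the lemma is stated for general maps, the pure associativity argument suffices, and I would relegate the diagrammatic verification of the edge labels to the appendix alongside the other routine string-diagram checks.
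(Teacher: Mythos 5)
Your proposal is correct and takes essentially the same route as the paper: both sides are conjugates, via the inverse identities of \ref{fig:compositions (inv)}, of the two bracketings of the triple composition $f_* l_* k_*$, so the identity reduces to the associativity axiom~\ref{eq:compositions assoc}, with the~$^*$ version handled identically or by adjunction through~\ref{eq:composition adjunction}. One bookkeeping slip to fix before writing it up: with $k \colon A \to B$, $l \colon B \to C$, $f \colon C \to D$ the ends of the diagram are $h_* k_*$ and $f_* g_*$ (not $h_* g_*$, which is not even composable), and the common composite is $hk = fg = flk$ rather than ``$hg$''.
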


We note that \cite{mccurdy}*{Definition~4} establishes a
\define{Frobenius algebra} as an object in a monoidal category
satisfying precisely the above diagrammatic constraints, except that
here, the ends of that diagram are not all the same object.  This
lemma therefore shows that the basic~SGFs of a geofibered category
form a generalization of a Frobenius algebra, presumably a ``Frobenius
algebroid'' in the same sense as a groupid, though we have not been
able to find this term in use.

\begin{theorem}{lem}{SD loop}
  We have the equivalence (for either direction of edges and all
  possible assignments of maps of spaces compatible with the depicted
  compositions):
  \begin{equation}
    \label{eq:SD loop}
    \includestandalone{img12}
  \end{equation} 
\end{theorem}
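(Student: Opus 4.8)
The plan is to reduce the depicted loop identity to the basic string-diagram identities already established, by treating the loop as a composite of pieces that can each be recognized from Lemmas~\ref{fig:adjunctions} through~\ref{fig:compositions (nontrivial)}. The picture in~\eqref{eq:SD loop} shows a strand that departs from the bottom boundary, makes a closed excursion (a ``loop'') through a cup–cap pair joined to a composition vertex, and returns; the claim is that this whole configuration equals the trivial diagram in which the strand simply runs straight up. The key realization is that a loop like this is exactly the setting of one of the triangle (zig-zag) identities of \ref{fig:adjunctions} after one has first slid a composition node along the strand using \ref{fig:adjunction-composition 1} or~\ref{fig:adjunction-composition 2}; the ``Frobenius'' identity~\ref{fig:compositions (nontrivial)} is what lets the loop's two composition vertices be merged or separated so that the cup and cap land adjacent to each other.

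Concretely, I would proceed as follows. First, choose a labeling of the edges compatible with the depicted compositions (as the statement allows, and as the paper's convention about propagating labels from the ends permits); call the map of spaces on the loop strand~$f$, and let the composition vertices involve the auxiliary maps forced by the picture. Second, apply \ref{fig:compositions (nontrivial)} (in whichever of its two forms matches the orientation of the loop) to rewrite the pair of composition vertices bounding the loop into the ``other'' associated configuration, in which the unit and counit of the $f$-adjunction appear directly cupped and capped with no intervening composition node. Third, apply the relevant triangle identity from \ref{fig:adjunctions} to cancel that cup–cap pair, collapsing the loop to a straight strand. Fourth, clean up any residual composition/trivialization nodes using \ref{fig:compositions (inv)}, \ref{fig:compositions (assoc)}, and the trivialization lemmas \ref{fig:trivializations triv}–\ref{fig:trivializations comp}, to arrive at the trivial diagram. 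The ``for either direction of edges'' clause is handled by observing that reversing all arrows exchanges $f^*$ with $f_*$ and unit with counit uniformly, so the $^*$-version follows from the $_*$-version by the symmetry built into Definitions~\ref{basic SGNTs} and the mirror-pairs in Lemmas~\ref{fig:adjunctions}–\ref{fig:trivializations comp}; the ``all possible assignments of maps of spaces'' clause is handled because at each step the identity invoked holds for arbitrary compatible maps.

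The main obstacle I expect is bookkeeping rather than conceptual: getting the orientations and the placement of the composition vertices exactly right so that \ref{fig:compositions (nontrivial)} applies in the correct one of its two forms, and making sure that the auxiliary map~$l$ appearing in that lemma really is supplied by the loop geometry (it should be the ``loop'' map itself, i.e.\ the map whose source and target coincide after going around). A secondary subtlety is that the diagram in~\eqref{eq:SD loop} may actually encode several topologically distinct loop shapes at once (the phrase ``all possible assignments'' hints at this), so I would want to check that the same three-step reduction — slide, cancel, clean up — works for each, or else organize the cases so that one reduction subsumes the rest by an obvious reflection. Since all of these manipulations are the routine planar deformations the paper has already legitimized, I anticipate the write-up to be short, essentially a labeled sequence of diagram equalities citing the lemmas above in the order \ref{fig:compositions (nontrivial)}, \ref{fig:adjunctions}, then the trivialization lemmas as needed.
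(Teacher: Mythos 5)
Your plan diverges from the paper's proof in a way that exposes a real gap rather than merely an alternative route. The paper eliminates the loop using only the composition calculus: it applies the Frobenius-type identity \ref{fig:compositions (nontrivial)} twice and then cancels an adjacent inverse pair of composition vertices via \ref{fig:compositions (inv)}; the adjunction triangle identities of \ref{fig:adjunctions} are never invoked. Your central step, by contrast, is to cancel a unit--counit pair by a triangle identity, and you assign to \ref{fig:compositions (nontrivial)} the job of producing that adjacent cup--cap configuration. But that lemma contains no adjunction units or counits at all: it is a pure identity among $\comp$ and $\comp^{-1}$ vertices for maps $f,g,h,k,l$ with $g=lk$ and $h=fl$ (this is exactly why the paper compares it to the Frobenius law), so it cannot ``rewrite the pair of composition vertices so that the unit and counit of the $f$-adjunction appear directly cupped and capped.'' The loop of \ref{SD loop} is bounded by composition vertices, and the mechanism that removes it is reassociation of those vertices until an inverse pair becomes adjacent --- not an adjunction cancellation.

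Even if the figure did contain a unit and a counit of the same map, your step would still need care: the triangle identities apply only to a cup and a cap sitting in the zig-zag configuration $f^* \to f^* f_* f^* \to f^*$ on a single strand, whereas a unit followed by a counit around a closed circle, $\id \to f_* f^* \to \id$, is not the identity and cannot be removed this way. Since the whole point of the lemma is to remove a closed region, you would have to explain why the cup and cap end up in zig-zag rather than circle position, and nothing in your plan does so. The repair is to drop \ref{fig:adjunctions} from the argument entirely and follow the composition-only route: two applications of \ref{fig:compositions (nontrivial)} to walk the bounding vertices around the region until an inverse pair of $\comp$ vertices becomes adjacent, then \ref{fig:compositions (inv)} to collapse that pair.
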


\section{Uniqueness of reduced forms}
\label{sec:uniqueness}

We have defined (Definitions~\plainref{alternating}
and~\plainref{roof}) two reduced forms for an~SGF and claimed that
they are unique.  In this section we prove these claims; the first one
is relatively straightforward and does not require string diagrams,
but the manipulations of the second one are simplified by their use,
so we have placed both of them at this point.  Only one general
relation needs to be noted here, expressing the ``commutation'' of
unrelated natural transformations.

\begin{theorem}{lem}{horizontal composition verify}
  Let $F = A F_1 B F_2 C$ be a composition of functors; let $\phi
  \colon F_1 \to G_1$ and $\psi \colon F_2 \to H_2$ be natural
  transformations.  Then the following diagram commutes:
  \begin{equation}
    \label{eq:horizontal composition verify}
    \includestandalone{img13} \qquad \qed
  \end{equation}
\end{theorem}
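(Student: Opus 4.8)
The plan is to recognize \ref{horizontal composition verify} as nothing more than the interchange law for horizontal and vertical composition of natural transformations — Godement's rule — specialized to the situation in which $\phi$ and $\psi$ act on disjoint ``slots'' of the composite $F = A F_1 B F_2 C$ and therefore ``commute past each other''. None of the geofibered structure, and indeed none of the axioms of \ref{geofibered category}, enter: this is a fact about $\cat{Cat}$ as a $2$-category, and the only thing to be careful about is the bookkeeping that pares the general composite down to the essential two-slot square.

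First I would strip off the outer factors by whiskering. Horizontal composition of natural transformations with a fixed functor, on the left by $A$ or on the right by $C$, is functorial in the transformation: it carries identity transformations to identity transformations and respects vertical composites, hence it takes any commuting square of natural transformations to a commuting square. Applying this reduces the diagram of \ref{eq:horizontal composition verify} to the analogous square built from $\phi$ and $\psi$ alone, i.e.\ with $A = C = \id$. The middle functor $B$ cannot be deleted this way, but it can be absorbed: put $F_2' = B F_2$, $H_2' = B H_2$, and $\psi' = B\psi \colon F_2' \to H_2'$, which is itself just a left-whiskering of $\psi$; then the original square becomes the square whose horizontal arrows are $\phi F_2'$ and $\phi H_2'$ and whose vertical arrows are $F_1 \psi'$ and $G_1 \psi'$, namely
\[
\begin{array}{ccc}
F_1 F_2' & \longrightarrow & G_1 F_2' \\
\downarrow & & \downarrow \\
F_1 H_2' & \longrightarrow & G_1 H_2'
\end{array}
\]
So it suffices to prove that this commutes for arbitrary natural transformations $\phi \colon F_1 \to G_1$ and $\psi' \colon F_2' \to H_2'$.

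That last step I would do by evaluating at an object: for any object $x$ of the source category, the clockwise composite is $G_1(\psi'_x) \circ \phi_{F_2' x}$ and the counterclockwise composite is $\phi_{H_2' x} \circ F_1(\psi'_x)$, and these agree precisely because the naturality square of $\phi$ commutes for the morphism $\psi'_x \colon F_2' x \to H_2' x$. Since this holds at every $x$, the two composite transformations coincide, completing the proof. There is no genuine obstacle here; this is the elementary bedrock on which all of the later ``commutation of unrelated transformations'' arguments rest. From the viewpoint of \ref{s:string diagrams} the statement is the planar isotopy that slides the node for $\psi$ past the node for $\phi$, the two nodes occupying disjoint horizontal ranges of the diagram; we give the symbolic argument above rather than invoking that picture only because the string-diagram calculus is itself justified, in the appendix, partly by appeal to facts of exactly this kind.
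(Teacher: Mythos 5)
Your proof is correct and is exactly the standard interchange-law argument the paper has in mind: the lemma is stated with a \qed and no proof precisely because it reduces, after whiskering by $A$, $B$, and $C$, to the naturality of $\phi$ applied to the components of $\psi$. Nothing further is needed.
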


\subsection*{Uniqueness of the alternating reduction.}

Showing that the alternating reduction is unique is a matter of
enforcing an inductive structure on its construction, as encapsulated
by the following lengthy definition.

\begin{theorem}{defn}{staged morphism}
  Let $\phi \colon F \to G$ be an~SGNT; we define a \define{staging
    structure} on it to be the following data:
  \begin{itemize}
  \item A sequence of~SGNTs
    \begin{equation}
      \label{eq:stages}
      F = F_0
      \xrightarrow{\gamma_1} F_1
      \xrightarrow{\alpha_2} F_2
      \cdots
      \xrightarrow{\alpha_{2n}} F_{2n}
      = G
    \end{equation}
    together with representations of $\gamma_i \in \genby{\comp_0}$
    and $\alpha_i \in \genby{\triv_0}$ as products of \define{factors}
    that are basic~SGNTs.

  \item This data must satisfy some conditions.  We make reference to
    the \define{terms} of an~SGF, its basic~SGF composands; a term is
    \define{trivial} if it is of the form $\id_*$~or~$\id^*$; a
    \define{composable pair} is a sequence of two consecutive terms of
    the form $f_* g_*$~or~$f^* g^*$.  We define the \define{stage} of
    any trivial term or composable pair in any of the
    intermediate~SGFs of the staging structure, together with
    conditions:
    \begin{itemize}
    \item The stage of any composable pair of~$F_0$ is~$0$, and of any
      trivial term is~$1$.
    \item Let~$\alpha$ be any factor of~$\alpha_i$, acting locally
      as~$t_1 t t_2 \to t_1 t_2$, where~$t$ is trivial; we require
      that~$t$ have stage~$i - 1$ and that that both pairs $t_1
      t$~and~$t t_2$ (when composable) have stage~$i$.  If~$t_1 t_2$
      is a composable pair, we define its stage to be~$i$.  We say
      that~$t$, $t_1 t_2$,~and $t_1 t$~and~$t t_2$ are
      \define{affected by}~$\alpha$.
    \item Let~$\gamma$ be any factor of~$\gamma_i$, acting locally as
      $t_0 t_1 t_2 t_3 \to t_0 t t_3$, where~$t_1 t_2$ is composable.
      We require that~$t_1 t_2$ have stage~$i -1$, and that if either
      of these terms is trivial, then that term has stage~$i$.  We
      define the stages of $t_0 t$~or~$t t_1$ (when composable), to be
      those of $t_0 t_1$~and~$t_1 t_2$ each; the stage of~$t$ (if it
      is trivial) is~$i$, unless $t_1$~and~$t_2$ are both trivial, in
      which case the stage of~$t$ is the larger of their stages.  We
      say that~$t$ and~$t_1 t_2$ are \define{affected by}~$\gamma$.
    \end{itemize}
  \end{itemize}
  We say that the staging structure is \define{complete} at stage~$s$
  when~$s$ is odd if~$F_s$ has no composable pairs, and when~$s$ is
  even if~$F_s$ has no trivial terms.
\end{theorem}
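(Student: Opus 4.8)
The final statement is a \emph{definition}: it introduces the notion of a \emph{staging structure} on an SGNT $\phi \colon F \to G$, together with the auxiliary notion of \emph{completeness} at a stage. There is therefore no proposition to prove. The only thing the definition silently requires, in order to make sense, is a well-definedness check, namely that the bookkeeping rules it lists assign a \emph{unique} stage to every trivial term and every composable pair occurring in each intermediate SGF $F_s$, with no clause ever in conflict with another. That is the verification I would carry out, and I sketch it below.

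The plan is to read the sequence $F = F_0 \xrightarrow{\gamma_1} F_1 \xrightarrow{\alpha_2} \cdots \xrightarrow{\alpha_{2n}} F_{2n} = G$ one basic factor at a time and to induct on the position in this list. The base clause is unambiguous: in $F_0$ every composable pair is declared to have stage $0$ and every trivial term stage $1$. Each subsequent factor is a single basic element of $\genby{\comp_0}$ or $\genby{\triv_0}$, and by its very form it acts \emph{locally}, rewriting a short window of terms (of the shape $t_1 t t_2$ for a $\triv_0$-factor, or $t_0 t_1 t_2 t_3$ for a $\comp_0$-factor) and leaving the rest of the word untouched. Consequently the objects whose stage the definition newly fixes are exactly the ones it calls \emph{affected by} that factor -- the freshly created trivial term or composable pair together with the new adjacent pairs it produces -- while every unaffected term or pair simply retains the stage it already carried. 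Since the affected objects are precisely the newly created ones, no trivial term or composable pair is ever assigned a stage twice, which is what well-definedness demands.

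It then remains only to confirm that the numerical \emph{requirements} attached to each factor are constraints on the data rather than claims to be proved: a $\triv_0$-factor of $\alpha_i$ may be used only when the trivial term it deletes already carries stage $i-1$, and a $\comp_0$-factor of $\gamma_i$ only when the pair it contracts has stage $i-1$. I would check that these prerequisites are always satisfiable in the situation of interest -- a staging structure built to witness the alternating reduction of \ref{alternating} -- and that the tie-breaking clause, in which a newly created trivial term inherits the larger of two stages, is the one subtle point keeping the induction deterministic. I expect this to be the only place requiring care; there is no genuine obstacle, since the local ``create-then-affect'' pattern just described is exactly what forecloses any ambiguity, and the definition is designed precisely so that completeness at successive stages becomes the invariant driving the subsequent uniqueness argument.
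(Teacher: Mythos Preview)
Your assessment is correct: the statement is a definition, and the paper offers no proof for it (nor does it carry out the well-definedness check you sketch; it simply states the definition and moves on to the lemma labeled \ref{stage limit}). Your additional sanity check of the bookkeeping is reasonable commentary, but nothing is required here.
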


The following fact is trivial:

\begin{theorem}{lem}{stage limit}
  In a staged~SGNT $\phi \colon F \to G$, any trivial term or
  composable pair in~$F_s$ has stage at most~$s$, if~$s > 0$. A term
  of stage~$s$ in~$G$ is not affected by any factor of any
  $\alpha_i$~or~$\gamma_i$ with~$i > s$. \qed
\end{theorem}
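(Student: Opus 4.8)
The plan is to prove both assertions simultaneously by induction on $s$, reading the behaviour of stages directly off the three stage-assignment rules of \ref{staged morphism}; throughout one keeps in mind that each $\gamma_i$ and each $\alpha_i$ is a product of basic factors applied one at a time, so all bookkeeping is really carried out factor-by-factor on the ``intermediate SGFs'' interpolating between $F_{i-1}$ and $F_i$, not merely on the explicitly listed $F_j$ of \ref{eq:stages}.

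For the first assertion I would start with the base case $s=1$: here $F_1$ is obtained from $F_0$ by the factors of $\gamma_1\in\genby{\comp_0}$, and the composition rule creates new trivial terms of stage exactly $1$ and assigns to each of the (at most two) new composable pairs the stage of the composable pair that was collapsed; since the composable pairs of $F_0$ have stage $0$ and its trivial terms have stage $1$, every stage occurring in $F_1$ is $\le 1$. (The hypothesis $s>0$ is needed precisely because $F_0$ itself may carry trivial terms of stage $1$.) For the inductive step with $s$ even, $F_s$ arises from $F_{s-1}$ by factors $\alpha\in\genby{\triv_0}$, each of which deletes a trivial term and stamps the newly created composable pair with stage $s$ while leaving the stages of untouched terms alone; so by the inductive hypothesis every stage in $F_s$ is $\le s$. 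The case $s$ odd repeats the base-case argument with $\gamma_s$ in place of $\gamma_1$ and the inductive hypothesis at $s-1$ in place of the data of $F_0$.

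For the second assertion, the crucial observation is that, by the requirements built into \ref{staged morphism}, a factor of $\gamma_i$ (resp.\ $\alpha_i$) can act only on a composable pair that already has stage $i-1$ and on a trivial term that already has stage $i$ (resp.\ $i-1$), and it stamps everything it creates with stage $i$---the exceptional ``$t_1$ and $t_2$ both trivial'' clause of the composition rule also produces stage $i$, since the preceding requirement forces those terms to have stage $i$. Hence the stage of a term equals the index of the last factor along \ref{eq:stages} that affected it, and since these indices increase monotonically along the sequence, a term of $G$ of stage $s$ can have been affected by no factor of $\gamma_i$ or $\alpha_i$ with $i>s$; I would present this as a brief secondary induction along \ref{eq:stages} carrying the invariant that a term present in $F_j$ with stage $s\le j$ has been affected by no factor of index strictly between $s$ and $j$.

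The argument is wholly mechanical, and the only point that rewards a moment's attention---the nearest thing to an obstacle---is checking that the stage bounds and the ``affected by'' relation are preserved by the passage through a single $\gamma_i$ or $\alpha_i$ one factor at a time, since a composable pair created partway through $\gamma_s$ (hence of stage $s$) may be collapsed again by a later factor of the same $\gamma_s$; one verifies directly from the rules that this causes no inconsistency, a composition factor never raising a pair's stage above the largest already present and a trivialization factor raising it only to the current index.
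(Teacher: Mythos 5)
Your argument is correct and is exactly the routine unwinding of \ref{staged morphism} that the paper has in mind when it declares this lemma trivial and omits the proof: every stage-assignment rule only ever stamps a newly created item with the current index $i$ or with a stage already present among its neighbours, and every input to a factor of index $i$ must have stage $i-1$ or $i$ and is destroyed by that factor, so survivors into $G$ carry the index of the factor that created them. One trifling point: when a factor of $\gamma_i$ collapses $t_1t_2$, the definition assigns to the new pairs $t_0t$ and $tt_3$ the stages of the \emph{adjacent} old pairs $t_0t_1$ and $t_2t_3$, not that of the collapsed pair as you say, but since those stages are also bounded by the inductive hypothesis this changes nothing in your bounds.
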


In the next few lemmas we establish the strong properties of a
complete staging.

\begin{theorem}{lem}{complete staging}
  If an~SGNT with staging $\phi \colon F \to G$ is complete at any
  stage~$s \geq 3$, then it is also complete at stage~$s - 2$.
\end{theorem}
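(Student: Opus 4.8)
The plan is to argue by working at the extreme end of the staging and reducing the stage count by two. Suppose $\phi \colon F \to G$ has a staging structure complete at stage $s \geq 3$. The two top transformations in the staging are $\gamma_{?}$ and $\alpha_s$ (or, depending on parity, $\alpha_s$ and $\gamma_{s-1}$ in some order determined by the even/odd convention in \ref{staged morphism}); in either case the top two stages handle, on one hand, the elimination of all remaining composable pairs of a given stage via a product of factors in $\genby{\comp_0}$, and on the other the elimination of all trivial terms of the neighboring stage via a product of factors in $\genby{\triv_0}$. The first step is to observe, using \ref{stage limit}, that no factor of $\alpha_i$ or $\gamma_i$ with $i \le s-2$ touches any term that gets created or consumed at stage $s-1$ or $s$: the only terms of stage $s$ or $s-1$ in the intermediate SGFs near the top are precisely those affected by the top two transformations. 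This ``locality at the top'' is what lets us hope to excise the last two stages cleanly.

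Next I would show that the composite of the top two transformations, $F_{s-2} \to F_{s-1} \to F_s = G$, can be \emph{rearranged} so that it is itself the concatenation of a single $\genby{\comp_0}$-transformation followed by a single $\genby{\triv_0}$-transformation that together form a \emph{complete} pair of stages $s-2$ and $s-1$ relative to the truncated staging on $F = F_0 \to \cdots \to F_{s-2}$. Concretely: the factors of the top two transformations act on disjoint ``local regions'' of the functor string (a trivial term $t$ with its two flanking terms, or a composable pair $t_1 t_2$ with its two neighbors), and by \ref{horizontal composition verify} factors acting in disjoint regions commute past one another. So I would push all the $\comp_0$-type factors of stage $\le s-1$ that happen to sit ``above'' some $\triv_0$-factor down past it, collecting a clean two-stage block at the top. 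Here one must be careful that a $\gamma$-factor at stage $s-1$ can combine two terms of which one is trivial and of stage $s$ — but completeness at stage $s$ means $F_s = G$ has no composable pairs, so after $\alpha_s$ there is nothing left to combine, and the interaction is forced to be of the permitted local shape; similarly the $\alpha_s$-factors each consume a trivial term of stage $s-1$, and completeness forces there to be none left in $G$.

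I expect the main obstacle to be bookkeeping the \emph{stage assignments} under this rearrangement: when I commute a $\comp_0$-factor past a $\triv_0$-factor that acts in an adjacent (not disjoint) region — say the trivial term $t$ being deleted by $\alpha_s$ is one of the two terms $t_1, t_2$ of the pair being merged by some $\gamma$ at stage $s-1$ — I need to check that the resulting reordered factors still satisfy the stage conditions of \ref{staged morphism}, i.e.\ that the ``new'' trivial terms and composable pairs created in the middle get consistent stages $\le s-2$. This is precisely the kind of case analysis (adjacent vs.\ disjoint regions, and the three sub-cases of which of $t_1, t_2$ is trivial) that the definition of ``affected by'' was set up to handle, so I would dispatch it by checking each local configuration against the rules, invoking \ref{stage limit} to rule out the impossible ones. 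Once the top two stages are shown to be removable as a block — leaving a staging on $F_0 \to \cdots \to F_{s-2}$ that is complete at stage $s-2$ because every trivial term and every composable pair that the removed block was responsible for has been shown to have had stage exactly $s-1$ or $s$ — the lemma follows.
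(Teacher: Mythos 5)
Your plan is organized around excising the last two transformations of the staging as a block after rearranging their factors, but this machinery is both unnecessary and insufficient: completeness at stage~$s-2$ is a statement about the SGF~$F_{s-2}$ itself (that it contains no composable pairs, resp.\ no trivial terms, according to parity), not about the shape of the maps $F_{s-2} \to F_{s-1} \to F_s$, so ``removing the top block'' does not by itself yield the conclusion. The fact that actually does the work appears only in your final parenthetical, and the supporting observation you cite from \ref{stage limit} is the wrong half of it. What is needed is not that factors of $\alpha_i$ or~$\gamma_i$ with $i \leq s-2$ avoid high-stage terms, but the converse: by the first half of \ref{stage limit}, any composable pair (resp.\ trivial term) already present in~$F_{s-2}$ has stage at most~$s-2$, whereas the stage conditions of \ref{staged morphism} force every factor of the two intervening transformations to affect such an object only when its stage is exactly~$s-1$. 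Hence every composable pair (resp.\ trivial term) of~$F_{s-2}$ persists untouched into~$F_s$, and completeness at stage~$s$ forces there to have been none. That is the entire proof; no commutation of $\comp_0$ factors past $\triv_0$ factors and no re-assignment of stages is required, and attempting the rearrangement only creates the bookkeeping problems you yourself anticipate (and would in any case produce a \emph{different} staging, whereas the lemma concerns the given one).

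A second, smaller error: you assume throughout that~$s$ is the final stage ($F_s = G$, ``the extreme end of the staging''). The lemma is stated for an arbitrary stage $s \geq 3$ and is applied in \ref{complete staging induct} by descending induction, so the argument must be local to the segment $F_{s-2} \to F_{s-1} \to F_s$ without reference to~$G$.
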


\begin{proof}
  Suppose~$s$ is odd, and consider a composable pair~$t_1 t_2$
  in~$F_{s - 2}$; it has stage~$\leq s - 2$ by \ref{stage limit}.  It
  is thus not possible for either term to be affected by a factor
  of~$\alpha_{s - 1}$, so this pair persists into~$F_{s - 1}$.  There,
  since~$t_1 t_2$ still has stage~$\leq s - 2$, it cannot be acted on
  by a factor of~$\gamma_s$ so this space in~$F_{s - 1}$ will remain
  in a composable pair in~$F_s$.
  
  Suppose~$s$ is even, and consider a trivial term~$t$ in~$F_{s - 2}$.
  By \ref*{stage limit} it has stage~$\leq s - 2$, so it is not
  affected by any factor of~$\gamma_{s - 1}$, so it persists
  into~$F_{s - 1}$.  There, it has the same stage and so cannot be
  affected by any factor of~$\alpha_s$, so it persists into~$F_s$,
  forming a trivial term there.
\end{proof}

\begin{theorem}{cor}{complete staging induct}
  If $\phi \colon F \to G$ has a staging such that~$G$ has neither
  composable pairs nor trivial terms, then it is complete at every
  stage~$s \geq 1$.
\end{theorem}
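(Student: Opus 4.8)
The plan is a downward induction on the stage number, seeded by two base cases of opposite parity. Since the staging runs $F = F_0 \xrightarrow{\gamma_1} F_1 \xrightarrow{\alpha_2} \cdots \xrightarrow{\alpha_{2n}} F_{2n} = G$ with final arrow $\alpha_{2n} \in \genby{\triv_0}$, completeness at the top (even) stage $2n$ is immediate: it requires only that $F_{2n} = G$ have no trivial terms, which is half the hypothesis. The reduction lemma \ref{complete staging} propagates completeness from a stage $s \geq 3$ down to $s - 2$; iterating it from stage $2n$ covers all even stages $\geq 2$. It therefore suffices to prove completeness at the odd stage $2n - 1$, after which the same lemma, iterated, covers all odd stages $\geq 1$. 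The case $n = 0$ is vacuous, and when $2n - 1 = 1$ the argument below applies unchanged with $\alpha_{2n} = \alpha_2$.

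So I must show $F_{2n-1}$ has no composable pair. Suppose it did, say $P$; by \ref{stage limit} (valid since $2n - 1 \geq 1$) the stage of $P$ is at most $2n - 1$. The arrow $\alpha_{2n} \colon F_{2n-1} \to G$ is a product of $\triv_0$ factors, each of which merely deletes a trivial term, never inserting or reordering; hence if no factor deletes either term of $P$, those two terms survive as consecutive terms of $G$ of unchanged form, so $P$ reappears as a composable pair of $G$, contradicting the second half of the hypothesis. Otherwise, let $\beta$ be the first factor of $\alpha_{2n}$ (in composition order) that deletes one of the two terms of $P$; at the instant $\beta$ acts, both terms of $P$ are still present and, since no intervening factor touched them and deletions do not reorder, still adjacent. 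Thus $\beta$ deletes one of them --- necessarily a trivial term, call it $t$ --- and in the local notation $t_1 \, t \, t_2 \to t_1 t_2$ of \ref{staged morphism} the other term of $P$ is $t_1$ or $t_2$, so $P$ itself is the pair $t\, t_2$ or $t_1\, t$. Being composable, $P$ must then have stage $i = 2n$ by the staging conditions imposed on a factor of $\alpha_{2n}$, contradicting the bound $\leq 2n - 1$. Hence $F_{2n-1}$ has no composable pair.

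Combining the base cases at stages $2n$ and $2n-1$ with iterated application of \ref{complete staging} yields completeness at every stage $s \geq 1$. The step I expect to require the most care is the last one of the previous paragraph: one must be sure that the composable pair produced inside an interior factor of $\alpha_{2n}$ is literally the pair $P$ carried over from $F_{2n-1}$, so that the stage constrained by that factor's conditions is the same stage bounded by \ref{stage limit} --- which is precisely why $\beta$ is chosen to be the \emph{first} factor meeting either term of $P$.
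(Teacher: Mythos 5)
Your proof is correct, but it establishes the two base cases by a different device than the paper. The paper simply pads the given staging with two vacuous stages, setting $F_{2n+1} = F_{2n+2} = F_{2n} = G$ with $\gamma_{2n+1} = \alpha_{2n+2} = \id$ (which is trivially still a staging), so that the hypothesis on~$G$ hands over completeness at stages $2n+1$ and~$2n+2$ for free, and then inducts downward with \ref{complete staging} exactly as you do. You instead take the even base case at stage~$2n$ for free and prove the odd base case at stage~$2n-1$ by hand; your argument there --- a composable pair~$P$ of~$F_{2n-1}$ has stage $\leq 2n-1$ by \ref{stage limit}, so no factor of~$\alpha_{2n}$ can delete either of its terms (the conditions of \ref{staged morphism} would force such a pair to have stage~$2n$), whence $P$ survives into~$G$, a contradiction --- is essentially the first half of the odd case of the proof of \ref{complete staging}, extracted and run one stage higher. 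Both routes are sound: the paper's padding trick avoids repeating that combinatorial analysis at the small cost of checking that the padded sequence is still a staging, while yours is a bit longer but leaves the given staging untouched. Your insistence on taking~$\beta$ to be the \emph{first} factor meeting a term of~$P$ is indeed the point that keeps the stage bookkeeping honest, since it guarantees that the pair constrained by that factor's conditions is literally the pair whose stage \ref{stage limit} bounds.
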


\begin{proof}
  It is vacuously compatible with the definition of a staging to
  define $F_{2n + 2} = F_{2n + 1} = F_{2n} = G$ with $\alpha_{2n + 2}
  = \gamma_{2n + 1} = \id$, with which convention~$\phi$ is complete
  at stages $2n + 1$~and~$2n$, so by induction on \ref{complete
    staging}, at every stage~$s \geq 1$.
\end{proof}

\begin{theorem}{lem}{complete staging determined}
  If $\phi \colon F \to G$ has a staging that is complete at every
  stage~$s \geq 1$, then all the intermediate SGFs~$F_i$ and~SGNTs
  $\alpha_i$~and~$\beta_i$ are uniquely determined by~$F$.
\end{theorem}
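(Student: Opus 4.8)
The plan is to reconstruct the whole staging by upward induction on the stage index $i$, showing at each step that $F_i$---together with the stage labels on its composable pairs and trivial terms---and the connecting map (which is $\gamma_i$ when $i$ is odd and $\alpha_i$ when $i$ is even) are forced by $F$ and by the completeness hypothesis. The base case is the definition: $F_0=F$, with composable pairs of stage $0$ and trivial terms of stage $1$. For the inductive step with $i$ odd I would first observe that $F_{i-2}$ has no composable pairs---by completeness at the odd stage $i-2$ when $i\ge 3$, and by the base assignment when $i=1$---so that every composable pair of $F_{i-1}$ must have been produced by a factor of $\alpha_{i-1}$ (vacuously when $i=1$, where $F_{i-1}=F_0$) and hence carries stage $i-1$ by \ref{staged morphism}. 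Therefore $\gamma_i$ is permitted to contract every composable pair of $F_{i-1}$, while completeness at stage $i$ forces $F_i$ to have none; so $\gamma_i$ must contract all of them. The symmetric analysis---interchanging ``composable pair'' with ``trivial term'', $\comp_0$ with $\triv_0$, and $\gamma$ with $\alpha$, and using that $F_{i-2}$ has no trivial terms for even $i\ge 4$ (and that those of $F_0$ have stage $1$ when $i=2$)---shows for even $i$ that every trivial term of $F_{i-1}$ has stage $i-1$ and that $\alpha_i$ must delete all of them.

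It remains to check that ``contract every composable pair'' and ``delete every trivial term'' each pin down not only the resulting SGF but also the connecting SGNT (not merely its number of factors). The resulting SGF is the normal form of the rewriting system whose moves are, respectively, contraction of a composable pair via $\comp_*$ or $\comp^*$ and deletion of a trivial term via $\triv_*$ or $\triv^*$, and this system is locally confluent: two moves on disjoint portions of an SGF commute by \ref{horizontal composition verify}, while the only overlapping critical pairs are a composable triple $f_*g_*h_*$ or $f^*g^*h^*$, which closes up by \ref{eq:compositions assoc}, and a block of adjacent trivial terms, which closes up by the trivialization identities \ref{fig:trivializations triv} and \ref{fig:trivializations comp} (with \ref{eq:trivializations comp} reconciling a contraction that happens to involve an identity with the corresponding trivialization); no critical pair mixes the two systems, since each $\gamma_i$ is built from $\comp_0$ alone and each $\alpha_i$ from $\triv_0$ alone. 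A diamond-lemma argument then gives a unique normal form reached by a unique composite, so $F_i$ and the connecting map are determined. Iterating determines the entire sequence up to $F_{2n}=G$; the alternation must have stabilized by stage $2n$ because completeness at all stages $s\ge 1$ (including, by the padding convention in the proof of \ref{complete staging induct}, those with $s>2n$) forbids $F_{2n}$ from containing a composable pair or a trivial term, and if the given $n$ exceeds what is needed the surplus $F_i$ all equal $G$ and the surplus connecting maps are identities.

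The step I expect to be the main obstacle is this confluence claim---that any two maximal chains of contractions (resp.\ deletions) compose to the same SGNT. This is a Mac~Lane-style coherence statement for the part of the structure generated by the composition and trivialization isomorphisms alone, and the real content is verifying that its critical pairs---the associativity overlap of a composable triple and the overlaps among adjacent identity terms---close up; the identities that do this are exactly \ref{eq:compositions assoc}, \ref{eq:trivializations comp} and the string-diagram lemmas \ref{fig:trivializations triv}--\ref{fig:trivializations comp}, together with \ref{horizontal composition verify} for moves with disjoint support.
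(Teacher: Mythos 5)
Your proposal is correct and follows essentially the same route as the paper's proof: an upward induction on the stage index, with completeness forcing each $\gamma_i$ to contract all composable pairs and each $\alpha_i$ to delete all trivial terms, and with order-independence supplied by the associativity relation~\ref{eq:compositions assoc} and the disjoint-support commutation of~\ref{horizontal composition verify}. Your explicit diamond-lemma treatment of the critical pairs merely elaborates what the paper dispatches in a single sentence.
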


\begin{proof}
  It is clear that if~$\phi$ is complete at~$F_1$, then~$F_1$ must be
  obtained by composing all composable pairs of~$F_0$; this is
  well-defined regardless of order by~\ref{eq:compositions assoc} and
  gives~$\gamma_1$ uniquely.  Then it is also clear that if~$\phi$ is
  complete at~$F_2$, it must be obtained by deleting all trivial terms
  of~$F_1$, which is well-defined regardless of order
  by~\ref{horizontal composition verify} and gives~$\alpha_2$
  uniquely.  By induction, the lemma follows.
\end{proof}

Now we turn to the construction of a staging on a given~SGNT.

\begin{theorem}{lem}{stage associativity}
  Let $\phi \colon F \to G$ have a staging; then any reordering of the
  factors of any~$\gamma_i$ by associativity, as
  in~\ref{eq:compositions assoc}, is a valid staging.
\end{theorem}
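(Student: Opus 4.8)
The plan is to reduce to a single elementary associativity move and then to observe that such a move affects only a bounded local window of the data, within which the stage bookkeeping of \ref{staged morphism} is insensitive to the order in which compositions are performed. Since $\gamma_i \in \genby{\comp_0}$, its factors are basic composition SGNTs, each acting by merging two consecutive same‑variance terms of the ambient SGF into one. Two representations of $\gamma_i$ as such a product that are related by associativity are connected by a finite sequence of moves of two kinds: interchanging two consecutive factors supported on disjoint sets of terms, and replacing, in a triple of consecutive same‑variance terms $t_1t_2t_3$, the two factors that merge $t_1t_2$ first by the two that merge $t_2t_3$ first. The first kind of move is harmless: by \ref{horizontal composition verify} the two orders produce the same SGNT and differ only in the single sub‑intermediate SGF between the two factors, and \ref{staged morphism} assigns stages to the terms affected by one factor independently of the other, so the staging conditions are preserved verbatim. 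By induction on the number of moves it therefore suffices to treat one move of the second kind.

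So fix such a move inside $\gamma_i \colon F_{i-1} \to F_i$: write the relevant factors as $\gamma^{(j+1)}\circ\gamma^{(j)}$, merging three consecutive terms $t_1t_2t_3$ of the $j$‑th sub‑intermediate SGF into one term $t$ via the ``left‑first'' path $t_1t_2t_3 \to (t_1t_2)\,t_3 \to t$, and let $\tilde\gamma^{(j+1)}\circ\tilde\gamma^{(j)}$ be the ``right‑first'' path $t_1t_2t_3 \to t_1\,(t_2t_3) \to t$. By \ref{eq:compositions assoc} these two composites are equal, so $\gamma_i$ itself, all the SGFs $F_0,\dots,F_{2n}$, and every other factor of every $\gamma$ and $\alpha$ are literally unchanged; the only new datum is the sub‑intermediate SGF $G$ sitting between $\tilde\gamma^{(j)}$ and $\tilde\gamma^{(j+1)}$, which has $t_2t_3$ merged and $t_1$ separate in place of the old one, which had $t_1t_2$ merged and $t_3$ separate. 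It remains only to put stages on the trivial terms and composable pairs of $G$ and to check the three conditions of \ref{staged morphism} for $\tilde\gamma^{(j)}$ and $\tilde\gamma^{(j+1)}$; everything outside this window is inherited unchanged.

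For this I would first record the stages the original staging already forces: $t_1t_2$ and $t_2t_3$ have stage $i-1$; whichever of $t_1,t_2,t_3$ is trivial has stage $i$ (this is exactly the trivial‑term requirement of \ref{staged morphism} applied to $\gamma^{(j)}$ and to $\gamma^{(j+1)}$); and the merged term $t$, together with its pairings with the untouched neighbours, has the stage prescribed by \ref{staged morphism}. I would then give $G$ the only admissible stages — the pair $t_2t_3$ as it sits merged in $G$ inherits the stage that the pairing rule of \ref{staged morphism} propagates from $t_2t_3$, and if $t_2,t_3$ happen both to be trivial the merged term gets the larger of their stages — and check that the conditions for $\tilde\gamma^{(j)}$ and $\tilde\gamma^{(j+1)}$ reduce to the same arithmetic that certified $\gamma^{(j)}$ and $\gamma^{(j+1)}$. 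The only step needing genuine care, and the one I expect to be the main (modest) obstacle, is the subcase in which two or three of $t_1,t_2,t_3$ are trivial: one must see that the stage the two paths attach to the trivial terms created en route, and to $t$ itself, agree, which is exactly the associativity of $\max$ together with the observation that validity of the input staging has already pinned each trivial $t_m$ to stage $i$, so every maximum that arises collapses to $i$ except where \ref{staged morphism} explicitly prescribes a larger value carried along unchanged by the move. Once this subcase is dispatched the remaining checks are direct translations, and the reordered data is a valid staging.
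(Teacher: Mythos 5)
Your proposal is correct and follows essentially the same route as the paper: reduce to a single elementary regrouping of a triple $t_1t_2t_3$, observe that only the intermediate sub-SGF between the two factors changes, check that the original staging already forces $t_2t_3$ to have stage $i-1$ (via the inheritance rule applied to $t\,t_3$) and the trivial terms to have stage $i$, and dispatch the all-trivial subcase by the associativity of the maximum. The only difference is that you also treat disjoint-support interchanges explicitly via \ref{horizontal composition verify}, which the paper leaves implicit.
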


\begin{proof}
  Given two overlapping compositions:
  \begin{align}
    \label{eq:stage associativity}
    t_0 ((t_1 t_2) t_3) t_4
    &&
    t_0 (t_1(t_2 t_3)) t_4
  \end{align}
  suppose that the first grouping satisfies the conditions of a
  staging.  Thus,~$t_1 t_2$ has stage~$\geq i - 1$ and each
  trivial~$t_j$ has stage~$i$.  Let~$t$ be their composition, so in
  particular,~$t t_3$ has the same stage as~$t_2 t_3$ and, as it is
  acted on by another factor of~$\gamma_i$, thus has stage~$\geq i -
  1$.  It follows that~$t_2 t_3$ has stage~$\geq i - 1$, and likewise
  that if~$t_3$ is trivial, then it has stage~$i$.  Therefore the
  second grouping also satisfies the conditions of a staging.
  Furthermore, the product of the first group has stage~$i$ unless all
  three of~$t_1$, $t_2$, and~$t_3$ are trivial, which is the same
  exception as for the second group, in which case its stage is the
  maximum of their stages.  So the results of each grouping are
  identical.
\end{proof}

\begin{theorem}{lem}{stage comp}
  Let $\phi \colon F \to G$ have a staging and let~$t_0 t_1 t_2 t_3
  t_4 t_5$ be a sequence of composable terms in~$G$.  Suppose that the
  staging can be extended by composing~$t_2 t_3$ \emph{or} by
  composing both $t_1 t_2$~and~$t_3 t_4$.  Then it can be extended by
  the sequence of compositions
  \begin{equation}
    \label{eq:stage comp}
    t_0 (t_1 ((t_2 t_3) t_4)) t_5.
  \end{equation}
\end{theorem}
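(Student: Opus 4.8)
The plan is to reduce the statement to \ref{stage associativity} via the coherence of associativity. The sequence of compositions named in~\ref{eq:stage comp} and the two sequences permitted by the hypothesis all have the same net effect on~$G$: they collapse the composable middle block $t_1 t_2 t_3 t_4$ of the chain $t_0 t_1 t_2 t_3 t_4 t_5$ into a single term while leaving $t_0$~and~$t_5$ alone, and they differ only in the binary bracketing used for the four-fold composition. Any two full binary bracketings on four factors are joined by a chain of elementary re-associations $((XY)Z)\mapsto(X(YZ))$, each an instance of~\ref{eq:compositions assoc} --- this is the classical coherence of associativity, the connectedness of the associahedron pentagon on four factors --- and each such elementary re-association carries a valid staging to a valid staging by \ref{stage associativity}. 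So it will be enough to produce, in each case of the hypothesis, \emph{one} valid extension of the staging that collapses $t_1 t_2 t_3 t_4$ to a single term under \emph{some} bracketing, and then re-bracket to the one named in~\ref{eq:stage comp}.

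Concretely, in the case where the staging can be extended by a collapse beginning with $t_2 t_3$, I would observe that the only two bracketings completing such a collapse are $t_1\bigl((t_2 t_3)t_4\bigr)$ and $\bigl(t_1(t_2 t_3)\bigr)t_4$; the first is exactly the target of~\ref{eq:stage comp}, and the second is one elementary re-association away from it, so \ref{stage associativity} disposes of this case. In the case where the staging can be extended by composing both $t_1 t_2$~and~$t_3 t_4$, the collapse is completed to the balanced bracketing $(t_1 t_2)(t_3 t_4)$, and I would re-associate along $(t_1 t_2)(t_3 t_4)\mapsto t_1\bigl(t_2(t_3 t_4)\bigr)\mapsto t_1\bigl((t_2 t_3)t_4\bigr)$, two elementary moves each handled by \ref{stage associativity}. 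Either way the target bracketing is reached.

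The one point this argument glosses over --- and the step I expect to be the main obstacle --- is the claim that the partial data furnished by the hypothesis really does extend to a \emph{complete} collapse of $t_1 t_2 t_3 t_4$ under some bracketing: e.g.\ that once $t_2 t_3$ is composable at the stage of some $\gamma$-step of the extension, the pair $(t_2 t_3)t_4$, and afterward $t_1\bigl((t_2 t_3)t_4\bigr)$, is again composable at the stage that \ref{staged morphism} demands. This is a direct computation with the stage-assignment rules for $\gamma$-factors in \ref{staged morphism}: a freshly created composable pair inherits the stage of the adjacent pair it replaces, so the successive pairs produced during the collapse stay at the stage of the $\gamma$-step that launched it (cf.~\ref{stage limit}). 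The only nuisance is the degenerate possibility that one of $t_1,\dots,t_4$ is a trivial term, so that a composition absorbs it through~\ref{eq:trivializations comp} and one must instead track the stage of the resulting (possibly trivial) term; but this is exactly the bookkeeping already performed in \ref{stage associativity} and introduces nothing new. Everything else is formal manipulation of bracketings.
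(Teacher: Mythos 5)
There is a genuine gap, and it starts with your reading of the hypothesis. You treat the ``or'' as a disjunction and argue two independent cases, but the lemma (as it is used in \ref{stage extend comp}, and as its stage bookkeeping demands) assumes that \emph{both} extensions are individually valid, and both facts are needed at once. Neither disjunct alone implies the conclusion. If you only know that $t_2 t_3$ may be composed, you know its stage is $2n$; but to continue the collapse you must next compose the pair $t\,t_4$ with $t = (t_2 t_3)$, and by the stage-assignment rule of \ref{staged morphism} that new pair inherits the stage of $t_3 t_4$ --- not, as you assert in your final paragraph, ``the stage of the $\gamma$-step that launched'' the collapse. Nothing in the first assumption controls the stage of $t_3 t_4$, nor the stage of $t_4$ when it is trivial, so the collapse can stall after its first step. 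Symmetrically, knowing only that $t_1 t_2$ and $t_3 t_4$ may each be composed says nothing about the stage of $t_2 t_3$, which is exactly the stage inherited by the pair $(t_1 t_2)(t_3 t_4)$ that your balanced bracketing would next need to compose. So the step you yourself flag as ``the main obstacle'' --- which is where all the content of the lemma lives --- is discharged by an incorrect claim about stage inheritance, and the case analysis built on top of it cannot be repaired case by case.

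With the hypothesis read as a conjunction, the paper's proof is a short direct computation and needs no appeal to coherence of bracketings: the first assumption gives $t_2 t_3$ stage $2n$ (so the inner composition is licensed); the freshly created pairs $t_1 t$ and $t\,t_4$ inherit the stages of $t_1 t_2$ and $t_3 t_4$, which the second assumption pins at $2n$ (with $t_1, t_4$ of stage $2n+1$ when trivial); hence the outward compositions are licensed in turn, yielding exactly the bracketing of \ref{eq:stage comp}. Your reduction to \ref{stage associativity} could also be made to work once both assumptions are in hand (establish the balanced bracketing, whose final pair inherits the stage of $t_2 t_3$ from the first assumption, then re-associate twice), but note that \ref{stage associativity} only transports validity between bracketings of an \emph{already valid} staging; it cannot manufacture the validity of the first complete collapse, which is the actual burden of this lemma.
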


\begin{proof}
  We suppose that we are constructing stage~$2n + 1$ of~$\phi$.  By
  the first assumption, the stage of~$t_2 t_3$ is~$2n$.  By
  construction, $t = (t_2 t_3)$, if trivial, has stage~$2n + 1$ and
  both pairs~$t_1 t$ and~$t t_4$ have the same stages as,
  respectively, $t_1 t_2$~and~$t_3 t_4$. By the second assumption, the
  latter stages are~$2n$ and each~$t_{1,4}$ (when trivial) has
  stage~$2n + 1$.  Therefore, the pair~$t t_4$ may be composed in a
  staging, with value~$u$ (if trivial) of stage~$2n + 1$ and the
  pair~$t_1 u$ having stage that of~$t_1 t$, which is the same as that
  of~$t_1 t_2$, which is~$2n$.  So the pair~$t_1 u$ may be composed as
  well.
\end{proof}

\begin{theorem}{cor}{stage extend comp}
  Let $\phi \colon F \to G$ have a staging ending at stage~$2n + 1$,
  and let~$\psi \in \comp_0$ have domain~$G$ affecting a pair~$t_1
  t_2$ of stage~$\leq 2n - 2$.  Define a factorization of~$\gamma_{2n
    + 1}$, using \ref{stage limit} and
  associativity~\ref{eq:compositions assoc}, as~$\gamma^1\gamma^0$,
  where~$\gamma^0$ does not affect~$t_1 t_2$ and~$\gamma^1$ is a pair
  of two~$\comp_0$ factors having values $t_1$~and~$t_2$ respectively.
  Let~$\phi'$ be the portion of~$\phi$ up to stage~$2n$ and
  let~$\psi'$ be~$\psi$ applied to the codomain of~$\gamma^0$.  Then,
  if~$\psi' \gamma^0 \phi'$ has a staging, so does~$\psi \phi$.
\end{theorem}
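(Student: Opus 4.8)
The plan is to re-present $\psi\phi$ as a short tail of $\comp_0$-factors applied on top of $\psi'\gamma^0\phi'$, and then to graft that tail onto a staging of $\psi'\gamma^0\phi'$.

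First I would unpack the factorization. Write $\gamma^1 = c_2 c_1$, where $c_1, c_2$ are the two $\comp_0$-factors with outputs $t_1, t_2$; since $\gamma^0$ does not affect the pair $t_1 t_2$ and all other factors of $\gamma_{2n+1}$ lie in $\gamma^0$, the four source terms of $c_1$ and $c_2$ form one consecutive block --- call it $u_1 u_2 v_1 v_2$, with $c_1 \colon u_1 u_2 \to t_1$ and $c_2 \colon v_1 v_2 \to t_2$ --- inside $\on{cod}(\gamma^0)$, untouched by $\gamma^0$. Since $\psi$ composes $t_1 t_2 \to s$, the three factors $c_1, c_2, \psi$ together form the total composition $u_1 u_2 v_1 v_2 \to s$. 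Using the associativity of the composition isomorphisms (\ref{eq:compositions assoc}, in the staging-compatible form of \ref{stage associativity}) together with \ref{horizontal composition verify} to slide the remaining, disjointly acting factors of $\gamma^0$ past them, I would rewrite $\psi\phi = \psi\gamma^1\gamma^0\phi'$ as $d \circ \psi' \circ \gamma^0 \circ \phi'$, where $\psi'$ is the single $\comp_0$-factor composing the middle pair $u_2 v_1$ --- this is precisely ``$\psi$ applied to $\on{cod}(\gamma^0)$'', since $u_2 v_1$ straddles the $t_1$--$t_2$ seam --- and $d \in \genby{\comp_0}$ is the pair of remaining factors composing $u_1\,r\,v_2 \to s$, with $r$ the value of $\psi'$. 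Thus $d\psi'\gamma^0\phi' = \psi\phi$ as SGNTs.

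Second, take a staging $\Sigma$ of $\psi'\gamma^0\phi'$ (provided by hypothesis) and extend it by the two factors of $d$ to obtain a staging of $\psi\phi$. The crux is a stage computation using the rules of \ref{staged morphism}: the pair $u_2 v_1$ carries the same stage as the pair $t_1 t_2$ does in $G$, namely $\leq 2n - 2$, because that stage is inherited unchanged through $c_1$ and then through $c_2$; hence in $\Sigma$ the factor $\psi'$ occurs at a stage strictly below the final $\gamma$-stage, so its value $r$ and the pairs $u_1 r$, $r v_2$ are present well before the end. By the same stage rules these two pairs inherit the stages of $u_1 u_2$ and $v_1 v_2$; those are the pairs composed by $c_1, c_2$ at stage $2n + 1$ of $\phi$, so they have stage $2n$, and by \ref{stage limit} they are not affected by anything later in $\Sigma$. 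Padding $\Sigma$ with empty stages as in the proof of \ref{complete staging induct} so that it has a stage numbered $2n+1$, both pairs have stage one below it, so the two factors of $d$ may be adjoined at that stage. This is exactly the configuration of \ref{stage comp}, read with the ``inner pair $t_2 t_3$'' taken to be $u_2 v_1$ and the two pairs of the second alternative taken to be $u_1 u_2$ and $v_1 v_2$; applying it delivers the extended staging, and since $d\psi'\gamma^0\phi' = \psi\phi$, we conclude that $\psi\phi$ has a staging.

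The main obstacle I expect is the stage computation of the second step: the hypothesis yields only the existence of some staging $\Sigma$ of $\psi'\gamma^0\phi'$, so the claims that the pairs $u_1 r$, $r v_2$ (equivalently $u_1 u_2$, $v_1 v_2$) carry stage $2n$ and persist unaffected must be established uniformly over $\Sigma$. The whole detour through $\psi'$ and $d$ is forced by the hypothesis that the middle pair has low stage $\leq 2n - 2$, which makes it illegal to compose $t_1 t_2$ directly at stage $2n + 1$ --- and that is exactly what keeps this corollary from being a triviality.
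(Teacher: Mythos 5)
Your proposal is correct and takes essentially the same route as the paper's (one-line) proof: refactor $\gamma_{2n+1}$ as $\gamma^1\gamma^0$ by associativity, note that $\gamma^0\phi'$ is therefore staged, and feed \ref{stage comp} the two extension alternatives --- the middle-pair composition coming from the hypothesis on $\psi'\gamma^0\phi'$ and the outer-pair compositions coming from $\gamma^1$ --- so that the nested composition it produces is $\psi\gamma^1$ and hence yields a staging of $\psi\phi$. The ``obstacle'' you flag, that the hypothesis supplies only \emph{some} staging of $\psi'\gamma^0\phi'$ rather than an extension of the particular staging of $\gamma^0\phi'$ inherited from $\phi$, is passed over at exactly the same level of detail in the paper's own proof, so it is not a defect of your argument relative to the original.
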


\begin{proof}
  By \ref{stage associativity}, the SGNT~$\gamma^1 \gamma^0 \phi'$ has a
  staging, so by definition, so does~$\gamma^0 \phi'$, and then
  \ref{stage comp} applies.
\end{proof}

\begin{theorem}{lem}{staging induct}
  Let $\phi \colon F \to G$ have a staging, and let~$\psi$ be a
  basic~SGNT either in $\comp_0$~or~$\triv_0$ composable with~$\phi$.
  Then~$\psi\phi$ has a staging.
\end{theorem}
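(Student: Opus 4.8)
The plan is to split on whether $\psi \in \comp_0$ or $\psi \in \triv_0$ and, in each case, to append $\psi$ to the given staging after a rearrangement of its terminal stage, the rearrangements being exactly those packaged by \ref{stage associativity}, \ref{stage comp} and \ref{stage extend comp}. Two preliminary reductions are convenient. First, by the padding trick used in the proof of \ref{complete staging induct}, we may freely lengthen the staging $F = F_0 \to \dots \to F_{2n} = G$ of $\phi$ by appending identity stages $\gamma_{2n+1} = \id$, $\alpha_{2n+2} = \id$, and so on, so that the parity of its terminal stage can be chosen to suit $\psi$. Second, since a basic $\comp_0$ factor composing a pair one of whose terms is trivial coincides, as a natural transformation, with a basic $\triv_0$ factor — and, conversely, a $\triv_0$ factor deleting a trivial term adjacent to a same-variance term coincides with a $\comp_0$ factor — by the trivialization--composition identities \ref{fig:trivializations comp} (and their pullback and adjoint variants), we may assume that $\psi \in \comp_0$ composes a composable pair of two \emph{non-trivial} terms, or else that $\psi \in \triv_0$ deletes a trivial term with no same-variance neighbour.

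Suppose $\psi \in \comp_0$ composes a non-trivial pair $t_1 t_2$ of $G$. Every composable pair occurring in a staging carries an even stage (such pairs originate in $F_0$ with stage $0$, are produced by an $\alpha$-factor with an even stage, or inherit a pair's stage through a $\gamma$-factor), so $t_1 t_2$ has some even stage $s \le 2n$ by \ref{stage limit}. Pad so that the staging ends with a $\comp$-stage $\gamma_{2n+1}$. If $s = 2n$, set $\gamma_{2n+1} = \psi$: the pair it composes has stage $2n = (2n+1)-1$, the non-triviality of $t_1, t_2$ voids the remaining clause of the $\gamma$-factor condition of \ref{staged morphism}, and assigning the composed term stage $2n+1$ (it is trivial only when the two maps of spaces are mutually inverse) and the newly formed pairs their inherited stages yields a valid staging that disturbs no earlier stage. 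If $s < 2n$, the obstruction is that $t_1$ and $t_2$ may individually have been produced at stages later than $s$, so that the pair $t_1 t_2$ is not yet available at stage $s+1$; here \ref{stage extend comp} — using \ref{stage associativity} for the reassociations of the terminal $\comp$-stage that it requires — reduces the claim to the same claim with a shorter terminal $\comp$-stage (equivalently, with $\psi$ acting one stage earlier), and we conclude by induction.

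The case $\psi \in \triv_0$ is treated by the same scheme with $\comp$ and $\triv$, and odd and even, interchanged. Now $\psi$ deletes a trivial term $t$ of $G$, which — trivial terms being produced only by $\gamma$-factors — carries some odd stage $s \le 2n-1$. We insert $\psi$ as a factor of the $\triv$-stage $\alpha_{s+1}$; when $s = 2n-1$ this is the terminal stage (possibly after padding), and otherwise $t$ may have been produced at a later stage by composing two trivial terms, in which case we first rewrite ``produce $t$, then delete $t$'' as ``delete the two constituent trivial terms'', by an argument patterned on \ref{stage comp} but using \ref{horizontal composition verify} in place of associativity, and again close by induction.

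I expect the real obstacle to be bookkeeping rather than ideas: after each rearrangement and each earlier insertion of $\psi$, one must re-verify every clause of the definition of a staging structure in \ref{staged morphism} — that the composable pairs and trivial terms created or displaced by the move receive exactly the stages demanded, and that no later $\gamma$- or $\alpha$-factor is thereby invalidated, which \ref{stage limit} keeps under control. All the genuine work is already contained in \ref{stage associativity}, \ref{stage comp} and \ref{stage extend comp}; the present lemma is the inductive wrapper that strings them together.
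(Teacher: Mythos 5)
Your proof follows essentially the same route as the paper's: a descending induction whose terminal cases insert $\psi$ directly into the last $\gamma$- or $\alpha$-stage, with the trivialization--composition identity~\ref{eq:trivializations comp} used to trade $\comp_0$ factors for $\triv_0$ factors, and with \ref{stage extend comp} and \ref{horizontal composition verify} used to carry $\psi$ down to the stage recorded on the pair or term it affects. The one organizational difference is that you perform the $\comp_0$/$\triv_0$ conversion once, as an up-front normalization, where the paper performs it adaptively inside the induction --- and that is where your argument has a small, repairable hole. The dichotomy ``$\psi \in \comp_0$ on two non-trivial terms, or $\psi \in \triv_0$ on a trivial term with no same-variance neighbour'' misses the degenerate configuration $\id_* \id_* \to \id_*$ (and its $^*$ analogue): rewritten as a trivialization it deletes a term whose neighbour is trivial of the same variance, and rewritten as a composition both its terms are trivial, so it escapes both of your normal forms and your reduction does not terminate on it. This is exactly the case the definition in \ref{staged morphism} anticipates with its ``unless $t_1$ and $t_2$ are both trivial'' clause, and it is disposed of either by inserting $\psi$ as a $\gamma$-factor whose value inherits the larger of the constituents' (odd) stages, or by your own rewriting of ``compose then delete'' as two successive deletions; but as stated your case split does not cover it. A second, smaller point: in the $\comp_0$ branch you should say explicitly, as the paper does, that \ref{horizontal composition verify} is what carries $\psi$ past the intervening $\alpha$-stages (its support is disjoint from theirs, since otherwise the pair would have acquired a later stage), because \ref{stage extend comp} only peels factors off a terminal $\gamma$-stage and, after your padding by identity stages, there is nothing there to peel. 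Neither point changes the strategy, and your parity bookkeeping (composable pairs carry even stages, trivial terms odd ones) is a correct and useful observation that the paper leaves implicit.
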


\begin{proof}
  The proof is by descending induction, with the two terminal cases:
  \begin{enumerate}
  \item\label{en:staging triv} $\psi \in \triv_0$ acts as~$t_1 t t_2
    \to t_1 t_2$, where the triple satisfies the conditions of a
    staging at stage~$2n$;
  \item\label{en:staging comp} $\psi \in \comp_0$ acts as~$t_1 t_2 \to
    t$, where the composable pair satisfies the conditions of a
    staging at stage~$2n + 1$;
  \end{enumerate}
  In the first, we can append~$\psi$ to~$\alpha_{2n}$, satisfying the
  definition of a staging.  In the second, we can begin~$\gamma_{2n +
    1}$ with~$\psi$, satisfying the definition of a staging.  Note
  that these each apply, respectively, at stages $2$~and~$1$, so the
  induction is well-founded.

  Suppose that~$\psi \in \triv_0$ acts on a trivial term~$t$;
  if~\ref*{en:staging triv} does not apply, then either~$t$ has
  stage~$<2n - 1$ or one of the pairs including~$t$ is composable; if
  the conditions for a composition in a staging do not apply to it,
  then either the pair has stage~$<2n$ or the other term is trivial of
  stage~$<2n - 1$.

  Suppose that~$\psi \in \comp_0$ acts on a composable pair~$t_1 t_2$;
  if~\ref*{en:staging comp} does not apply, then either its stage
  is~$<2n$, or one of the terms~$t_i$ is trivial of stage~$<s$.

  In either case, we have identified a pair~$t_1 t_2$ such that either
  at least one term is trivial and has stage~$<2n - 1$, or~$t_1 t_2$
  itself has stage~$<2n$; when~$\psi \in \triv_0$ it acts on one of
  the~$t_i$, and when~$\psi \in \comp_0$ it acts on~$t_1 t_2$.  Then
  by~\ref{eq:trivializations comp}, we can replace~$\psi$ with either
  the composition of~$t_1 t_2$ or a trivialization of either
  trivial~$t_i$.  In the latter case, by \ref{horizontal composition
    verify}, we may ``commute''~$\psi$ with all
  $\beta_i$~and~$\gamma_i$ down to~$F_s$.  In the former case, we do
  this with~$\psi \in \comp_0$ using \ref{stage extend comp} and
  \ref*{horizontal composition verify}.  Either way, the proof follows
  by induction.
\end{proof}

\begin{theorem}{prop}{alternating unique}
  For any SGF~$F$, there exists a unique map $F \to G$
  in~$\genby{\comp_0, \triv_0}$ with~$G$ alternating.
\end{theorem}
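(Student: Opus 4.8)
The plan is to prove existence and uniqueness separately, using the staging machinery developed in this section.

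\medskip

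\emph{Existence.}  First I would observe that any~SGF $F$ admits \emph{some} map $F \to G$ in $\genby{\comp_0, \triv_0}$ with $G$ alternating, constructed greedily: compose all composable pairs, then delete all trivial terms, and repeat; each such operation strictly decreases the number of basic~SGFs (composition) or strictly decreases the count of trivial terms without increasing the total (trivialization), so the process terminates at an~SGF with no composable pairs and no trivial terms, which is exactly the definition of alternating.  The point of all the preceding lemmas is that this crude construction can be organized into a staging: starting from the trivial staging of $\id_F \colon F \to F$ (which exists vacuously), I would apply \ref{staging induct} once for each basic~SGNT factor appearing in the greedy construction, building up a staging of the composite $F \to G$ one factor at a time.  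Hence $F \to G$ has a staging, and since $G$ has neither composable pairs nor trivial terms, \ref{complete staging induct} says this staging is complete at every stage $s \geq 1$.

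\medskip

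\emph{Uniqueness.}  Suppose $\phi \colon F \to G$ and $\phi' \colon F \to G'$ are two maps in $\genby{\comp_0, \triv_0}$ with $G$, $G'$ both alternating.  By the existence argument applied to $\phi$ and $\phi'$ themselves, each acquires a staging; and by \ref{complete staging induct} each of these stagings is complete at every stage $s \geq 1$.  Now \ref{complete staging determined} says that in a staging complete at every stage $s \geq 1$, every intermediate~SGF $F_i$ and every intermediate~SGNT $\alpha_i$, $\gamma_i$ is uniquely determined by $F$ alone.  In particular the terminal objects $G$ and $G'$ agree, and the composites $\phi = \alpha_{2n}\cdots\gamma_1$ and $\phi' = \alpha'_{2n'}\cdots\gamma'_1$ agree as natural transformations — so $G = G'$ and $\phi = \phi'$.

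\medskip

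The main obstacle is the first step of \emph{Existence}: arranging the greedy reduction so that \ref{staging induct} actually applies at each step.  That lemma only permits appending a single basic~SGNT in $\comp_0$ or $\triv_0$ to an already-staged~SGNT, and the conclusion of a staging constrains stages rather tightly; so I need to be careful that the order in which composable pairs are collapsed and trivial terms deleted is compatible with the stage bookkeeping.  Fortunately \ref{stage associativity} and \ref{stage extend comp} were built precisely to absorb the reorderings this requires, and \ref{horizontal composition verify} handles commuting a pending factor past unrelated ones, so the induction in \ref{staging induct} does the real work; what remains here is just to invoke it repeatedly and note termination.  A minor secondary point is checking that the trivial staging of $\id_F$ genuinely satisfies \ref{staged morphism} — but with $n = 0$ there are no factors to constrain, so this is immediate.
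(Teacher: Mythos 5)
Your proposal is correct and follows essentially the same route as the paper: obtain a staging on any such map by repeated application of \ref{staging induct}, conclude completeness at every stage from \ref{complete staging induct} since the target is alternating, and then invoke \ref{complete staging determined} for uniqueness, with existence supplied by the canonical "compose all pairs, delete all trivial terms" construction. The only difference is presentational — you elaborate the termination of the greedy reduction and the base case of the empty staging, which the paper leaves implicit.
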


\begin{proof}
  Any such~SGNT has a staging by induction on \ref{staging induct},
  and~$G$ has neither trivial terms nor composable pairs by definition
  of an alternating~SGF.  Therefore, by \ref{complete staging induct},
  it is complete at every stage, so by \ref{complete staging
    determined} it is uniquely determined by~$F$.  Given~$F$ alone,
  such a map exists by the construction in the proof of that lemma.
\end{proof}

\begin{theorem}{cor}{big alternating unique}
  If $\phi \colon F \to G$ is in~$\genby{\comp, \triv}$, then it is
  unique there, and $F$~and~$G$ have the same alternating reduction.
\end{theorem}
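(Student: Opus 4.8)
The plan is to reduce the uniqueness of an SGNT $\phi\colon F\to G$ in $\genby{\comp,\triv}$ to the uniqueness already established in \ref{alternating unique} for maps in $\genby{\comp_0,\triv_0}$ with alternating target. First I would observe that every generator $\comp,\triv$ is invertible, so $\genby{\comp,\triv}$ is a \emph{groupoid}: any $\phi\colon F\to G$ there has an inverse, and $F$ and $G$ are isomorphic in the category $\cSGNT$ via a map built only from $\comp_0,\triv_0$ and their inverses. The first claim — that $F$ and $G$ have the same alternating reduction — should then follow by composing: if $\alpha\colon F\to F'$ and $\beta\colon G\to G'$ are the alternating reductions (Definition~\ref{alternating}), then $\beta\,\phi\,\alpha^{-1}\colon F'\to G'$ is an isomorphism in $\cSGNT$ between two \emph{alternating} SGFs lying in $\genby{\comp_0,\triv_0,\comp_0^{-1},\triv_0^{-1}}$. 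I would argue this forces $F'=G'$: an alternating SGF has no composable pairs and no trivial terms, so by \ref{alternating unique} the only map out of $F'$ in $\genby{\comp_0,\triv_0}$ with alternating target is the identity; applying this to both $F'\to G'$ read one way and $G'\to F'$ read the other (after noting any such isomorphism decomposes into forward pieces, since each $\comp_0^{-1}$ or $\triv_0^{-1}$ factor that ``uncomposes'' or ``un-trivializes'' can be matched against a later forward factor) yields $F'=G'$ and the composite is $\id$.

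The cleanest route to the last point is probably to invoke the staging machinery directly rather than to reshuffle factors by hand: given $\phi\in\genby{\comp,\triv}$, I would show $\beta\,\phi$ is a map $F\to G'$ in $\genby{\comp,\triv}$ with $G'$ alternating, hence (taking inverses of all the backward generators and absorbing them) it is equivalent to a map in $\genby{\comp_0,\triv_0}$ to $G'$; but $\alpha$ is \emph{also} such a map, to $F'$, and by \ref{alternating unique} the alternating target is unique given the source $F$, so $F'=G'$. For the uniqueness of $\phi$ itself: if $\phi,\psi\colon F\to G$ are both in $\genby{\comp,\triv}$, then $\beta\phi$ and $\beta\psi$ are both maps $F\to G'$ in $\genby{\comp,\triv}$ with alternating target; each is, up to inverting backward generators, a map in $\genby{\comp_0,\triv_0}$, and \ref{alternating unique} says there is only one such, so $\beta\phi=\beta\psi$, whence $\phi=\psi$ because $\beta$ is invertible.

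The main obstacle I anticipate is the bookkeeping in the sentence ``any isomorphism in $\genby{\comp,\triv}$ between alternating SGFs is the identity built from forward generators only.'' A priori such an isomorphism is a zig-zag of $\comp_0$, $\triv_0$ and their inverses, and one must check that the inverse pieces do no net work — i.e., that one cannot, say, uncompose a pair and then recompose it differently. The honest way to handle this is again \ref{alternating unique}: view the whole composite $F'\to G'$ as an element of $\genby{\comp_0,\triv_0}$ after moving all inverses to the other side (replacing $F'\xrightarrow{\phi}G'$ by $G'\xrightarrow{\phi^{-1}}F'$ where needed and re-reading), so that it becomes a map in $\genby{\comp_0,\triv_0}$ from one alternating SGF to an alternating SGF; since the source already has no composable pairs and no trivial terms, the construction in the proof of \ref{alternating unique} produces the identity, and uniqueness there forces the given composite to equal it. This sidesteps any direct combinatorial argument about cancelling $\comp_0$ against $\comp_0^{-1}$. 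I expect the remainder to be routine manipulation with the groupoid structure of $\genby{\comp,\triv}$ and with the functoriality encapsulated in \ref{horizontal composition verify} and \ref{eq:compositions assoc}.
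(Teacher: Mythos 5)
Your overall strategy is the paper's: reduce everything to \ref{alternating unique} by composing with the alternating reductions $\alpha\colon F\to F'$ and $\beta\colon G\to G'$ and exploiting that every generator of $\genby{\comp,\triv}$ is invertible. The one place where your stated mechanism would not literally work is the conversion of a zig-zag into a forward map: ``moving all inverses to the other side and re-reading'' a composite such as $\gamma_3\gamma_2^{-1}\gamma_1$ does not produce a single element of $\genby{\comp_0,\triv_0}$ from $F'$ to $G'$; it only produces an equation between two forward paths with different (generally non-alternating) intermediate SGFs, so you cannot directly feed the whole composite to \ref{alternating unique}. The repair is the factor-by-factor induction that the paper uses: for each forward factor $\chi\colon H\to K$, the composite $\beta_K\chi$ lies in $\genby{\comp_0,\triv_0}$ with alternating target, hence equals the unique reduction map $\alpha_H$ and forces $H'=K'$; for an inverse factor one applies the same statement to $\chi$ itself and then inverts, getting $\beta_K\chi^{-1}=\alpha_H$ again. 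Chaining these over the factors of $\phi$ gives $\beta\phi=\alpha$ and $F'=G'$ in one stroke, after which your concluding step (two such maps $\phi,\psi$ satisfy $\beta\phi=\alpha=\beta\psi$, and $\beta$ is invertible, so $\phi=\psi$) is exactly the paper's. So the idea and the key lemma are right; only the bookkeeping sentence you yourself flagged needs to be replaced by this per-factor argument rather than a global rearrangement of inverses.
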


\begin{proof}
  Write~$\phi$ as an alternating composition of~$\genby{\comp_0,
    \triv_0}$ and its inverses.  By \ref{alternating unique}, either
  one preserves both the alternating reduction and the map to it, so
  this is true of~$\phi$ as a whole by induction.  If there is another
  such map~$\phi'$, then~$\phi^{-1} \phi'$ is a self-map of~$F$
  preserving the map to its alternating reduction.  Since that map is
  an isomorphism, we have~$\phi^{-1} \phi' = \id$.
\end{proof}

\subsection*{Relations.}
To deal with the roof and its additional complications arising from
the base change morphisms, we prove a number of~``commutation
relations'' among~$\comp_0$, $\triv_0$,~and $\bc$, beginning with
rewriting some especially trivial transformations in terms of simpler
ones.  In this subsection, we use~$\cd$ instead of~$\bc$ to emphasize
the fundamentally diagrammatic nature of the arguments; following
that, we will be forced for practical reasons to specialize.

\begin{theorem}{lem}{CD trivial isomorphisms}
  We have identities
  \begin{subequations}
    \label{eq:CD trivial isomorphisms}
    \begin{alignat}{99}
      \label{eq:CD trivial isomorphisms lower}
      \cd(f,\id; f, \id) &= &\triv_*^{-1} f^* &\circ f^* \triv_*
      &&\colon f^* \id_* &\to \id_* f^* \\
      \label{eq:CD trivial isomorphisms upper}
      \cd(\id,f; \id, f) &= &f_* \triv^{{*}{-1}} &\circ \triv^* f_*
      &&\colon \id^* f_* &\to f_* \id^*.
    \end{alignat}
  \end{subequations}
\end{theorem}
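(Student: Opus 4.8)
The plan is to unwind \ref{commutative diagram morphism} in the two degenerate configurations and observe that the commutation morphism collapses into a composite of trivializations. In each case the underlying square has two of its four edges equal to an identity morphism, so the corner-swapping map of \ref{eq:base change definition} takes the shape $f_*\id_* \xrightarrow{\comp_*} (f\id)_* = (\id f)_* \xrightarrow{\comp_*^{-1}} \id_*f_*$ (or its pullback analogue), with an identity occurring in each $\comp_*$ node. The first step is to rewrite this corner-swap: by \ref{eq:trivializations comp} (equivalently the string-diagram identity \ref{fig:trivializations comp}) each such $\comp_*$ is the appropriate $\triv_*$ or $\triv_*^{-1}$, so the corner-swap reduces to the pure trivialization $f_*\id_* \cong f_* \cong \id_*f_*$.

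Second, one transfers this trivialized corner-swap through the adjunction recipe of \ref{commutative diagram morphism}, i.e.\ the relevant adjunction bijection together with the reverse natural adjunction \ref{eq:RNA map}. When the maps being adjointed over are themselves identities, the compatibility \ref{eq:trivialization adjunction} and the identities of \ref{fig:trivializations adj} express the corresponding units and counits again through $\triv^*$ and $\triv_*$, so the transfer only introduces further trivializations; when the adjunction is over the nontrivial map $f$, one uses instead that transferring the identity of $f_*$ across the $(f^*,f_*)$-adjunction yields the identity of $f^*$ — which is precisely the triangle identity \ref{eq:left adjoint identity}--\ref{eq:right adjoint identity}, i.e.\ \ref{fig:adjunctions}. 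Either way, after one straightening of the resulting zigzag (again by \ref{fig:adjunctions}), what survives is exactly a two-step composite of the form $\triv^{-1}\circ\triv$ as asserted in \ref{eq:CD trivial isomorphisms}. Because the whole manipulation is combinatorial, the cleanest execution is a single string-diagram computation: draw the picture \ref{eq:CD string diagram} for the degenerate $\cd$, delete the two identity edges using \ref{fig:trivializations comp} and \ref{fig:trivializations adj}, and topologically straighten the remaining strand.

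The second identity in \ref{eq:CD trivial isomorphisms} is handled by the identical computation with the roles of $^*$ and $_*$ interchanged throughout — legitimate because the axioms of a geofibered category, and \ref{commutative diagram morphism} in particular, are symmetric under this exchange together with $\mathsf{unit}\leftrightarrow\mathsf{counit}$ — or one may simply read it off by reflecting the relevant square across its diagonal.

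I expect the only real obstacle to be bookkeeping: keeping track of the directions of the several $\triv$ maps (the source and target of $\cd$ are not symmetric) and checking that, once the adjunction transfer is written out, the zigzag that appears is the one the triangle identity actually removes rather than its mirror image. There is no mathematical content beyond the structural axioms \ref{eq:adjoint identities}, \ref{eq:trivializations comp}, and \ref{eq:trivialization adjunction}, so any slip would be of sign type rather than a genuine gap.
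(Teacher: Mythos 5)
Your proposal is correct and follows essentially the same route as the paper: unwind the degenerate $\cd$ via its string diagram \ref{eq:CD string diagram}, remove the identity-labelled composition nodes with \ref{fig:trivializations comp}, and cancel the remaining unit--counit data with \ref{fig:trivializations adj} when the adjunction is over $\id$ and with the triangle identities \ref{fig:adjunctions} when it is over $f$ --- exactly the three lemmas the paper invokes. The only caveat is your closing claim that the second identity is the first with $^*$ and $_*$ interchanged: the two cases are genuinely asymmetric (the definition in \ref{commutative diagram morphism} adjoints only over the pullback legs, so one case is settled by \ref{fig:trivializations adj} and the other by \ref{fig:adjunctions}), but since the body of your argument already describes both mechanisms this is a presentational slip rather than a gap.
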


\begin{proof}
  First, we note that according to the definition~\ref{eq:base change
    definition}, we have as a string diagram
  \begin{equation}
    \label{eq:CD string diagram}
    \cd(f,g; \tilde{f}, \tilde{g}) = \includestandalone{img14}
  \end{equation}
  Now, for the claimed identities, moving the right-hand side terms to
  the left, they are equivalent to the two string diagram identities
  \begin{align}
    \label{eq:CD trivial string diagrams}
    \includestandalone{img15a} && \includestandalone{img15b}
  \end{align}
  the first of which follows from~\ref{fig:trivializations adj} and
  then~\ref{fig:trivializations comp} (twice each),~and the second of
  which follows from \ref*{fig:trivializations comp} (twice) and
  then~\ref{fig:adjunctions}.
\end{proof}

For those nontrivial~SGNTs that do~``interact'', we have the following
relations:

\begin{theorem}{lem}{commutation relations}
  The following~``commutation relations'' hold in~$\SGNT_0$:
  \begin{enumerate}[label=\alph*.,ref=\alph*]
  \allowdisplaybreaks
  \item \label{en:comp-triv commutation}
    We have, for any composable maps~$f$ and~$g$ of spaces:
    \begin{subequations}
      \label{eq:comp-triv commutation}
      \begin{gather}
        \label{eq:comp-triv commutation lower lower}
        \begin{multlined}[b][\linewidth - 6em]
          \bigl(
          f_* \id_* g_*
          \xrightarrow{\triv_*}
          f_* g_*
          \xrightarrow{\comp_*(g,f)}
          (fg)_*
          \bigr) \\
          = \bigl(
          f_* \id_* g_*
          \xrightarrow{\comp_*(g,\id)}
          f_* g_*
          \xrightarrow{\comp_*(g,f)}
          (fg)_*
          \bigr) 
        \end{multlined}
        \\
        \label{eq:comp-triv commutation lower upper}
        \begin{multlined}[b][\linewidth - 6em]
          \bigl(
          f_* \id^* g_*
          \xrightarrow{\triv^*}
          f_* g_*
          \xrightarrow{\comp_*(g,f)}
          (fg)_*
          \bigr) \\
          = \bigl(
          f_* \id^* g_*
          \xrightarrow{\cd(g,\id;g,\id)}
          f_* g_* \id^*
          \xrightarrow{\comp_*(g,f)}
          (fg)_* \id^*
          \xrightarrow{\triv^*}
          (fg)_*
          \bigr)
        \end{multlined}
        \\
        \label{eq:comp-triv commutation upper upper}
        \begin{multlined}[b][\linewidth - 6em]
          \bigl(
          f^* \id^* g^*
          \xrightarrow{\triv^*}
          f^* g^*
          \xrightarrow{\comp^*(f,g)}
          (gf)^*
          \bigr) \\
          = \bigl(
          f^* \id^* g^*
          \xrightarrow{\comp^*(\id,g)}
          f^* g^*
          \xrightarrow{\comp^*(f,g)}
          (gf)^*
          \bigr)
        \end{multlined}
        \\
        \label{eq:comp-triv commutation upper lower}
        \begin{multlined}[b][\linewidth - 6em]
          \bigl(
          f^* \id_* g^*
          \xrightarrow{\triv_*}
          f^* g^*
          \xrightarrow{\comp^*(f,g)}
          (gf)^*
          \bigr) \\
          = \bigl(
          f^* \id_* g^*
          \xrightarrow{\cd(\id,f;\id,f)}
          \id_* f^* g^*
          \xrightarrow{\comp^*(f,g)}
          \id^* (gf)^*
          \xrightarrow{\triv^*}
          (gf)^*
          \bigr)
        \end{multlined}
      \end{gather}
    \end{subequations}

  \item \label{en:CD-triv commutation}
    We have, referring to~\ref{eq:base change diagram}:
    \begin{subequations}
      \label{eq:CD-triv commutation}
      \begin{gather}
        \label{eq:CD-triv commutation lower}
        \begin{multlined}[b][\linewidth - 6em]
          \bigl(
          g^* \id_* f_*
          \xrightarrow{\triv_*}
          g^* f_*
          \xrightarrow{\cd(f,g;\tilde{f},\tilde{g})}
          \tilde{f}_* \tilde{g}^*
          \bigr) \\
          = \bigl(
          g^* \id_* f_*
          \xrightarrow{\comp_*(f,\id)}
          g^* f_*
          \xrightarrow{\cd(f,g;\tilde{f},\tilde{g})}
          \tilde{f}_*\tilde{g}^*
          \bigr)
        \end{multlined}
        \\
        \label{eq:CD-triv commutation upper}
        \begin{multlined}[b][\linewidth - 6em]
          \bigl(
          g^* \id^* f_*
          \xrightarrow{\triv^*}
          g^* f_*
          \xrightarrow{\cd(f,g;\tilde{f},\tilde{g})}
          \tilde{f}_* \tilde{g}^*
          \bigr) \\
          = \bigl(
          g^* \id^* f_*
          \xrightarrow{\comp^*(g,\id)}
          g^* f_*
          \xrightarrow{\cd(f,g,\tilde{f},\tilde{g})}
          \tilde{f}_* \tilde{g}^*
          \bigr)
        \end{multlined}
      \end{gather}
    \end{subequations}

  \item \label{en:CD-comp commutation}
    Consider the large commutative diagram
    \begin{equation}
      \label{eq:CD-comp commutation diagram}
      \includestandalone{img16}
    \end{equation}
    We have:
    \begin{subequations}
      \label{eq:CD-comp commutation}
      \begin{align}
        \label{eq:CD-comp commutation lower}
        \begin{multlined}[b][\linewidth - 6em]
          \bigl(
          g^* f_* h_*
          \xrightarrow{\comp_*(h,f)}
          g^* (fh)_*
          \xrightarrow{\cd(fh,g;\tilde{f}\tilde{h},k)}
          (\tilde{f}\tilde{h})_* k^*
          \bigr) \\
          = \bigl(
          g^* f_* h_*
          \xrightarrow{\cd(f,g;\tilde{f},\tilde{g})}
          \tilde{f}_* \tilde{g}^* h_*
          \xrightarrow{\cd(h,\tilde{g};\tilde{h},k)}
          \tilde{f}_* \tilde{h}_* k^*
          \xrightarrow{\comp_*(\tilde{h},\tilde{f})}
          (\tilde{f}\tilde{h})_* k^*
          \bigr)
        \end{multlined}
        \\
        \label{eq:CD-comp commutation upper}
        \begin{multlined}[b][\linewidth - 6em]
          \bigl(
          h^* f^* g_*
          \xrightarrow{\comp^*(h,f)}
          (fh)^* g_*
          \xrightarrow{\cd(g,fh;k,\tilde{f}\tilde{h})}
          k_* (\tilde{f}\tilde{h})^*
          \bigr) \\
          = \bigl(
          h^* f^* g_*
          \xrightarrow{\cd(g,f;\tilde{g},\tilde{f})}
          h^* \tilde{g}_* \tilde{f}^*
          \xrightarrow{\cd(\tilde{g},h;k,\tilde{h})}
          k_* \tilde{h}^* \tilde{f}^*
          \xrightarrow{\comp^*(\tilde{h},\tilde{f})}
          k_* (\tilde{f}\tilde{h})^*
          \bigr)
        \end{multlined}
      \end{align}
    \end{subequations}
  \end{enumerate}
\end{theorem}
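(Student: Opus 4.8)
The plan is to reduce every one of these identities to a computation with labelled string diagrams: each $\cd$ occurring in the statement is first replaced by its diagram form \ref{eq:CD string diagram} --- the corner-swap \ref{eq:base change definition} flanked by an adjunction cup and a reverse-adjunction cap --- so that the two sides of each identity become string diagrams with the same endpoints, which are then matched by planar isotopy together with the local moves of \ref{fig:adjunctions} through \ref{SD loop} and, when disjoint pieces must be slid past one another, \ref{horizontal composition verify}.

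Parts (\ref{en:comp-triv commutation}) and (\ref{en:CD-triv commutation}) turn out to be essentially formal. In \ref{eq:comp-triv commutation lower lower} the two composites share their final arrow, so the identity reduces to the trivialization--composition compatibility \ref{eq:trivializations comp} (or its opposite-side analogue, which is part of the pseudofunctor data just as well) horizontally composed with an identity functor; no diagram is needed. In \ref{eq:comp-triv commutation lower upper} the only $\cd$ present is one handled by \ref{CD trivial isomorphisms}, so I would rewrite it accordingly, after which each side is a short word in $\triv_*$, $\triv^*$, $\comp_*$, $\comp^*$, and the equality drops out of the trivialization identities of \ref{fig:trivializations triv}, \ref{fig:trivializations adj} and \ref{fig:trivializations comp}. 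For part (\ref{en:CD-triv commutation}), I would simply observe that on $g^* \id_* f_*$ the two maps $\triv_*$ and $\comp_*(f,\id)$ into $g^* f_*$ are already equal --- deleting the trivial term $\id_*$ and absorbing it into $f_*$ are the same operation, which is again \ref{eq:trivializations comp} --- and likewise $\triv^*$ and $\comp^*(g,\id)$ agree on $g^* \id^* f_*$; the identity \ref{eq:CD-triv commutation lower} is then just that equality of maps post-composed with the single transformation $\cd(f,g;\tilde f,\tilde g)$.

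Part (\ref{en:CD-comp commutation}) is the substantive case, and the bookkeeping there is the step I expect to be the main obstacle. Reading the grid \ref{eq:CD-comp commutation diagram} as two commutative squares joined along the common vertical edge $\tilde g$, the left-hand side of \ref{eq:CD-comp commutation lower} first merges $h_*, f_*$ by $\comp_*(h,f)$ and then base-changes once across the outer rectangle by $\cd(fh,g;\tilde f\tilde h,k)$, whereas the right-hand side base-changes across the lower square by $\cd(f,g;\tilde f,\tilde g)$, then across the upper square by $\cd(h,\tilde g;\tilde h,k)$, and only afterwards merges $\tilde h_*, \tilde f_*$ by $\comp_*(\tilde h,\tilde f)$. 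Expanding every $\cd$ via \ref{eq:CD string diagram}, both sides become string diagrams $g^* f_* h_* \to (\tilde f\tilde h)_* k^*$ assembled from adjunction cups and caps and a handful of $\comp_*$ nodes, and I would pass from one to the other by: (i) regrouping the $\comp_*$ nodes using the composition-associativity identities of \ref{fig:compositions (assoc)} so that the composition read off at the output is exactly $\comp_*(\tilde h,\tilde f)$; (ii) sliding the adjunction units and counits into the positions dictated by the two squares, using the adjunction--composition identities of \ref{fig:adjunction-composition 1} and \ref{fig:adjunction-composition 2}, together with \ref{fig:compositions (nontrivial)} and \ref{SD loop} whenever a strand must be threaded past a unit or counit; and (iii) cancelling the cup--cap pairs that thereby become redundant by \ref{fig:adjunctions}. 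No ingredient beyond these relations enters; the genuine difficulty is only in keeping track of which map of spaces labels each strand and each $\comp_*$ node throughout the deformation, which is precisely the reason for arguing diagrammatically.

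It is worth noting that the identity of part (\ref{en:CD-comp commutation}) is in fact forced: transporting \ref{eq:CD-comp commutation lower} through the $g$- and $k$-adjunctions and the $\tilde g$-reverse adjunction turns both sides into the associativity \ref{eq:compositions assoc} of the corner-swap compositions --- combined with the commutativity of \ref{eq:CD-comp commutation diagram}, which is what makes the two iterated identifications of composite spaces agree --- and the same explanation covers the $^*$ case. Following the paper's diagrammatic convention I would nevertheless present the string-diagram manipulation. Finally, as everywhere, the $^*$-versions \ref{eq:comp-triv commutation upper upper}, \ref{eq:comp-triv commutation upper lower}, \ref{eq:CD-triv commutation upper} and \ref{eq:CD-comp commutation upper} are obtained by the mirror computation with the arrows reversed, or equivalently by passing to adjoints via \ref{eq:composition adjunction}.
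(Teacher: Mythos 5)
Your proposal is correct and follows essentially the same route as the paper: the identities \ref{eq:comp-triv commutation lower lower}, \ref{eq:comp-triv commutation upper upper} and both of \ref{eq:CD-triv commutation} reduce directly to \ref{eq:trivializations comp} (and its pullback analogue) applied to the first arrow alone, and the remaining cases are string-diagram manipulations using exactly the lemmas you name. The one place you genuinely diverge is \ref{eq:comp-triv commutation lower upper}: the paper attacks the right-hand diagram head-on with \ref{fig:trivializations comp}, \ref{fig:adjunction-composition 2}, \ref{fig:trivializations triv} and \ref{fig:trivializations adj}, whereas you first replace the trivial commutation morphism by its expression from \ref{CD trivial isomorphisms} and then cancel the resulting inverse pair of trivializations after sliding one past $\comp_*(g,f)$ via \ref{horizontal composition verify}; since \ref{CD trivial isomorphisms} is proved beforehand this is not circular, and it is arguably the cleaner argument. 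For part (\ref{en:CD-comp commutation}) you give a strategy rather than the explicit chain of moves --- the paper's actual sequence is to expand both sides via \ref{eq:CD string diagram}, eliminate the loop with \ref{fig:adjunction-composition 1} and \ref{SD loop}, and finish with \ref{fig:compositions (assoc)} and \ref{fig:adjunction-composition 1} again --- but your plan names all of these ingredients, and your observation that the identity is forced because transport through the $g$-, $k$- and $\tilde{g}$-adjunctions turns both sides into instances of the associativity \ref{eq:compositions assoc} is the correct conceptual summary of what that computation accomplishes.
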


\begin{proof}
  For~\ref{eq:comp-triv commutation lower lower} and~\ref{eq:comp-triv
    commutation upper upper}, we can directly
  apply~\ref{eq:trivializations comp} for the former (and its
  analogue \ref{fig:trivializations comp} for the latter), ignoring
  the second composand.  The same goes for both
  equations~\ref{eq:CD-triv commutation}.  The remainder we prove
  using string diagrams.

  For~\ref{eq:comp-triv commutation lower upper}, the string diagrams
  of the left and right transformations are, respectively:
  \begin{align}
    \label{eq:comp-triv lower upper strings}
    \includestandalone{img17a} && \includestandalone{img17b}
  \end{align}
  Clearly it is the latter diagram that needs to be simplified; to
  understand it, the blue sub-diagram is~$\cd(g,\id;\allowbreak
  g,\id)$, the red one is~$\comp_*(g,f)$,~and the black one
  is~$\triv^*$.  We recall our convention on omitting labels from
  diagrams; there should be no ambiguity provided that one recalls the
  labels of the ends.

  We begin by applying \ref{fig:trivializations comp} to the
  one~$\triv$, simplifying it to the first diagram below, which then
  transforms using \ref{fig:adjunction-composition 2} on the blue
  subdiagram:
  \begin{equation}
    \label{en:comp-triv lower upper more strings}
    \includestandalone{img18}
  \end{equation}
  Finally, we break the identity (middle lower) string according
  to \ref{fig:trivializations triv} and remove the associated
  \mbox{$\unit$-$\triv$}~combination using \ref{fig:trivializations
    adj} and then \ref*{fig:trivializations comp} again, leaving the
  first figure of~\ref{eq:comp-triv lower upper strings}, as desired.
  The same computation applies to~\ref{eq:comp-triv commutation upper
    lower} (or, to avoid repeating the same work: take the above
  computation, reverse all the arrows,~and reflect it horizontally).

  For~\ref{eq:CD-comp commutation lower}, we again render the two
  transformations as diagrams, which are somewhat more complex:
  \begin{align}
    \label{eq:CD-comp commutation strings}
    \includestandalone{img19a} && \includestandalone{img19b}
  \end{align}
  To parse the first diagram, the blue sub-diagram is~$\comp_*(h,f)$
  and the red one is~$\cd(fh,g;\allowbreak\tilde{f}\tilde{h},k)$.  To
  parse the second diagram, the blue sub-diagram
  is~$\cd(f,g;\allowbreak\tilde{f},\tilde{g})$, the red one
  is~$\cd(h,\tilde{g};\allowbreak\tilde{h},k)$,~and the black one
  is~$\comp_*(\tilde{h},\tilde{f})$.  Simplifying this requires a
  number of steps but as a first major goal we eliminate the loop.
  As usual, we use matching colors to indicate changes in the 
  diagrams, where violet denotes a subdiagram that is both blue and
  red (i.e.\ is changed both from and to the adjacent diagrams).
  \begin{equation}
    \label{eq:CD-comp commutation loop}
    \includestandalone{img20}
  \end{equation}
  In the first equality we use~\ref{fig:adjunction-composition 1},~and
  in the second, we use~\ref{SD loop}.  Now we paste this into the
  rest of~\ref{eq:CD-comp commutation strings}:
  \begin{equation}
    \label{eq:CD-comp commutation finish}
    \includestandalone{img21}
  \end{equation}
  where the first equality is \ref*{fig:compositions (assoc)} again
  and the second is~\ref{fig:adjunction-composition 1}.  The last
  diagram is the left diagram of~\ref{eq:CD-comp commutation strings},
  as desired.  The proof of~\ref{eq:CD-comp commutation upper} is the
  same (that is, with arrows reversed and the diagrams reflected
  horizontally).
\end{proof}

\subsection*{Uniqueness of the roof.}

Now we pursue a ``normal form'' for the roof similar to the
``staging'' defined for the alternating reduction.  It is much less
complicated, however.

\begin{theorem}{lem}{SGNT CD ordering}
  We have $\genby{\comp_0,\triv_0,\bc} =
  \genby{\triv_0}\genby{\comp_0}\genby{\bc}$.
\end{theorem}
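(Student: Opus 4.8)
The goal is to show that every element of $\genby{\comp_0,\triv_0,\bc}$ can be rewritten as a composition first of $\bc$ maps, then $\comp_0$ maps, then $\triv_0$ maps (read right-to-left as functions). The plan is to prove this by induction on the length of a word representing a given $\phi \in \genby{\comp_0,\triv_0,\bc}$, where at each step we take a word of the form $\psi \cdot w$ with $\psi$ a single basic factor in $\comp_0 \cup \triv_0 \cup \bc$ and $w$ already in the normal form $\genby{\triv_0}\genby{\comp_0}\genby{\bc}$, and we show how to absorb $\psi$ into normal form. So I would write $w = t \cdot c \cdot b$ with $t \in \genby{\triv_0}$, $c \in \genby{\comp_0}$, $b \in \genby{\bc}$, and analyze the three cases for $\psi$.

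First I would handle the easy case $\psi \in \triv_0$: then $\psi t \in \genby{\triv_0}$ already and $\psi w = (\psi t) c b$ is in normal form, so nothing is required. The heart of the argument is the case $\psi \in \comp_0$: I need to commute $\psi$ to the left past the block $t \in \genby{\triv_0}$. The tool for this is \ref{horizontal composition verify} (to slide $\psi$ past trivializations acting on disjoint terms) together with \ref{commutation relations}\plainref{en:comp-triv commutation} — specifically equations \ref{eq:comp-triv commutation lower lower}--\ref{eq:comp-triv commutation upper lower} — which trade a $\comp_0$ applied across a trivial term for a $\comp_0$ on the other side plus possibly a $\cd$; since in the invertible-base-change setting these $\cd$'s are the $\bc$'s, they get pushed to the far right. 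Note that \ref{eq:comp-triv commutation lower upper} and \ref{eq:comp-triv commutation upper lower} manufacture a new $\bc$ factor, so the commutation of $\comp_0$ past $\triv_0$ is not "clean"; this is exactly where one must be careful that the produced $\cd$ can itself be commuted all the way to the right past the existing $\comp_0$ block $c$ using \ref{commutation relations}\plainref{en:CD-comp commutation} (equations \ref{eq:CD-comp commutation}). One also uses \ref{CD trivial isomorphisms} to recognize that the simplest $\cd$'s occurring are expressible via $\triv$, keeping the bookkeeping consistent. The case $\psi \in \bc$ is intermediate: $\psi$ must be commuted past both $t$ and $c$; past $t$ one uses \ref{commutation relations}\plainref{en:CD-triv commutation} (equations \ref{eq:CD-triv commutation}), and past $c$ one uses \ref{commutation relations}\plainref{en:CD-comp commutation} again, after which $\psi$ joins the $\bc$ block $b$.

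The main obstacle I anticipate is that the commutation relations in \ref{commutation relations} are stated for \emph{single} interacting factors, whereas $t$ and $c$ are arbitrary products; so the induction must be structured so that at each stage we peel off a single factor of $t$ (or of $c$) and apply the relevant identity, generating in general several new $\comp_0$ and $\bc$ factors, and one must check the process terminates. Termination follows because each application of a $\comp_0$--$\triv_0$ or $\bc$--$\triv_0$ commutation does not increase the number of trivial terms, and the number of trivial terms strictly to the right of the $\comp_0$-block decreases; a secondary measure (e.g.\ the total count of $\comp_0$ factors still to the left of the $\bc$-block) handles the $\bc$--$\comp_0$ commutation, which by \ref{eq:CD-comp commutation} replaces one $(\bc,\comp_0)$ adjacency by a bounded reshuffling with strictly fewer inversions. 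Assembling these observations — base case trivial, inductive step by the three case analysis above, termination by the stated measures — yields the claimed factorization $\genby{\comp_0,\triv_0,\bc} = \genby{\triv_0}\genby{\comp_0}\genby{\bc}$, and the reverse inclusion is obvious since the right-hand side is visibly contained in the generated category.
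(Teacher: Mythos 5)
Your proposal is correct and takes essentially the same approach as the paper: the same normal form $\genby{\triv_0}\genby{\comp_0}\genby{\bc}$ is reached by the same rewriting moves (\ref{horizontal composition verify} plus the three families of \ref{commutation relations}), with the only difference being that you absorb one factor at a time into the full three-block form while the paper sorts in two stages ($\triv_0$ out first, then $\comp_0$ past $\bc$), and you supply termination bookkeeping the paper omits. The one point the paper makes explicit that you do not is that applying the $\cd$--$\comp$ relation left to right requires factoring the base-changed composite $\tilde{h}\tilde{f}$ into its two pieces, which is canonical only for cartesian squares --- this is precisely why the statement is about $\bc$ rather than $\cd$.
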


\begin{proof}
  Before proceeding, note that using \ref{commutation
    relations}\ref{en:CD-triv commutation} from ``left to right''
  requires making a choice of the individual morphisms~$\tilde{f}$,
  $\tilde{g}$, and~$\tilde{h}$ given only the
  composition~$\tilde{h}\tilde{f}$; this may not, in general, be
  possible, but is in fact canonical if we assume the outer rectangle
  is cartesian (then we may take~$\tilde{g}$ to be the base change
  of~$g$).  This is why we specialize to~$\bc$ in this corollary,
  aside from the applications.
  
  Now, it follows from~\ref{horizontal composition
    verify} and the cases~\ref{en:comp-triv commutation}
  and~\ref*{en:CD-triv commutation} of~\ref*{commutation relations}
  that $\genby{\comp_0,\triv_0,\cd} =
  \genby{\triv_0}\genby{\comp_0,\cd}$, and it follows from
  case~\ref{en:CD-comp commutation} that we have $\genby{\comp_0,\cd}
  = \genby{\comp_0}\genby{\cd}$.
\end{proof}

Although this proves that any $\phi \in \genby{\comp_0, \triv_0, \bc}$
can be written as $\alpha\beta\gamma$ with $\alpha \in
\genby{\triv_0}$, $\beta \in \genby{\comp_0}$,~and $\gamma \in
\genby{\bc}$, the intermediate functors $\on{dom} \alpha = \on{cod}
\beta$ and $\on{dom}\beta = \on{cod} \gamma$ are not canonical.  In
the general case this is unfixable, though the next lemma remains
valid.  In the case of the roof, this is fortunately all that is
required.

\begin{theorem}{lem}{canonical triv}
  Let~$F$ be any~SGF and let $\phi \colon G \to F$ be in
  $\genby{\bc,\comp_0,\triv_0}$.  Then there exists an~SGF $F_{\triv}
  = F_l F F_r$ and an~SGNT $\phi_{\triv} = \phi_l F \phi_r \colon
  F_{\triv} \to F$, having the properties that:
  \begin{itemize}
  \item $F_l = \id$ if and only if $\phi_l = \id$; otherwise, $F_l =
    \id_*$ and $\phi_l = \triv_*$, and unless $F = f^* F_1$ starts
    with a~$^*$, we have $F_l = \id$,
  \item $F_r = \id$ if and only if $\phi_r = \id$; otherwise, $F_r =
    \id^*$ and $\phi_r = \triv^*$, and unless $F = F_1 f_*$ ends with
    a~$_*$, we have $F_r = \id$,
  \end{itemize}
  and a $\psi \colon G \to F_{\triv}$ in $\genby{\bc, \comp_0}$ such
  that $\phi = \phi_{\triv} \psi$.
\end{theorem}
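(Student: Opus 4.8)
The plan is to split off the ``trivialization part'' of $\phi$ using \ref{SGNT CD ordering} and then strip it down one term at a time. By that lemma I may write $\phi=\alpha\beta\gamma$ with $\alpha\in\genby{\triv_0}$, $\beta\in\genby{\comp_0}$ and $\gamma\in\genby{\bc}$, where $\alpha\colon F'\to F$ and $F'=\on{dom}(\beta)$. Since $\beta\gamma$ already lies in $\genby{\bc,\comp_0}$, it suffices to produce a factorization $\alpha=(\phi_l F\phi_r)\circ\psi_0$ with $\psi_0\colon F'\to F_{\triv}$ in $\genby{\bc,\comp_0}$ and $\phi_l$, $\phi_r$ of the stated form, for then $\psi=\psi_0\beta\gamma$ works. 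Now $F'$ is simply $F$ with some number $k$ of extra trivial terms inserted, and $\alpha$ is (any composite presentation of) the $k$ deletions of those terms; I induct on $k$, the case $k=0$ being immediate.

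For the inductive step, call an extra trivial term of the SGF under consideration \emph{absorbable} if one of its neighbours is a term---trivial or not---of the same variance. If $F'$ has an absorbable extra trivial term $u$, use \ref{horizontal composition verify} to reorder the deletions of $\alpha$ so that $u$ is deleted first, say $\alpha=\alpha'\circ\tau_u$ with $\tau_u\colon F'\to F''$; then $u$ is still adjacent to its same-variance neighbour at that stage, so by \ref{eq:trivializations comp} and its pullback analogue (the composition part of \ref{fig:trivializations comp}) the deletion $\tau_u$ equals the composition morphism ($\comp_*$ or $\comp^*$) of $u$ with that neighbour, hence $\tau_u\in\genby{\comp_0}$. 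The map $\alpha'\colon F''\to F$ is again in $\genby{\triv_0}$ but deletes only $k-1$ terms, so the inductive hypothesis gives $\alpha'=(\phi_l F\phi_r)\circ\psi_0'$, and $\psi_0=\psi_0'\circ\tau_u$ finishes this case.

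If $F'$ has no absorbable extra trivial term, then every extra trivial term has all of its (one or two) neighbours non-trivial and of the opposite variance, and I \emph{slide} such a term toward the end it ``wants''. Concretely, for an extra $\id_*$ adjacent to some $f^*$, \ref{CD trivial isomorphisms} shows that deleting this $\id_*$ is the same as first applying a base change over a trivial---hence cartesian---square, which is an element of $\bc$, that moves the $\id_*$ to the other side of $f^*$, and then deleting it there; symmetrically an extra $\id^*$ adjacent to a $_*$ term is slid past it. Each slide strictly decreases the total distance of the extra trivial terms from the ends of the SGF, so after finitely many of them either some term becomes absorbable---returning us to the previous paragraph---or a term reaches the left end (if it is an $\id_*$) or the right end (if it is an $\id^*$). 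If a term does reach an end, then $F$ must begin with a $^*$ (resp.\ end with a $_*$), for otherwise that term would have met a same-variance neighbour on the way; and any surplus $\id_*$'s piling up at the left (resp.\ $\id^*$'s at the right) are adjacent composable pairs, hence absorbable among themselves, collapsing to a single one via $\comp_*(\id,\id)$ (resp.\ $\comp^*(\id,\id)$). What survives is at most one leading $\id_*$ and one trailing $\id^*$, occurring precisely under the advertised conditions on $F$; deleting these by $\triv_*$ and $\triv^*$ is the sought $\phi_{\triv}=\phi_l F\phi_r$, while the accumulated slides, absorptions and collapses assemble into $\psi_0\in\genby{\bc,\comp_0}$.

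The only real work, and the point to be careful about, is the bookkeeping: one must check at each step that the rewritten word is \emph{equal as a natural transformation} to $\alpha$---not merely a map with the same source and target---and that a slid trivial term meets the boundary $\id_*$ or $\id^*$ produced by the inductive hypothesis by collapsing (via $\comp_0$) rather than by spawning a second boundary term. This is exactly what \ref{CD trivial isomorphisms}, \ref{eq:trivializations comp}, \ref{horizontal composition verify}, and (through \ref{SGNT CD ordering}) the commutation relations of \ref{commutation relations} are there to supply. The wholly degenerate situations, in which $F$ has no non-trivial terms at all, are checked by hand and cause no trouble.
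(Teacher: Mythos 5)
Your proposal is correct and follows essentially the same route as the paper's proof: reduce to the $\genby{\triv_0}$ part via \ref{SGNT CD ordering}, slide each surviving trivial term past opposite-variance neighbours using the degenerate base changes of \ref{CD trivial isomorphisms}, absorb it into any same-variance neighbour via \ref{eq:trivializations comp}, and use \ref{horizontal composition verify} to justify the reordering. The only difference is organizational (your explicit induction on the number of deleted terms with the absorbable/slide dichotomy, versus the paper's ``commute all trivializations, then classify each''), and your handling of the initial reduction is, if anything, stated more carefully than the paper's.
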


\begin{proof}
  By~\ref{SGNT CD ordering}, it suffices to assume $\phi \in
  \genby{\triv_0}$.  By~\ref{eq:CD trivial isomorphisms lower},
  any~$\triv_*$ in the configuration $f^* \triv_* \colon f^* \id_* \to
  f^*$ can be replaced by~$\bc(f,\id)$ followed by $\triv_* f^* \colon
  \id_* f^* \to f^*$.  Inductively, then, any configuration $f_1^*
  \cdots f_n^* \triv_*$ is equal to $\triv_* f_1^* \cdots f_n^*$
  following a sequence of base change morphisms.  Similarly,
  by~\ref{eq:CD trivial isomorphisms upper} we may convert $\triv^*
  f_{1{*}} \cdots f_{n{*}}$ to $f_{1{*}} \cdots f_{n{*}} \triv^*$
  following a sequence of base changes.

  Analogously, by~\ref{eq:trivializations comp}, any~$\triv_*$ in the
  configuration~$\triv_* f_*$ or~$f_* \triv_*$ can be replaced
  wholesale with simply~$\comp_*(f,\id)$ or~$\comp_*(\id,f)$
  respectively.  Likewise for~$\triv^*$.

  Now, by~\ref{horizontal composition verify}, all~$\triv_*$
  and~$\triv^*$ morphisms~``commute'', so we may assume that those
  covered in the first paragraph occur first on the composition
  of~$\phi$, followed by those covered in the second paragraph.  It
  follows that the only trivializations that cannot be eliminated by
  this combination are those appearing in the configuration~$\triv_*
  f^*$ at the left end, or~$f_* \triv^*$ at the right end of the
  composition, giving $F_{\triv}$~and~$\phi_{\triv}$, and the above
  construction furnishes~$\psi$.
\end{proof}

Now we can prove \ref{roof morphism}.

\begin{theorem}{prop}{roof morphism uniqueness}
  For any SGF~$F$, its roof~$\roof(F)$ is the unique SGF of the
  form~$a_* b^*$ admitting admitting a map $\roof(F) \colon F \to a_*
  b^*$ in~$\SGNT_0^+$, and this map is unique.
\end{theorem}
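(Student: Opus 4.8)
The plan is to reduce the statement to the structural results already in hand, namely \ref{SGNT CD ordering} and \ref{canonical triv}, and then analyze what an SGF of the form $a_* b^*$ admitting such a morphism must look like. First I would recall that by definition $\roof(F)$ is the final object in the category of spaces mapping to the diagram $F$, equipped with its projections $a_F, b_F$, and that $\SGNT_0^+ = \genby{\bc, \comp_0, \triv_0}$. So given any $\phi \colon F \to a_* b^*$ in $\SGNT_0^+$, I apply \ref{canonical triv} with the target $a_* b^*$ in the role of ``$F$'' there: since $a_* b^*$ does not start with a $^*$ (it starts with $a_*$) nor, generically, end with a $_*$, the lemma tells me $\phi$ factors as $\psi$ in $\genby{\bc, \comp_0}$ followed by a trivialization part $\phi_{\triv}$ that can only be nontrivial in the degenerate cases $b = \id$ (so the target is $a_* \id^*$) or $a = \id$; in either case the trivialization is absorbed and we may as well assume $\phi \in \genby{\bc, \comp_0}$.

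Next I would use \ref{SGNT CD ordering} to write $\phi = \beta\gamma$ with $\gamma \in \genby{\bc}$ and $\beta \in \genby{\comp_0}$; after composing away the base changes, $\gamma$ carries $F$ to some SGF $F'$, and $\beta$ — being purely a sequence of composition isomorphisms — can only collapse adjacent $f_* g_*$ or $f^* g^*$ pairs. For the end result to have the alternating-reduction-free shape $a_* b^*$, the SGF $F'$ must itself, after composing, already be of the form (a single tower of $_*$'s)(a single tower of $^*$'s). This forces the diagram-of-spaces picture: applying a sequence of base changes to $F$ and then composing must produce a morphism $a \colon R \to \target(F)$ followed (contravariantly) by $b^* $ with $b \colon R \to \source(F)$, i.e.\ a span $\target(F) \xleftarrow{a} R \xrightarrow{b} \source(F)$ over the diagram $F$. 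The universal property of the fibered product — precisely the defining property of $\roof(F)$ — then identifies $(R, a, b)$ with $(\roof(F), a_F, b_F)$: any span dominating $F$ receives a unique map from $\roof(F)$, and the span produced by base-changing-then-composing is exactly the ``diagonal'' span that $\roof(F)$ is built to be the terminal object among. This gives existence of the morphism $\roof(F) \colon F \to a_{F*} b_F^*$ and pins down $a = a_F$, $b = b_F$, hence the uniqueness of the SGF $a_* b^*$.

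For the uniqueness of the map itself, once the target is fixed to be $\roof(F) = a_{F*} b_F^*$, I would argue that two such morphisms $\phi, \phi'$ in $\SGNT_0^+$ must agree. The cleanest route is to invoke \ref{basic theorem}(2), which says that for a target of exactly this form $a_* b^*$ there is at most one map from $F$ in $\SGNT_0$ — and $\SGNT_0^+ \subset \SGNT_0$ — so $\phi = \phi'$; alternatively, and more in keeping with the ``bootstrap'' flavor of this section, one can note that \ref{SGNT CD ordering} plus \ref{canonical triv} determine $\phi$ stage-by-stage (first the base changes, canonically forced by requiring the outer rectangles cartesian and by the universal property of the roof; then the compositions, canonical by \ref{eq:compositions assoc}; the trivialization part is trivial by the shape of the target), leaving no freedom.

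The main obstacle I anticipate is the step identifying ``$F'$ has the shape $a_* b^*$ after composition'' with ``the span $(R,a,b)$ dominates the diagram $F$ and is in fact its terminal dominating span.'' One has to check that the sequence of base changes applied to $F$ really does build the fibered product $\roof(F)$ and not some intermediate object, and that no essential information is lost when the intervening $\comp_0$-isomorphisms collapse the resulting tower — in other words, that every base change introduced is the ``right'' one, forced by a cartesian-square requirement, so that $R$ ends up being precisely $\target(F) \times_{\cdots} \source(F)$ over the whole diagram. This is where the hypothesis that we work with $\bc$ rather than general $\cd$ (flagged in the proof of \ref{SGNT CD ordering}) does real work: it is what makes the choice of each $\tilde g$ canonical and hence makes the composite span canonically the roof. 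Once that identification is secure, both the existence and the uniqueness are essentially bookkeeping on top of the two lemmas just cited.
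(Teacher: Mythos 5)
Your decomposition is the right one and matches the paper's: strip trivializations via \ref{canonical triv}, order the factors via \ref{SGNT CD ordering}, identify the codomain of the base-change part with the roof via the universal property of the fibered product, and handle the $\genby{\comp_0}$ part by associativity~\ref{eq:compositions assoc}. But your argument for uniqueness of the \emph{map} has a genuine gap. Your ``cleanest route'' --- citing \ref{basic theorem}(2) --- is circular: that statement is the corollary \ref{SGNT0 roofs}, whose proof in the paper invokes \ref{roof morphism uniqueness} itself (it writes $\phi$ as an alternating composition of $\SGNT_0^+$ and its inverses and applies the proposition to each factor). So it cannot be used here.

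Your fallback (``the base changes are canonically forced \dots leaving no freedom'') names the right issue but does not resolve it. The point is not merely that the target space $R$ is forced to be the fibered product --- that part you handle correctly --- but that two \emph{a priori different} compositions of base-change factors $F \to F_{\bc}$, e.g.\ performing the moves on the several configurations $g^* f_*$ of a long word in different orders, or interleaving a move on a configuration created by an earlier move, must be equal \emph{as natural transformations}. Landing at the same SGF does not by itself give equality of the SGNTs. The paper closes this with a separate combinatorial argument: it assigns a \emph{level} to each basic SGF and to each $\bc$ factor, shows that all level-$1$ factors have pairwise disjoint supports and hence commute by \ref{horizontal composition verify}, that every original configuration $g^* f_*$ must be consumed by some level-$1$ factor, and then inducts on level. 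Some such argument (or an appeal to \ref{commutation relations}\ref{en:CD-comp commutation} organized into a confluence statement) is needed to finish your proof; without it the uniqueness of $\beta \in \genby{\bc}$ is asserted rather than proved.
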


\begin{proof}
  Let $\phi \colon F \to \roof(F)$ be in~$\SGNT_0^+$; that is, in
  $\genby{\bc,\comp_0, \triv_0}$.  \ref{SGNT CD ordering} then places
  it in the form~$\alpha \gamma \beta$ with $\alpha \in
  \genby{\triv_0}$, $\gamma \in \genby{\comp_0}$,~and $\beta \in
  \genby{\bc}$, and furthermore by~\ref{canonical triv}, $\alpha =
  \id$, since $\roof(F) = a_{F{*}} b_F^*$.

  Write $\beta \colon F \to G$; since $\gamma \colon G \to a_{G{*}}
  b_G^*$ is in~$\comp_0$, we must have~$G$ of the form $(a_{1{*}}
  \cdots a_{i{*}})(b_j^* \cdots b_1^*)$ and $\beta \in \genby{\bc}$;
  thus, the maps $a_k$~and~$b_k$ furnish the projections of the final
  object mapping to the diagram of~$F$ described in \ref{roof}, and in
  particular,~$G$ is unique. We will write $G = F_{\bc}$.

  The proof then reduces to two claims: that the map $\gamma \colon
  F_{\bc} \to \roof(F)$ is unique in~$\genby{\comp_0}$, and that the
  map $\beta \colon F \to F_{\bc}$ is unique in~$\genby{\bc}$.  The
  former is simple: since $\roof(F) = a_{F{*}} b_F^*$, the map~$\gamma$
  must be of the form $(\comp_*(?,?) \cdots)(\comp^*(?,?) \cdots)$;
  i.e.~the~$_*$ and~$^*$ compositions do not interact.  Then
  by~\ref{eq:compositions assoc}, both factors are fully associative,
  so~$\gamma$ is unique.

  For the latter, the proof follows directly from~\ref{horizontal
    composition verify} but only with the right words.  By definition,
  $\beta \in \genby{\bc}$ is a composition of base change morphisms,
  which we may view as rewriting the string of basic SGFs~$F$:
  each~$\bc(f,g)$ replaces~$g^* f_*$ with~$\tilde{f}_* \tilde{g}^*$;
  we will call the two-letter space it affects its \define{support}.
  We will say that given any particular representation of~$\beta$ as
  a composition, the basic~SGFs of~$F$ itself have \define{level}~$1$
  and that each replacement increases the level of each basic~SGF
  by~$1$.  We will say that the~\define{level} of a
  specific~$\bc(f,g)$ factor is~$n$ if that is the larger of the
  levels of $f$~and~$g$.

  By definition of level, if a factor of level~$1$ follows any other
  factor, then their supports must be disjoint, and therefore, they
  are subject to \ref*{horizontal composition verify}.
  Therefore,~$\beta$ may be written with all the factors of level~$1$
  coming first; i.e.~$\beta = \beta \beta_0$, where~$\beta_0$ is
  a composition of level-$1$ factors and~$\beta$, a composition of
  higher-level factors.

  The set of possible level-$1$ factors is the set of possible base
  change morphisms out of~$F$, each of which correspond to a
  configuration~$g^* f_*$ in~$F$.  Since in~$\roof(F)$, no such
  configurations remain, each one must be the support of some factor
  of~$\beta$, and therefore necessarily some level-$1$ factor.
  Furthermore, since their supports are disjoint their order is
  irrelevant by \ref{horizontal composition verify}.
  Therefore,~$\beta_0$ is uniquely determined by~$F$.

  Now, letting $F' = \on{cod} \beta_0$, this functor is uniquely
  determined by~$F$ and we may apply induction to $\beta \colon F'
  \to \roof(F') = \roof(F)$ to conclude that~$\beta$ is equal to a
  uniquely defined ordered product of its factors of each level, and
  is therefore unique, as claimed.
\end{proof}

The following proposition gives a less laborious construction of the
roof morphism.

\begin{theorem}{prop}{roof morphism construction}
  The roof morphism of~$F$ factors through the alternating reduction
  of~$F$ and admits a factorization $(\phi_n \beta_n) \cdots (\phi_1
  \beta_1)$, where each~$\beta_i \in \bc$ and each~$\phi_i$ is an
  alternating reduction.
\end{theorem}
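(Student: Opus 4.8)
The plan is to build the roof morphism explicitly by an inductive procedure that alternates applications of the alternating-reduction morphism with single base-change morphisms, and then to check that the resulting composite lands in $\SGNT_0^+$ with the right codomain, so that by the uniqueness already established in \ref{roof morphism uniqueness} it must \emph{be} the roof morphism. First I would recall from \ref{roof morphism uniqueness} that $\roof(F) \colon F \to a_{F{*}} b_F^*$ is the unique map in $\genby{\bc, \comp_0, \triv_0}$ to an SGF of the form $a_* b^*$; thus it suffices to exhibit \emph{any} composite of the advertised shape landing in that class with the correct target, and uniqueness does the rest. The key structural input is that $\roof(\cdot)$ of an SGF depends only on the underlying diagram of morphisms of spaces, and applying an element of $\genby{\comp_0, \triv_0}$ (i.e., passing to the alternating reduction) or an element of $\bc$ (which, as noted in the proof of \ref{roof morphism uniqueness}, replaces a configuration $g^* f_*$ by $\tilde f_* \tilde g^*$, the latter being precisely a base-change square sitting over the same fibered product) does not change $\roof$.

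The induction would run on the number of ``descents'' $g^* f_*$ — configurations where a pullback is immediately followed in the composition-order by a pushforward, i.e., the obstructions to being of the form $a_* b^*$ — present in the alternating reduction of $F$. Concretely: let $F'$ be the alternating reduction of $F$, with alternating-reduction morphism $\phi_1 \colon F \to F'$ in $\genby{\comp_0, \triv_0}$. If $F'$ has no such descent, then $F'$ is already of the form $a_* b^*$ (an alternating SGF with no $g^* f_*$ must be pushforwards-then-pullbacks), hence equals $a_{F{*}} b_F^*$ by definition of the roof, and $\phi_1$ itself is the roof morphism (taking $n = 1$, $\beta_1 = \id \in \bc$ since $\bc$ contains identities via $F \id(f) G$ with trivial squares — or more cleanly, we simply declare the base case to need no $\beta$). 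Otherwise, pick any such descent, apply the base-change morphism $\beta_1 \in \bc$ supported there to get $F \to F' \to F''$; then take the alternating reduction $\phi_2$ of $F''$. Since $\roof(F'') = \roof(F') = \roof(F)$, and since each base-change step strictly decreases a suitable complexity measure (e.g., the total ``pullback-mass to the left of some pushforward'' in the alternating reduction, which each $\bc$ application pushes past one pushforward while the subsequent alternating reduction only composes and trivializes), the process terminates, producing $(\phi_n \beta_n) \cdots (\phi_2 \beta_2)(\phi_1)$ with final codomain an alternating SGF with no descents, i.e., of the form $a_* b^*$ with $\roof = \roof(F)$.

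The main obstacle — and the point requiring care — is verifying that the complexity measure genuinely decreases, i.e., that iterated application of ``push one descent past a pushforward, then re-alternate'' cannot cycle or grow without bound. The subtlety is that passing to the alternating reduction after a base-change step can in principle re-create descents elsewhere; one must choose the right monovariant. I would use the observation that a base-change morphism $g^* f_* \to \tilde f_* \tilde g^*$ moves one pushforward strictly to the left of one pullback in the total composition-order, and that $\comp_0$ and $\triv_0$ never move a pushforward to the right of a pullback; hence the number of pairs (pushforward $P$, pullback $Q$) with $P$ occurring after $Q$ is a strict monovariant under each $\beta_i$ and non-increasing under each $\phi_i$, giving termination. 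Once termination is in hand, the remaining bookkeeping — that the composite lies in $\genby{\bc, \comp_0, \triv_0} = \SGNT_0^+$, that each $\beta_i \in \bc$ genuinely lands in the class (the squares produced are cartesian by construction, since each is the base-change square of the relevant $f$ along the relevant $g$ inside the roof), and that ``factors through the alternating reduction of $F$'' holds because $\phi_1$ is precisely that reduction — is routine, and then \ref{roof morphism uniqueness} identifies this composite with $\roof(F)$.
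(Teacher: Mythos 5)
Your proposal is correct and follows essentially the same route as the paper: construct the map by alternating a base change with an alternating reduction, and invoke the uniqueness statement of \ref{roof morphism uniqueness} to identify the resulting composite in $\SGNT_0^+$ with the roof morphism. The only point of divergence is the termination argument --- the paper simply observes that each base-change-plus-reduction step strictly decreases the number of terms of the alternating reduction (when that number is at least three, with a two-term base case handled separately), whereas you track an inversion-counting monovariant; both work, and yours is marginally more explicit about why re-created descents cannot cause the process to cycle.
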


\begin{proof}
  To define the roof morphism of~$F$, it suffices to define it for the
  alternating reduction~$F'$, since then by \ref{roof morphism
    uniqueness}, they will have the same roof.  We thus
  define~$\beta_1 = \id$.  Let~$\beta_2$ be any element of~$\bc$
  defined on~$F'$, and let~$\phi_2$ be the alternating reduction of
  its codomain.  We claim that we may complete the proof by induction
  applied to the codomain of~$\phi_2$.  Indeed, we have decreased the
  number of terms of~$F'$ if that number was at least three, and if it
  has exactly two terms, then the construction is completed by a
  single additional base change.
\end{proof}

We finish with a few other results extending the uniqueness of the
roof to a larger class of~SGNTs.

\begin{theorem}{cor}{SGNT0 roofs}
  If $\phi \colon F \to G$ is in $\SGNT_0$, then $\roof(F) = \roof(G)$
  as~SGFs and $\roof(G) \phi = \roof(F)$ as~SGNTs.
\end{theorem}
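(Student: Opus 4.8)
The plan is to reduce everything to \ref{roof morphism uniqueness}. Recall that by \ref{invertible base changes} an element $\phi$ of $\SGNT_0$ is a vertical composition of finitely many morphisms, each of which lies in $\SGNT_0^+$ or is the inverse of an invertible element of $\SGNT_0^+$; I would argue by induction on the number of such factors, with two base cases (a single factor in $\SGNT_0^+$, and a single inverse) and one inductive step (composition).

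First I would dispose of the case $\phi \colon F \to G$ with $\phi \in \SGNT_0^+$. Here the roof morphism $\roof(G) \colon G \to \roof(G)$ lies in $\SGNT_0^+$ by \ref{roof morphism uniqueness}, and since $\SGNT_0^+ = \genby{\bc,\comp_0,\triv_0}$ is by definition closed under composition, the composite $\roof(G) \circ \phi \colon F \to \roof(G)$ is again in $\SGNT_0^+$. As $\roof(G)$ is an SGF of the form $a_* b^*$, the uniqueness clauses of \ref{roof morphism uniqueness} force $\roof(G) = \roof(F)$ as SGFs and $\roof(G)\circ\phi = \roof(F)$ as SGNTs, which is exactly the claim.

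Next, for $\phi = \psi^{-1}$ with $\psi \colon G \to F$ an invertible element of $\SGNT_0^+$, the previous paragraph applied to $\psi$ gives $\roof(F) = \roof(G)$ and $\roof(F)\circ\psi = \roof(G)$, whence $\roof(F)\circ\phi = \roof(F)\circ\psi^{-1} = \roof(G)$, the assertion for $\phi$. Finally, if $\phi = \phi_2 \circ \phi_1$ with $\phi_1 \colon F \to H$ and $\phi_2 \colon H \to G$ and the conclusion is known for $\phi_1$ and $\phi_2$, then $\roof(F) = \roof(H) = \roof(G)$ and $\roof(G)\circ\phi = (\roof(G)\circ\phi_2)\circ\phi_1 = \roof(H)\circ\phi_1 = \roof(F)$. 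By induction this covers all of $\SGNT_0$, the empty composition $\id$ being trivial.

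There is no serious obstacle: all the substance lives in \ref{roof morphism uniqueness}, and the only point needing care is the observation that $\roof(G)\circ\phi$ remains inside $\SGNT_0^+$ whenever $\phi$ does, so that the uniqueness of the roof morphism can be invoked; the inverse and composition cases are then purely formal. The genuinely delicate work — establishing that the roof morphism exists and is unique within $\SGNT_0^+$ — has already been carried out in \ref{roof morphism uniqueness} and \ref{roof morphism construction}.
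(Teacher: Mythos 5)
Your proof is correct and takes essentially the same approach as the paper's: write $\phi$ as a composition of elements of $\SGNT_0^+$ and their inverses, handle a single forward factor by observing that $\roof(G)\circ\phi$ stays in $\SGNT_0^+$ so that the uniqueness clause of \ref{roof morphism uniqueness} applies, and dispatch the inverse and composition cases formally by induction. (Your inverse case has the letters transposed --- from $\roof(F)\circ\psi = \roof(G)$ the conclusion should be $\roof(G)\circ\phi = \roof(F)$, not $\roof(F)\circ\phi = \roof(G)$ --- but this is a notational slip, not a gap.)
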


\begin{proof}
  For the former, write~$\phi$ as an alternating composition
  of~$\SGNT_0^+$ and its inverses.  By~\ref{roof morphism uniqueness}
  both preserve the roof (as in the proof of~\ref{good persistence}).

  For the latter, again write $\phi = \phi_0 \alpha$ with $\alpha \in
  \SGNT_0^+ \cup (\SGNT_0^+)^{-1}$ and $H = \on{dom}(\phi_0) =
  \on{cod}(\alpha)$, we have $\roof(G) \phi_0 = \roof(H)$ by
  induction.  If $\alpha \in \SGNT_0^+$ then we get $\roof(G) \phi =
  \roof(H) \alpha = \roof(F)$ by \ref*{roof morphism uniqueness}.  If
  $\alpha \in (\SGNT_0^+)^{-1}$, then we get $\roof(F) \alpha^{-1} =
  \roof(H) = \roof(G) \phi_0$.
\end{proof}

\section{Simplification via the roof}
\label{sec:simplify}

Finally we can employ the device of the roof to simplify an
arbitrary~SGNT that may contain unit morphisms.

\begin{theorem}{prop}{unit roof}
  Let $\phi \colon FG \to Ff_*f^*G$ be the SGNT $F \unit(f) G$.  Then
  there exists a map of spaces~$\tilde{f}$ and a commutative diagram:
  \begin{equation}
    \label{eq:unit roof diagram}
    \includestandalone{img22}
  \end{equation}
  where the lower edge is in~$\SGNT_0$ and is independent of~$\phi$.
\end{theorem}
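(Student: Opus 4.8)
The statement concerns the single SGNT $\phi = F\unit(f)G \colon FG \to Ff_*f^*G$, and asks us to factor it through roofs so that the part not equal to $\phi$ itself (the bottom edge of the square) lies in $\SGNT_0$ and, crucially, does not depend on $\phi$. My plan is to compute both roofs $\roof(FG)$ and $\roof(Ff_*f^*G)$ as diagrams of spaces and show they coincide up to the one extra diagonal map coming from $f$, then use the uniqueness of the roof morphism (Proposition~\ref{roof morphism}) to force the square to commute with the asserted form.

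First I would compute the two roofs. Viewing $FG$ as a linear diagram of spaces $X_0 \to X_1 \to \cdots \to X_n$ (reading off the basic SGFs of $FG$), the roof $\roof(FG)$ is the final object admitting maps to this diagram, i.e.\ an iterated fibered product, with projections $a_{FG}, b_{FG}$. Now $Ff_*f^*G$ has the \emph{same} underlying diagram of spaces except that at one vertex — the target of $f$, where $G$ ends and $F$ begins, say $X_k = \target(f) = \source(f)$ after identification — we have inserted the two maps $f \colon Z \to X_k$ and $f \colon Z \to X_k$ (the domain of $f$ appears twice, once as target of $f^*$ and once as source of $f_*$, but they are the \emph{same} space $Z$ with the \emph{same} map $f$). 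So the diagram for $Ff_*f^*G$ is obtained by grafting a ``spike'' $Z \xrightarrow{f} X_k$ onto the $FG$-diagram. Because the new vertex $Z$ maps to $X_k$ which is already in the diagram, the universal property of the fibered product gives a canonical comparison: the roof of the grafted diagram is $\roof(FG) \times_{X_k} Z$, and the projection to $\roof(FG)$ is the base change $\tilde f$ of $f$ along $a$ (or $b$) — this is the map of spaces $\tilde f$ named in the statement. Concretely $\roof(Ff_*f^*G) = a'_* b'^*$ where $(a',b')$ factors as $(a_{FG}, b_{FG})$ composed with $\tilde f$, giving the left and right vertical/diagonal edges of~\eqref{eq:unit roof diagram}.

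Next I would assemble the square. The top edge is $\phi$; the two side edges are the roof morphisms $\roof(FG) \colon FG \to \roof(FG)$ and $\roof(Ff_*f^*G) \colon Ff_*f^*G \to \roof(Ff_*f^*G)$, both in $\SGNT_0^+ \subset \SGNT_0$ by Proposition~\ref{roof morphism uniqueness}; and the bottom edge should be the comparison SGNT $\roof(FG) \to \roof(Ff_*f^*G)$ built from the base change $\bc(\cdot,\cdot)$ associated to inserting the $f$-spike, followed by trivializations/compositions to put it in reduced form — this lies in $\genby{\bc,\comp_0,\triv_0} = \SGNT_0^+ \subset \SGNT_0$, and it manifestly depends only on the diagram of spaces of $FG$ and the map $f$, not on the transformation $\phi$ (indeed it is itself a roof-type morphism of a modified diagram). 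To see the square commutes, I would observe that the composite down-then-along-the-bottom $FG \to \roof(FG) \to \roof(Ff_*f^*G)$ is an SGNT from $FG$ to an SGF of the form $a'_* b'^*$ lying in $\SGNT_0^+$; and the composite along-the-top-then-down, $FG \xrightarrow{\phi} Ff_*f^*G \to \roof(Ff_*f^*G)$, is an SGNT $FG \to a'_* b'^*$ — but it lands in $\genby{\SGNT_0^+, \unit}$, not obviously $\SGNT_0^+$. Here I invoke the uniqueness half of Proposition~\ref{roof morphism uniqueness}: the roof morphism $FG \to a'_* b'^*$ in $\SGNT_0^+$ is \emph{unique}, so it remains only to check that $\roof(Ff_*f^*G)\circ\phi$, despite containing the unit $\unit(f)$, actually agrees with it — equivalently, that the unit $\unit(f)$ inserted and then ``swallowed'' by the roof construction contributes trivially. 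This is exactly a triangle-identity computation: the roof morphism of $Ff_*f^*G$ factors (by Proposition~\ref{roof morphism construction}) through the alternating reduction and a base change, and the $f_*f^*$ pair, being adjacent, gets reduced by a base change morphism $\bc(f,f)$ or collapsed via $\unit'(f)$, and the composite $\unit'(f)\circ\unit(f)$-type combination is killed by~\eqref{eq:left adjoint identity}/\eqref{eq:right adjoint identity}. I would carry this out as a short string-diagram manipulation using Lemma~\ref{fig:adjunctions}.

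\textbf{Main obstacle.} The delicate point is the last one: showing that pre-composing the roof morphism of $Ff_*f^*G$ with $F\unit(f)G$ yields an element of $\SGNT_0^+$ (no residual unit), so that roof-uniqueness can be applied and the square forced to commute. Everything else — identifying the roofs as fibered products, recognizing the base change $\tilde f$, noting $\SGNT_0^+$-membership and $\phi$-independence of the bottom edge — is essentially bookkeeping with Definitions~\ref{roof} and~\ref{base change morphism} and the already-proven uniqueness results. The unit-cancellation is where the actual mathematical content of Mac~Lane-style coherence (here, the triangle identities) is used, and I expect the proof to spend most of its effort drawing the string diagram for $\roof(Ff_*f^*G)$, locating the $\unit(f)$ inside it after composing with $\phi$, and collapsing the cap-cup pair via~\eqref{eq:adjoint identities}.
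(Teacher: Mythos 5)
Your identification of $\tilde f$ and of the roof of $Ff_*f^*G$ as $\roof(FG)\times_Y A$ (where $f\colon A\to Y$ and $Y=\source(F)=\target(G)$) agrees with the paper, as does the overall shape of the square. But the mechanism you propose for commutativity fails, and the step you isolate as the ``main obstacle'' is a misdiagnosis. You claim the composite $\roof(Ff_*f^*G)\circ\phi$ should land in $\SGNT_0^+$ with ``no residual unit,'' so that uniqueness of the roof morphism (\ref{roof morphism uniqueness}) forces the square to close. This is false on two counts. First, your proposed bottom edge $\roof(FG)=a_*b^*\to\roof(Ff_*f^*G)=(a\tilde f)_*(b\tilde f)^*$ is \emph{not} in $\SGNT_0^+$: the string $a_*b^*$ contains no configuration $g^*h_*$, so no base change applies to it, and no $\comp$/$\triv$ map can change its alternating reduction; the only way to pass to $(a\tilde f)_*(b\tilde f)^*$ in the calculus is via $a_*\unit(\tilde f)b^*$ followed by compositions. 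The unit does not cancel --- it \emph{survives} as the base-changed unit $\unit(\tilde f)$, and that surviving unit is precisely the fourth edge of the diagram; this is the entire point of the proposition, which is what lets units accumulate on one edge in \ref{SGNT0 square}. Second, the cancellation you invoke cannot occur: the adjacent pair in $Ff_*f^*G$ is $f_*f^*$ (pull, then push), which is already in roof form, whereas $\bc(f,f)$ and $\unit'(f)$ apply to the opposite configuration $f^*f_*$, which is not present; no counit ever appears, so the triangle identities are never in play. Finally, appealing to a uniqueness statement for maps in $\genby{\SGNT_0^+,\unit}$ would be circular, since such uniqueness is what \ref{SGNT0 square} and \ref{main theorem} are built on top of this very proposition to prove.

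What is actually required, and where the paper spends its effort, is a direct verification that
\begin{equation*}
  \bigl(\text{near-roof morphism of }Ff_*f^*G\bigr)\circ F\unit(f)G
  \;=\;
  a_*\unit(\tilde f)\,b^*\circ\roof(FG),
\end{equation*}
where the left-hand factor is the explicit $\SGNT_0$ composite of $\roof(F)$, $\roof(G)$, the base changes $\bc(f,b_F)$, $\bc(a_G,f)$, $\bc(\tilde a_G,\tilde b_F)$, and (inverse) compositions, landing in $a_*\tilde f_*\tilde f^*b^*$. After stripping common outer factors this reduces to an identity of maps $b_F^*a_{G{*}}\to(p_F\tilde f)_*(p_G\tilde f)^*$, which the paper proves by showing that \emph{both} sides equal the single commutation morphism $\cd(a_G,b_F;p_G\tilde f,p_F\tilde f)$, using \ref{SD loop}, \ref{fig:adjunctions}, and \ref{fig:adjunction-composition 1}. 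So the string-diagram work is not a cap--cup cancellation but a genuine rearrangement in which $\unit(f)$ is transported across the base changes and re-emerges as $\unit(\tilde f)$; your outline as written would stall at exactly this step.
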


\begin{proof}
  For notation, write $\roof(F) = a_{F{*}}b_F^*$, $\roof(G) = a_{G{*}}
  b_G^*$,~and $\roof(FG) = a_* b^*$ as in the statement (we used
  $a_{FG{*}} b_{FG}^*$ in \ref{roof}).  Consider the following
  diagram:
  \begin{equation}
    \label{eq:unit roof zigzag}
    \includestandalone{img23}
  \end{equation}
  where $X = \target(F)$, $Y = \source(F) = \target(G)$,~and $Z = 
  \source(G)$, the middle square is cartesian,~and $a = a_F p_F$,
  $b = b_F p_G$.  We observe the following formal identity:
  \begin{equation}
    \label{eq:unit roof compatibility}
    \roof(FG) \times_Y A \cong
    (\roof(G) \times_Y A) \times_A (A \times_Y \roof(F))
  \end{equation}
  since $\roof(FG) \cong \roof(G) \times_Y \roof(F)$ by~\ref{eq:unit
    roof zigzag}.  This is represented by the following diagram
  (rotated from the above for compactness):
  \begin{equation}
    \label{eq:unit roof compatibility diagram}
    \includestandalone{img24}
  \end{equation}
  We define~$\tilde{f}$ to be the projection $\roof(FG) \times_Y A \to
  \roof(FG)$; then, since $\roof(FG) \cong \roof(F) \times_Y
  \roof(G)$, we have the following compositions, using the projections
  from diagram~\ref{eq:unit roof zigzag}:
  \begin{align}
    \label{eq:unit roof maps}
    \tilde{f}_F \pi_F = p_F \tilde{f} &&
    \tilde{f}_G \pi_G = p_G \tilde{f}.
  \end{align}
  Using all this notation, we can define the multipart composition for
  the bottom arrow of~\ref{eq:unit roof diagram}:
  \begin{subequations}
    \label{eq:unit roof composition}
    \begin{align}
      \label{eq:unit roof composition 1}
      F f_* f^* G
      &\xrightarrow{\roof(F), \roof(G)}
      a_{F{*}} b_F^* f_* f^* a_{G{*}} b_G^* \\
      \label{eq:unit roof composition 2}
      &\left.\!\!\!
      \begin{aligned}
        &\xrightarrow{\bc(f,b_F), \bc(a_G, f)}
        a_{F{*}} \tilde{f}_{F{*}} \tilde{b}_F^*
        \tilde{a}_{G{*}} \tilde{f}_G^* b_G^* \\
        &\xrightarrow{\bc(\tilde{a}_G, \tilde{b}_F)}
        a_{F{*}} \tilde{f}_{F{*}}\pi_{F{*}}
        \pi_G^* \tilde{f}_G^* b_G^* \\
        &\xrightarrow{\comp_*(\pi_F, \tilde{f}_F), \comp^*(\pi_G,
          \tilde{f}_G)}
        a_{F{*}} (\tilde{f}_F \pi_F)_* (\tilde{f}_G \pi_G)^* b_G^* \\
        &=
        a_{F{*}} (p_F \tilde{f})_* (p_G \tilde{f})^* b_G^* \\
        &\xrightarrow{\comp_*(\tilde{f}, p_F)^{-1}, \comp^*(\tilde{f},
          p_G)^{-1}}
        a_{F{*}} p_{F{*}} \tilde{f}_* \tilde{f}^* p_G^* b_G^*
      \end{aligned}
      \right\}
      \\
      \label{eq:unit roof composition 3}
      &\xrightarrow{\comp_*(p_F, a_F), \comp^*(p_G, b_G)}
      a_* \tilde{f}_* \tilde{f}^* b^*.
    \end{align}
  \end{subequations}
  We have braced the middle lines for comparison with~$\roof(FG)$,
  which by~\ref{SGNT0 roofs} may be written as:
  \begin{subequations}
    \label{eq:roof composition}
    \begin{align}
      \label{eq:roof composition 1}
      FG
      &\xrightarrow{\roof(F), \roof(G)}
      a_{F{*}} b_F^* a_{G{*}} b_G^* \\
      \label{eq:roof composition 2}
      &\xrightarrow{\bc(a_G, b_F)}
      a_{F{*}} p_{F{*}} p_G^* b_G^* \\
      \label{eq:roof composition 3}
      &\xrightarrow{\comp_*(p_F, a_F), \comp^*(p_G, b_G)}
      a_* b^*.
    \end{align}
  \end{subequations}
  To show that~\ref{eq:unit roof diagram} commutes with~\ref{eq:unit
    roof composition} as the lower edge, we have to show that (using
  the numbers as names) $\ref*{eq:unit roof composition} \circ
  F\unit(f)G = a_* \unit(\tilde{f}) b^* \circ \ref*{eq:roof
    composition}$.  According to~\ref{horizontal composition verify},
  we have both of:
  \begin{gather}
    \label{eq:unit roof commutation}
    \ref*{eq:unit roof composition 1} \circ F\unit(f)G
    = a_{F{*}} b_F^* \unit(f) a_{G{*}} b_G^*  \circ
    \ref*{eq:roof composition 1} \\
    \ref*{eq:unit roof composition 3} \circ
    a_{F{*}} p_{F{*}} \unit(\tilde{f}) p_G^* b_G^*
    = a_* \unit(\tilde{f}) b^* \circ
    \ref*{eq:roof composition 3}
  \end{gather}
  so since $\ref*{eq:unit roof composition} = \ref*{eq:unit roof
    composition 3} \ref*{eq:unit roof composition 2} \ref*{eq:unit
    roof composition 1}$ and $\ref*{eq:roof composition} =
  \ref*{eq:roof composition 3} \ref*{eq:roof composition 2}
  \ref*{eq:roof composition 1}$ it suffices to show that:
  \begin{equation}
    \label{eq:unit roof sufficient}
    \ref*{eq:unit roof composition 2}
    \circ a_{F{*}} b_F^* \unit(f) a_{G{*}} b_G^* \\
    = a_{F{*}} p_{F{*}} \unit(\tilde{f}) p_G^* b_G^*
    \circ \ref*{eq:roof composition 2}
  \end{equation}
  We can omit the $a_{F{*}}$~and~$b_G^*$ on the ends and move the
  $\comp_*$~and~$\comp^*$ inverses in~\ref{eq:unit roof composition 2}
  to the other side, rendering both sides as maps of~SGFs
  \begin{equation}
    \label{eq:unit roof signature}
    b_F^*a_{G{*}} \to (p_F\tilde{f})_* (p_G \tilde{f})^*.
  \end{equation}
  We show these are equal using string diagrams.  First, the two sides
  of~\ref{eq:unit roof sufficient} are:
  \begin{align}
    \label{eq:unit roof sufficient strings}
    \includestandalone{img25a} && \includestandalone{img25b}
  \end{align}
  Note that $\tilde{f}_F \pi_F = p_F \tilde{f}$ and the same for~$G$,
  by~\ref{eq:unit roof maps}.  In the left diagram, the blue portion
  is~$\unit(f)$; the red portion is~$\bc(f,b_F)\bc(a_G,f)$; the brown
  portion is $\bc(\tilde{a}_G, \tilde{b}_F)$; and the yellow portion
  is $\comp_*(\pi_F, \tilde{f}_F) \comp^*(\pi_G, \tilde{f}_G)$.  In
  the right diagram, the blue is~$\unit(\tilde{f})$; the red
  is~$\bc(a_G, b_F)$; and the brown is $\comp_*(\tilde{f},
  p_F)\comp^*(\tilde{f}, p_G)$.

  Despite the complexity of these diagrams we claim that both are
  equivalent to that of $\cd(a_G, b_F; p_G \tilde{f}, p_F \tilde{f})$.
  First, the second one, where we match blue and red in consecutive
  pictures to track regions that are altered; violet means a shape
  that is both blue and red.
  \begin{equation}
    \label{eq:unit roof second strings}
    \includestandalone{img26}
  \end{equation}
  by \ref{SD loop}, and this is exactly the desired $\cd$ diagram.
  For the larger diagram we have to do only scarcely more:
  \begin{equation}
    \label{eq:unit roof first strings}
    \includestandalone{img27}
  \end{equation}
  We have used~\ref{fig:adjunctions} on the blue diagram (with the
  cyan diagram unchanged for comparison),
  and~\ref{fig:adjunction-composition 1} on the red diagram.  This
  rather extended result is now amenable to~\ref{SD loop} applied
  twice:
  \begin{equation}
    \label{eq:unit roof first strings loops}
    \includestandalone{img28}
  \end{equation}
  where, finally, we have used~\ref{fig:compositions (inv)} on the
  second diagram.  The third is once again a~$\cd$ diagram,
  necessarily $\cd(a_G, b_F; p_G \tilde{f}, p_F \tilde{f})$ because
  the ends are correct.  This completes the~proof.
\end{proof}

\begin{theorem}{prop}{SGNT0 square}
  Let $\phi \colon F \to G$ be in $\genby{\SGNT_0, \unit}$; then there
  exists a map of spaces~$g$ making the following diagram commute:
  \begin{equation}
    \label{eq:positive SGNT diagram}
    \includestandalone{img29}
  \end{equation}
\end{theorem}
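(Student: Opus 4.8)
The plan is to induct on the number of honest-unit factors in a presentation of $\phi$ as a composition of the generators of $\genby{\SGNT_0, \unit}$ --- elements of $\SGNT_0^+$, their inverses, and horizontal composites $A\,\unit(f)\,B$; any $\unit'$ factors present in a given representation may be eliminated beforehand using \ref{counits auxiliary}. Grouping the $\SGNT_0$ generators into maximal consecutive blocks, write
\[
  \phi = \psi_n \circ u_n \circ \psi_{n-1} \circ \cdots \circ u_1 \circ \psi_0,
\]
with $\psi_i \in \SGNT_0$ and $u_i \in \unit$. The base case $n = 0$, i.e.\ $\phi \in \SGNT_0$, is immediate: \ref{SGNT0 roofs} gives $\roof(F) = \roof(G)$ as SGFs and $\roof(G) \circ \phi = \roof(F)$ as SGNTs, so the diagram commutes with $g = \id$ and trivial bottom edge (this already covers the generators in $(\SGNT_0^+)^{-1}$, in particular $\bc^{-1}$). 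For $n = 1$ with trivial outer blocks the statement is essentially \ref{unit roof}: post-composing the right and bottom edges of its square with the single $\comp_0$-step $a_* \tilde{f}_* \tilde{f}^* b^* \to (a\tilde{f})_* (b\tilde{f})^* = \roof(\on{cod} u_1)$ turns its right edge into an $\SGNT_0$-morphism $\on{cod} u_1 \to \roof(\on{cod} u_1)$ --- necessarily the roof morphism, by uniqueness of $\SGNT_0$-maps into a roof (which follows from \ref{SGNT0 roofs} and \ref{roof morphism uniqueness}) --- while the bottom edge becomes a single roof-unit map $a_* \unit(\tilde{f}) b^*$ followed by that $\comp_0$-step.

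For the inductive step I would first absorb the leading block: \ref{SGNT0 roofs} applied to $\psi_0 \colon F \to \on{dom}(u_1)$ gives $\roof(F) = \roof(\on{dom} u_1)$ and $\roof(\on{dom} u_1) \circ \psi_0 = \roof(F)$, so we may assume $\phi = \phi'' \circ u_1$ with $u_1 = A\,\unit(f)\,B \in \unit$ and $\phi'' \in \genby{\SGNT_0, \unit}$ having $n - 1$ honest units. Applying the $\comp_0$-reduced square of \ref{unit roof} to $u_1$ and the inductive hypothesis to $\phi'' \colon \on{cod} u_1 \to G$ yields two commutative squares sharing the edge $\roof(\on{cod} u_1) \colon \on{cod} u_1 \to \roof(\on{cod} u_1)$; pasting them along this edge produces a commutative diagram with $\phi$ on top, the roof morphisms of $F$ and $G$ on the sides, and bottom edge the composite of the two bottom edges, each a single roof-unit map modulo $\SGNT_0$.

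The step I expect to be the main obstacle is then to collapse this composite bottom edge back into the normal form demanded by the statement: a \emph{single} $a_{G*} \unit(g) b_G^*$ modulo $\SGNT_0$ for one map of spaces $g$, rather than an alternating chain of roof-units and $\SGNT_0$-maps. The key is the formal fibered-product identity $\roof(FG) \times_Y A \cong (\roof(G) \times_Y A) \times_A (A \times_Y \roof(F))$ established in the proof of \ref{unit roof} (displayed there as \ref{eq:unit roof compatibility}): an iterated fibered product of roofs over their common space is again a single such product, so the chain of projection maps collapses to one projection $g$ out of a single iterated fibered product of roofs, and the accompanying chain of unit maps collapses, via the adjunction--composition and loop identities \ref{fig:adjunction-composition 1}, \ref{fig:adjunction-composition 2} and \ref{SD loop} together with the commutation relations \ref{commutation relations}, to the single $a_{G*} \unit(g) b_G^*$ --- precisely the string-diagram computation performed for one unit at the end of the proof of \ref{unit roof}. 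One must also check that the $\SGNT_0$-blocks $\psi_i$ absorbed into the intermediate SGFs can be transported across the units without disrupting this collapse, which is handled by \ref{SGNT0 roofs} and the horizontal-composition lemma \ref{horizontal composition verify}.
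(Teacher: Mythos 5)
Your proposal is correct and follows essentially the same route as the paper: induction on a factorization of $\phi$, with \ref{SGNT0 roofs} absorbing the $\SGNT_0$ factors, \ref{unit roof} handling each unit factor, and consecutive roof-units merging into a single $a_{G{*}}\unit(g)b_G^*$ for a composite map of spaces. The one difference is that the paper performs this merge one factor at a time using only the composition-of-units identity~\ref{eq:composition units counits} (from the proof of \ref{fig:adjunction-composition 1}) together with \ref{fig:compositions (assoc)}, which is lighter than the deferred, global collapse via the fibered-product identity and loop lemmas that you sketch.
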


\begin{proof}
  We apply induction on~$\phi$: thus, suppose that $\phi = \alpha
  \phi_0$, where diagram~\ref{eq:positive SGNT diagram} exists for
  $\phi_0 \colon F \to F'$ and $\alpha \in \SGNT_0 \cup \unit$.  If
  $\alpha \in \SGNT_0$, then we can augment the $\phi_0$~diagram
  simply:
  \begin{equation}
    \label{eq:positive SGNT diagram induction 1}
    \includestandalone{img30}
  \end{equation}
  where $\roof(F') = \roof(G)$ by \ref{SGNT0 roofs}; the triangle
  commutes by \ref{SGNT0 roofs}.  If, alternatively, $\alpha \in
  \unit$, then we write $F' = AB$ and augment the $\phi_0$~diagram
  with~\ref{eq:unit roof diagram}:
  \begin{equation}
    \label{eq:positive SGNT diagram induction 2}
    \includestandalone{img31}
  \end{equation}
  The lower edge is, by~\ref{SGNT0 roofs}, equal to~$\roof(G)$;
  since both squares commute and the triangle commutes by
  construction, the large diagram commutes.  We claim that the right
  edge is equal to $(\comp_*(gf,a)\comp^*(gf,b)) \circ a_* \unit(gf)
  b^*$.  We prove this using string diagrams:
  \begin{equation}
    \label{eq:positive SGNT diagram induction 2 strings}
    \includestandalone{img32}
  \end{equation}
  where we have used first~\ref{fig:compositions (assoc)} and
  then~\ref{eq:composition units counits} from the proof
  of~\ref{fig:adjunction-composition 1}.
\end{proof}

This is the ultimate theorem for unit morphisms alone; now we extend
it to include inverse units.

\begin{theorem}{lem}{Unit square}
  In~\ref{unit roof}, if~$\phi$ is in~$\Unit$, then so is the unit in
  the right edge.
\end{theorem}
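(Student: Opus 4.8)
The plan is to verify that the right edge $a_*\unit(\tilde f)b^*$ of~\ref{eq:unit roof diagram}, in which $\roof(FG) = a_*b^*$, meets the three requirements of~\ref{invertible units}: it must be a good SGNT, a natural isomorphism, and have admissible core-domain. First I would recall what the construction in the proof of~\ref{unit roof} tells us about $\tilde f$: it is the base change of $f$ along the canonical map $\roof(FG)\to Y$, where $Y = \source(F) = \target(G)$ --- that is, the projection $\roof(FG)\times_Y\source(f)\to\roof(FG)$ --- and the target $a_*\tilde f_*\tilde f^*b^*$ of the right edge has alternating reduction $(a\tilde f)_*(b\tilde f)^*$, which is precisely $\roof(Ff_*f^*G)$ (the roof of $Ff_*f^*G$ being $\roof(FG)\times_Y\source(f)$ with pair morphism $(a\tilde f, b\tilde f)$).

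For the ``good'' and ``natural isomorphism'' conditions I would run through the commutative square~\ref{eq:unit roof diagram}. Since $\phi\in\Unit$ it is a good SGNT, so both $\on{dom}\phi = FG$ and $\on{cod}\phi = Ff_*f^*G$ are good; hence by~\ref{roof isomorphism} the SGF $a_*b^* = \roof(FG)$ is good and the morphism $\roof(FG)\colon FG\to a_*b^*$ is a natural isomorphism, while the alternating reduction $\roof(Ff_*f^*G)$ of $a_*\tilde f_*\tilde f^*b^*$ is good, so $a_*\tilde f_*\tilde f^*b^*$ is good as well. As the source and target of the lower edge of~\ref{eq:unit roof diagram} are then both good and that edge lies in $\SGNT_0$, it is a natural isomorphism by~\ref{roof isomorphism} and~\ref{SGNT0 roofs}. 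Since $\phi$ itself is a natural isomorphism --- every element of $\Unit$ is --- commutativity of~\ref{eq:unit roof diagram} forces $a_*\unit(\tilde f)b^*$ to be a natural isomorphism; and it is a good SGNT because its domain $a_*b^*$ and its codomain $a_*\tilde f_*\tilde f^*b^*$ are both good.

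It remains to exhibit the required decomposition, for which the natural choice takes the outer SGFs trivial and the core equal to $a_*\unit(\tilde f)b^*$ itself, whose domain is $a_*b^* = \roof(FG)$; so one must show $\roof(FG) = (a,b)$ is admissible, i.e.\ $(a,b)\in C$. Here I would use the hypothesis $\phi\in\Unit$: write $\phi = F_0(A\unit(f)B)G_0$ with $A\unit(f)B$ a natural isomorphism and $AB$ admissible, so that $\roof(AB)\in C$ and $F = F_0A$, $G = BG_0$. Since the roof of a concatenation of diagrams of spaces is the corresponding iterated fibered product, $\roof(FG) = \roof(F_0ABG_0) = \roof(F_0)\times_{\source(F_0)}\roof(AB)\times_{\target(G_0)}\roof(G_0)$ is a base change of $\roof(AB)$ along the roof projections $b_{F_0}$ and $a_{G_0}$, hence lies in $C$ by the base-change-stability built into~\ref{acyclicity structure}. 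The delicate point --- and the step I expect to be the real obstacle --- is that this base change naturally yields the pair morphism of $\roof(FG)$ \emph{into} $\roof(F_0)\times\roof(G_0)$, whereas admissibility concerns the pair morphism into $\target(F)\times\source(G)$; one must therefore still check that post-composing the two legs with the roof projections $a_{F_0}$ and $b_{G_0}$ does not leave $C$. I would handle this by invoking that $FG$ is good, so that its alternating reduction --- and with it $\roof(FG) = (a,b)$ --- has $a$ push-geolocalizing or $b$ pull-geolocalizing, and then applying the first axiom of an acyclicity structure (the pairs $(a,i)$ and $(i,b)$) to absorb those projections, in effect reducing to the case $F_0 = G_0 = \id$. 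Granting this, $(a,b)\in C$, so all three conditions of~\ref{invertible units} hold and $a_*\unit(\tilde f)b^*\in\Unit$.
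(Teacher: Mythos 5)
Your first two paragraphs are essentially sound: the identification of $\tilde{f}$ as the base change of~$f$ to $\roof(FG)$, and the diagram chase showing that $a_*\unit(\tilde{f})b^*$ is good and a natural isomorphism, agree with the paper's argument. The gap is exactly where you flagged it, in the admissibility step, and the repair you propose does not work. By taking the outer SGFs in the decomposition of \ref{invertible units} to be trivial, you commit yourself to showing that the full roof pair $(a,b)=\roof(FG)$ lies in~$C$. The base-change axiom of \ref{acyclicity structure} only produces the pair $(a_0,b_0)=\roof(b_{F_0}^*\,AB\,a_{G_0*})$, i.e.\ the fibered product of the admissible $(a_{AB},b_{AB})$ with $\roof(F_0)$ and $\roof(G_0)$ over $\target(a_{AB*})$ and $\source(b_{AB}^*)$; it says nothing about post-composing the two legs with the remaining projections $a_{F_0}$ and $b_{G_0}$. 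The first axiom of an acyclicity structure only asserts that certain pairs $(a,i)$ and $(i,b)$ \emph{belong} to~$C$; it is not a rule for composing elements of~$C$, so there is nothing with which to ``absorb'' those projections, and goodness of~$FG$ does not help. Indeed the claim fails in the motivating example \ref{etale acyclicity}: there $C$ consists of immersions, $(a_0,b_0)$ is an immersion, but its composite with $a_{F_0}\times b_{G_0}$ (say with $a_{F_0}$ a non-injective proper map, which goodness permits) need not be injective, hence need not be an immersion. So $(a,b)$ need not be admissible.

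The paper's proof escapes precisely by choosing a different decomposition: the admissible core is taken to be $a_{0*}b_0^*$, the roof of the middle portion $b_{F_0}^*ABa_{G_0*}$ alone, and the outer factors $a_{F_0*}$ and $b_{G_0}^*$ are pushed into the unconstrained slots of \ref{invertible units}, which carry no admissibility requirement. The price of that choice is that the natural-isomorphism condition must then be verified for the smaller unit $a_{0*}\unit(\tilde{f})b_0^*$ rather than for the whole right edge; the paper does this by writing a second instance of diagram~\ref{eq:unit roof diagram} for the pair $b_{F_0}^*A$ and $Ba_{G_0*}$, whose right edge is exactly $a_{0*}\unit(\tilde{f})b_0^*$ and whose other three edges are isomorphisms. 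Your goodness and isomorphism arguments survive essentially unchanged once transported to that decomposition, but as written the admissibility step is a genuine gap.
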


\begin{proof}
  For concurrency of notation, replace $F$~and~$G$ in~\ref{eq:unit
    roof diagram} with $FA$~and~$BG$ respectively, where we assume as
  in Definitions~\plainref{acyclicity structure}
  and~\plainref{invertible units} that $A\unit(f)B$ is a natural
  isomorphism with $AB$~admissible and~$\phi$ itself good.  Let
  $\psi = a_* \unit(\tilde{f}) b^*$ for brevity.

  As usual, we write $\roof(F) = a_{F{*}} b_F^*$ and $\roof(G) =
  a_{G{*}} b_G^*$, and similarly $\roof(AB) = a_{AB{*}} b_{AB}^*$; by
  hypothesis, we have $(a_{AB}, b_{AB})$ admissible.  Finally, we
  write $\roof(b_F^* a_{AB{*}}\allowbreak b_{AB}^* a_{G{*}}) =
  a_{0{*}} b_0^*$, so by \ref{SGNT0 roofs} (together with \ref{roof
    morphism}) we have the alternating reduction $\roof(a_{F{*}}
  a_{0{*}} b_0^* b_G^*) = \roof(FABG) = a_* b^*$.  We claim that $a_*
  \unit(\tilde{f}) b^*$ is in~$\Unit$.

  First, we verify that~$a_* \unit(\tilde{f}) b^*$ is good.  Indeed,
  we have already shown that $a_* b^* = \roof(FABG)$, which is good by
  hypothesis on~$\phi$ and~\ref{roof isomorphism}; likewise, the
  alternating reduction of $a_* \tilde{f}_* \tilde{f}^* b^*$ is equal
  to~$\roof(FAf_*f^*BG)$ by the same token and the lower edge
  of~\ref{unit roof}, so is also good.

  Next, we verify that~$a_{0{*}} \unit(\tilde{f}) b_0^*$ is an
  isomorphism.  Indeed, if we write the diagram corresponding
  to \ref*{unit roof} with $F \leftrightarrow b_F^* A$ and $G
  \leftrightarrow B a_{G{*}}$:
  \begin{equation}
    \label{eq:Unit square diagram}
    \includestandalone{img33}
  \end{equation}
  then it appears as the right edge.  Both horizontal edges are good,
  as the alternating reduction of~$b_F^* AB a_{G{*}}$ is that
  of~$FABG$ with $a_{F{*}}$~and~$b_G^*$ removed, so by~\ref{roof
    isomorphism} they are isomorphisms.  Since the left edge
  contains~$A\unit(f)B$, which is an isomorphism by hypothesis, the
  right edge is an isomorphism, as claimed.

  Finally, we verify that the pair map~$(a_0,b_0)$ is admissible.
  Indeed, if we write the diagram
  \begin{equation}
    \label{eq:Unit_1 isomorphism zigzag}
    \includestandalone{img34}
  \end{equation}
  then the map from its roof to the product of its two projections
  $X$~and~$Z$ is
  \begin{equation}
    \label{eq:Unit_1 isomorphism roof}
    (b_0, a_0)
    = \bigl(X \times_U Y \times_V Z \to X \times Z\bigr)
    = X \times_U \bigl(Y \to U \times V\bigr) \times_V Z
  \end{equation}
  and is therefore the base change of an admissible map, so
  admissible.
\end{proof}

\begin{theorem}{lem}{unit fractions swap}
  Let $F = a_* b^*$ and let $\phi = a_* \unit(f) b^*$ be a left~or
  right isomorphism.  Then for any $\psi = a_* \unit(g) b^*$, it is
  possible to write $\psi\phi^{-1} = \alpha^{-1}\beta$ for some
  $\alpha, \beta$ with $\alpha$~invertible.
\end{theorem}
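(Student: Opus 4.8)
The plan is to extract the factorization from the interchange law for
  natural transformations---concretely, from \ref{horizontal composition
  verify}---applied to the two units $\unit(f)\colon \id \to f_*f^*$ and
  $\unit(g)\colon \id \to g_*g^*$, each regarded as living in the slot of
  $a_*b^*$ between $a_*$ and $b^*$.

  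Suppose first that $\phi$ is a left isomorphism, i.e.\ that $a_*\unit(f)$ is
  a natural isomorphism; then $\phi = (a_*\unit(f))b^*$ is one as well.  I would
  invoke \ref{horizontal composition verify} with the slot receiving $\unit(f)$
  placed to the left of the slot receiving $\unit(g)$.  The resulting
  commutative square has top edge $\phi \colon a_*b^* \to a_*f_*f^*b^*$, left
  edge $\psi \colon a_*b^* \to a_*g_*g^*b^*$, right edge $\beta := a_*f_*f^*
  \unit(g) b^*$, and bottom edge $\alpha := a_* \unit(f) g_*g^* b^*$, so that
  $\beta\phi = \alpha\psi$.  Since $\alpha$ is the horizontal composite of the
  isomorphism $a_*\unit(f)$ with the identity of $g_*g^*b^*$, it is itself a
  natural isomorphism; as $\phi$ is also invertible, I may rewrite
  $\beta\phi = \alpha\psi$ as $\psi\phi^{-1} = \alpha^{-1}\beta$, which is the
  asserted form.

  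If instead $\phi$ is a right isomorphism, i.e.\ $\unit(f)b^*$ is a natural
  isomorphism, I would run the same argument with the two slots exchanged, so
  that $\unit(f)$ is applied in the slot adjacent to $b^*$.  Then
  \ref{horizontal composition verify} gives $\alpha\psi = \beta\phi$ with
  $\alpha := a_*g_*g^* \unit(f) b^*$ and $\beta := a_* \unit(g) f_*f^* b^*$;
  now $\alpha$ is the horizontal composite of the identity of $a_*g_*g^*$ with
  the isomorphism $\unit(f)b^*$, hence again a natural isomorphism, and the
  same rearrangement yields $\psi\phi^{-1} = \alpha^{-1}\beta$.  In both cases
  $\alpha$ and $\beta$ lie in $\unit$, being single adjunction units
  horizontally composed with identities, and $\alpha$ is invertible.

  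The argument is short; the step most in need of care is the choice, in each
  case, of which slot receives $\unit(f)$.  It must be the one that leaves the
  ``active half'' of the hypothesis intact inside the fourth edge
  $\alpha$---namely $a_*\unit(f)$ in the left case and $\unit(f)b^*$ in the
  right case---so that its invertibility is visible.  With the other choice the
  fourth edge would instead be $a_*\unit(f)$ (resp.\ $\unit(f)b^*$) with an
  unrelated functor spliced in, whose invertibility does not follow; this is
  precisely where the hypothesis that $\phi$ is a \emph{left} or \emph{right}
  isomorphism---and not merely invertible---is used.
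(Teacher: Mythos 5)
Your proof is correct and is essentially the paper's own argument: the paper also completes $\phi,\psi$ to a commutative square via the interchange law (its two unlabeled diagrams are exactly your two orderings of the slots, with common targets $a_*f_*f^*g_*g^*b^*$ and $a_*g_*g^*f_*f^*b^*$), and likewise chooses the ordering according to whether $a_*\unit(f)$ or $\unit(f)b^*$ is the invertible half so that this half survives intact inside $\alpha$. Your closing remark that $\alpha,\beta\in\unit$ matches how the lemma is later applied in the proof of \ref{SGNT0 inverses square'}.
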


\begin{proof}
  The proof is drawn from the~``calculus of
  fractions'';~$\psi\phi^{-1}$ is represented by the upper-left corner
  of the following two diagrams, and we take $\alpha$~and~$\beta$ to
  be the other two edges in one of them:
  \begin{align}
    \label{eq:unit-unit overlap}
    \includestandalone{img36a} && \includestandalone{img36b}
  \end{align}
  We choose depending on whether it is $a_* \unit(f)$~or~$\unit(f)
  b^*$ that is an isomorphism; this ensures that the same portion
  of~$\alpha$ is also an isomorphism.
\end{proof}

Now we can give the proof of our main technical theorem, which we
restate for clarity.

\begin{theorem}{prop}{SGNT0 inverses square'}
  Let $\phi \colon F \to G$ be in~$\genby{\SGNT_0, \unit ,\Unit^{-1}}$,
  and denote $\roof(G) = a_{G{*}} b_G^*$.  Then there exist maps of
  spaces $f$~and~$g$, such that~$a_{G{*}} \unit(g) b_G^*$ is a natural
  isomorphism, forming a commutative diagram:
  \begin{equation}
    \label{eq:SGNT0 inverses square'}
    \includestandalone{img37}
  \end{equation}
\end{theorem}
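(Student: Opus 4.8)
The approach is an induction on the length of $\phi$ written as a word in the generators $\SGNT_0 \cup \unit \cup \Unit^{-1}$, extending the inductive proof of \ref{SGNT0 square} by the one new generator $\Unit^{-1}$. Write $\phi = \alpha\phi_0$ with $\phi_0 \colon F \to F'$ already fitted with a diagram of the shape~\ref{eq:SGNT0 inverses square'} — its own legs $f_0,g_0$, its roof $\roof(F')$, and its invertible upward unit $a_{F'{*}}\unit(g_0)b_{F'}^*$ — and $\alpha \colon F' \to G$ a single generator; the base case $\phi = \id$ is trivial, with $f = g = \id$.

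If $\alpha \in \SGNT_0$, then by \ref{SGNT0 roofs} we have $\roof(F') = \roof(G)$ and $\roof(G)\circ\alpha = \roof(F')$, so the $\phi_0$-diagram already serves for $\phi$. If $\alpha \in \unit$, the forward part is extended as in the proof of \ref{SGNT0 square}: write $F' = AB$, apply \ref{unit roof} to $\alpha = A\unit(f)B$ to produce the commutative square whose lower edge is the canonical $\SGNT_0$-map $G \to a_{G{*}}\tilde f_*\tilde f^*b_G^*$ and whose side is $a_{G{*}}\unit(\tilde f)b_G^*$, and paste it under the $\phi_0$-diagram, carrying the upward arrow of $\phi_0$ through and re-normalizing it along the change of roof if necessary — exactly as in the $\Unit^{-1}$ step below. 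So nothing genuinely new is demanded by these two cases.

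The content is the case $\alpha \in \Unit^{-1}$, say $\alpha = \beta^{-1}$ with $\beta \colon G \to F'$ in $\Unit$; thus $\beta$ is good, its inner $\unit(f)$ (taken in horizontal composition) is a natural isomorphism with admissible domain, and $F'$ is obtained from $G$ by inserting $f_*f^*$. I would apply \ref{unit roof} to $\beta$, obtaining the commutative square relating $\roof(G) = a_{G{*}}b_G^*$, the canonical $\SGNT_0$-map $\lambda \colon F' \to a_{G{*}}\tilde f_*\tilde f^*b_G^*$, and the side map $a_{G{*}}\unit(\tilde f)b_G^*$. The crucial input is \ref{Unit square}, whose hypotheses are precisely the goodness and domain-admissibility packaged into $\beta \in \Unit$: it gives $a_{G{*}}\unit(\tilde f)b_G^* \in \Unit$, so in particular a natural isomorphism, and its inverse is a candidate upward arrow on $\roof(G)$. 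Since $\roof(F')$ is the alternating reduction of $a_{G{*}}\tilde f_*\tilde f^*b_G^*$ (as shown inside the proof of \ref{Unit square}), the map $\lambda$ transports the $\phi_0$-diagram onto $\roof(G)$; after pasting one is left, on the single roof $\roof(G)$, with a composite built from the transported forward unit of $\phi_0$, its transported upward arrow, and the new invertible unit $a_{G{*}}\unit(\tilde f)b_G^*$. Reducing this composite to the normal form ``one forward unit $a_{G{*}}\unit(f)b_G^*$, one invertible backward unit $a_{G{*}}\unit(g)b_G^*$'' is precisely the function of \ref{unit fractions swap}, applied on $\roof(G) = a_{G{*}}b_G^*$: it rewrites the relevant fraction $\psi\phi^{-1}$, with $\phi$ the invertible unit, as $\alpha^{-1}\beta$ with $\alpha$ invertible, which is exactly the diagram~\ref{eq:SGNT0 inverses square'} for $\phi$, the invertibility of the $g$-leg being forced by that of the corresponding part of $\alpha$.

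The main obstacle is the bookkeeping across the three roofs $\roof(F)$, $\roof(F')$, $\roof(G)$: one must verify that $\roof(F')$ really is the alternating reduction of $a_{G{*}}\tilde f_*\tilde f^*b_G^*$, that the hypotheses of \ref{Unit square} genuinely transfer to the relevant $\beta$, and that after the normalization by \ref{unit fractions swap} both legs still sit on $\roof(G)$ in the asserted shape — in particular that the pre-existing upward arrow of $\phi_0$ has been absorbed cleanly into the single new $a_{G{*}}\unit(g)b_G^*$, including in the $\unit$ case where the roof itself moves. Each step is individually routine, but glueing the \ref{unit roof} square, the \ref{Unit square} conclusion, and the \ref{unit fractions swap} normalization into one instance of~\ref{eq:SGNT0 inverses square'} is where the care concentrates.
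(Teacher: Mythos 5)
Your overall strategy is the paper's: induct on a factorization of~$\phi$, use \ref{unit roof} and \ref{Unit square} to produce and invert the new unit at the level of roofs, and use \ref{unit fractions swap} to restore the normal form ``one forward unit, one invertible backward unit with a common target.''  But you peel the factorization from the codomain end ($\phi = \alpha\phi_0$ with $\alpha$ last), whereas the paper peels from the domain end, and this reversal flips which case actually contains the difficulty --- and your write-up assigns the hard step to the wrong case.  Trace the arrows in your own setup.  The inductive datum for $\phi_0 \colon F \to F'$ is a span $\roof(F) \xrightarrow{\beta_0} T_0 \xleftarrow{\alpha_0} \roof(F')$ with $\alpha_0$ the invertible unit.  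If $\alpha \in \Unit^{-1}$, then \ref{unit roof} applied to $\alpha^{-1} \in \Unit$ together with \ref{Unit square} gives an \emph{invertible} unit $\gamma \colon \roof(G) \to \roof(F')$, and the new backward leg is simply $\alpha_0 \circ \gamma \colon \roof(G) \to \roof(F') \to T_0$: two invertible units pointing the same way, which collapse to a single invertible $a_{G*}\unit(g)b_G^*$ by the composition-of-units identity~\ref{eq:composition units counits} exactly as in the proof of \ref{SGNT0 square}.  There is no fraction $\psi\phi^{-1}$ here and nothing for \ref{unit fractions swap} to do; in particular there is no pair of units with common source $\roof(G)$ as your text asserts.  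Conversely, if $\alpha \in \unit$, \ref{unit roof} gives a forward, \emph{not necessarily invertible}, unit $\delta \colon \roof(F') \to \roof(G)$, and the composite $\delta \circ \alpha_0^{-1} \colon T_0 \to \roof(F') \to \roof(G)$ is precisely the fraction with common source $\roof(F')$ that \ref{unit fractions swap} exists to repair.  This is the case you wave off with ``nothing genuinely new is demanded.''

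Two consequences make this more than a labelling slip.  First, as written, the inductive step for $\alpha \in \unit$ is not proved: the upward arrow of the $\phi_0$-diagram cannot be ``carried through'' $\delta$ without the swap, and the swap in turn requires its inverted input to be a \emph{left or right} isomorphism, not merely a natural isomorphism.  In your decomposition the unit being inverted in the swap is the \emph{inherited} $\alpha_0$, so you must strengthen the induction hypothesis to carry membership in~$\Unit$ (or at least the left/right-isomorphism property from \ref{acyclicity structure}) for the backward leg through every step, including across the composition $\alpha_0\gamma$ in the $\Unit^{-1}$ case; the statement you are inducting on only records ``natural isomorphism.''  The paper's decomposition sidesteps this, because there the unit inverted in the swap is always the freshly minted one, whose $\Unit$-membership comes straight from \ref{Unit square}.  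Your proof is repairable --- move \ref{unit fractions swap} to the $\unit$ case, replace it in the $\Unit^{-1}$ case by the composition-of-units identity, and strengthen the inductive hypothesis as above --- but as it stands the inductive step fails in the one case where the real content lives.
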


\begin{proof}
  As in the statement of the theorem, our convention in this proof will
  be to draw the inverses of invertible units as arrows pointing the
  wrong way (there, down; here, left).

  The proof is by induction on the length of~$\phi$ as an alternating
  composition of $\genby{\unit,\SGNT_0}$~and~$\Unit^{-1}$.  If it has
  only one factor, then the~theorem follows from~\ref{SGNT0 square} in
  the former case, and from~\ref{unit roof} (upside-down)
  and~\ref{Unit square} in the latter.  Thus, suppose~$\phi$ has at
  least two factors, and write $\phi = \phi_0 \phi_1$, with~$\phi_0$
  having fewer factors and~$\phi_1$ being a single factor.  By
  induction we can form diagram~\ref{eq:SGNT0 inverses square'}
  for~$\phi_0$ and~\ref{eq:positive SGNT diagram} for~$\phi_1$, giving
  the following diagram, which we draw rotated to save space:
  \begin{equation}
    \label{eq:SGNT0 inverses induction square}
    \includestandalone{img38}
  \end{equation}
  Here, $H$~is some intermediate~SGF.  If $\phi_1 \in \genby{\unit,
    \SGNT_0}$, then so is~$\psi$ and therefore~$\beta_0 \psi$, and
  therefore we can apply diagram~\ref*{eq:positive SGNT diagram} to
  both halves, giving a larger diagram:
  \begin{equation}
    \label{eq:SGNT0 inverses comp square}
    \includestandalone{img39}
  \end{equation}
  which is what we want.  Now, suppose that $\phi_1 \in \Unit^{-1}$;
  then we rewrite the top line of~\ref{eq:SGNT0 inverses induction
    square} as
  \begin{multline}
    \label{eq:SGNT0 inverses top line}
    a_{F{*}} b_F^*
    \xleftarrow[\phi_1^{-1}]
    {\comp_*(f,a_F)\comp^*(f,b_F) \circ a_{F{*}}\unit(f)b_F^*}
    (a_F f)_* (b_F f)^* = a_{H{*}} b_H^* = \\
    = (a_K g)_* (b_K g)^*
    \xrightarrow[\beta_0]
    {\comp_*(g,a_K)\comp^*(g,b_K) \circ a_{K{*}}\unit(g)b_K^*}
    a_{K{*}} b_K^*
    \xrightarrow{\alpha_0}
    \roof(G);
  \end{multline}
  Leaving the~$\comp$s on the outside, the two units form the
  combination considered in~\ref{unit fractions swap}, where by
  \ref{Unit square}, we have $a_{F{*}} \unit(f) b_F^* \in \Unit$ and so,
  by definition of acyclicity structure, is either a left-~or
  right-isomorphism.  Therefore we can replace them with two different
  elements of~$\unit$ (with a common target different from $H$).  Since
  the~$\comp$s are invertible, that means that we can
  rewrite~\ref*{eq:SGNT0 inverses induction square} as
  \begin{equation}
    \label{eq:SGNT0 inverses comp square 2}
    \includestandalone{img40}
  \end{equation}
  The second diagonal arrow is, as indicated, invertible by \ref*{unit
    fractions swap}.  Then, as before, we may apply~\ref{SGNT0 square}
  to the left and to the composition of the two right arrows on the
  top of this diagram to complete the proof.
\end{proof}

\section{Proofs of the main theorems}
\label{sec:proofs}
Here are the proofs of the remaining main results and supporting lemmas.

\subsection*{Proof of \ref{counits auxiliary}.}
Let $\unit'(f) \colon f^* f_* \to \id$ be a counit morphism, and
consider the following diagram:
\begin{equation}
  \label{eq:diagonal products}
  \includestandalone{img41}
\end{equation}
Then, in short, we have the following sequence of maps whose
composition is an~SGNT $f^* f_* \to \id$ in~$\genby{\SGNT_0^+,
  \unit}$.
\begin{multline}
  \label{eq:counit composition}
  f^* f_*
  \xrightarrow{\bc(f,f)}
  \tilde{f}_{1{*}} \tilde{f}_2^*
  \xrightarrow{(\tilde{f}_1)_*\unit(\Delta)\tilde{f}_2^*}
  \tilde{f}_{1{*}} \Delta_* \Delta^* \tilde{f}_2^* \\
  \xrightarrow{\comp_*(\Delta,\tilde{f}_1)\comp^*(\Delta,\tilde{f}_2)}
  (\tilde{f}_1 \Delta)_* (\tilde{f}_2 \Delta)^* \\
  =
  \id_* \id^*
  \xrightarrow{\triv_*\triv^*}
  \id\, \id
  = \id.
\end{multline}
To see that this coincides with~$\unit'(f)$, we do a string diagram
computation.  Below is the diagram of the map constructed in
\ref{eq:counit composition}:
\begin{equation}
  \label{eq:counit strings}
  \includestandalone{img42}
\end{equation}
where the red portion is~$\bc(f,f)$, the blue portion
is~$\unit(\Delta)$, the brown portion is
$\comp_*(\Delta,\tilde{f}_1)\comp^*(\Delta,\tilde{f}_2)$,~and the
black portion is~$\triv_* \triv^*$.  This is precisely the second
diagram considered in~\ref{eq:unit roof sufficient strings}, with
$a_G$~and~$b_F$ replaced by~$f$ and the upper ends replaced by
$\id^*$~and~$\id_*$ and two~$\triv$s applied.  Accounting for the
change in notation, diagram~\ref{eq:counit strings} is equivalent to
$\triv_*\triv^* \circ \cd(f,f;\id,\id)$:
\begin{equation}
  \label{eq:counit strings final}
  \includestandalone{img43}
\end{equation}
By~\ref{fig:trivializations adj} and~\ref{fig:trivializations
  comp}, this becomes merely~$\unit'(f)$, as desired.

Suppose now that $\psi = F\phi G$ is good, where $\phi = A \unit'(f)
B$ an isomorphism,~$\unit'(f)$ is good,~and $\on{dom}(\phi) = Af^*f_*B$
admissible as in \ref{invertible units}.  Since~$f^* f_*$ is good, the
factor~$\bc(f,f)$ is an isomorphism by \ref{geolocalizing}.  Thus, the
composition~\ref{eq:counit composition} contains only one potentially
non-isomorphism, namely the term $A \tilde{f}_{1{*}} \unit(\Delta)
\tilde{f}_2^* B$, which it follows is an isomorphism as well.  It is
good, even after composing with $FA$~and~$BG$: for its domain $F
Af_{1{*}} f_2^*B G$, this follows from \ref{roof isomorphism} and
\ref{roof morphism uniqueness}, since $\bc(f,f) \in \SGNT_0^+$; for
its codomain, the entire trailing part of the diagram is its partial
alternating reduction to $FABG$, which is assumed to be good.
Finally, by uniqueness of the roof from~\ref{roof morphism}:
\begin{equation}
  \label{eq:counit roof}
  \roof(A \tilde{f}_{1{*}} \tilde{f}_2^* B)
  =
  \roof(Af^* f_*B)
  = \roof(\on{dom} \phi),
\end{equation}
so $Af_{1{*}}f_2^*B$ is admissible.  Thus, $A f_{1{*}}
\unit(\Delta) f_2^* B \in \Unit$, so $\psi \in \genby{\SGNT_0^+,
  \Unit}$, as claimed. \qed

\subsection*{Proof of \ref{secondary theorem}.}
This follows from examining \ref{SGNT0 square}.  Clearly both the
upper and lower edges are either the identity or a single
trivialization each, while the right edge must be the identity since
any~$\unit(f)$ would incur both a~$_*$ and a~$^*$ in~$G$, not both of
which are present.

\subsection*{Proof of \ref{main theorem}.}
By \ref{counits auxiliary} we may use~$\genby{\SGNT_0, \unit}$ in
place of~$\genby{\SGNT \cup\allowbreak \bc^{-1}}$.  We show uniqueness by
applying~\ref{SGNT0 inverses square'}; if the bottom edge is a natural
isomorphism then it suffices to show that the right edge is
independent of~$\phi$.  We assume that both arrows in this edge occur;
the case in which only one does is treated in~\ref{conditions
  auxiliary}.  We denote the right edge by~$\alpha^{-1} \beta$.

Write $X = \source(F) = \source(G)$ and $Y = \target(F) = \target(G)$,
and let $A = \roof(F)$ and $B = \roof(G)$, with projections $a_F
\colon A \to X$, $b_F \colon A \to Y$, and similarly for~$G$. Both
maps $f$~and~$g$ necessarily have the same source~$C$; we have $f
\colon C \to A$ and $g \colon C \to B$.  In order for $a_F f = a_G g$
and $b_F f = b_G g$, it is equivalent that the composites $(b_F, a_F)
f = (b_G, a_G) g$ into~$X \times Y$ be equal.  Such a pair of maps $C
\to A, B$ is equivalent once again to a single map $h \colon C \to A
\times_{X \times Y} B$.  Let $a$~and~$b$ be the two projections of
this fibered product.

We have $f = ah$ and $g = bh$, so by diagram~\ref{eq:composition units
  counits} in the proof of \ref{fig:adjunction-composition 1}, we have
\begin{equation}
  \label{eq:full theorem units}
  \unit(f) = \bigl(
  \id
  \xrightarrow{\unit(a)}
  a_* a^*
  \xrightarrow{a_* \unit(h) a^*}
  a_* h_* h^* a^*
  \xrightarrow{\comp_*(h,a) \comp^*(h,a)}
  f_* f^*
  \bigr)
\end{equation}
and similarly for~$g$.  After composing with $a_{F{*}}$~and~$b_F^*$
(resp.~$a_{G{*}}$ and~$b_G^*$), applying $\comp_*(f,a_F)
\comp^*(f,b_F)$ to the end is the same as the following,
by~\ref{horizontal composition verify}:
\begin{equation}
  \label{eq:full theorem composition}
  a_{F{*}} b_F^*
  \to a_{F{*}} a_* a^* b_F^*
  \to (a_F a)_* (b_F a)^*
  \to (a_F a)_* h_* h^* (b_F a)^*
  \to (a_F a h)_* (b_F a h)^*,
\end{equation}
and similarly for~$g$, with~$G$ replacing~$F$ and~$b$ replacing~$a$.
In the latter situation, assuming that $a_{G{*}} \unit(b) b_G^*$ is an
isomorphism, so is $(a_G b)_* \unit(h) (b_G b)^*$ as the only
potentially non-isomorphism in~\ref{eq:full theorem composition}, and
since $a_F a = a_G b$ and $b_F a = b_G b$, the last two steps of both
are identical and so cancel out in~$\alpha^{-1} \beta$.  Thus, we may
assume $f = a$ and $g = b$, which are uniquely determined by
$F$~and~$G$, making~$\phi$ canonical. \qed

\subsection*{Proof of \ref{conditions auxiliary}.}
It follows from~\ref{roof isomorphism} that $\roof(G)$ is an
isomorphism if~$G$ is good.

For the second condition, first note that if $\phi \in \genby{\SGNT_0,
  \unit}$, then by~\ref{eq:positive SGNT diagram},~$(b_G, a_G)$
factors through~$(b_F, a_F)$.  Assuming that factorization, we have a
graph morphism $\roof(G) \to \roof(F) \times_Z \roof(G)$ forming a
section of~$b$.  Since $(b_F, a_F)$ is a universal monomorphism,
so~$b$ is a monomorphism, and therefore that section is an
isomorphism; thus, $a_{G{*}} \unit(b) b_G^*$ (in fact,~$\unit(b)$
itself) is an isomorphism.

Observe that we can, in this case, fill in an~$\alpha^{-1}$ to go with
$\beta = \phi$, in the notation of the proof of \ref*{main theorem}.
Namely, we take $\alpha = \id = \comp_*(\id,a_F)\comp^*(\id,b_F) \circ
\unit(\id)$, so the formal setup of the previous proof applies.

If we have a $\phi \in \genby{\SGNT_0, \Unit^{-1}}$, then by diagram
\ref*{eq:positive SGNT diagram} taken upside down, we find that~$(b_F,
a_F)$ factors through~$(b_G, a_G)$ by some map~$g$; when~$G$ is weakly
admissible, the same argument applies and shows that $\roof(F)
\times_Z \roof(G) \cong \roof(F)$, with the projection onto~$\roof(G)$
being~$g$.  Thus the right edge of the diagram is~$a_{G{*}} \unit(b)
b_G^*$, and is also invertible.  As before, we can assume that $\phi =
\alpha^{-1}$ is complemented by a trivial~$\beta$ for notational
purposes.\qed

\subsection*{Proof of \ref{etale acyclicity}.}
The SGA4 result that we require is the following criterion for an
invertible unit morphism; we assume the same hypotheses on schemes as
in the description of the \'etale context.

\begin{theorem}{lem}{SGA thm}
  (\cite{sga4}*{Exp.~xv, Th.~1.15}) Let $f \colon X \to Y$ be
  separated and of finite type, as well as locally acyclic (for
  example, smooth).  Let~$\sh{F}$ be an $\ell$-torsion (or, therefore,
  $\ell$-adic) sheaf on~$Y$.  Then the unit morphism of sheaves
  \begin{equation}
    \label{eq:SGA thm}
    \unit(f)_{\sh{F}} \colon \sh{F} \to f_* f^* \sh{F}
  \end{equation}
  is an isomorphism if and only if, for every algebraic geometric
  point $g \colon y \to Y$ with fiber $f_y \colon X_y \to y$, the unit
  morphism
  \begin{equation}
    \label{eq:SGA thm fiber}
    \unit(f_y)_{g^* \sh{F}} \colon g^* \sh{F} \to f_{y{*}}^{} f_y^* g^* \sh{F}
  \end{equation}
  is an isomorphism. \qed
\end{theorem}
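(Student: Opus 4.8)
The plan is to verify that the unit morphism~\ref{eq:SGA thm} is an isomorphism by checking it on geometric stalks, and to show that its stalk at each geometric point $g \colon y \to Y$ is canonically identified with the fiberwise unit~\ref{eq:SGA thm fiber}. Since a morphism of $\ell$-torsion sheaves (or of complexes, reading $f_*$ as the derived pushforward in the derived context) is an isomorphism precisely when it induces an isomorphism on every geometric stalk, this single identification yields both directions of the asserted equivalence at once, with no essential asymmetry between the ``if'' and ``only if'' parts.

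First I would compute the two relevant stalks. The stalk of the source $\sh{F}$ at $y$ is tautologically $g^* \sh{F}$. For the target, the standard description of the stalks of a pushforward gives
\begin{equation*}
  (f_* f^* \sh{F})_y \cong R\Gamma(X_{(\bar y)}, f^* \sh{F}),
\end{equation*}
where $X_{(\bar y)} = X \times_Y \on{Spec} \mathcal{O}_{Y, \bar y}^{\mathrm{sh}}$ is the base change of $f$ to the strict henselization at $y$. What the lemma demands instead is the cohomology of the honest geometric fiber, namely $R\Gamma(X_y, f_y^* g^* \sh{F}) = f_{y*} f_y^* g^* \sh{F}$; here I use that the square defining $f_y \colon X_y \to y$ is cartesian, so that the restriction of $f^* \sh{F}$ to $X_y$ is $f_y^* g^* \sh{F}$. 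The passage from $X_{(\bar y)}$ to $X_y$ is exactly a base-change comparison.

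This base change is the entire content of the lemma and the step I expect to be the main obstacle. Were $f$ proper it would be immediate from proper base change, but $f$ is only separated of finite type, and it is precisely the local acyclicity hypothesis that supplies the missing control, forcing the comparison map $R\Gamma(X_{(\bar y)}, f^* \sh{F}) \to R\Gamma(X_y, f_y^* g^* \sh{F})$ to be an isomorphism. Here I would invoke the machinery of~\cite{sga4}: smooth morphisms are universally locally acyclic relative to every sheaf, which accounts for the parenthetical ``for example, smooth'', and the general locally acyclic case is handled by the vanishing-cycle formalism. Concretely, choosing a Nagata compactification $f = \bar f \circ j$ with $\bar f$ proper and $j$ an open immersion reduces the claim, via proper base change, to the compatibility of $Rj_* f^* \sh{F}$ with the base change $y \to Y$, and this compatibility is governed by exactly the vanishing cycles that local acyclicity annihilates.

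Finally I would confirm that, under the two identifications above, the stalk at $y$ of the unit~\ref{eq:SGA thm} is the fiberwise unit~\ref{eq:SGA thm fiber}. This is a naturality assertion: the adjunction unit $\unit(f)$ is compatible with restriction to the fiber through the commutation morphism $\cd(f,g;f_y,\tilde g) \colon g^* f_* \to f_{y*} \tilde g^*$ of~\ref{base change morphism} (where $\tilde g \colon X_y \to X$ is the projection), so that applying $g^*$ to $\unit(f)_{\sh{F}}$ and transporting along the base-change isomorphism of the previous paragraph produces precisely $\unit(f_y)_{g^* \sh{F}}$. With the stalk of the global unit thus identified with the fiberwise unit at every geometric point, the equivalence of the two isomorphism conditions is immediate, completing the proof.
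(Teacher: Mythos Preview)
The paper does not prove this lemma: it is quoted verbatim from SGA4 (Exp.~xv, Th.~1.15), as the citation and the terminal \qed\ indicate, and is used as a black box in the proof of \ref{etale acyclicity}. So there is no ``paper's own proof'' to compare against; your write-up is effectively a sketch of the SGA4 argument rather than of anything in this paper.

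That said, your outline is sound and tracks the standard proof. Checking isomorphy on geometric stalks, identifying $(f_* f^* \sh{F})_y$ with cohomology over the strict henselization, and then invoking local acyclicity to pass from the ``tube'' $X_{(\bar y)}$ to the honest fiber $X_y$ is exactly the mechanism. One small caution: the tube-to-fiber comparison is not literally the definition of local acyclicity in SGA4 (which is phrased via Milnor fibers and vanishing cycles at points of $X$), so the passage from the hypothesis to your key isomorphism is itself a short argument, not a tautology; your Nagata-compactification reduction is one way to organize it, though SGA4 proceeds more directly. The final naturality check identifying the stalk of $\unit(f)$ with $\unit(f_y)$ is routine and your description via the commutation morphism is correct.
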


Now we proceed to the proof.  There are three statements to verify, of
which two are trivial:
\begin{itemize}
\item If~$i$ is an isomorphism, then for any morphisms $a$~or~$b$, the
  pair map $(a,i)$ or $(i,b)$ is isomorphic to the graph of
  $a$~or~$b$, which is an immersion.
\item The base change of any immersion is again an immersion.
\end{itemize}

For the third statement, we must verify that if $\phi =
a_*\unit(f)b^*$ is good and a natural isomorphism with $(a,b)$ an
immersion, then~$\phi$ is a left or right isomorphism.  The goodness
hypothesis entails that either both $af$~and~$a$ are proper, or
$bf$~and~$b$ are smooth.  Let us write $a \colon Y \to A$ and $b
\colon Y \to B$, so $f \colon X \to Y$ as in the statement of \ref{SGA
  thm}.

Consider the proper case, and let~$p$ be any (geometric) point
of~$A$.  We will show that the stalk of~$a_* \unit(f)$ at~$p$ is an
isomorphism, and therefore that~$a_* \unit(f)$ is itself an
isomorphism since~$p$ is arbitrary.  Let~$\tilde{p}$ denote the
fiber of~$a$ over~$p$ and let~$\tilde{p}_f$ denote that of~$af$
over~$p$.  Using~\ref{unit roof} on $p^* a_*$~and~$p^* (af)_*$, the
stalk of~$a_* \unit(f) b^*$ at~$p$ is the unit map from~$p^* a_* b^*
\isoarrow a|_{p{*}} (b \tilde{p})^*$ to
\begin{multline}
  \label{eq:acyclicty lemma proper}
  p^* a_* f_* f^* b^*
  \cong p^* (af)_* f^* b^*
  \isoarrow (a|_p f|_{\tilde{p}})_* \tilde{p}_f^* f^* b^* \\
  \cong a|_{p{*}} f|_{\tilde{p}{*}}(f\tilde{p}_f)^* b^*
  \cong a|_{p{*}} f|_{\tilde{p}{*}}(\tilde{p}f|_{\tilde{p}})^* b^* \\
  \cong a|_{p{*}} f|_{\tilde{p}{*}}f|_{\tilde{p}}^* \tilde{p}^*b^*
  \cong a|_{p{*}} f|_{\tilde{p}{*}}f|_{\tilde{p}}^* (b\tilde{p})^*,
\end{multline}
where by~\ref{base change theorems} the arrows are isomorphisms
since $a$~and~$af$ are proper.  Since~$(a,b)$ is an immersion,
$b\tilde{p}$~is also an immersion (the base change of~$(a,b)$
along~$p$) and therefore $(b\tilde{p})^* (b\tilde{p})_* \isoarrow
\id$.  Applying~$(b\tilde{p})_*$ to the right above, we find that
$a|_{p{*}} \unit(f|_{\tilde{p}})$ is an isomorphism.  The same
computation shows that this is the stalk of $a_* \unit(f)$ at $p$,
as desired.

Consider the smooth case.  Then for any point~$q$ of~$B$ we again
have isomorphisms
\begin{align}
  \label{eq:acyclicity lemma smooth}
  a_* b^* q_* \isoarrow (a\tilde{q})_* b|_q^* &&
  a_* f_* f^* b^* q_*
  \isoarrow (a\tilde{q})_* f|_{\tilde{q}{*}} f|_{\tilde{q}}^* b|_q^*
\end{align}
in which $a\tilde{q}$ is an immersion and thus $(a\tilde{q})^*
(a\tilde{q})_* \isoarrow \id$.  Applying that pullback, we find that
$\unit(f|_{\tilde{q}}) b|_q^*$ is an isomorphism.  Since~$f$ is
smooth, it is locally acyclic, so by~\ref{SGA thm} all its fibers
$\unit(f|_p) p^* b|_q^* = \unit(f|_p) b|_p^*$ are isomorphisms, over
all points~$p$ of~$X$.  Applying it again, this means that~$\unit(f)
b^*$ is an isomorphism.

\section{Comments and acknowledgements}
\label{s:comments}

Owing to the high level of abstraction in our presentation and the
precise formulation of our definitions and theorems, some analysis of
the limitations of this line of investigation is in order.

\subsection*{Comments and counterexamples.}
The conditions of~\ref{main theorem} may require some explanation.
Invertibility of the roof morphism is of course technically necessary
in the proof, and the ``good'' property of~\ref{conditions auxiliary}
gives convenient access to it, but some such condition is actually
necessary, as the following example due to Paul Balmer shows:

\begin{theorem}{ex}{balmer}
  Let~$X$ be the scheme $(\Aff^1 \setminus \{0\}) \sqcup \{0\}$; that
  is, the affine line with the origin detached, and let $f \colon X
  \to \Aff^1$ be the natural map that is the identity on each
  connected component of~$X$.  There are two SGNTs from~$f^* f_* f^*$
  to itself: the identity map, and the composition $\phi = f^*
  \unit(f) \circ \unit'(f) f^*$.  They are not equal, as can be seen
  by computing them on the constant sheaf~$\sh{C}$ of rank~$1$
  on~$\Aff^1$ (this works for any kind of sheaf):
  \begin{itemize}
  \item $f^* \sh{C}$~is again the constant sheaf; $f_* f^* \sh{C}$~has
    rank~$2$ on every neighborhood of~$\{0\}$; therefore $f^* f_* f^*
    \sh{C}$~has rank~$2$ on~$\{0\}$.
  \item The map~$\unit'(f)f^*$ already has to map something of
    rank~$2$ to something of rank~$1$, so is not injective;
    therefore~$\phi$ cannot be an isomorphism, much less the identity.
  \end{itemize}
  Since~$f$ is neither proper nor even flat, of course~\ref{conditions
    auxiliary} does not apply; this example illustrates the necessity
  of gaining control of the pathologies of the maps along which the
  functors are taken.  In fact, the map~$\roof(F)$ is not an
  isomorphism either: we have $\roof(F) = \tilde{f}_{2*}
  (f\tilde{f}_1)^*$, where~$\tilde{f}_i$ are the projections of $X
  \times_{\Aff^1} X$ onto~$X$, and one can see that, applied
  to~$\sh{C}$ on~$\Aff^1$, it yields a sheaf on~$X$ with rank~$4$
  at~$\{0\}$ and rank~$2$ elsewhere, which is nowhere isomorphic
  to~$F\sh{C}$ as computed above.
\end{theorem}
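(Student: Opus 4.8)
The plan is to exhibit the two maps and then separate them by evaluating on a single sheaf. Both $\id_{f^*f_*f^*}$ and $\phi = f^*\unit(f) \circ \unit'(f)f^*$ are manifestly in $\SGNT$; the triangle identity \ref{eq:left adjoint identity} only guarantees $\unit'(f)f^* \circ f^*\unit(f) = \id_{f^*}$, i.e.\ that the composition in the \emph{opposite} order is the identity, so it imposes nothing on $\phi$ itself. Hence the whole task is to produce a sheaf on which $\phi$ differs from the identity, and I would follow the evaluation indicated in the statement.

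I would take the test sheaf to be the constant rank-one sheaf $\sh{C}$ on $\Aff^1$ and compute $F\sh{C} = f^*f_*f^*\sh{C}$ componentwise. Write $X = U \sqcup \{0\}$ with $U = \Aff^1 \setminus \{0\}$, so that $f$ is the disjoint union of the open immersion $j \colon U \hookrightarrow \Aff^1$ and the closed point $i \colon \{0\} \hookrightarrow \Aff^1$; then $f^*\sh{C} = \sh{C}_X$ and $f_*f^*\sh{C} = j_*\sh{C}_U \oplus i_*\sh{C}_{\{0\}}$. The second summand is a rank-one skyscraper at the origin, while the first still has rank one near the origin since a punctured (\'etale, resp.\ analytic) neighborhood of $0$ in $\Aff^1$ is connected; so $f_*f^*\sh{C}$ has rank $2$ at $0$ and rank $1$ elsewhere, and pulling back once more, $F\sh{C}$ has rank $2$ on the component $\{0\} \subset X$ and rank $1$ on $U$. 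Now $\id_{F\sh{C}}$ is an isomorphism, but $\phi_{\sh{C}}$ factors through $(\unit'(f)f^*)_{\sh{C}} \colon F\sh{C} \to f^*\sh{C} = \sh{C}_X$, which at the origin maps a two-dimensional stalk onto a one-dimensional one and hence is not injective; therefore $\phi_{\sh{C}} \neq \id$, and in fact $\phi$ is not a natural isomorphism. The same computation applies in the presheaf, quasicoherent, $\ell$-adic and derived contexts. Finally I would note that $f$ is neither proper nor flat, so \ref{base change theorems}, and with it \ref{conditions auxiliary} and \ref{main theorem}, does not apply to it, which is precisely the point the example is making.

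For the remark about the roof I would argue directly. Viewed as a diagram of spaces, the SGF $f^*f_*f^*$ is $X \xrightarrow{f} \Aff^1 \xleftarrow{f} X \xrightarrow{f} \Aff^1$, whose terminal cone is $X \times_{\Aff^1} X$ together with its two projections $\tilde f_1, \tilde f_2$ onto $X$; so by \ref{roof}, $\roof(F) = \tilde f_{2{*}}(f\tilde f_1)^*$. Evaluating on $\sh{C}$ and comparing stalk ranks at the origin with those of $F\sh{C}$ shows that the canonical map $\roof(F) \colon F \to \roof(F)$ is not a natural isomorphism; equivalently, by \ref{roof morphism} this map is assembled from an alternating reduction and the base change $\bc(f,f)$, and the latter is not invertible exactly because $f$ is neither proper nor flat. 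Either way this is consistent with \ref{roof isomorphism}: the pathological $f$ makes $F$ fail to be good, so nothing forces its roof morphism to be an isomorphism.

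The only genuinely substantive step, the rest being bookkeeping, is the local computation of $f_*f^*\sh{C}$: one must be sure that pushing the constant sheaf forward along the open immersion $j$ contributes exactly one extra unit of rank at the origin rather than none or infinitely much, which comes down to the connectedness of a punctured neighborhood of $0$ and should be checked uniformly over the sheaf theories in play. It is this that pins the rank of $F\sh{C}$ at the origin at $2$, and so forces $\unit'(f)f^*$ to fail injectivity there, and so gives $\phi \neq \id$.
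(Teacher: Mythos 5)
Your proposal is correct and follows the paper's own argument essentially verbatim: evaluate both transformations on the constant sheaf, observe that $f_* f^* \sh{C}$ acquires rank~$2$ at the origin so that $\unit'(f) f^*$ must fail injectivity there, conclude that $\phi \neq \id$, and then identify $\roof(F)$ with $\tilde{f}_{2*}(f\tilde{f}_1)^*$ and compare ranks to see that the roof morphism is not invertible. The only difference is that you make explicit the decomposition $f_* f^* \sh{C} = j_* \sh{C}_U \oplus i_* \sh{C}_{\{0\}}$ and the connectedness of punctured neighborhoods, which the paper leaves implicit.
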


Our second comment concerns the specific and careful definition of the
class~$\Unit$.  The ultimate goal was to be able to prove~\ref{unit
  fractions swap}, which requires only the property of being a ``left
or right isomorphism'' (see~\ref{acyclicity structure}) but whose
partner~\ref{Unit square} was easily proven only for~SGNTs of the
simple form allowed by the ``trivial'' acylicity structure (this is
actually a simplified version of the very involved history of this
research).  We felt that more general invertible ``units'' were likely
to occur in reality, and eventually arrived at the statement of
\ref{etale acyclicity}, which is the key ingredient in the expanded
class, by pondering the following example:

\begin{theorem}{ex}{cohomology not isomorphism}
  Let $X = \Aff^1$ and let $f \colon \{0\} \to X$ and $g \colon \{1\}
  \to X$ be the closed immersions of two points, and
  consider~\ref{eq:unit-unit overlap}.  Denote by~$p$ the map from~$X$
  to a point, and let $a = b = p$.  Then although both squares commute
  and their common left vertical arrow is an isomorphism, their right
  vertical arrows are both zero.
\end{theorem}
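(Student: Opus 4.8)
The plan is to render everything explicit over the point and reduce the three assertions to one-line computations with constant and skyscraper sheaves on~$\Aff^1$. In the notation of \ref{unit fractions swap}, here $F = a_* b^* = p_* p^*$ (so $F$ is its own roof, with $a = b = p \colon \Aff^1 \to \on{pt}$), $\phi = a_* \unit(f) b^* = p_* \unit(f) p^*$, and $\psi = a_* \unit(g) b^* = p_* \unit(g) p^*$; the two diagrams of \ref{eq:unit-unit overlap} encode the formal fraction $\psi\phi^{-1}$ by two squares sharing the span $\phi(F) \xleftarrow{\phi} F \xrightarrow{\psi} \psi(F)$, so $\phi$ is the common left vertical arrow and the two right vertical arrows are the maps out of~$\psi(F)$ that the construction would like to be invertible. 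I would evaluate all functors on an object~$V$ of $\cat{Sh}_{\on{pt}}$, using $p^* V$ (the constant sheaf) and the skyscrapers $f_* W$, $g_* W$ on~$\Aff^1$.

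For the first two claims: because $pf \colon \{0\} \to \on{pt}$ and $pg \colon \{1\} \to \on{pt}$ are isomorphisms, $f^* p^* = (pf)^*$ and $p_* f_* = (pf)_*$ are canonically the identity (likewise for~$g$), so after identifying both $p_* p^*$ and $p_* f_* f^* p^*$ with the identity functor the transformation $\phi$ becomes the identity; explicitly, $p_* p^* V = \Gamma(\Aff^1, p^* V) = V = \Gamma(\Aff^1, f_* f^* p^* V) = p_* f_* f^* p^* V$, and $\phi_V$ is the map sending a global section of the constant sheaf $p^* V$ to its germ at~$0$, hence the identity of~$V$. Thus $\phi$ (and, by symmetry,~$\psi$) is a natural isomorphism, so the common left vertical arrow is an isomorphism; commutativity of each square is then immediate, and is in any case the interchange law \ref{horizontal composition verify} applied to two units sitting at non-overlapping positions of the functor string.

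For the third claim: by the construction in the proof of \ref{unit fractions swap}, the right vertical arrow of the first diagram is a unit~$\unit(f)$ horizontally composed at the front of a string already carrying~$g_* g^*$, i.e.~$p_* \unit(f)\, g_* g^* p^* \colon p_* g_* g^* p^* \to p_* f_* f^* g_* g^* p^*$, and of the second diagram it is $p_* g_* g^*\, \unit(f)\, p^* \colon p_* g_* g^* p^* \to p_* g_* g^* f_* f^* p^*$. The first target SGF contains the sub-composite $f^* g_*$ and the second contains $g^* f_*$; but $\{0\} \times_{\Aff^1} \{1\} = \emptyset$, so $\cat{Sh}_\emptyset$ is the zero category and $f^* g_* = 0 = g^* f_*$ as functors (equivalently, $f^*$ of a sheaf supported at~$1$, and $g^*$ of a sheaf supported at~$0$, both vanish). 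Thus both target SGFs are the zero functor, so both right vertical arrows are zero, even though their common source $p_* g_* g^* p^*$ is naturally~$\id$, hence nonzero.

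The upshot is that $\phi$, though a genuine natural isomorphism, is neither a left nor a right isomorphism in the sense of \ref{acyclicity structure}: $p_* \unit(f)$ is not invertible on a skyscraper at~$1$ (it reads $W \to 0$ there), and $\unit(f) p^*$ is not invertible on stalks at~$1$; so the hypothesis of \ref{unit fractions swap} genuinely fails and the arrow that lemma needs to be invertible is zero. I expect the only real work to be the bookkeeping that matches the abstract edges of \ref{eq:unit-unit overlap} to the explicit morphisms above and isolates the offending sub-composite $f^* g_*$ (resp.~$g^* f_*$); the sheaf computations themselves are immediate. I would also note, to forestall confusion, that $\roof(F) = (p,p)$ is neither of the form $(a,i)$ or $(i,b)$ nor an immersion into $\on{pt} = \on{pt} \times \on{pt}$, so this~$\phi$ lies in neither version of the class~$\Unit$ we construct — the example is a guide to the definition of~$\Unit$, not a counterexample to any theorem.
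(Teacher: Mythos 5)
Your verification is correct and is precisely the computation the paper leaves implicit: the left edge is an isomorphism because $pf$ and $pg$ are isomorphisms (so the unit becomes the identity of $p_*p^*\cong\id$), each square commutes by the interchange law, and the right edges land in functors containing the sub-composite $f^* g_*$ or $g^* f_*$, which vanish since $\{0\} \times_{\Aff^1} \{1\} = \emptyset$, while their common source $p_* g_* g^* p^* \cong \id$ is nonzero. Your closing remark also matches the paper's own follow-up comment that $(p,p)$ is far from an immersion, so the example probes the definition of the acyclicity structure rather than contradicting \ref{unit fractions swap}.
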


Of course, in this example, the map $(p, p) \colon X \to \mathrm{pt}
\times \mathrm{pt}$ is far from an immersion.  But it illustrates how
the failure of this condition can cause problems, morally speaking by
subtracting information from the unit morphism embedded between
$a_*$~and~$b^*$ to the point that it becomes an isomorphism when it
should not; note that in \ref*{cohomology not isomorphism}, the map
$p^* \to f_* f^* p^*$ alone is very much not an isomorphism.  This map
corresponds to the actual immersion $(\id, p) \colon X \to X \times
\mathrm{pt}$, in which the key \ref{unit fractions swap} actually does
hold.

As for the other condition of \ref{main theorem}, we have no
particular insight into its general meaning, but we do note that even
for maps $a_* b^* \to c_* d^*$, the~theorem can fail if we have $(c,d)
= (a,b)g$ for multiple maps~$g$, giving not necessarily equal~SGNTs
$a_* \unit(g) b^*$.  This is forbidden by the weak admissibility
hypothesis of \ref{conditions auxiliary}.

Finally, we comment on our choice of terminology for ``standard
geometric functors''.  It is easy to imagine trying to prove theorems
similar to the above involving not only $f_*$~and~$f^*$ but also
$f_!$~and~$f^!$ (the ``exceptional'' pushforward and pullback), and
indeed, this was the original intention of this paper.  Unfortunately,
we were unable to identify the correct context for such results; it
seems likely that they will need to include, as well, the bifunctors
$\on{Hom}$~and~$\otimes$, filling out the full complement of the six
functors, in order to adequately express the relationship between
$f^!$~and~$f^*$.  Furthermore, the techniques of this paper appear
inadequate, as a functor such as~$f_* g_!$ is not alternating but
apparently has no alternating reduction (hence no roof), and is
seemingly incomparable with~$f_! g_*$.

\subsection*{Acknowledgements.}

This paper would probably not have been finished were it not for Mitya
Boyarchenko's encouragement and his astute, if disruptive, reading of
the first draft and its several implicit errors.  I am also grateful
to Brian Conrad for commenting on that draft and for advocating the
next section, whose title was another of Mitya's suggestions.  During
the sophomoric stages of this research, Paul Balmer was very generous
in giving me much seminar time for it, as well as disproving the
original theorem and, therefore, motivating my formulation of
everything that is now in the paper.  Finally, I want to thank the
anonymous referee for requesting additional organizational clarity and
precision of language that led to my formulation of \ref{geofibered
  category}.

\appendix
\section{User guide}
\label{sec:guide}

This section is an informal description of the intuition and use of
the main theorem \ref{main theorem} aimed at readers hoping to find a
connection with familiar appearances of the so-called geometric
functors.  We begin with a non-rigorous reformulation of our definitions:

\begin{theorem*}{defn}
  (Imprecise) Let~$F$ and~$G$ be two functors of the form~$\cdots f_*
  g^* \cdots$ of sheaves; we consider natural transformations~$\phi
  \colon F \to G$ contained in the smallest class closed under the
  inclusion of those of the following three types:
  \begin{description}
  \item[Functorial] \mbox{}\nopagebreak
    \begin{itemize}
    \item The identity transformations;
    \item Functoriality isomorphisms~$f_* g_* \cong (fg)_*$ and~$g^*
      f^* \cong (fg)^*$;
    \item Functoriality isomorphisms~$\id_* \cong \id$ and $\id^*
      \cong \id$;
    \item Compositions of transformations;
    \item Applications of~$f_*$ or~$f^*$ to either side of a
      transformation;
    \end{itemize}
  \item[Adjunction] \mbox{}\nopagebreak
    \begin{itemize}
    \item Transformations corresponding under adjunction of~$^*$
      and~$_*$, or equivalently, the units~$\id \to f_* f^*$ and
      counits~$f^* f_* \to \id$ of such adjunctions;
    \end{itemize}
  \item[Inverse] \mbox{}\nopagebreak
    \begin{itemize}
    \item The inverses of all invertible base change
      transformations~$g^* f_* \isoarrow \tilde{f}_* \tilde{g}^*$,
      as in diagram~\ref{eq:base change diagram}.
    \item The inverses of all invertible transformations~$f_* \to f_*
      g_* g^*$ or~$f^* \to g_* g^* f^*$ derived from adjunction units.
    \end{itemize}
  \end{description}
\end{theorem*}

To illustrate the functors in question, one such is by definition
equivalent to the data of a zigzag diagram of spaces
\begin{equation}
  \label{eq:SGF zigzag diagram}
  \includestandalone{img44}
\end{equation}
corresponding to the functor~$(g_{1{*}} \cdots\allowbreak g_{m{*}})
\cdots (f_n^* \cdots\allowbreak f_1^*)$, where the central ellipsis
(``$\cdots$'') corresponds to further zigzags in the peak ellipsis of
the picture.  This notation is intended to encompass the many variants
such as~$f^* g_*$ or~$f_n^* \cdots f_1^*$ by omitting some of the
terms from either side of the composition (respectively, maps from the
diagram).

Each of these functors has an associated ``roof'' (\ref{roof}),
depicted diagramatically as folows:
\begin{subequations}
  \allowdisplaybreaks
  \label{eq:zigzag roof}
  \begin{gather}
    \label{eq:zigzag diagram}
    F =
    \includestandalone{img45a} \\
    \label{eq:roof diagram}
    \roof(F) = \includestandalone{img45b} = a_{F{*}} b_F^*
  \end{gather}
\end{subequations}

Given the above context, our main theorems mostly claim the following:

\begin{theorem*}{thm}
  A transformation $\phi \colon F \to G$ as above is unique if either:
  \begin{enumerate}
  \item \label{en:formal} $F = G$ is of the form~$f_*$ or~$g^*$
    and~$\phi$ draws from the functorial and adjunction
    transformations and inverse base changes; or, if only~$G$ is of
    the form~$f_* g^*$ and~$\phi$ draws only from the functorial
    transformations, base changes,~and inverse base changes.
  \item \label{en:geometric} $\phi$ is arbitrary, if~$G$ is
    isomorphic to its roof~$a_{G*} b_G^*$, if the pair of roof
    maps~$(b_G, a_G)$ of~\ref{eq:roof diagram} factors through the
    pair~$(b_F, a_F)$, and if the latter is an immersion into~$A
    \times C$ (in the notation of the diagram).
  \end{enumerate}
\end{theorem*}

The potential isomorphism mentioned in~\ref{en:geometric} is
canonically defined in \ref{roof}; it is effectively the sequence of
base changes corresponding to~\ref{eq:roof diagram}.  We have
strengthened the hypotheses unnecessarily to make it more
straightforward.  See the example of ``cohomological pullback'' for a
discussion.

Functors and transformations of this nature are found throughout
geometry.  Part~\ref{en:formal} is the more common application, and
signifies that a diagram can be shown to commute by simple
manipulation.  Part~\ref{en:geometric} indicates at least a slightly
domain-specific computation that (as its proof will eventually show)
is not \emph{entirely} symbolic manipulation.

\subsection*{Coherence of tensor products.}

Recall that the \define{tensor product} of sheaves~$\sh{F}$
and~$\sh{G}$ on a scheme~$X$ satisfies the relations
\begin{equation*}
  \sh{F} \otimes \sh{G}
  = \Delta^* (\sh{F} \boxtimes \sh{G}),
  \qquad
  \sh{F} \boxtimes \sh{G}
  = \on{pr}_1^* \sh{F} \otimes \on{pr}_2^* \sh{G},
\end{equation*}
where~$\on{pr}_{1,2}$ are the projections~$X \times X \to X$
and~$\Delta \colon X \to X \times X$ is the diagonal morphism.
Taking the latter, ``outer'' tensor product as the fundamental
object allows the convenient formulation of tensor product
identities entirely in terms of the functors described by
the~theorem.  We obtain the commutativity and associativity
constraints,
\begin{align*}
  \sh{F} \otimes \sh{G} \cong \sh{G} \otimes \sh{F} &&
  (\sh{F} \otimes \sh{G}) \otimes \sh{H} \cong \sh{F} \otimes
  (\sh{G} \otimes \sh{H}),
\end{align*}
using the functoriality of $^*$ on the analogous identities:
\begin{align*}
  \Delta = \on{sw} \Delta &&
  (\Delta \times \id) \Delta = (\id \times \Delta)\Delta,
\end{align*}
where~$\on{sw} \colon X \times X \to X \times X$ is the coordinate
swap.

As an easy consequence, we obtain the conclusion of Mac Lane's
coherence theorem for the tensor category of sheaves: any natural
transformations of two parenthesized multiple tensor products
constructed only from commutativity and associativity constraints
are equal.  Indeed, all such parenthesized products are repeated
pullbacks~$g^*$ for various maps~$g$, so such a transformation is a
map
\begin{equation*}
  g^* \cong g_1^* \cdots g_n^* \to h_1^* \cdots h_n^* \cong h^*,
\end{equation*}
where the outside isomorphisms are by functoriality of pullback.
Therefore part~\ref{en:formal} of the~theorem applies.

\subsection*{Projection formula and compatibility diagrams.}
As a more interesting example, we consider the \define{projection
  formula} morphism for a map~$f \colon X \to Y$ and
sheaves~$\sh{F}$ and~$\sh{G}$ on~$X$ and~$Y$ respectively:
\begin{equation*}
  f_* \sh{F} \otimes \sh{G} \to f_*(\sh{F} \otimes f^* \sh{G}).
\end{equation*}
It can be expediently defined by first forming the cartesian diagram
\begin{equation*}
  \includestandalone{img46}
\end{equation*}
and then rewriting the projection formula as
\begin{equation*}
  \Delta_Y^*(f \times \id)_*(\sh{F} \boxtimes \sh{G})
  \to f_* \Gamma_f^*(\sh{F} \boxtimes \sh{G})
\end{equation*}
and realizing it as a base change morphism.  Here we have used the
fact that~$f_* \sh{F} \boxtimes \sh{G} \cong (f \times \id)_*
(\sh{F} \boxtimes \sh{G})$.

Examples of diagrams of the projection formula are diagrams~(2.1.11)
and~(2.1.12) of~\cite{conrad}, which are correctly said to be
trivial but are nonetheless rather tedious, and in fact
automatically commute.  For verification we reproduce them here,
using our notation.  The first one is:
\begin{equation*}
  \tag{2.1.11}
  \includestandalone{img47}
\end{equation*}
where the horizontal maps are associativity, the lower-left vertical
map is the isomorphism~$f^*(\sh{G} \otimes \sh{H}) \cong f^* \sh{G}
\otimes f^* \sh{H}$ that is easily deduced from the outer product
formulation,~and the others are projection formula maps.  Here, we
have~$f \colon Y \to X$ for schemes $X$~and~$Y$, where~$\sh{F}$ is a
sheaf on~$X$ and $\sh{G}$~and~$\sh{H}$ are on~$Y$.  If we
write $\Delta_X$~and~$\Delta_Y$ for the respective diagonal
morphisms, then the two directions around the diagram are
transformations
\begin{equation*}
  \Delta_X^* (f \times \id)_* (\id \times \Delta_Y)^*
  \to
  f_*\Delta_Y^* (\id \times f)^* (\Delta_Y \times \id)^* ((\id
  \times f) \times \id)^*,
\end{equation*}
and the latter is of the form~$f_* g^*$ after applying functoriality
isomorphisms to the chain of pullbacks, so part~\ref{en:formal} of
the~theorem applies.

The second diagram is:
\begin{equation*}
  \tag{2.1.12}
  \includestandalone{img48}
\end{equation*}
where the upper and lower horizontal and lower-left vertical maps
are projection formulas, the other vertical maps are functoriality,
and the middle map is defined by inverting either the upper-left or
lower-right vertical maps and composing; to check the small squares
it suffices to check the big one.  Here, we have~$f \colon Z \to Y$
and~$g \colon Y \to X$, with sheaves~$\sh{F}$ on~$Z$ and~$\sh{G}$
on~$X$, and the two ways around the diagram are transformations
\begin{equation*}
  \Delta_X^* (gf \times \id)_*
  \to
  g_* f_* \Delta_Z^* (\id \times f)^*(\id \times g)^*,
\end{equation*}
which again can be brought into the form treated by
part~\ref{en:formal} of the~theorem by condensing the pullbacks and
pushforwards.

\subsection*{Cohomological pullback.}
An obvious way of involving units of adjunction is to introduce the
\emph{change of space} or \emph{cohomological pullback} maps,
defined as follows.  Let~$*$ denote the base (final) scheme and for
any~$X$ with its canonical map~$p \colon X \to *$, let~$H^*(X,?) =
p_*$ denote global cohomology.  Cohomological pullback is defined
for any map~$f \colon Y \to X$ as the natural transformation
(written with reference to a sheaf~$\sh{F}$ on~$X$)
\begin{equation*}
  H^*(X,\sh{F}) = p_* \sh{F} \to p_* f_* f^* \sh{F} \cong (pf)_* f^*
  \sh{F} = H^*(Y,f^* \sh{F})
\end{equation*}
since~$pf$ is the structure map for~$X$.  It follows easily from
part~\ref{en:formal} of the~theorem that any composition of
cohomological pullbacks from~$X$ to~$Z$ connected by some number of
composable maps is independent of which intermediate maps are used;
i.e.~that pullback is functorial in the same sense as~$^*$ itself.
Indeed, pullback is a transformation of the form~$p_{X{*}} \to
p_{Z{*}} f^*$, and by applying ``reverse natural adjunction''
(see~\ref{eq:RNA map} for this not-widely-mentioned operation) is
equivalent to a transformation~$p_{X{*}} f_* \to p_{Z{*}}$, which by
a functoriality isomorphism is equivalent to a transformation~$(p_X
f)_* \to p_{Z{*}}$, which is unique.

More interestingly, let us say that~$f$ is a \emph{cohomological
  isomorphism} if its associated pullback is an isomorphism.  Then
we can compose pullbacks and the~\emph{inverses} of pullbacks that
are isomorphisms to get maps between the global cohomology of spaces
that are \emph{not} connected by composable maps, but merely zigzags
(similar to~\ref{eq:zigzag diagram}) where the zigs are invertible.
Nonetheless, these are transformations to which
part~\ref{en:geometric} of the~theorem applies.  For example, if we
use the exact diagram shown there, we get transformations~$p_{X{*}}
\to p_{Z{*}}$.

The criterion given by the~theorem as stated above for these
transformations to be unique is somewhat restrictive: we would require
that there be an actual map~$f \colon Z \to X$ commuting with the
structure maps~$p$, in which case the zigzag pullback coincides with
the actual pullback along $f$.  A slightly less restrictive hypothesis
is given in~\ref{main theorem}: that the projection map~$X \times Z
\to Z$ be a cohomological isomorphism; then the universal
representative of zigzag pullbacks from~$X$ to~$Z$ passes through~$X
\times Z$.

\subsection*{Cup products and cohomology algebras.}
A final application of the~theorem is again to tensor products: the
\define{cup product} on cohomology, defined in the following
two-step manner.  For any sheaf~$\sh{F}$ on~$X$, writing
just~$H^*(\sh{F})$ rather than~$H^*(X,\sh{F})$, we have a map
\begin{equation*}
  H^*(\sh{F}) \otimes H^*(\sh{F}) \to H^*(\sh{F} \otimes \sh{F})
\end{equation*}
defined as the cohomological pullback along~$\Delta \colon X \to X
\times X$
\begin{equation*}
  (p \times p)_* (\sh{F} \boxtimes \sh{F})
  \to
  (p \times p)_* \Delta_* \Delta^* (\sh{F} \boxtimes \sh{F})
  \cong p_* (\sh{F} \otimes \sh{F})
\end{equation*}
We have used the fact that~$(p \times p)_*(\sh{F} \boxtimes \sh{F})
\cong p_* \sh{F} \boxtimes p_* \sh{F} = p_* \sh{F} \otimes p_*
\sh{F}$.  Then, if~$\sh{F}$ is a ring sheaf with multiplication~$m
\colon \sh{F} \otimes \sh{F} \to \sh{F}$, we compose with the
induced map~$H^*(m) \colon H^*(\sh{F} \otimes \sh{F}) \to
H^*(\sh{F})$ to obtain the cup product
\begin{equation*}
  \smile \colon H^*(\sh{F}) \otimes H^*(\sh{F}) \to H^*(\sh{F}).
\end{equation*}

It is easy to show, from the functoriality of cohomological
pullback, that whenever we have a map~$f \colon X \to Y$ and thus we
have a commutative diagram of spaces, we get one of natural
transformations:
\begin{align*}
  \includestandalone{img49a} && \includestandalone{img49b}
\end{align*}
Indeed, we need only adjust the second diagram to bring the ring
multiplication map of~$\sh{F}$ out.  But this follows from
naturality, i.e.~the left square in the diagram below commutes:
\begin{equation*}
  \includestandalone{img50}
\end{equation*}
The rest of the proof is just that the two paths in the right half
of the diagram are a pair of alternative cohomological pullbacks
from~$X \times X$ to~$Y$, hence equal.

By a similar token, we can show that the cup product is an algebra
structure on~$H^*(\sh{F})$.  We will gloss over the details that the
ring multiplication map can be factored out of the diagrams
expressing commutativity, unitarity,~and associativity, after which
they all express aspects of the functorial nature of cohomological
pullback again.

We can go further in the case that~$X = G$ is a group scheme, whose
multiplication map induces a \emph{coalgebra} structure
on~$H^*(G,\sh{F})$ for any sheaf~$\sh{F}$, with comultiplication
defined by cohomological pullback along~$G \times G \to G$ and
counit being pullback along~$* \to G$.  Once again, functoriality of
the pullback proves that this is indeed a coalgebra.  Taken together
with the cup product we easily combine the structures into a Hopf
algebra, since (modulo writing the naturality diagrams that
pull~$H^*(m)$ out) all the compatibilities express equality of
various iterated cohomological pullbacks.

This is standard algebraic topology material and nothing in this
example has actually cited the main theorem directly; we include it
only to note that it rests on a fact, functoriality of cohomological
pullbacks, that is a corollary of our main theorem, as well as to
indicate the relevance of this setup to well-known constructions.

\section{Fundamentals of string diagrams}
\label{s:string diagram intro}

Here is an overview of the use of pictorial notation for
category-theoretic computations, with an emphasis on applications to
this paper, together with proofs of the diagram equivalences presented
in \ref{s:string diagrams}.

String diagrams are used to depict the algebra of category theory
visually, and there appears to be a variety of styles in which they
are drawn.  These topological representations of natural
transformations (introduced by Penrose~\citelist{\cite{penrose1}
  \cite{penrose2}} and whose first introduction into the pure
mathematics literature seems to have been in~\cite{joyal-street}; an
instructive introduction is given by the video series~\cite{catsters})
seem to have the same mysterious effectiveness in category theory that
the Leibniz notation does in calculus, often indicating algebraic
truths through visual intuition.  It does not appear that the
topological data is intrinsically important, though some experience
with string diagrams suggests that removing loops is an essential
first step in reducing them.

\subsection*{Overview of string diagrams.}
String diagrams are a notational paradigm for representing natural
transformations and, in particular, for making the following concepts
from category theory more amenable to intuitive manipulation.

\subsection*{Notation.}

Functors and natural transformations can be combined in composition in
several ways, reflecting the bicategorical structure of the category
of small categories. Two functors may of course be composed, which we
denote by pure juxtaposition without the symbol~$\circ$. If~$\phi
\colon A \to B$ is a natural transformation of two functors
$A$~and~$B$, and if~$F$ is a functor such that $FA$~and~$FB$ are
defined, then we write~$F\phi$ to mean the associated natural
transformation~$FA \to FB$ such that~$F\phi_x = F(\phi_x)$ for any
object~$x$.  Likewise, if $AF$~and~$BF$ are defined, then we
write~$\phi F$ to mean the functor~$AF \to BF$ such that~$\phi F_x =
\phi_{F(x)}$.

We use the common term ``horizontal composition'' of natural
transformations~$\phi \colon A \to B$ and~$\psi \colon C \to D$ to
mean the following natural transformation~$AC \to BD$, which we will
write as simply~$\phi\psi$ without ambiguity:
\begin{equation}
  \label{eq:horizontal composition}
  \includestandalone{img51}
\end{equation}

\subsection*{Introduction to string diagrams.}

Every string diagram depicts a single natural transformation of
functors, where all compositions (of both functors and
transformations) are made explicit.  Its main feature is a web of
continuous paths recording the history of each functor as it is
transformed; these transformations occur at the intersections.  The
connected components of the complement of this web represent the
categories related by these functors, with the direction of
composition being the same as in any diagram $X \to Y \to Z$.  Thus,
each functor is like a river flowing between two banks, and to apply
the functor is to cross the river.  The intersections of paths are
natural transformations, with the direction of composition being
upwards.  In this paper, the allowable intersections are those given
in~\ref{fig:basic SGNTs} representing as marked the basic SGNTs
of~\ref{basic SGNTs}.  We give a precise definition of the string
diagrams we use:

\begin{theorem}{defn}{string diagram}
  A \define{string diagram} is a planar graph that is the union of
  shapes of the general form shown in~\ref{fig:basic SGNTs}, where the
  various segments may have any lengths.  These shapes may only
  intersect at their ends, and conversely, any end of a shape in the
  diagram must either join another shape, or be continued upward or
  downward to an infinite vertical ray.  The vertices of the graph are
  either the univalent caps of the~$\triv^*$ or~$\triv_*$ shapes, the
  trivalent intersections of the~$\comp^*$ or~$\comp^*$ shapes, or the
  right-angle turns of the various shapes; the edges are the connected
  line segments in the complement of the vertices.  A string diagram
  may also have bi-infinite, directed vertical lines disjoint from the
  other portions.
\end{theorem}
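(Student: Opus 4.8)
The final statement is a \emph{definition}, so in place of a proof there is a soundness claim to establish: that a string diagram as defined above, once its edges are \emph{labeled} compatibly with \ref{fig:basic SGNTs}, determines a well-defined SGNT, and conversely that every SGNT built from basic SGNTs by horizontal and vertical composition arises from such a diagram. The plan is to organize the verification around slicing the diagram into elementary layers and then invoking the interchange law.

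First I would show that any string diagram, after an arbitrarily small ambient isotopy rel its outer rays, can be sliced by generic horizontal lines into a finite vertical stack of \emph{elementary layers}, each consisting of a single basic shape of \ref{fig:basic SGNTs} juxtaposed horizontally with some number of plain vertical edges. This is a routine perturbation argument: one arranges that no two vertices (the caps of $\triv_*$/$\triv^*$, the trivalent intersections of $\comp_*$/$\comp^*$, and the right-angle turns of the unit/counit shapes) share a height, and cuts between consecutive vertex heights. The SGNT of the diagram is then \emph{defined} to be the vertical composite of the layer SGNTs, each layer being a horizontal composite $F\,(\text{basic SGNT})\,G$ exactly as catalogued in \ref{basic SGNT classes}.

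The content is that this composite does not depend on the chosen slicing. Any two slicings are related by a finite sequence of elementary moves: transposing the heights of two vertices that lie in different ``columns'' of the diagram, and creating or cancelling a ``zigzag'' --- a cap together with a cup --- along a single strand. The first move preserves the composite by \ref{horizontal composition verify}, which is precisely the interchange law of the bicategory $\cat{Cat}$ in this setting; the zigzag move is exactly the pair of triangle identities \ref{eq:adjoint identities}, i.e.\ \ref{fig:adjunctions}. A Reidemeister-type lemma --- that any ambient isotopy of string diagrams rel endpoints factors through finitely many such moves --- then yields invariance of the associated SGNT under planar isotopy. I would also record the labeling caveat noted in the main text (due to Boyarchenko): propagating functor labels inward from the outer rays is forced at each basic shape by the composition/trivialization/adjunction relation it carries, so a labeled diagram has a unique consistent labeling of all internal edges, whereas the vertically reflected graph --- still a string diagram combinatorially --- need not admit any labeling by composable basic SGFs, so the correspondence is genuinely one between \emph{labeled} diagrams and SGNTs.

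The main obstacle is the Reidemeister-style factorization of isotopies together with the bookkeeping that each elementary move is accounted for by \ref{horizontal composition verify} or by \ref{fig:adjunctions}; everything else (existence of the layer decomposition, uniqueness of label propagation, and the subsequent verification of Lemmas~\plainref{fig:adjunctions}--\plainref{SD loop}) amounts to routine translation of the symbolic identities of \ref{s:definitions}. Once soundness is in hand, each identity of \ref{s:string diagrams} is checked by exhibiting its two sides as isotopic labeled diagrams, or by a short sequence of the already-established elementary moves, as the paper indicates.
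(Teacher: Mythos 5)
This statement is a definition, so the paper offers no proof of it; the closest the paper comes to your ``soundness claim'' is \ref{string diagram transformation}, which \emph{describes} the SGNT of a labeled diagram via its behavior under horizontal and vertical cuts, together with the weak invariance statement \ref{diagrams translation} (invariance under positive affine rescalings only). The core of your proposal --- slice the diagram into elementary layers, define the SGNT as the vertical composite of horizontal composites $F(\text{basic SGNT})G$, and check independence of the slicing via the interchange law \ref{horizontal composition verify} --- is sound and is essentially a rigorous unpacking of the decomposition properties the paper merely asserts. One technical caveat: a ``generic horizontal slice'' must also avoid the horizontal edges internal to the $\comp$ and $\triv$ shapes (the paper's splitting rule explicitly forbids crossing horizontal edges), so the perturbation argument should separate whole basic shapes, not just vertices, by height.

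The overreach is in the Reidemeister-style isotopy invariance. First, it is not needed for well-definedness: two slicings of a \emph{fixed} diagram differ only by transpositions of heights of disjoint shapes (interchange); the zigzag creation/cancellation move relates genuinely \emph{different} diagrams and never arises when comparing slicings of one diagram. Second, the claim that ambient isotopy of labeled diagrams preserves the SGNT cannot hold in the generality you state: the paper explicitly observes that Lemmas~\plainref{fig:compositions (inv)} and~\plainref{fig:trivializations triv} are \emph{not} topological trivialities (even with the double-line convention), and says outright that it does not know how to prove topological invariance in general. If full isotopy invariance held, those identities --- which are axioms of a geofibered category, encoded in \ref{eq:compositions assoc} and its relatives --- would follow for free, which they do not. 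So the safe formulation is the weaker one the paper adopts: well-definedness of $\phi_D$ for a fixed labeled diagram via interchange, plus the specific list of diagram identities in \ref{s:string diagrams} proved one at a time from the geofibered-category axioms, with topological intuition as a guide rather than a theorem.
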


We write our diagrams with their edges doubled.  This has no practical
meaning, but aside from making the diagrams more aesthetic, it
provides some topological intuition that will be explained
later. There is a correspondence between SGNTs and string diagrams
labeled as in \ref{fig:basic SGNTs}.

\begin{theorem}{defn}{string diagram transformation}
  Let~$D$ be a string diagram with its edges labeled; then any
  horizontal slice not containing a vertex of~$D$ determines an SGF by
  composing the labels left-to-right (rather than right-to-left, as is
  traditional in algebraic notation).  In particular, its eventual
  slices in the two vertical directions determine two~SGFs
  $F$~and~$G$.  We describe how to obtain an~SGNT $\phi_D \colon F \to
  G$; it satisfies the following properties:
  \begin{itemize}
  \item Each of the diagrams of~\ref{fig:basic SGNTs} has the
    interpretation as an~SGNT given there.
  \item If~$D$ is split by any simple path extending to horizontal
    infinity in both directions and not containing any vertices or
    crossing any horizontal edge (the latter, a~restriction born of
    our choice of visual style), then both the lower and upper parts
    $D_1$~and~$D_2$ are string diagrams (after extending their cut
    edges to infinity).  We have $\phi_D = \phi_{D_2} \phi_{D_1}$.
    That is, vertical composition of~SGNTs is vertical concatenation
    of diagrams.
  \item If the complement of~$D$ contains a simple path extending to
    vertical infinity in both directions, then its left and right
    parts $D_1$~and~$D_2$ are string diagrams and~$\phi_D$ is the
    horizontal composition of~$\phi_{D_2}$ after~$\phi_{D_1}$.  That
    is, horizontal composition of~SGNTs is horizontal concatenation of
    diagrams.
  \end{itemize}
  Under this interpretation, we have the following correspondence:
  \begin{itemize}
  \item The complement of~$D$ in~$\R^2$ consists of finitely many
    connected open sets, each of which represents a category.
  \item Each vertical edge is a functor from the category on its left
    to the one on its right; an upward edge is an~$f_*$, and a
    downward edge is an~$f^*$ (thus, the direction of~$f$ is
    always left-to-right when facing along the edge).
  \item A natural transformation occurs at any horizontal edge
    (including those of zero length at the valence-$1$ vertices).  Its
    direction is from the functor below it to the functor above.
  \end{itemize}
  We will say that two diagrams are \define{equivalent} if they define
  equal transformations.
\end{theorem}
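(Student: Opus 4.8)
We must show that the recipe outlined in \ref{string diagram transformation} actually yields a well-defined~SGNT $\phi_D$, independent of the choices implicit in it, and that the resulting assignment satisfies the three listed properties---which, conversely, pin it down uniquely. The plan is to reduce an arbitrary labeled diagram to a vertical stack of ``elementary'' pieces by a Morse-theoretic height argument, define $\phi_D$ as the corresponding vertical composite of basic~SGNTs, and then check that refining or reordering the stack does not change the answer, the crucial point being the interchange law of \ref{horizontal composition verify}.

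First I would \emph{normalize}: given a labeled string diagram $D$ in the sense of \ref{string diagram}, perturb it by an ambient isotopy (which the doubled-edge and right-angle conventions permit freely, as these carry no mathematical data) so that the finitely many ``events''---the valence-$1$ caps of the $\triv$ shapes and the trivalent nodes of the $\comp$ shapes---all lie at distinct heights, and so that the bi-infinite straight strands are vertical. Generic horizontal lines strictly between consecutive event heights then cut $D$ into a vertical concatenation $D = D_n \cdots D_1$ (read bottom to top) of strips, each containing exactly one event and otherwise only vertical strands. By the first bullet of \ref{string diagram transformation}, each such strip is, up to the identity functors contributed by its vertical strands on either side, one of the basic shapes of \ref{fig:basic SGNTs}; so by the horizontal-composition bullet it receives a unique~SGNT $\phi_{D_i}$ of the form $A_i\,\psi_i\,B_i$, with $\psi_i$ a basic~SGNT and $A_i, B_i$ the~SGFs read off from the strands to its left and right. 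I then \emph{define} $\phi_D = \phi_{D_n}\circ\cdots\circ\phi_{D_1}$; the vertical-composition bullet forces this, and the three properties of the statement are then immediate from associativity of composition once any strip decomposition is fixed (a coarser horizontal split of $D$ just groups consecutive strips; a vertical split simply partitions the strands into two sub-stacks).

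The real content is independence of the normalization. Two admissible height functions on $D$ are joined by a path of height functions, along which the combinatorial profile---the order in which the events are met from below---changes only by two elementary moves: (i) a generic line sweeps across a region containing no event, which merely inserts or deletes an identity~SGNT and leaves $\phi_D$ unchanged; and (ii) two consecutive events exchange heights. In case~(ii) the two events are necessarily \emph{independent}: if a strand ran from one to the other, planarity together with the fixed ``upward'' direction of composition would force one event strictly below the other, contradicting swappability. Hence in the stack their strips appear as $\phi_{D_i} = A\,\psi_1\,B\,F_2\,C$ with slice $A\,G_1\,B\,F_2\,C$ above it, and then $\phi_{D_{i+1}} = A\,G_1\,B\,\psi_2\,C$, where $\psi_1$ and $\psi_2$ act on disjoint blocks of strands; \ref{horizontal composition verify}, applied to the decomposition $A F_1 B F_2 C$ (or its mirror), says precisely that $(A\,G_1\,B\,\psi_2\,C)\circ(A\,\psi_1\,B\,F_2\,C) = (A\,\psi_1\,B\,H_2\,C)\circ(A\,F_1\,B\,\psi_2\,C)$, so the two orders give the same composite. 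Thus $\phi_D$ is unchanged under~(ii) as well, hence independent of the height function; invariance under planar isotopy then follows because the whole recipe sees only the combinatorial planar graph, which isotopy preserves. Uniqueness of $\phi_D$ subject to the three properties is then clear, since those properties let one decompose $D$ all the way down to the basic shapes, whose~SGNTs are already fixed.

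The step I expect to be the main obstacle is making the interchange argument of case~(ii) airtight: one must verify the implication ``swappable events have disjoint supports'' for the exact shape vocabulary of \ref{fig:basic SGNTs} (in particular for the valence-$1$ trivialization caps, which terminate a single strand), and then check that the flanking~SGFs $A_i, B_i$ of the two strips genuinely assemble into the single decomposition $A F_1 B F_2 C$ demanded by \ref{horizontal composition verify}---a bookkeeping argument about how strands thread past events, routine in spirit but where all the care actually lies. A secondary, standard wrinkle is the transversality (Cerf-type) claim that any two admissible height functions are joined by a path realizing only moves~(i) and~(ii); I would only sketch this.
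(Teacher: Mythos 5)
Your proposal is sound, but it is worth saying up front that the paper offers no proof of this statement at all: it is a \emph{definition}, and the well-definedness of $\phi_D$ (that the listed decomposition properties are mutually consistent and determine a unique transformation) is left entirely implicit. What you have written is the classical Joyal--Street argument that fills this gap: normalize by a generic height function, decompose into strips each containing one basic shape, define $\phi_D$ as the vertical composite, and prove independence of the normalization by reducing to the interchange law --- which is exactly what the paper's \ref{horizontal composition verify} supplies --- together with a Cerf-type statement that any two admissible height functions are connected by moves of types (i) and (ii). This buys a genuine theorem where the paper has only an assertion, and your identification of the two load-bearing points (swappable events have disjoint strand-supports by planarity; the flanking SGFs assemble into the $AF_1BF_2C$ pattern of \ref{horizontal composition verify}) is correct. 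Two small repairs: your list of ``events'' (univalent caps of $\triv$ shapes, trivalent nodes of $\comp$ shapes) omits the adjunction unit and counit shapes of \ref{fig:basic SGNTs}, which have neither kind of vertex --- each of these cups and caps must also count as a single event, located at its horizontal edge, or your strips will not exhaust the diagram. And your isotopy normalization must respect the paper's convention (\ref{string diagram}) that edges are axis-parallel with right-angle turns, so ``ambient isotopy'' should be read as the restricted class of deformations the paper later formalizes in \ref{diagrams translation}; this does not affect the argument, since the combinatorial data (the vertical order of events and the left-to-right threading of strands past them) is all the construction consumes.
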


\subsection*{Proofs of string diagram identities.}

In this subsection we collect the proofs from \ref{s:string diagrams},
which are by and large trivial translations of symbolic
category-theory notation into string diagrams.

\begin{proof}[Proof of \ref{fig:adjunctions}]
  These diagrams correspond to the fundamental identities
  \ref{eq:adjoint identities} of the unit and counit that ensure that
  they define an adjunction.
\end{proof}

\begin{proof}[Proof of \ref{fig:compositions (assoc)}]
  The left side is the string diagram equivalent
  of~\ref{eq:compositions assoc}.  The right side follows by Yoneda's
  lemma from~\ref{eq:composition adjunction} and the left side.
\end{proof}

\begin{proof}[Proof of \ref{fig:compositions (inv)}]
  The left two diagrams express in string diagram language
  that $\comp_*$ comes in inverse pairs, which we have assumed by
  definition.  The right two also express this, for~$\comp^*$, which
  is~\emph{not} a definition, but follows immediately
  from~\ref{eq:composition adjunction} and Yoneda's lemma.
\end{proof}

\begin{proof}[Proof of \ref{fig:trivializations triv}]
  The first and third follow from the definition, since~$\triv_*$
  comes by hypothesis in an inverse pair.  The second and fourth
  express the same for~$\triv^*$ and follow
  from~\ref{eq:trivialization adjunction} by Yoneda's lemma.
\end{proof}

\begin{proof}[Proof of \ref{fig:trivializations adj}]
  The third and fourth express the
  correspondence~\ref{eq:trivialization adjunction} of~$\triv^*$
  and $\triv_*$ via Yoneda's lemma.  The first and second express the
  same correspondence of their inverses.
\end{proof}

\begin{proof}[Proof of \ref{fig:trivializations comp}]
  The entire first row of equalities is the string diagram version
  of~\ref{eq:trivializations comp}, which we assume to hold.  The
  second row follows from this equation and from~\ref{eq:composition
    adjunction} and~\ref{eq:trivialization adjunction} by Yoneda's
  lemma.
\end{proof}

\begin{proof}[Proof of \ref{fig:adjunction-composition 1}]
  These correspond to the assumed compatibility of~$\comp_*$
  with $\comp^*$ via adjunction expressed in~\ref{eq:composition
    adjunction}.  This compatibility entails the equality of the two
  units, respectively the two counits, of the adjunctions for the two
  sides of $(fg)^* = g^* f^*$ (resp.~of $(fg)_* = f_* g_*$):
  \begin{align}
    \label{eq:composition units counits}
    \includestandalone{img52a} && \includestandalone{img52b}
  \end{align}
  If we apply the second row of~\ref{fig:compositions (inv)} to each
  of the four composition shapes in these two diagrams, we get the
  four diagrams of~\ref{fig:adjunction-composition 1}.
\end{proof}

\begin{proof}[Proof of \ref{fig:adjunction-composition 2}]
  We illustrate the upper-left one; the others are formally identical.
  \begin{equation}
    \label{eq:adjunction-composition 2}
    \includestandalone{img53}
  \end{equation}
  using first~\ref{fig:adjunctions} and
  then~\ref{fig:adjunction-composition 1}.
\end{proof}

\begin{proof}[Proof of \ref{fig:compositions (nontrivial)}]
  This interesting diagram is easy to prove from the previous ones.
  We will omit the labels, since they can be inferred by comparison
  with the figure.  In the equalities below, matching blue and red
  indicate which portions of the diagram are changed; violet indicates
  a shape that is both blue and red.
  \begin{equation}
    \label{eq:compositions (nontrivial)}
    \includestandalone{img54}
  \end{equation}
  using the second row of~\ref{fig:compositions (inv)},
  then~\ref{fig:compositions (assoc)}, then the first row
  of~\ref{fig:compositions (inv)}.  The proof of the~$^*$ version is
  identical with the appropriate change of notation (or follows by
  Yoneda's lemma from~\ref{eq:composition adjunction}).
\end{proof}

\begin{proof}[Proof of \ref{SD loop}]
  Again using colors to indicate the affected regions, we have
  \begin{equation}
    \includestandalone{img55}
  \end{equation}
  by~\ref{fig:compositions (nontrivial)} (twice) and
  then~\ref{fig:compositions (inv)}.
\end{proof}

\subsection*{Topological intuition and application to objects.}
It is a tautology of string diagrams that they are useful because they
are visual; our chosen notation, through a combination of design and
good fortune, happens to display a remarkable compatibility with
topological intuition that has been commented on several times earlier
in this paper.  

The general principle of our string diagrams is that topologically
identical diagrams are equivalent.  We don't know how to prove this in
that kind of generality, though a close examination of our many
figures will show that they are all topological trivialities (i.e.\
diagrams equivalent through ambient isotopy are equal as natural
transformations), with the exception of
Lemmas~\plainref{fig:compositions (inv)}
and~\plainref{fig:trivializations triv}, if one takes the double-line
notation seriously (this is the reason for its invention).  In fact,
if in the first line of the latter figure one removes the inner loop
formed by one of the sets of doubled lines, the identity is again
topologically accurate.  However, it is easy to give a simple version
that is almost too subtle to remark upon.  Indeed, we have never
invoked it directly, but it is used constantly to rearrange diagrams
to our liking.

\begin{theorem}{lem}{diagrams translation}
  Let $D \subset \R^2$ be a string diagram and let $f \colon \R^2 \to
  \R^2$ be any continuous map whose restriction to each edge of~$D$ is
  an affine map with positive scale factor in the direction
  (horizontal or vertical) of that edge.  Then~$f(D)$ is an equivalent
  string diagram to~$D$.
\end{theorem}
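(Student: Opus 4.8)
The plan is to exploit the recursive description of the transformation $\phi_D$ attached to a labeled string diagram given in \ref{string diagram transformation}: $\phi_D$ is built up from the shapes of \ref{fig:basic SGNTs} by repeatedly cutting $D$ along a horizontal line disjoint from the vertices (vertical composition) or along a bi-infinite vertical path in the complement of $D$ (horizontal composition). A continuous map $f$ that is affine with positive scale factor in the edge direction on each edge preserves (i) the horizontal/vertical character of every edge, (ii) the upward/downward orientation of every vertical edge, hence its label $f_*$ versus $f^*$, (iii) the type of every vertex, and (iv) the left-to-right and bottom-to-top order in which the vertices occur. So it should transport any such cutting decomposition of $D$ to one of $f(D)$, and I would argue by induction on the number of vertices of $D$.

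For the base cases: if $D$ has no vertices it is a disjoint union of bi-infinite oriented vertical lines, $\phi_D$ is an identity transformation, and $f(D)$ is again such a union with the same labels and orientations (a positive scale factor keeps upward rays upward), so $\phi_{f(D)}$ is the same identity. If $D$ has exactly one vertex, then after stripping the parallel identity strands horizontally composed on either side, $D$ is one of the shapes of \ref{fig:basic SGNTs}; because $f$ is affine with positive scale factor on each edge, $f(D)$ has the same shape with the same labels, so it represents the same basic SGNT. In both cases $\phi_{f(D)} = \phi_D$.

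For the inductive step, suppose $D$ has at least two vertices; then $D$ admits one of the two kinds of cut described in \ref{string diagram transformation}, splitting it into string diagrams $D_1, D_2$ each with strictly fewer vertices, with $\phi_D = \phi_{D_2}\phi_{D_1}$ (vertical cut) or $\phi_D$ the horizontal composite of $\phi_{D_2}$ after $\phi_{D_1}$ (vertical path in the complement). The image under $f$ of the chosen horizontal cutting line is again a horizontal line missing the vertices of $f(D)$; the image of the chosen vertical cutting path remains, by (i) and (ii), a monotone vertical curve lying in the complement of $f(D)$. Either way this provides the analogous cut of $f(D)$ into $f(D_1)$ and $f(D_2)$, and $f$ restricted to each $D_i$ still satisfies the hypothesis of the lemma. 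By \ref{string diagram transformation} and the induction hypothesis, $\phi_{f(D)} = \phi_{f(D_2)}\phi_{f(D_1)} = \phi_{D_2}\phi_{D_1} = \phi_D$ (respectively the horizontal-composite version), closing the induction.

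The main obstacle is bookkeeping rather than ideas: one must check that $f(D)$ is genuinely a string diagram in the sense of \ref{string diagram} — that the images of the shapes meet only at their ends and that no shape is degenerated — even though $f$ need not be injective. The positive-scale-factor hypothesis makes $f$ locally an orientation-preserving monotone reparametrization in each coordinate direction, so no shape degenerates; and any spurious coincidence of two shapes of $f(D)$ away from their ends would have to come from distinct strands of $D$ that a horizontal cut already separates, so the inductive decomposition still applies to $D$ and is carried over by $f$. Alternatively, since the assertion is plainly local, one may first reduce to the case in which $f$ is a homeomorphism, where these issues evaporate, and then deduce the general case. Once this is settled, the remainder is the routine verification that $f$ preserves labels and orientations, which is exactly what the hypothesis on scale factors was designed to guarantee.
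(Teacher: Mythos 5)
Your overall strategy---check that $f$ preserves the type, orientation, and label of every basic shape and then propagate this through the recursive description of $\phi_D$---is sound, and your first half (that $f(D)$ is again a string diagram of the same combinatorial type) is essentially the paper's first half. The gap is in the inductive step, at the sentence claiming that ``the image under $f$ of the chosen horizontal cutting line is again a horizontal line missing the vertices of $f(D)$.'' The hypothesis constrains $f$ only on the \emph{edges} of $D$; on the complement, where the cutting path lives, $f$ is an arbitrary continuous map, so the image of the cut tells you nothing. What you actually need is a \emph{new} cut of $f(D)$ inducing the same splitting, and such a cut of the same type and in the same order need not exist: the relative positions of pieces of $D$ that are not joined along a common vertical edge are not controlled by the hypothesis. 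Distinct connected components can be repositioned arbitrarily relative to one another, and even within one component two shapes joined only through paths containing horizontal edges can exchange their relative heights, because different vertical edges may be rescaled by different factors. So a splitting of $D$ by a horizontal cut, with $D_1$ below $D_2$, may only be realizable in $f(D)$ as a vertical (horizontal-composition) splitting, or as a horizontal cut with the opposite order.

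This is precisely the point the paper absorbs with the remark that ``translational motion is an equivalence by naturality'': the agreement of the vertical composite $\phi_{D_2}\phi_{D_1}$, the opposite vertical composite, and the horizontal composite for shapes whose relative position has drifted is the interchange law of \ref{horizontal composition verify}, not a consequence of the cut being transported. To repair your induction, either (a) first use \ref{horizontal composition verify} to show that $\phi_{f(D)}$ does not depend on which of the available decompositions of $f(D)$ is used, and then exhibit \emph{some} decomposition of $f(D)$ whose pieces are $f(D_1)$ and $f(D_2)$; or (b) follow the paper's organization: handle each connected component separately (where continuity at shared endpoints pins the image down to a coordinatewise stretch, so your cut argument does go through), and then dispose of the relative repositioning of distinct components by naturality. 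Your closing remarks about non-injectivity of $f$ address a different and less serious issue and do not touch this one.
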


\begin{proof}
  We see, first, that~$f(D)$ is a string diagram, since each basic
  shape is transformed by~$f$ into a basic shape of the same type, and
  any edge abuts the same basic shapes or is infinite in both diagrams
  by continuity and positive affinity of~$f$.  Furthermore,~$f$
  induces a bijection between components of $\R^2 \setminus D$, and
  any two adjacent components are separated by an edge of the same
  direction; thus, the interpretation of each edge as a map of schemes
  remains valid.  To see that the diagrams are equivalent, first
  consider two distinct connected components of~$D$, if they exist.
  By a simple induction, each of them is equivalent, as a string
  diagram, to its image under~$f$, and these images are translated
  with respect to each other as compared to the originals.  But
  translational motion is an equivalence by naturality.
\end{proof}

Ours is not the only notation for string diagrams; in fact, it appears
to differ in some particulars from others commonly used.  McCurdy and
Melli\`es~\citelist{\cite{mccurdy} \cite{mellies}}, following Cockett
and Seely~\cite{cockett-seely}, use ``functorial boxes'' rather than
line segments to denote functors, but this reflects their different
application of the string diagram visualization: their diagrams denote
objects of a monoidal category and their morphisms, rather than
functors and their natural transformations.  In fact, our notation is
more general and can easily be extended to display the specific
instances of functors and natural transformations obtained by
evaluating them at individual objects of the underlying categories.
However, we believe that string diagram notation is inherently
domain-specific, depending on the natural properties and relationships
of the functors and transformations it describes, so it is unlikely
that there is one single, universally effective style.  That said, the
box notation is entirely free of features that display the nature of
the functors that appear in it.

As an example of the two diagram styles, we present the definition of
a monoidal functor and a monoidal natural transformation between two
monoidal functors.  In the notation of Melli\`es (which is
particularly attractive), the monoidality of a functor~$f^*$ is
expressed in \ref{eq:mellies monoidal functor}.
\begin{figure}[ht]
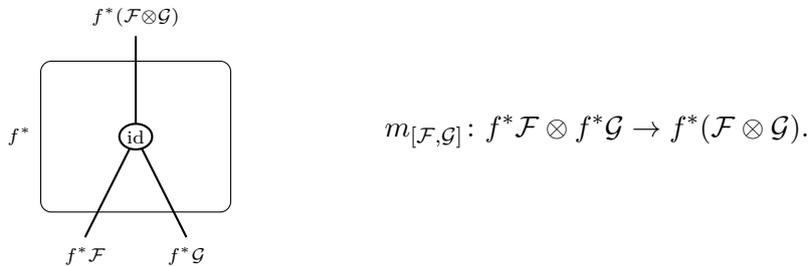

  \begin{align*}
    \includestandalone{img56} &&
    m_{[\sh{F}, \sh{G}]} \colon
    f^* \sh{F} \otimes f^* \sh{G} \to f^*(\sh{F} \otimes \sh{G}).
  \end{align*}
  \caption{Monoidal functor \`a la Melli\`es}
  \label{eq:mellies monoidal functor} 
\end{figure}
The meaning of the box is that, inside, the~$f^*$ is ``stripped'' from
the objects; the name of the map~$m_{[\sh{F}, \sh{G}]}$ is not
mentioned, as it is implicit to the diagram, which is specialized to
the needs of monoidal categories. By comparision, the monoidality
of~$f^*$ may be expressed in our notation as in \ref{eq:our monoidal
  functor}.
\begin{figure}[ht]
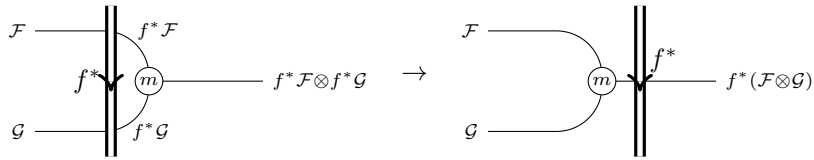

  \begin{equation*}
    \includestandalone{img57}
  \end{equation*}
  \caption{Monoidal functor in our notation}
  \label{eq:our monoidal functor}
\end{figure}
In each of its diagrams, the background layer is a horizontal string
diagram with the strings being objects of the category corresponding
to the planar region containing them (according to \ref{string diagram
  transformation}) and their intersections being transformations from
left to right (here, marked, as we have no specialized notation for
monoidal categories as we do for geofibered ones).  Vertical stacking
of objects' strings corresponds to their monoidal product.

Unlike in the purely functorial diagrams, each individual diagram does
not depict a morphism~$m$; rather, it denotes the endpoints of such a
morphism (the rightmost labeled object in each) and the ``location''
of~$m$.  The morphism itself is obtained by ``sweeping'' the
functorial diagram over the objects from left to right, as shown
above.  Whenever it crosses a marked morphism, the motion corresponds
to the introduction of that map between the ``endpoint'' on the left
and the one on the right.  (Obviously, we must forbid endpoint
diagrams that would allow degenerate or ambiguous configurations.)

Thus, the diagrams display the history of an object and the
\emph{potential} natural morphisms, but it is the topological
relationships between them that give the actual maps.

As a further example of this, consider the monoidality of the natural
transformation~$\theta=\comp^*(g,f)$.  In Melli\`es' notation, it is
the equation of diagrams in \ref{eq:mellies monoidal transformation}.
\begin{figure}[ht]
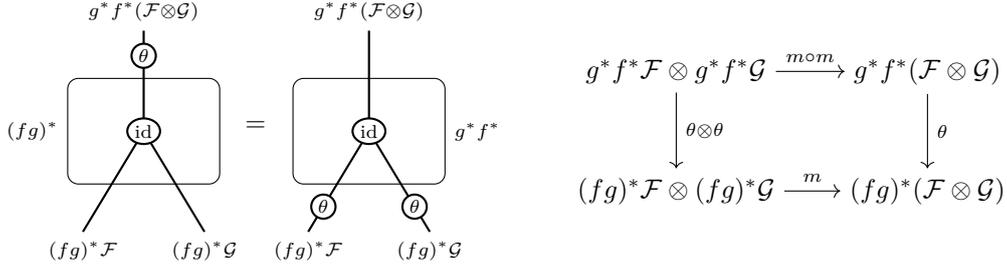

  \begin{align*}
    \includestandalone{img58a} && \includestandalone{img58b}
  \end{align*}
  \caption{Monoidal transformation \`a la Melli\`es}
  \label{eq:mellies monoidal transformation}
\end{figure}
The interpretation is straightforward; vertical juxtaposition of
morphisms is composition and horizontal is monoidal product.  In our
notation, we express it in \ref{eq:our monoidal transformation} as a
closed loop of translations.
\begin{figure}[ht]
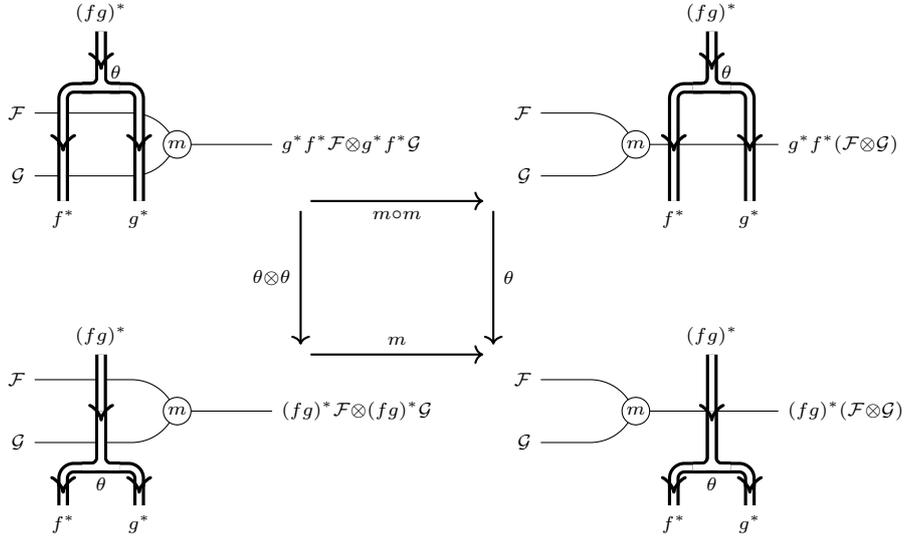

  \begin{equation*}
    \includestandalone{img59}
  \end{equation*}
  \caption{Monoidal transformation in our notation} 
  \label{eq:our monoidal transformation} 
\end{figure}
Moving the functorial ``foreground'' from left to right, as before,
produces one or two iterations of~$m$, depending on how many of its
strings cross that vertex.  Moving the objects' ``background'' from
bottom to top produces an instance of the natural
transformation~$\comp^*(f,g)$ (the one on the object~$\sh{F} \otimes
\sh{G}$ for the~$\theta$ edge; for each factor of~$\theta \otimes
\theta$, we get the instances on $\sh{F}$~or~$\sh{G}$ individually).
Since the translational motions commute, so does the diagram of maps.
We cannot help but feel that this is an instance of the
Eckmann--Hilton argument for the commutativity of higher homotopy
groups (as depicted in Hatcher's book~\cite{hatcher}*{p.~340}).

The fact that we have expressed a single natural morphism, as
in~\ref{eq:our monoidal functor}, in terms of an equivalence of
several diagrams, would seem to be at odds with our intention of using
individual diagrams to express natural transformations.  In fact, this
morphism could be expressed in a purely functorial notation with some
additional convention as to the appearance of the ``monoidal product
string'', which would involve the complication of expressing its
bivariant nature.  However, it is actually \emph{this} approach that
is at odds with the philosophy of our diagrams, since the morphism
of~\ref*{eq:our monoidal functor} is not a ``structural'' morphism but
rather a ``compatibility'' between two independent structures.
Moreover, the structures are not of an equivalent nature: the monoidal
product is ``internal'' and the pullback functor (or any monoidal
functor) is ``external''.  So it is entirely appropriate that their
compatibility should be expressed through superimposed string diagrams
of which one represents objects in the categories cut out by the
other.

In summary: in this paper we have used ``string diagrams''
representing functors and their natural transformations, but we can
``specialize'' them to the corresponding values and their natural maps
when applied to specific objects by superimposing such a diagram on a
horizontal diagram of objects and morphisms in the categories cut out
by the functorial diagram.  Then any continuous motion (avoiding some
degeneracies) ``instantiates'' the morphisms when the strings of one
diagram cross the vertices of the other, and any closed loop should
represent a commutative diagram of these maps.

We feel that this topological intuition is a valuable asset in a
graphical algebraic notation, since it elevates the picture from
simply a two-dimensional symbolic calculus to a genuinely graphical
reasoning tool.

\begin{bibdiv} 
  \begin{biblist}
    \bib{sga4}{article}{
      author={Artin, Michael},
      title = {Théorie des topos et cohomologie étale des schémas,
        1963--1964},
      series = {Lecture Notes in Mathematics},
      date = {1973},
      publisher = {Springer},
      volume = {305},
      pages = {64--78},
      note = {Expos\'es xii--xvi},
      label= {SGA IV$_3$}
    }
    \bib{catsters}{online}{
      author={Catsters, The},
      title={String diagrams, adjunctions, and monads},
      url={http://www.youtube.com/playlist?list=PL50ABC4792BD0A086},
      note={Youtube video series},
      date={2007},
      accessdate={2014-04-06},
      label={Cat}
    }
    \bib{cockett-seely}{article}{
      author={Cockett, J. R. B.},
      author={Seely, R. A. G.},
      title={Linearly distributive functors},
      note={Special volume on the occasion of the 60th birthday of Professor
        Michael Barr (Montreal, QC, 1997)},
      journal={J. Pure Appl. Algebra},
      volume={143},
      date={1999},
      number={1-3},
      pages={155--203},
    }
    \bib{conrad}{book}{
      author={Conrad, Brian},
      title={Grothendieck duality and base change},
      series={Lecture Notes in Mathematics},
      volume={1750},
      publisher={Springer-Verlag},
      place={Berlin},
      date={2000},
      pages={vi+296},
    }
    \bib{fausk-may}{article}{
      author={Fausk, H.},
      author={Hu, P.},
      author={May, J. P.},
      title={Isomorphisms between left and right adjoints},
      journal={Theory Appl. Categ.},
      volume={11},
      date={2003},
      pages={No. 4, 107--131},
      issn={1201-561X},
      review={\MR{1988072 (2004d:18006)}},
    }
    \bib{hatcher}{book}{
      author={Hatcher, Allen},
      title={Algebraic topology},
      publisher={Cambridge University Press},
      place={Cambridge},
      date={2002},
      pages={xii+544},
      note={Available at
        \url{http://www.math.cornell.edu/~hatcher/AT/ATpage.html}.}
    }
    \bib{joyal-street}{article}{
      author={Joyal, Andr{\'e}},
      author={Street, Ross},
      title={The geometry of tensor calculus. I},
      journal={Adv. Math.},
      volume={88},
      date={1991},
      number={1},
      pages={55--112},
    }
    \bib{enhanced}{online}{
      author = {Liu, Yifeng},
      author = {Zhang, Weizhe},
      title = {Enhanced six operations and base change theorem for
        sheaves on Artin stacks},
      date = {2012},
      accessdate = {2014-04-06},
      url = {http://arxiv.org/abs/1211.5948}
    }
    \bib{lurie}{online}{
      author={Lurie, Jacob},
      title={Tannaka Duality for Geometric Stacks},
      url={http://arxiv.org/abs/math/0412266},
      date={2005},
      accessdate={2013-10-11},
      note={AIM 2005-6}
    }
    \bib{maclane}{article}{
      author={Mac Lane, Saunders},
      title={Natural associativity and commutativity},
      journal={Rice Univ. Studies},
      volume={49},
      date={1963},
      number={4},
      pages={28--46},
    }
    \bib{mccurdy}{article}{
      author={McCurdy, Micah Blake},
      title={Graphical methods for Tannaka duality of weak bialgebras and weak
        Hopf algebras},
      journal={Theory Appl. Categ.},
      volume={26},
      date={2012},
      pages={No. 9, 233--280},
    }
    \bib{mellies}{article}{
      author={Melli{\`e}s, Paul-Andr{\'e}},
      title={Functorial boxes in string diagrams},
      conference={
        title={Computer science logic},
      },
      book={
        series={Lecture Notes in Comput. Sci.},
        volume={4207},
        publisher={Springer},
        place={Berlin},
      },
      date={2006},
      pages={1--30},
    }
    \bib{milne}{book}{
      author={Milne, James S.},
      title={\'Etale cohomology},
      series={Princeton Mathematical Series},
      volume={33},
      publisher={Princeton University Press},
      place={Princeton, N.J.},
      date={1980},
      pages={xiii+323},
    }
    \bib{penrose1}{article}{
      author={Penrose, Roger},
      title={Applications of negative dimensional tensors},
      conference={
        title={Combinatorial Mathematics and its Applications (Proc. Conf.,
          Oxford, 1969)},
      },
      book={
        publisher={Academic Press},
        place={London},
      },
      date={1971},
      pages={221--244},
    }
    \bib{penrose2}{book}{
      author={Penrose, Roger},
      author={Rindler, Wolfgang},
      title={Spinors and space-time. Vol.\ 1},
      series={Cambridge Monographs on Mathematical Physics},
      note={Two-spinor calculus and relativistic fields},
      publisher={Cambridge University Press},
      place={Cambridge},
      date={1987},
      pages={x+458},
    }
    \bib{stacks-project}{online}{
      author = {The {Stacks Project Authors}},
      title = {Stacks Project},
      url = {http://stacks.math.columbia.edu},
      date = {2014},
      accessdate = {2014-04-06},
      label = {Stacks}
    }
    \bib{vistoli}{article}{
      author={Vistoli, Angelo},
      title={Grothendieck topologies, fibered categories and descent theory},
      conference={
        title={Fundamental algebraic geometry},
      },
      book={
        series={Math. Surveys Monogr.},
        volume={123},
        publisher={Amer. Math. Soc.},
        place={Providence, RI},
      },
      date={2005},
      pages={1--104},
      note={Available at \url{http://arxiv.org/abs/math/0412512}.}
    }
  \end{biblist}
\end{bibdiv}

\end{document}
